%
%
\documentclass{QT}

\usepackage{amscd}
\usepackage{verbatim}
\usepackage{mathrsfs}
\usepackage{lscape}
\usepackage{stmaryrd}
\usepackage[all,knot]{xy}
\xyoption{arc}

\address[davidero@usc.edu]{David E. V. Rose, Department of Mathematics, University of Southern California, 
Los Angeles, CA 90089, USA}


\numberwithin{equation}{section}
\newtheorem{thm}{Theorem}[section]
\newtheorem{cor}[thm]{Corollary}
\newtheorem{lem}[thm]{Lemma}
\newtheorem{prop}[thm]{Proposition}


\theoremstyle{definition}
\newtheorem{ex}[thm]{Example}
\newtheorem{defn}[thm]{Definition}

\newtheorem*{nnthm}{Theorem}


\newcommand{\abs}[1]{\left\vert#1\right\vert}

\newcommand{\R}{\mathbb R}
\newcommand{\C}{\mathbb C}

\newcommand{\Z}{\mathbb Z}

\newcommand{\Hom}{\operatorname{Hom}}
\newcommand{\cone}{\operatorname{cone}}
\newcommand{\Oh}{\operatorname{O}}
\newcommand{\Rep}{\operatorname{Rep}}
\newcommand{\wt}{\operatorname{wt}}
\newcommand{\Ch}{\operatorname{Kom}}
\newcommand{\K}{\operatorname{K}}
\newcommand{\Tot}{\operatorname{Tot}}
\newcommand{\supp}{\operatorname{supp}}
\newcommand{\Id}{\operatorname{id}}

\newcommand{\diskv}{
\xy 0;/r.1pc/:
(0,10)*\xycircle(10,3){-};
(10,10); (-10,10) **\crv{(10,-15)&(-10,-15)};
\endxy
}

\newcommand{\ctv}{
\xy 0;/r.1pc/:
(3,10)*{
\xy
(-3,-10)*\xycircle(10,3){-};
(7,-10); (-13,-10) **\crv{(0,-20)&(15,-40)&(0,-70)&(-20,-40)&(-5,-20)};
(-1,-38); (-2,-50) **\crv{(-4,-43)}\POS?(.2)="a" \POS?(.85)="b";
"a"; "b" **\crv{(1,-45)};
(-2.5,-23.5)*\xycircle(7.6,-3){-};
(0,-23.7); (4,-28) **\dir{-};
(-4.2,-23.6); (.85,-29.1) **\dir{-};
(-8,-24.4); (-3.15,-29.5) **\dir{-};
\endxy};
\endxy
}

\newcommand{\tv}{
\xy 0;/r.1pc/:
(3,10)*{
\xy
(-3,-10)*\xycircle(10,3){-};
(7,-10); (-13,-10) **\crv{(0,-20)&(15,-40)&(0,-70)&(-20,-40)&(-5,-20)};
(-1,-38); (-2,-50) **\crv{(-4,-43)}\POS?(.2)="a" \POS?(.85)="b";
"a"; "b" **\crv{(1,-45)};
\endxy};
\endxy
}

\newcommand{\smallproj}{
\xy 0;/r.15pc/:
(-6,0); (-1,0) **\dir{-};
(-1,-4)*{}; (-1,4)*{} **\dir{-};
(-1,-4)*{}; (1,-4)*{} **\dir{-};
(-1,4)*{}; (1,4)*{} **\dir{-};
(1,-4)*{}; (1,4)*{} **\dir{-};
(1,0); (6,0) **\dir{-};
\endxy
}

\newcommand{\smooth}{
\xy
(-2,2)*{}="a";
(2,2)*{}="b";
(-2,-2)*{}="c";
(2,-2)*{}="d";
"a";"c" **\crv{-};
"b";"d" **\crv{-};
\endxy
}

\newcommand{\singular}{
\xy
(-2,2)*{}="a";
(0,1)*{}="b";
(2,2)*{}="c";
(0,-1)*{}="d";
(-2,-2)*{}="e";
(2,-2)*{}="f";
"a";"b" **\dir{-};
"b";"c" **\dir{-};
"b";"d" **\dir{-};
"e";"d" **\dir{-};
"d";"f" **\dir{-};
\endxy
}

\newcommand{\id}[1]{\ \xy
(3,0)*{_{#1}};
{\ar (-3,0)*{};(1,0)*{}};
\endxy}

\newcommand{\idmn}[2]{\ \xy
{\ar (-3,2)*{};(1,2)*{}};
{\ar (1,-2)*{};(-3,-2)*{}};
(3,2)*{_{#1}};
(3,-2)*{_{#2}};
\endxy}

\newcommand{\idtwo}{
\xy
{\ar@/_.2pc/(-5,3)*{};(5,3)*{}};
{\ar@/^.2pc/(-5,-3)*{};(5,-3)*{}}; 
\endxy
}

\newcommand{\twistw}[1]{\xy
\htwist~{(-10,3)}{(0,3)}{(-10,-3)}{(0,-3)};
\htwist~{(0,3)}{(10,3)}{(0,-3)}{(10,-3)};
(0,1)*{\vdots};
(13,0)*{_{w}};
(0,5)*{_{#1}};
\endxy}

\newcommand{\smalltwohalftwist}{\xy 0;/r.1pc/:
\htwist~{(0,5)}{(10,5)}{(0,-5)}{(10,-5)};
{\ar (10,5)*{};(13,5)*{}};
{\ar (10,-5)*{};(13,-5)*{}};
\endxy}

\newcommand{\smalltwistm}[2]{\xy 0;/r.1pc/:
\htwist~{(-10,5)}{(0,5)}{(-10,-5)}{(0,-5)};
\htwist~{(0,5)}{(10,5)}{(0,-5)}{(10,-5)};
(0,2.5)*{\vdots};
{\ar (10,5)*{};(14,5)*{}};
{\ar (10,-5)*{};(14,-5)*{}};
(15,0)*{_{#1}};
(0,7)*{^{#2}};
\endxy}

\newcommand{\twistm}[2]{\xy
\htwist~{(-10,3)}{(0,3)}{(-10,-3)}{(0,-3)};
\htwist~{(0,3)}{(10,3)}{(0,-3)}{(10,-3)};
(0,1)*{\vdots};
{\ar (10,3)*{};(11,3)*{}};
{\ar (10,-3)*{};(11,-3)*{}};
(13,0)*{_{#1}};
(0,7)*{_{#2}};
\endxy}

\newcommand{\twisttwo}[1]{
\xy
{\ar (-5,5)*{}; (6,5)*{}};
{\ar (-5,-5)*{}; (6,-5)*{}};
\htwist~{(-5,2)}{(0,2)}{(-5,-2)}{(0,-2)};
\htwist~{(0,2)}{(5,2)}{(0,-2)}{(5,-2)};
{\ar (5,2)*{};(6,2)*{}};
{\ar (5,-2)*{};(6,-2)*{}};
(0,3)*{_{#1}};
\endxy
}

\newcommand{\twistds}[2]{
\xy
\htwist~{(-10,5)}{(0,5)}{(-10,-5)}{(0,-5)};
\htwist~{(0,5)}{(10,5)}{(0,-5)}{(10,-5)};
(0,1)*{\vdots};
(-5,0)="h1";
(5,0)="h2";
(-10,1)*{};"h1"+(-1,1) **\dir{-};
(-10,-1)*{};"h1"+(-1,-1) **\dir{-};
{\ar "h2"+(1,1);(11,1)};
{\ar "h2"+(1,-1);(11,-1)};
{\ar (10,5)*{};(11,5)*{}};
{\ar (10,-5)*{};(11,-5)*{}};
(13,0)*{_{#1}};
(0,7)*{_{#2}};
\endxy
}

\newcommand{\leftmedtwistds}[2]{
\xy 0;/r.15pc/:
\htwist~{(-10,5)}{(0,5)}{(-10,-5)}{(0,-5)};
\htwist~{(0,5)}{(10,5)}{(0,-5)}{(10,-5)};
(0,1)*{\vdots};
(-6,0)="h1";
(6,0)="h2";
(-10,1)*{};"h1"+(-1,1) **\dir{-};
(-10,-1)*{};"h1"+(-1,-1) **\dir{-};
(10,1)*{};"h2"+(0,1) **\dir{-};
(10,-1)*{};"h2"+(0,-1) **\dir{-};
{\ar "h1"+(0,1);(-11,1)};
{\ar "h1"+(0,-1);(-11,-1)};
{\ar (-10,5)*{};(-11,5)*{}};
{\ar (-10,-5)*{};(-11,-5)*{}};
(15,0)*{_{#1}};
(0,7)*{_{#2}};
\endxy
}

\newcommand{\smalltwistmn}[3]{\xy 0;/r.1pc/:
\htwist~{(-10,5)}{(0,5)}{(-10,-5)}{(0,-5)};
\htwist~{(0,5)}{(10,5)}{(0,-5)}{(10,-5)};
(0,2.5)*{\vdots};
{\ar (10,5)*{};(11,5)*{}};
{\ar (-10,-5)*{};(-11,-5)*{}};
(15,5)*{_{#1}};
(15,-5)*{_{#2}};
(0,9)*{_{#3}};
\endxy}

\newcommand{\smalltwist}[1]{\xy 0;/r.1pc/:
\htwist~{(-10,5)}{(0,5)}{(-10,-5)}{(0,-5)};
\htwist~{(0,5)}{(10,5)}{(0,-5)}{(10,-5)};
(0,2.5)*{\vdots};
(0,8)*{_{#1}};
\endxy}

\newcommand{\medtwistn}[2]{\xy 0;/r.15pc/:
\htwist~{(-10,5)}{(0,5)}{(-10,-5)}{(0,-5)};
\htwist~{(0,5)}{(10,5)}{(0,-5)}{(10,-5)};
(0,2.5)*{\vdots};
{\ar (-10,5)*{};(-12,5)*{}};
{\ar (-10,-5)*{};(-12,-5)*{}};
(13,0)*{_{#1}};
(0,7)*{^{#2}};
\endxy}

\newcommand{\smalltwotwistoneone}{\xy 0;/r.1pc/:
\htwist~{(-10,5)}{(0,5)}{(-10,-5)}{(0,-5)};
\htwist~{(0,5)}{(10,5)}{(0,-5)}{(10,-5)};
{\ar (10,5)*{};(12,5)*{}};
{\ar (-10,-5)*{};(-12,-5)*{}};
\endxy}

\newcommand{\medtwotwistoneone}{\xy 0;/r.15pc/:
\htwist~{(-10,5)}{(0,5)}{(-10,-5)}{(0,-5)};
\htwist~{(0,5)}{(10,5)}{(0,-5)}{(10,-5)};
{\ar (10,5)*{};(12,5)*{}};
{\ar (-10,-5)*{};(-12,-5)*{}};
\endxy}

\newcommand{\twistmn}[3]{\xy
\htwist~{(-10,3)}{(0,3)}{(-10,-3)}{(0,-3)};
\htwist~{(0,3)}{(10,3)}{(0,-3)}{(10,-3)};
(0,1)*{\vdots};
{\ar (10,3)*{};(11,3)*{}};
{\ar (-10,-3)*{};(-11,-3)*{}};
(15,3)*{_{#1}};
(15,-3)*{_{#2}};
(0,5)*{_{#3}};
\endxy}

\newcommand{\onewrapmn}[2]{
\xy
(-10,5)*{}; (0,-5)*{} **\crv{(-6.5,5)&(-3.5,-5)} \POS?(.4)*{\hole}="h1"
\POS?(.7)*{\hole}="h2";
(-10,0)*{}; "h1" **\crv{(-8,0)};
(-10,-5)*{}; "h2" **\crv{(-6,-5)} ?(0)*\dir{<};
"h1"; (10,0)*{} **\crv{(2,10)&(3,0)}\POS?(.7)*{\hole}="h4";
"h2"; (10,-5)*{} **\crv{(0,3)&(3,-4)} \POS?(.6)*{\hole}="h3" ;
(0,-5)*{}; "h3" **\crv{(2,-5)};
"h3"; "h4" **\crv{(3,-2)};
"h4"; (10,5)*{} **\crv{(7,5)};
(9.9, 4.985); (10,5)**\dir{}?(1)*\dir{>};
(9.9,.01); (10,0)**\dir{}?(1)*\dir{>};
(14,0)*{^{_{#1}}};
(12,-5)*{^{_{#2}}};
\endxy
}

\newcommand{\medonewrapmn}[2]{
\xy 0;/r.15pc/:
(-10,5)*{}; (0,-5)*{} **\crv{(-6.5,5)&(-3.5,-5)} \POS?(.4)*{\hole}="h1"
\POS?(.7)*{\hole}="h2";
(-10,0)*{}; "h1" **\crv{(-8,0)};
(-10,-5)*{}; "h2" **\crv{(-6,-5)} ;
"h1"; (10,0)*{} **\crv{(2,10)&(3,0)}\POS?(.7)*{\hole}="h4";
"h2"; (10,-5)*{} **\crv{(0,3)&(3,-4)} \POS?(.6)*{\hole}="h3" ;
(0,-5)*{}; "h3" **\crv{(2,-5)};
"h3"; "h4" **\crv{(3,-2)};
"h4"; (10,5)*{} **\crv{(7,5)};
(9.9, 4.985); (10,5)**\dir{}?(1)*\dir{>};
(9.9,.01); (10,0)**\dir{}?(1)*\dir{>};
(-9.9, -4.985); (-10,-5)**\dir{}?(1)*\dir{>}; 
(16,0)*{^{_{#1}}};
(14,-5)*{^{_{#2}}};
\endxy
}

\newcommand{\smallhalftwistmn}[2]{\xy 0;/r.1pc/:
\htwist~{(0,5)}{(10,5)}{(0,-5)}{(10,-5)};
{\ar (10,5)*{};(12,5)*{}};
{\ar (10,-5)*{};(12,-5)*{}};
(20,5)*{^{_{#1}}};
(20,-5)*{^{_{#2}}};
\endxy}

\newcommand{\crossdown}{\xy 0;/r.1pc/:
\htwist~{(0,5)}{(10,5)}{(0,-5)}{(10,-5)};
{\ar (0,-5)*{};(-2,-5)*{}};
{\ar (10,-5)*{};(12,-5)*{}};
(10,5); (12,5)**\dir{-};
(0,5); (-2,5)**\dir{-};
\endxy}

\newcommand{\crossup}{\xy 0;/r.1pc/:
\htwist~{(0,5)}{(10,5)}{(0,-5)}{(10,-5)};
{\ar (10,5)*{};(12,5)*{}};
{\ar (0,5)*{};(-2,5)*{}};
(10,-5); (12,-5)**\dir{-};
(0,-5); (-2,-5)**\dir{-};
\endxy}

\newcommand{\crossleft}{\xy 0;/r.1pc/:
(0,5)*{}; (-10,-5)*{} **\crv{(-3.5,5)&(-6.5,-5)} \POS?(.5)*{\hole}="h2";
(0,-5)*{}; "h2" **\crv{(-2.5,-5)};
"h2"; (-10,5) **\crv{(-7.5,5)};
{\ar (-10,5)*{};(-12,5)*{}};
{\ar (-10,-5)*{};(-12,-5)*{}};
\endxy}

\newcommand{\smallYro}{
\xy
(-3,0)*{}="a";
(.25,0)*{}="b";
(3,2.5)*{}="c";
(3,-2.5)*{}="d";
{\ar "b"; "a" };
{\ar "b"; "c" };
{\ar "b"; "d" };
\endxy
}

\newcommand{\smallYri}{
\xy
(-3,0)*{}="a";
(.25,0)*{}="b";
(3,2.5)*{}="c";
(3,-2.5)*{}="d";
"a";"b" **\dir{-}?(.6)*\dir{>};
"c";"b" **\dir{-}?(.6)*\dir{>};
"d";"b" **\dir{-}?(.6)*\dir{>};
\endxy
}

\newcommand{\smallYli}{
\xy
(3,0)*{}="a";
(-.25,0)*{}="b";
(-3,2.5)*{}="c";
(-3,-2.5)*{}="d";
"a";"b" **\dir{-}?(.6)*\dir{>};
"c";"b" **\dir{-}?(.6)*\dir{>};
"d";"b" **\dir{-}?(.6)*\dir{>};
\endxy
}

\newcommand{\smallYlo}{
\xy
(3,0)*{}="a";
(-.25,0)*{}="b";
(-3,2.5)*{}="c";
(-3,-2.5)*{}="d";
{\ar "b"; "a" };
{\ar "b"; "c" };
{\ar "b"; "d" };\endxy
}

\newcommand{\turnback}{\xy 
(-20,17.5)*{} = "L1";
(-20,12.5)*{} = "L2";
(-20,7.5)*{} = "L3";
(-20,2.5)*{} = "L4";
(-20,-2.5)*{} = "L5";
(-20,-7.5)*{} = "L6";
(-20,-12.5)*{} = "L7";
(-20,-17.5)*{} = "L8";
(20,17.5)*{} = "R1";
(20,12.5)*{} = "R2";
(20,7.5)*{} = "R3";
(20,2.5)*{} = "R4";
(20,-2.5)*{} = "R5";
(20,-7.5)*{} = "R6";
(20,-12.5)*{} = "R7";
(20,-17.5)*{} = "R8";
(-15,-12.5)*{\hole} = "h1";
(-15,-7.5)*{\hole} = "h2";
(-15,-2.5)*{\hole} = "h3";
(-10,-7.5)*{\hole} = "h4";
(-10,-2.5)*{\hole} = "h5";
(-11,12.5)*{\hole} = "h6";
(-3.5,-2.5)*{\hole} = "h7";
"L2"; "h6" **\dir{-};
"L4"; "L8" **\crv{(-13,4)&(-13,-19)}?(1)*\dir{>};
"L3"; "h1" **\crv{(-7,5)&(-7,-15)};
"L1"; "h4" **\crv{(0,10)&(0,-10)};
"L7"; "h1" **\dir{-}?(0)*\dir{<};
"L6"; "h2" **\dir{-}?(0)*\dir{<};
"L5"; "h3" **\dir{-};
"h2"; "h4" **\dir{-};
"h3"; "h5" **\dir{-};
"h5"; "h7" **\dir{-};
"h6"; "R2" **\dir{-};
"h7"; "R5" **\dir{-};
(15,-12.5)*{\hole} = "H1";
(15,-7.5)*{\hole} = "H2";
(15,-2.5)*{\hole} = "H3";
(10,-7.5)*{\hole} = "H4";
(10,-2.5)*{\hole} = "H5";
(10,12.5)*{\hole} = "H6";
(3.5,-2.5)*{\hole} = "H7";
"H4"; "R7" **\crv{(10,-13.5)};
"H7"; "R6" **\crv{(5,-10)};
"H5"; "R3" **\crv{(10,5)};
"H1"; "R8" **\crv{(15,-17)};
"H3"; "R4" **\crv{(15,2.5)};
"H7"; "H6" **\crv{(3,10)};
"H6"; "R1" **\crv{(12,15)};
"H4"; "H5" **\dir{-};
"H1"; "H2" **\dir{-};
"H2"; "H3" **\dir{-};
\endxy}

\newcommand{\threetwist}{\xy 
(-10,17.5)*{} = "L1";
(-10,12.5)*{} = "L2";
(-10,7.5)*{} = "L3";
(-10,2.5)*{} = "L4";
(-10,-2.5)*{} = "L5";
(-10,-7.5)*{} = "L6";
(-10,-12.5)*{} = "L7";
(-10,-17.5)*{} = "L8";
(10,17.5)*{} = "R1";
(10,12.5)*{} = "R2";
(10,7.5)*{} = "R3";
(10,2.5)*{} = "R4";
(10,-2.5)*{} = "R5";
(10,-7.5)*{} = "R6";
(10,-12.5)*{} = "R7";
(10,-17.5)*{} = "R8";
"L1"; "R1" **\dir{-}?(1)*\dir{>};
"L2"; "R2" **\dir{-}?(1)*\dir{>};
"L3"; "R3" **\dir{-}?(1)*\dir{>};
"L4"; "R4" **\dir{-}?(1)*\dir{>};
"L5"; "R5" **\dir{-}?(1)*\dir{>};
(-6,-10)*{\hole}="h1";
(-4.5,-14)*{\hole}="h2";
(-1,-9.5)*{\hole}="h3";
(1,-14.5)*{\hole}="h4";
(4.5,-10)*{\hole}="h5";
(6,-15)*{\hole}="h6";
"L6"; "h4" **\crv{(-2,-8)&(-6,-24)};
"h1"; "h6" **\crv{(2,-1)&(-1,-25)};
"h3"; "R8" **\crv{(7,-2)&(3,-19)};
"L7"; "h1" **\crv{(-8,-12.5)};
"L8"; "h2" **\crv{(-6,-17.5)};
"h2"; "h3" **\crv{(-2.5,-11)};
"h4"; "h5" **\crv{(2.5,-12)};
"h5"; "R6" **\crv{(7,-7.5)};
"h6"; "R7" **\crv{(8,-12.5)};
\endxy}

\title[Categorification of Quantum $\mathfrak{sl}_3$ Projectors]{A Categorification of Quantum $\mathfrak{sl}_3$ Projectors and the 
$\mathfrak{sl}_3$ Reshetikhin-Turaev Invariant of Tangles}

\author[D. E. V. Rose]{David E. V. Rose}

\begin{document}

\maketitle

\begin{abstract}
We construct a categorification of the quantum $\mathfrak{sl}_3$ projectors, the $\mathfrak{sl}_3$ analog of 
the Jones-Wenzl projectors, 
as the stable limit of the complexes assigned to 
$k$-twist torus braids (as $k \to \infty$) in a suitably shifted version of
Morrison and Nieh's geometric formulation of $\mathfrak{sl}_3$ link homology \cite{MorrisonNieh}. 
We use these projectors to give a categorification of the 
$\mathfrak{sl}_3$ Reshetikhin-Turaev invariant of framed tangles.
\end{abstract}

\begin{classification}
57M27, 81R50.
\end{classification}

\begin{keywords}
Categorification, Jones-Wenzl projectors, $\mathfrak{sl}_3$ spider, Khovanov homology, quantum groups.
\end{keywords}

\section{Introduction}
In \cite{BarNatan2}, Bar-Natan introduced a geometric formulation of Khovanov's $\mathfrak{sl}_2$ homology 
theory for tangles. In this
framework, the invariant of a tangle takes values in a category whose objects are complexes composed of crossingless 
tangles and cobordisms. This construction gives a categorification of the Temperley-Lieb algebra, meaning that the 
Temperley-Lieb algebra can be recovered from this category by taking the Grothendieck group. 

The Jones-Wenzl projectors are special idempotent elements of the Temperley-Lieb algebra relevant to quantum topology, 
where they are used to give combinatorial constructions of the $\mathfrak{sl}_2$ Reshetikhin-Turaev invariants of 
framed links (i.e. the colored Jones polynomial) and the $\mathfrak{sl}_2$ Witten-Reshetikhin-Turaev invariants of 
3-manifolds. A natural question to ask is whether there exist objects in Bar-Natan's category corresponding to these 
projectors. 

This question has been answered in the affirmative by various authors using differing constructions. In  
\cite{Rozansky}, Rozansky has constructed complexes in Bar-Natan's category satisfying categorified versions of the 
defining relations of the Jones-Wenzl projectors and mapping to them 
via the canonical map to the Grothendieck group. These complexes 
are presented as the stable limit of the complexes assigned to $k$-twist torus braids as $k\to \infty$, i.e. as the 
complexes associated to `infinite twists.' Cooper and Krushkal \cite{CooperKrushkal} used a categorified version of the 
Frenkel-Khovanov recursion relation for the Jones-Wenzl projectors \cite{FrenkelKhovanov} to give an alternative 
construction of these categorified projectors. Finally, Frenkel, Stroppel, and Sussan \cite{FrenkelStroppelSussan} 
have used category $\mathcal{O}$ methods to construct categorified Jones-Wenzl projectors. It can be shown that the 
constructions of Rozansky and Cooper-Krushkal agree and that these constructions are related to that of 
Frenkel-Stroppel-Sussan \cite{StroppelSussan} (see also \cite{MazorchukStroppel} for a connection between the category 
$\mathcal{O}$ formulation of Khovanov homology and the webs and foams formulation used in this paper).

In this paper, we extend the results of Rozansky and Cooper-Krushkal to the $\mathfrak{sl}_3$ case.
Playing the role of the Temperley-Lieb algebra is Kuperberg's $\mathfrak{sl}_3$ spider \cite{Kuperberg}, a 
combinatorial construction which describes a full subcategory of the category $\Rep \mathcal{U}_q(\mathfrak{sl}_3)$ of 
(type I) finite dimensional representations of the quantum group 
$\mathcal{U}_q(\mathfrak{sl}_3)$ at generic $q$. The quantum $\mathfrak{sl}_3$ knot invariant has a simple description 
in this 
context which has been categorified in \cite{Khovanov3}. Mackaay and Vaz gave a geometric reformulation of this knot 
homology in \cite{MackaayVaz} akin to Bar-Natan's construction which was extended by Morrison and Nieh in
\cite{MorrisonNieh} to give a categorification of the $\mathfrak{sl}_3$ spider.

The analog of the Jones-Wenzl projectors are the $\mathfrak{sl}_3$ projectors, also called `internal clasps' or 
`magic elements', which correspond 
to projection onto highest weight irreducible summands in $\Rep \mathcal{U}_q(\mathfrak{sl}_3)$. 
These projectors have been studied in 
\cite{Kim} and have been used to construct quantum invariants of $3$-manifolds \cite{OhtsukiYamada}. The main result of 
this paper is the following: 

\begin{nnthm} The complex assigned to a $k$-twist torus braid (suitably shifted) 
in Morrison and Nieh's categorification of the 
$\mathfrak{sl}_3$ spider stabilizes as $k\to \infty$ and the stable limit gives a categorified 
$\mathfrak{sl}_3$ projector.
\end{nnthm}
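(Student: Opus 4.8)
The plan is to mimic Rozansky's strategy for the $\mathfrak{sl}_2$ case, adapting it to the cobordism-theoretic setting of Morrison and Nieh. Let $T_k$ denote the $k$-twist torus braid on the relevant number of strands (for the $3$-strand projector, the full twist on three strands), and let $[\![T_k]\!]$ be the complex assigned to it in the (shifted) Morrison--Nieh category. The first step is to establish \emph{stabilization}: there are canonical chain maps $[\![T_k]\!] \to [\![T_{k+1}]\!]$ (induced by adding one more twist region and resolving) such that in each fixed homological degree the complexes and maps eventually become constant, or more precisely that the mapping cone of $[\![T_k]\!]\to[\![T_{k+1}]\!]$ is supported in homological degrees tending to $\infty$ with $k$. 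This gives a well-defined limiting complex $P := \varinjlim_k [\![T_k]\!]$ in an appropriate category of (possibly unbounded) complexes. The key computational input here is the local behavior of a single crossing and the Morrison--Nieh relations (the ``neck-cutting'', ``square-switch'', ``bigon'', and related local relations among foams), which let one simplify the complex of a twist region and identify the part that survives into higher and higher degree.

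The second step is to verify that $P$ is a \emph{categorified projector}, i.e.\ that it satisfies the categorified analogs of the defining relations of the $\mathfrak{sl}_3$ internal clasp. Concretely, one wants: (i) $P$ is ``absorbing'' --- composing $P$ with any of the elementary generators of the spider that factor through a lower highest weight (the cup--cap and trivalent-vertex turnbacks) yields a contractible complex; and (ii) $P$ is ``idempotent'' --- $P$ composed with itself is homotopy equivalent to $P$ (up to the appropriate grading shift), and $P$ contains the identity as a subcomplex with contractible complement. Property (i) is the heart of the matter and should follow from stabilization together with the observation that each turnback, when capped onto a twist region, lets one ``unwind'' a crossing and thereby increases the homological degree of the support --- in the limit this forces acyclicity. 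Property (ii) is then formal: a complex that absorbs all turnbacks and maps to the genuine projector under the Grothendieck-group/Euler-characteristic map is forced to be idempotent, by the usual argument that any two such complexes are canonically homotopy equivalent.

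The third step is the decategorification check: one computes the class $[P]$ in the Grothendieck group of the Morrison--Nieh category and verifies it equals the $\mathfrak{sl}_3$ projector in Kuperberg's spider. This amounts to computing the graded Euler characteristic of $P$, which by stabilization equals $\lim_k \chi([\![T_k]\!])$ after the appropriate renormalization, and matching it against the known formula for the internal clasp (e.g.\ via the Frenkel--Khovanov-type recursion or directly from the eigenvalue decomposition of the twist on $\Rep\,\mathcal{U}_q(\mathfrak{sl}_3)$). Since the $R$-matrix acting on the relevant tensor product has distinct eigenvalues, the renormalized infinite twist projects onto the top eigenspace, which is exactly the highest-weight summand.

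The main obstacle I expect is the stabilization step in the $\mathfrak{sl}_3$ setting: unlike the Temperley-Lieb case, the spider has trivalent vertices and the space of foams between webs is governed by more intricate local relations, so controlling the complex of a twist region --- in particular showing that the ``error terms'' genuinely escape to higher homological degree and that the bonding maps are compatible --- requires a careful diagrammatic analysis of how a single $\mathfrak{sl}_3$ crossing interacts with a turnback and with the preceding twists. Once stabilization and the absorbing property are in hand, idempotence and the decategorification are comparatively routine, following the template already established in \cite{Rozansky} and \cite{CooperKrushkal}.
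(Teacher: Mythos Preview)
Your proposal follows essentially the same template as the paper --- Rozansky's infinite-twist strategy adapted to Morrison--Nieh's $\mathfrak{sl}_3$ setting --- and correctly identifies the untwist/turnback interaction as the engine behind both stabilization and the absorbing property. A few points where the paper differs from, or goes beyond, what you sketch are worth flagging.

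First, the paper builds an \emph{inverse} system with maps $g_k:\llbracket T_{k+1}\rrbracket_s\to\llbracket T_k\rrbracket_s$ (essentially ``forget the last twist,'' identity in homological degree zero), not a direct system $\llbracket T_k\rrbracket\to\llbracket T_{k+1}\rrbracket$ as you write. The stabilization statement is that this system is \emph{Cauchy}: $|\cone(g_k)|_h\to\infty$. The limit is then produced by the homological-calculus machinery of Section~\ref{homoalg3}, not as a naive colimit. Second, your decategorification step glosses over a genuine issue: $\tilde P_w$ is an infinite complex, so $\chi(\tilde P_w)$ is not a priori defined. The paper introduces a subcategory $\K^{\angle}(\mathcal F)$ of complexes whose support lies in an angular region of the $(h,q)$-plane, and devotes real effort (Lemmas~\ref{lem1}, \ref{lem2}, Proposition~\ref{pluscatprojgoodform}, and the long Proposition~\ref{hardproof}) to bounding the quantum-degree growth of the webs appearing in $\tilde P_w$. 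Only then is $\chi$ well defined with values in $\Z[q^{-1},q]]$. The paper then argues decategorification directly from the categorified characterizing properties (Proposition~\ref{charproj}), not via $R$-matrix eigenvalues. Third, you anticipate the stabilization step as the main obstacle, and that is right, but you may be underestimating where the difficulty lies: the all-positive case $w=(+\cdots+)$ is relatively clean, while the segregated case $w=(+\cdots+-\cdots-)$ is substantially harder because the degree-zero term of $\llbracket T_1\rrbracket_s$ is no longer the identity web, and the paper must first prove a homotopy equivalence (Proposition~\ref{mnstart}) putting the complex into the right form before the Cauchy argument can even begin. The proof of Proposition~\ref{hardproof} is a lengthy three-case analysis tracking both homological order and quantum-degree support simultaneously. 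Finally, idempotence in the paper is not derived from uniqueness plus decategorification; it is proved directly (Proposition~\ref{plus3}) from the absorbing property and the shape of $\tilde P_w$ via a distinguished-triangle argument.
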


\noindent We shall give a more precise statement of this result in Theorem \ref{1} 
and the remarks following that theorem.
We use these categorified projectors to construct an invariant of framed tangles which categorifies the 
$\mathfrak{sl}_3$ Reshetikhin-Turaev invariant of framed tangles. 
See Definition \ref{invt} and Theorem \ref{3} for the 
construction of this invariant and details of its properties.

The paper is organized as follows. 
In Section \ref{background} we provide the relevant background information on the $\mathfrak{sl}_3$ spider 
and projectors. We also review the $\mathfrak{sl}_3$ tangle invariant,
$\mathfrak{sl}_3$ homology, and categorification. This section also contains a summary of the major 
results of the paper.

We present the relevant homological algebra in Section \ref{homoalg}. The results in Subsection \ref{homoalg1} 
are standard and are for the most part presented without proof. Subsections \ref{homoalg2} and \ref{homoalg3} 
contain results concerning the calculus of chain complexes; these results are due to Rozansky and were 
originally presented in \cite{Rozansky}. We give a slightly different treatment which is adapted for our 
purposes.

Section \ref{sectioncatproj} contains the bulk of the content of the paper. In this section we construct 
the categorified projectors and show that they satisfy categorified versions of the properties which define 
the projectors in the $\mathfrak{sl}_3$ spider. We also show that the categorified projectors 
decategorify to give the $\mathfrak{sl}_3$ projectors.

We define the tangle invariant in Section \ref{sectioninvt} and compute some examples.

\noindent \textbf{Acknowledgments:} I would like to thank Scott Morrison and Lev Rozansky
for useful discussions; their work on $\mathfrak{sl}_3$ knot homology and 
categorified Jones-Wenzl projectors (respectively)
served as motivation for this work. I am also grateful to Dick Hain and Ezra Miller for a useful 
tea-time discussion concerning some homological algebra and to Sabin Cautis for 
pointing out a typo in (an earlier version of) Example \ref{ex++}.
Finally, I would like to thank my advisor Lenny Ng for his continued guidance. 
The author was partially supported by NSF grant DMS-0846346 during the completion of this work.

\section{Background and summary of results}\label{background}

\subsection{The $\mathfrak{sl}_3$ spider}
The $\mathfrak{sl}_3$ spider $\mathcal{S}$, introduced in \cite{Kuperberg}, can be interpreted as a pivotal category 
giving a diagrammatic description of the subcategory of $\Rep \mathcal{U}_q(\mathfrak{sl}_3)$ generated (as a pivotal 
category) by 
the standard representation (see \cite{MorrisonThesis} for details of this interpretation, \cite{BarrettWestbury} 
for background on pivotal categories, and \cite{Selinger} for the graphical description of such categories). 
The objects in this category are words in the symbols 
$+$ and $-$, tensor product is given by concatenation of words, and the dual of a word is obtained 
by reversing the word and 
switching all signs. For example \[(+-+)\otimes(--+) = (+-+--+)\] and \[(+-+--+)^* = (-++-+-).\] 
Morphisms in $\mathcal{S}$ are given by $\C(q)$-linear combinations of webs - oriented, trivalent planar graphs
whose edges are all directed into or out from a trivalent vertex - with appropriate boundary, modulo local relations. 
We shall refer to the parameter $q$ as the quantum degree. 
Edges should be directed into $+$ and out from $-$ in the codomain and vice-versa in the domain; the local relations 
are given as follows:

\begin{align}
\label{spidereqn1}
\xy 0;/r.1pc/:
(-10,0)*{}; (10,0)*{} **\crv{(0,8)}?(.45)*\dir{<};
(-10,0)*{}; (10,0)*{} **\crv{(0,-8)}?(.45)*\dir{<};
(-20,0)*{}; (-10,0)*{}; **\dir{-}?(.3)*\dir{<};
(10,0)*{}; (20,0)*{}; **\dir{-}?(.3)*\dir{<};
\endxy
&= [2] \hspace{1mm}
\xy
(-5,0)*{}; (5,0)*{} **\dir{-}?(.55)*\dir{>};
\endxy \\
\label{spidereqn2}
\xy 0;/r.15pc/:
(-5,5)*{}="a";
(5,5)*{}="b";
(-5,-5)*{}="c";
(5,-5)*{}="d";
(-10,10)*{}="e";
(10,10)*{}="f";
(-10,-10)*{}="g";
(10,-10)*{}="h";
"a";"b" **\dir{-}?(.6)*\dir{>};
"d";"b" **\dir{-}?(.6)*\dir{>};
"a";"c" **\dir{-}?(.6)*\dir{>};
"d";"c" **\dir{-}?(.6)*\dir{>};
"a";"e" **\dir{-}?(.6)*\dir{>};
"f";"b" **\dir{-}?(.6)*\dir{>};
"g";"c" **\dir{-}?(.6)*\dir{>};
"d";"h" **\dir{-}?(.6)*\dir{>};
\endxy
&=
\xy 0;/r.13pc/:
(-10,10)*{}="e";
(10,10)*{}="f";
(-10,-10)*{}="g";
(10,-10)*{}="h";
"f";"e" **\crv{-}?(.55)*\dir{>};
"g";"h" **\crv{-}?(.55)*\dir{>};
\endxy +
\xy 0;/r.13pc/:
(-10,10)*{}="e";
(10,10)*{}="f";
(-10,-10)*{}="g";
(10,-10)*{}="h";
"f";"h" **\crv{-}?(.55)*\dir{>};
"g";"e" **\crv{-}?(.55)*\dir{>};
\endxy\\
\label{spidereqn3}
\xy 0;/r.15pc/:
(0,0)*\xycircle(5,5){-};
{\ar@{->}(0,5)*{};(1,5)*{}}; 
\endxy 
&= [3]
\end{align}
where $ [n] = \frac{q^n-q^{-n}}{q-q^{-1}}$. 
A web with no digon, square, or circular faces is called non-elliptic. Using 
the above relations, any web can be expressed as a $\Z[q^{-1},q]$-linear sum of non-elliptic webs; 
in fact, for a fixed boundary, the non-elliptic webs give a basis for the corresponding $\Hom$-set \cite{Kuperberg}. An Euler 
characteristic argument shows that there cannot exist a non-empty, closed, non-elliptic web.

Tensor product is given by placing the webs next to each other (vertically) 
in a disjoint manner and composition (denoted by $\bullet$)
is given by gluing together the boundaries of the webs. For example,
\[
\xy
(0,0)*{};(5,0)*{} **\dir{-}?(.7)*\dir{>};
(0,0)*{};(-2.5,4.33)*{} **\dir{-}?(.7)*\dir{>};
(0,0)*{};(-2.5,-4.33)*{} **\dir{-}?(.7)*\dir{>};
\endxy 
\otimes
\xy
(-5,0)*{}; (5,0)*{} **\dir{-}?(.55)*\dir{>};
\endxy \ = \ 
\xy
(0,1)*{};(5,1)*{} **\dir{-}?(.7)*\dir{>};
(0,1)*{};(-2.5,5.33)*{} **\dir{-}?(.7)*\dir{>};
(0,1)*{};(-2.5,-3.33)*{} **\dir{-}?(.7)*\dir{>};
(-2.5,-5)*{}; (5,-5)*{} **\dir{-}?(.55)*\dir{>};
\endxy 
\]

and 
\[
\xy
(0,0)*{};(5,0)*{} **\dir{-}?(.7)*\dir{>};
(0,0)*{};(-2.5,4.33)*{} **\dir{-}?(.7)*\dir{>};
(0,0)*{};(-2.5,-4.33)*{} **\dir{-}?(.7)*\dir{>};
\endxy \bullet
\xy
(0,0)*{};(-5,0)*{} **\dir{-}?(.3)*\dir{<};
(0,0)*{};(2.5,4.33)*{} **\dir{-}?(.3)*\dir{<};
(0,0)*{};(2.5,-4.33)*{} **\dir{-}?(.3)*\dir{<};
\endxy \ = \ 
\xy
(-3,0)*{};(-5,-5)*{} **\dir{-}?(.6)*\dir{>};
(-3,0)*{};(-5,5)*{} **\dir{-}?(.6)*\dir{>};
(3,0)*{};(-3,0)*{} **\dir{-}?(.4)*\dir{<};
(5,-5)*{};(3,0)*{} **\dir{-}?(.6)*\dir{>};
(5,5)*{};(3,0)*{} **\dir{-}?(.6)*\dir{>};
\endxy \ \ .
\]
Note that tangle composition is denoted in the non-traditional but diagrammatically more pleasing order; 
when considering morphisms in other categories we will use the traditional order for composition.
The dual of a web is obtained by rotating the web $180^{\circ}$ and the pairing is given by the webs
\[
\xy
(-5,5)*{}; (-5,-5)*{} **\crv{(5,5)&(5,-5)}?(1)*\dir{>};
\endxy
\text{    and    } \ \
\xy
(-5,5)*{}; (-5,-5)*{} **\crv{(5,5)&(5,-5)}?(0)*\dir{<};
\endxy.
\]
We will call these webs and their duals $U$-webs.

The correspondence between $\mathcal{S}$ and the subcategory of $\Rep \mathcal{U}_q(\mathfrak{sl}_3)$ generated by the 
standard
representation is given as follows. The symbols $+$ and $-$ correspond to the standard representation $V$ of 
$\mathcal{U}_q(\mathfrak{sl}_3)$ and its dual $V^*$. The morphisms
\[
\xy
(0,0)*{};(5,0)*{} **\dir{-}?(.7)*\dir{>};
(0,0)*{};(-2.5,4.33)*{} **\dir{-}?(.7)*\dir{>};
(0,0)*{};(-2.5,-4.33)*{} **\dir{-}?(.7)*\dir{>};
\endxy
\text{\ \ \  and \ \ \ }
\xy
(0,0)*{};(5,0)*{} **\dir{-}?(.3)*\dir{<};
(0,0)*{};(-2.5,4.33)*{} **\dir{-}?(.3)*\dir{<};
(0,0)*{};(-2.5,-4.33)*{} **\dir{-}?(.3)*\dir{<};
\endxy
\]
correspond to the unique (up to scalar multiple) maps $V^*\otimes V^* \rightarrow V$ and $V\otimes V \rightarrow V^*$ 
(we shall read all diagrams as mapping from the domain on the left to the codomain on the right).
We will call these morphisms and their duals $Y$-webs. All other morphisms 
can be obtained from $Y$-webs and $U$-webs via composition and tensor product. The main result of 
\cite{Kuperberg} is that under this correspondence $\mathcal{S}$ is equivalent to the full subcategory of 
$\mathcal{U}_q(\mathfrak{sl}_3)$ representations generated by $V$. One consequence of this result is that there exist 
only finitely many non-elliptic webs with a given boundary since the corresponding $\Hom$-set is a finite-dimensional 
vector space.

\subsection{$\mathfrak{sl}_3$ projectors}
Kuperberg introduced `internal clasps' in his initial study of the $\mathfrak{sl}_3$ spider \cite{Kuperberg}. Under the equivalence 
outlined above, these clasps are idempotent elements $P_w\in\Hom_{\bullet}(w,w)$ which correspond to projection 
onto the highest weight irreducible summand (and then inclusion). Here $w$ is a word of $+$'s and $-$'s.

We shall denote these projectors graphically by
\[
P_w =
\xy
(-8,0); (-1,0) **\dir{-};
(-1,-6)*{}; (-1,6)*{} **\dir{-};
(-1,-6)*{}; (1,-6)*{} **\dir{-};
(-1,6)*{}; (1,6)*{} **\dir{-};
(1,-6)*{}; (1,6)*{} **\dir{-};
(1,0); (8,0) **\dir{-};
(10,0)*{_{w}};
\endxy
\]
when we don't wish to specify the word $w$ and by orienting the strands and labeling them with numbers 
corresponding to their multiplicities when we do. 
For instance, 
\[
P_{(+++--+)}=
\xy
(12,4)*{_3};
(12,0)*{_2};
(-10,-4)*{}; (-1,-4)*{} **\dir{-};
(-10,0)*{}; (-1,0)*{} **\dir{-}?(0)*\dir{<};
(-10,4)*{}; (-1,4)*{} **\dir{-};
(-1,-8)*{}; (-1,8)*{} **\dir{-};
(-1,-8)*{}; (1,-8)*{} **\dir{-};
(-1,8)*{}; (1,8)*{} **\dir{-};
(1,-8)*{}; (1,8)*{} **\dir{-};
(1,-4)*{}; (10,-4)*{} **\dir{-}?(1)*\dir{>};
(1,0)*{}; (10,0)*{} **\dir{-};
(1,4)*{}; (10,4)*{} **\dir{-}?(1)*\dir{>};
\endxy
\]
where unlabeled strands have multiplicity one.

The projectors are defined as follows. Let the weight of a word $w$ be given by 
\[ \wt(w)=(w{_+},w{_-}) \in \Z^2_{\geq0}\] where $w_{\pm}$ denotes the number of $\pm$ signs appearing in the word. 
There is a partial order on words generated by the relations 
\begin{align*}
(w_+,w_-) &> (w{_+}+1,w{_-}-2) \\ 
(w_+,w_-) &> (w{_+}-2,w{_-}+1),
\end{align*}
corresponding to the partial order on the weight lattice for $\mathfrak{sl}_3$. The projector $P_w$ is the unique 
non-zero idempotent element in $\Hom_{\bullet}(w,w)$ satisfying the condition that if 
$\wt(v) < \wt(w)$ then $P_w\bullet W_1 = 0$ 
for any $W_1 \in \Hom_{\bullet}(w,v)$ and $W_2\bullet P_w = 0$ for any $W_2 \in \Hom_{\bullet}(v,w)$
(recall our conventions for the order of tangle composition!). It follows that $(P_w)^* = P_{{w^*}}$.

Of particular importance are the segregated projectors, those of the form 
\[
 \xy
(12,4)*{_m};
(12,-4)*{_n};
(-10,-4)*{}; (-1,-4)*{} **\dir{-}?(0)*\dir{<};
(-10,4)*{}; (-1,4)*{} **\dir{-};
(-1,-8)*{}; (-1,8)*{} **\dir{-};
(-1,-8)*{}; (1,-8)*{} **\dir{-};
(-1,8)*{}; (1,8)*{} **\dir{-};
(1,-8)*{}; (1,8)*{} **\dir{-};
(1,-4)*{}; (10,-4)*{} **\dir{-};
(1,4)*{}; (10,4)*{} **\dir{-}?(1)*\dir{>};
\endxy
\]
for $m,n\geq0$; we will refer to the domain ($=$ codomain) of a segregated projector as a segregated word. 
All other projectors can be obtained from these by inserting `$H$-webs,' those of the form
\[
 \xy
(0,3)*{};(-5,5)*{} **\dir{-}?(.6)*\dir{>};
(0,3)*{};(5,5)*{} **\dir{-}?(.6)*\dir{>};
(0,-3)*{};(0,3)*{} **\dir{-}?(.4)*\dir{<};
(-5,-5)*{};(0,-3)*{} **\dir{-}?(.6)*\dir{>};
(5,-5)*{};(0,-3)*{} **\dir{-}?(.6)*\dir{>};
\endxy \text{   and   }
\xy
(0,-3)*{};(-5,-5)*{} **\dir{-}?(.6)*\dir{>};
(0,-3)*{};(5,-5)*{} **\dir{-}?(.6)*\dir{>};
(0,3)*{};(0,-3)*{} **\dir{-}?(.4)*\dir{<};
(-5,5)*{};(0,3)*{} **\dir{-}?(.6)*\dir{>};
(5,5)*{};(0,3)*{} **\dir{-}?(.6)*\dir{>};
\endxy
\]
to permute the order of the $+$'s and $-$'s. For example $P_{(+-+)}$ can be obtained from $P_{(++-)}$ as follows:
\[
 \xy
(-10,-4)*{}; (-1,-4)*{} **\dir{-};
(-10,0)*{}; (-1,0)*{} **\dir{-}?(0)*\dir{<};
(-10,4)*{}; (-1,4)*{} **\dir{-};
(-1,-8)*{}; (-1,8)*{} **\dir{-};
(-1,-8)*{}; (1,-8)*{} **\dir{-};
(-1,8)*{}; (1,8)*{} **\dir{-};
(1,-8)*{}; (1,8)*{} **\dir{-};
(1,-4)*{}; (10,-4)*{} **\dir{-}?(1)*\dir{>};
(1,0)*{}; (10,0)*{} **\dir{-};
(1,4)*{}; (10,4)*{} **\dir{-}?(1)*\dir{>};
\endxy \ = \
\xy
(12,0)*{}; 
(-10,0)*{}; (-6,-1)*{} **\dir{-}?(0)*\dir{<};
(-10,-4)*{}; (-6,-3)*{} **\dir{-};
(-6,-1)*{}; (-6,-3)*{} **\dir{-};
(-6,-1)*{}; (-1,0)*{} **\dir{-};
(-6,-3)*{}; (-1,-4)*{} **\dir{-};
(-10,4)*{}; (-1,4)*{} **\dir{-};
(-1,-8)*{}; (-1,8)*{} **\dir{-};
(-1,-8)*{}; (1,-8)*{} **\dir{-};
(-1,8)*{}; (1,8)*{} **\dir{-};
(1,-8)*{}; (1,8)*{} **\dir{-};
(1,4)*{}; (10,4)*{} **\dir{-}?(1)*\dir{>};
(1,0)*{}; (6,-1)*{} **\dir{-};
(1,-4)*{}; (6,-3)*{} **\dir{-};
(6,-1)*{}; (6,-3)*{} **\dir{-};
(6,-1)*{}; (10,0)*{} **\dir{-};
(6,-3)*{}; (10,-4)*{} **\dir{-}?(1)*\dir{>};
\endxy.
\]
The following result, proved in \cite{OhtsukiYamada}, gives a recursive formula for the segregated projectors.
\begin{prop}\label{projrecur}
For $m>0$,
\begin{equation}\label{(m,0)projector}
\xy
(13,0)*{_{m}};
(-10,0)*{}; (-1,0)*{} **\dir{-};
(-1,-6)*{}; (-1,6)*{} **\dir{-};
(-1,-6)*{}; (1,-6)*{} **\dir{-};
(-1,6)*{}; (1,6)*{} **\dir{-};
(1,-6)*{}; (1,6)*{} **\dir{-};
(1,0)*{}; (10,0)*{} **\dir{-}?(1)*\dir{>};
\endxy =
\xy
(14,4)*{_{m-1}};
(-1,8)*{}; (1,8)*{} **\dir{-};
(-1,0)*{}; (-1,8)*{} **\dir{-};
(1,8)*{}; (1,0)*{} **\dir{-};
(-1,0)*{}; (1,0)*{} **\dir{-};
(-10,4)*{}; (-1,4)*{} **\dir{-};
(1,4)*{}; (10,4)*{} **\dir{-}?(1)*\dir{>};
(-10,-4)*{}; (10,-4)*{} **\dir{-}?(1)*\dir{>};
\endxy
- \frac{[m-1]}{[m]}
\xy
(-12,0)*{}; 
(14,4)*{_{m-1}};
(0,9)*{_{m-2}};
(-10,4)*{}; (-6,4)*{} **\dir{-};
(6,4)*{}; (10,4)*{} **\dir{-}?(1)*\dir{>};
(-4,6)*{}; (4,6)*{} **\dir{-};
(4,8)*{}; (6,8)*{} **\dir{-};
(4,0)*{}; (4,8)*{} **\dir{-};
(4,0)*{}; (6,0)*{} **\dir{-};
(6,0)*{}; (6,8)*{} **\dir{-};
(-4,8)*{}; (-6,8)*{} **\dir{-};
(-4,0)*{}; (-4,8)*{} **\dir{-};
(-4,0)*{}; (-6,0)*{} **\dir{-};
(-6,0)*{}; (-6,8)*{} **\dir{-};
(-2,-2)*{}; (2,-2)*{} **\dir{-};
(-4,2)*{}; (-2,-2)*{} **\dir{-};
(2,-2)*{}; (4,2)*{} **\dir{-};
(-10,-4)*{}; (-2,-2)*{} **\crv{(-6,-4)};
(2,-2)*{}; (10,-4)*{} **\crv{(6,-4)}?(1)*\dir{>};
\endxy
\end{equation}
and for $m,n>0$
\begin{equation}
\xy
(12,4)*{_m};
(12,-4)*{_n};
(-10,-4)*{}; (-1,-4)*{} **\dir{-}?(0)*\dir{<};
(-10,4)*{}; (-1,4)*{} **\dir{-};
(-1,-8)*{}; (-1,8)*{} **\dir{-};
(-1,-8)*{}; (1,-8)*{} **\dir{-};
(-1,8)*{}; (1,8)*{} **\dir{-};
(1,-8)*{}; (1,8)*{} **\dir{-};
(1,-4)*{}; (10,-4)*{} **\dir{-};
(1,4)*{}; (10,4)*{} **\dir{-}?(1)*\dir{>};
\endxy = \sum_{k=0}^{\min(m,n)}(-1)^{k} \frac{[m]![n]![m+n-k+1]!}{[m-k]![n-k]![m+n+1]![k]!}
\xy
(-12,0)*{}; 
(4,2)*{}; (6,2)*{} **\dir{-};
(6,2)*{}; (6,10)*{} **\dir{-};
(6,10)*{}; (4,10)*{} **\dir{-};
(4,10)*{}; (4,2)*{} **\dir{-};
(-4,2)*{}; (-6,2)*{} **\dir{-};
(-6,2)*{}; (-6,10)*{} **\dir{-};
(-6,10)*{}; (-4,10)*{} **\dir{-};
(-4,10)*{}; (-4,2)*{} **\dir{-};
(4,-2)*{}; (6,-2)*{} **\dir{-};
(6,-2)*{}; (6,-10)*{} **\dir{-};
(6,-10)*{}; (4,-10)*{} **\dir{-};
(4,-10)*{}; (4,-2)*{} **\dir{-};
(-4,-2)*{}; (-6,-2)*{} **\dir{-};
(-6,-2)*{}; (-6,-10)*{} **\dir{-};
(-6,-10)*{}; (-4,-10)*{} **\dir{-};
(-4,-10)*{}; (-4,-2)*{} **\dir{-};
(6,6)*{}; (10,6)*{} **\dir{-}?(1)*\dir{>};
(6,-6)*{}; (10,-6)*{} **\dir{-};
(-6,-6)*{}; (-10,-6)*{} **\dir{-}?(1)*\dir{>};
(-6,6)*{}; (-10,6)*{} **\dir{-};
(-4,8)*{}; (4,8)*{} **\dir{-};
(-4,-8)*{}; (4,-8)*{} **\dir{-};
(4,4)*{}; (4,-4)*{} **\crv{(0,0)};
(-4,4)*{}; (-4,-4)*{} **\crv{(0,0)};
(0,0)*{_k};
(0,10)*{_{m-k}};
(0,-6)*{_{n-k}};
(12,6)*{_m};
(12,-6)*{_n};
\endxy.
\end{equation}
\end{prop}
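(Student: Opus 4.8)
The plan is to prove Proposition \ref{projrecur} by verifying that the right-hand sides define idempotents satisfying the defining annihilation property of the projectors, and then invoking the uniqueness clause in that definition. I would first set up the two claims separately. For \eqref{(m,0)projector}, write $Q_m$ for the proposed right-hand side. I would show (i) $Q_m$ is a morphism in $\Hom_\bullet((+^m),(+^m))$ landing in the correct $\Hom$-space, (ii) $Q_m \bullet Q_m = Q_m$, and (iii) $Q_m$ kills all webs factoring through a word $v$ of strictly lower weight. For the second formula, write $Q_{m,n}$ for the proposed sum over $k$ and carry out the analogous three-step verification, using the already-established $Q_m$ (applied to the $m$-strand and $n$-strand groups) as a black box inside each summand.

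The heart of the argument is an induction on $m$ (and then on $m+n$). For step (iii) in \eqref{(m,0)projector}, note that any web $w\in\Hom_\bullet((+^m),v)$ with $\wt(v)<\wt(m,0)$ must contain a trivalent vertex or a $U$-turn among the first $m$ strands; by the spider relations \eqref{spidereqn1}--\eqref{spidereqn3} and the inductive hypothesis on $P_{m-1}$, post-composing such a $w$ with the two terms of $Q_m$ produces cancelling contributions, where the coefficient $-[m-1]/[m]$ is exactly what is forced by the relation \eqref{spidereqn2} applied to the digon/square created when the capped-off turn meets $P_{m-1}$. The computation of the relevant closed webs — a circle giving $[3]$, a digon giving $[2]$ times an edge, and the value of $P_{m-1}$ composed with a turnback — is what pins the coefficient; these are finite spider computations. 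Idempotency (ii) then follows from (iii) together with the fact that $Q_m$ differs from $P_{m-1}\otimes(\text{strand})$ by a term supported on lower-weight words, so composing two copies collapses the correction terms.

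The derivation of the general segregated formula from \eqref{(m,0)projector} is the combinatorial core, and is where I expect the main obstacle to lie. The idea is to expand the $(m,n)$-projector by first applying $P_m\otimes P_n$ (which one builds from \eqref{(m,0)projector}), then resolving the web that merges the $m$-group and the $n$-group through the single clasp using repeated applications of \eqref{spidereqn1}--\eqref{spidereqn3}; each resolution step trades a pair of strands crossing the clasp for a lower-$k$ configuration, and tracking the quantum-integer bookkeeping produces the ratio of factorials $\tfrac{[m]![n]![m+n-k+1]!}{[m-k]![n-k]![m+n+1]![k]!}$. The obstacle is purely the induction on $k$: one must show that the stated coefficients are precisely the unique solution to the linear recursion coming from demanding that $Q_{m,n}$ annihilate every lower-weight web, in particular the webs that cap off $j$ of the through-strands for each $j\le\min(m,n)$. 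I would organize this as a triangular linear system in the unknown coefficients $c_k$, solve it by the standard quantum-binomial identity (an induction using $[a][b+1]-[a+1][b]=[a-b]$ or the $q$-Vandermonde identity), and then cite uniqueness of the projector to conclude both sides of \eqref{(m,0)projector} and the $(m,n)$-formula agree. A reader wanting full detail is referred to \cite{OhtsukiYamada}; here I only need the statement, so I would present the above as a sketch and then proceed.
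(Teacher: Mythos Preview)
The paper does not actually prove this proposition: it is stated with the preface ``The following result, proved in \cite{OhtsukiYamada}, gives a recursive formula for the segregated projectors'' and no argument is supplied. Your proposal therefore already goes further than the paper itself, and since you also end by deferring full details to \cite{OhtsukiYamada}, you are in agreement with the paper on where the proof lives.

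As for the sketch you give, the strategy---verify that the right-hand side satisfies the characterizing properties (idempotency and annihilation of webs through lower-weight words) and then invoke uniqueness---is the standard and correct approach. Your identification of the key computational point (that the coefficient $-[m-1]/[m]$ is forced by what happens when a turnback meets $P_{m-1}$, and that the general $(m,n)$ coefficients solve a triangular linear system coming from annihilating capped configurations) matches how these formulas are typically derived. Nothing in your outline is wrong; it is simply more than the paper chose to include.
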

From these formulas one can show that a projector is the sum of a lone identity web, 
denoted $\Id_w$ for the duration, in quantum degree zero 
with a $\C(q)$-linear combination of 
non-identity webs. The next proposition, which follows from Proposition \ref{projrecur} and the definition of 
the projectors, gives a characterization of $P_w$ that we will eventually categorify.
\begin{prop}\label{charproj}
The following properties characterize $P_w$.
\begin{enumerate}
\item $P_w = \Id_w + \sum_{i=1}^{r} f_i(q) \cdot W_i$ with $f_i \in \C(q)$ and where $W_i \in \Hom_{\bullet}(w,w) \smallsetminus \Id_w$ are 
non-elliptic webs.
\item If $\wt(v) < \wt(w)$ then $P_w\bullet W_1 = 0$ 
for any $W_1 \in \Hom_{\bullet}(w,v)$ and $W_2\bullet P_w = 0$ for any $W_2 \in \Hom_{\bullet}(v,w)$.
\item $P_w \bullet P_w = P_w$.
\end{enumerate} 
\end{prop}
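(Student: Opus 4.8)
The plan is to show that the three listed properties are, on the one hand, satisfied by $P_w$ and, on the other hand, determine it uniquely; the uniqueness is the substantive half. That $P_w$ satisfies (2) and (3) is immediate from the definition given above (property (2) \emph{is} the defining condition, and (3) is idempotence, which is part of the definition). To see that $P_w$ satisfies (1), I would invoke the remark immediately preceding the proposition: from the recursive formulas of Proposition \ref{projrecur} one reads off that a segregated projector is $id_w$ (in quantum degree zero) plus a $\C(q)$-linear combination of webs each of which factors through a strictly lower weight, hence is not the identity web; for a general (non-segregated) $w$ one conjugates by the appropriate composite of $H$-webs, which carries $id_w$ to $id_w$ and non-identity webs to non-identity webs, so the form (1) persists. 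One should also note here that by the equivalence with $\Rep\mathcal{U}_q(\mathfrak{sl}_3)$ the $\Hom$-space $\Hom_\bullet(w,w)$ is finite dimensional, so the sum in (1) is finite and all the manipulations below take place in a finite-dimensional algebra.

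For uniqueness, suppose $Q\in\Hom_\bullet(w,w)$ also satisfies (1)--(3); I want to conclude $Q=P_w$. The key structural input is that any non-identity web $W\in\Hom_\bullet(w,w)$ factors as $W = W_2\bullet W_1$ with $W_1\in\Hom_\bullet(w,v)$ and $W_2\in\Hom_\bullet(v,w)$ for some word $v$ with $\wt(v)<\wt(w)$: indeed a non-identity non-elliptic web on a word of that weight must contain a trivalent vertex or a cup/cap meeting the boundary, and reading the diagram through a horizontal slice just past that feature exhibits it as a composite through a boundary of strictly smaller weight (and one can reduce a general web to non-elliptic form without changing the $\Hom$-space). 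Granting this, property (2) for $P_w$ kills every non-identity web appearing in the decomposition (1) of $Q$ upon left multiplication by $P_w$, so $P_w\bullet Q = P_w\bullet id_w = P_w$; symmetrically, property (2) for $Q$ gives $P_w\bullet Q = id_w\bullet Q = Q$ using the decomposition (1) of $P_w$. Combining, $Q = P_w$. Note this argument uses only (1) and (2) for both elements together with (2) applied to each; idempotence (3) is then automatic and is listed for emphasis/symmetry. If one instead wants to derive existence-uniqueness purely internally (not citing that $P_w$ as defined satisfies these), the same sandwich computation shows any two elements satisfying (1)--(2) coincide, and a separate check that such an element exists is supplied by the explicit formulas.

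The step I expect to be the main obstacle is the factorization claim: that every non-identity web in $\Hom_\bullet(w,w)$ genuinely factors through a word of strictly smaller weight. The reduction to non-elliptic webs is standard (relations \eqref{spidereqn1}--\eqref{spidereqn3}), but one must be careful that the partial order used — generated by $(w_+,w_-) > (w_++1,w_--2)$ and $(w_+,w_-) > (w_+-2,w_-+1)$ — is exactly the one detected by slicing a web at a trivalent vertex (which locally turns two parallel like-oriented strands into one oppositely-oriented strand, i.e.\ changes weight by $(\pm1,\mp2)$ up to the $\mathfrak{sl}_3$ identification $(1,1)\sim(0,0)$) or at a turnback (a cup or cap, which removes two strands). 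One must also confirm that the intermediate word $v$ obtained from an arbitrary horizontal slice can be replaced, after reducing to non-elliptic form on each side, by one of strictly smaller weight, rather than merely not larger; this is where the non-existence of a nonempty closed non-elliptic web (the Euler characteristic argument quoted in the text) and the finiteness of non-elliptic webs with fixed boundary do the real work. I would isolate this as a lemma and prove it by induction on the number of trivalent vertices of a non-elliptic representative.
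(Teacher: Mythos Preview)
The paper states this proposition without proof, so there is no ``paper's own proof'' to compare against. Your argument is the standard one and is essentially correct: the sandwich computation $P_w = P_w\bullet Q = Q$ using (1) for one element and (2) for the other is exactly the right idea, and your observation that (3) is not needed for uniqueness is correct and worth keeping.

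Two small corrections. First, you have the composition order reversed relative to the paper's convention: in this paper $A\bullet B$ means domain-of-$A$ to codomain-of-$B$ (diagrammatic order), so the factorization should read $W = W_1\bullet W_2$ with $W_1\in\Hom_\bullet(w,v)$ and $W_2\in\Hom_\bullet(v,w)$; your argument survives this relabeling unchanged. Second, ``horizontal slice'' should be ``vertical slice'' given the left-to-right reading convention.

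On the factorization lemma itself: your instinct that this is the real content is right, and your sketch is on the correct track, but the argument as written is a bit loose. The clean statement you want is that every non-identity \emph{non-elliptic} web in $\Hom_\bullet(w,w)$ has a $Y$-web or $U$-web adjacent to the $w$-boundary on at least one side, which immediately gives the factorization through lower weight. For segregated $w$ this is very close to Kuperberg's Proposition~\ref{Kuprop} (stated in the paper for $\Hom_\bullet(w,v)$ with $v$ of lower or incomparable weight, but the same Euler-characteristic/growth argument applies to non-identity webs in $\Hom_\bullet(w,w)$). For non-segregated $w$ you can either run the same argument directly or, as you suggest, conjugate by $H$-webs to reduce to the segregated case; in the latter route you should note that $h\bullet\bar h \cong id_w \oplus (\text{lower-weight webs})$, so the identity-plus-lower-weight form is preserved. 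Either way, I would state and prove this as a separate lemma rather than folding it into the uniqueness paragraph.
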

In fact, one can show that the third property follows from the first two, but perhaps a better way to view this proposition is 
that the latter two properties characterize the projector, with the first property serving as a non-degeneracy condition (indeed, the 
zero morphism is the only other morphism satisfying the second two properties).

If the projector is segregated, Proposition \ref{projrecur} shows that the non-identity webs in the sum
take the form $V_1 \bullet W \bullet V_2$ where $V_1$ and $V_2$ are (the tensor product of identity webs with) 
$U$-webs or $Y$-webs, and $W$ is an arbitrary web.
This observation, together with the semisimplicity of $\Rep \mathcal{U}_q(\mathfrak{sl}_3)$, implies
that the second defining property above can be replaced by the following in the case of a segregated projector:
\begin{enumerate}
\item[(2')] $P_w$ annihilates $Y$-webs and $U$-webs (when two of the boundary points are attached to $P_w$).
\end{enumerate}
This can also be deduced from the following result of Kuperberg which shall be used in the sequel.
\begin{prop}[\cite{Kuperberg}]\label{Kuprop}
If $w$ is a segregated word and $v$ is a word of lower or incomparable weight, then any non-elliptic web in 
$\Hom_{\bullet}(w,v)$ factors through a $Y$-web or a $U$-web, i.e. has a $Y$-web or $U$-web with two of its boundary 
points attached to $w$.
\end{prop}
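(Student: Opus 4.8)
The plan is to prove Proposition~\ref{Kuprop} by an induction on the complexity of a non-elliptic web $W \in \Hom_\bullet(w,v)$, where $w$ is segregated and $\wt(v) \not\geq \wt(w)$, reducing everything to a boundary combinatorics argument. The key observation is that a segregated word $w$, having all of its $+$'s grouped together and all of its $-$'s grouped together, has a very rigid boundary: reading the boundary of $W$ starting from the $w$-side, we see a block of $m$ incoming edges followed by a block of $n$ outgoing edges (or the reverse, depending on orientation conventions). I would first set up the standard ``state sum'' or cut-path technology for non-elliptic webs: any non-elliptic web can be sliced by a family of disjoint arcs into elementary pieces, and along the $w$-boundary the edges emanate in a monotone pattern of signs.

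First I would argue that if no edge of $W$ has both endpoints on the $v$-boundary and $W$ does not factor through a $U$-web or $Y$-web attached to $w$, then every edge leaving $w$ either runs directly to $v$ or terminates at a trivalent vertex whose other two edges also (eventually) reach $v$ without any ``turnback.'' In that case one can track how the weight changes as one moves a cut path from immediately right of $w$ across the web to immediately left of $v$: crossing a trivalent vertex changes the weight of the cut word by one of the covering relations generating the partial order (either $(a,b)\mapsto(a+1,b-2)$ or $(a,b)\mapsto(a-2,b+1)$), while crossing nothing leaves it fixed. The crucial point, which uses segregation of $w$ in an essential way, is that starting from a segregated word one \emph{cannot} apply such a weight-lowering move to the first cut word unless a $Y$- or $U$-web is attached directly to $w$: a $U$-web attached to $w$ would join two adjacent same-sign boundary points (a cap or cup), and a $Y$-web attached to $w$ is a trivalent vertex with two of its legs on $w$. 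Any other local configuration at the $w$-boundary forces the cut word immediately to the right of $w$ to still have weight $\geq \wt(w)$, and an easy inductive bookkeeping then shows $\wt(v) \geq \wt(w)$, contradicting the hypothesis.

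For the induction step, suppose $W$ has an edge both of whose endpoints lie on $v$; then that edge, together with a small piece of $v$, bounds a disk inside which sits a sub-web $W'$ with smaller boundary. Since $W$ is non-elliptic this sub-picture is itself non-elliptic, and the disk structure together with the $\mathfrak{sl}_3$ relations (especially the square relation \eqref{spidereqn2} and the circle/digon relations) lets one reduce to the case of strictly fewer such ``$v$-only'' edges or strictly fewer vertices, at which point the base case above applies. One must also handle the possibility that $W$ has internal faces, but the Euler characteristic argument quoted in the excerpt (there is no nonempty closed non-elliptic web, and more generally a non-elliptic web in a disk has controlled face structure) rules out the problematic cases.

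The main obstacle I expect is making the ``weight of a cut word changes by a covering relation'' bookkeeping fully rigorous: one must choose the cut paths carefully so that they meet $W$ transversally in finitely many points and cross exactly one vertex at a time, handle the orientations of edges consistently (so that the two covering relations $(w_+,w_-)>(w_++1,w_--2)$ and $(w_+,w_-)>(w_+-2,w_-+1)$ really do exhaust the local moves at a trivalent vertex), and verify that a segregated starting word admits no such move without a $Y$- or $U$-web directly attached --- this last point is where all the hypotheses are used and is the genuine content of the proposition. Since this is exactly Kuperberg's result, I would in practice cite \cite{Kuperberg} for the detailed combinatorial lemmas and present only the weight-monotonicity sketch above, reserving full detail for the parts (the segregation hypothesis, the reduction to the base case) that are used in the present paper's applications.
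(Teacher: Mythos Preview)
The paper does not prove this proposition at all; it is stated with the attribution \cite{Kuperberg} and used as a black box, followed only by the remark that the dual statement holds by taking duals. So there is no paper-side argument to compare your proposal against, and the appropriate treatment in context is exactly what you suggest at the end: cite Kuperberg.

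That said, your sketch has a genuine gap worth flagging. You claim that crossing a trivalent vertex changes the cut-word weight by one of the two covering relations, but in fact crossing a vertex can move the weight \emph{up or down}, depending on whether two of its three edges lie on the $w$-side or on the $v$-side of the cut. So knowing that the weight does not drop at the very first cut past $w$ does not by itself yield $\wt(v)\geq\wt(w)$: the weight could rise for a while and then fall below $\wt(w)$ by the time the cut reaches $v$. The actual mechanism in Kuperberg's argument is his growth-algorithm / dominant-path description of non-elliptic webs: a non-elliptic web in a disk corresponds to a lattice walk that stays in the dominant chamber, and when the $w$-portion of the boundary is segregated that walk first runs straight out to the point $\wt(w)$; the portion of the walk along $v$ can only leave the cone above $\wt(w)$ if the very next step after $w$ already drops the weight, and that step is precisely a $Y$- or $U$-web attached to $w$. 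Your cut-path idea is morally this, but the missing ingredient is the global dominance constraint on non-elliptic webs, not a step-by-step monotonicity. Your induction on ``$v$-only'' edges is also unnecessary, and the accompanying reduction via \eqref{spidereqn2} is problematic since applying it to a non-elliptic web produces summands that need not be non-elliptic. Given that the paper simply cites the result, your plan to do the same is the right call.
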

A similar result holds for $\Hom_{\bullet}(v,w)$ by taking duals.

\subsection{$\mathfrak{sl}_3$ knot invariants}
In \cite{Kuperberg}, Kuperberg introduced skein relations for the $\mathfrak{sl}_3$ spider which lead to the quantum 
$\mathfrak{sl}_3$ invariant of framed tangles (see \cite{Kuperberg2} for a detailed discussion of the combinatorial 
approach to this invariant and \cite{ReshetikhinTuraev1} for the original construction using quantum groups). We will 
use the convention for these relations which is categorified in \cite{Khovanov3}:
\begin{align*}
\left\langle \
\xy 
(-5,-5)*{};(5,5)*{} **\dir{-}?(1)*\dir{>}  \POS?(.5)*{\hole}="h1";
(-5,5)*{}; "h1" **\dir{-}?(0)*\dir{<};
(5,-5)*{}; "h1" **\dir{-};
\endxy \ \right\rangle &= q^2
\xy
(-5,-5)*{};(-5,5)*{} **\crv{(0,0)}?(1)*\dir{>};
(5,-5)*{};(5,5)*{} **\crv{(0,0)}?(1)*\dir{>};
\endxy - q^3
\xy
(0,3)*{};(-5,5)*{} **\dir{-}?(.6)*\dir{>};
(0,3)*{};(5,5)*{} **\dir{-}?(.6)*\dir{>};
(0,-3)*{};(0,3)*{} **\dir{-}?(.4)*\dir{<};
(-5,-5)*{};(0,-3)*{} **\dir{-}?(.6)*\dir{>};
(5,-5)*{};(0,-3)*{} **\dir{-}?(.6)*\dir{>};
\endxy \\
\left\langle \
\xy 
(-5,5)*{};(5,-5)*{} **\dir{-}?(0)*\dir{<}  \POS?(.5)*{\hole}="h1";
(-5,-5)*{}; "h1" **\dir{-};
(5,5)*{}; "h1" **\dir{-}?(0)*\dir{<};
\endxy \ \right\rangle &= q^{-2}
\xy
(-5,-5)*{};(-5,5)*{} **\crv{(0,0)}?(1)*\dir{>};
(5,-5)*{};(5,5)*{} **\crv{(0,0)}?(1)*\dir{>};
\endxy - q^{-3}
\xy
(0,3)*{};(-5,5)*{} **\dir{-}?(.6)*\dir{>};
(0,3)*{};(5,5)*{} **\dir{-}?(.6)*\dir{>};
(0,-3)*{};(0,3)*{} **\dir{-}?(.4)*\dir{<};
(-5,-5)*{};(0,-3)*{} **\dir{-}?(.6)*\dir{>};
(5,-5)*{};(0,-3)*{} **\dir{-}?(.6)*\dir{>};
\endxy.
\end{align*}
Using these relations, we can view (framed) tangles as morphisms in $\mathcal{S}$. In particular, links are morphisms in 
$\Hom_{\bullet}(\emptyset,\emptyset)\cong \C(q)$ so this gives a $\C(q)$-valued invariant of (framed) links. In fact, 
the local and skein relations show that this link invariant is $\Z[q^{-1},q]$-valued. 
Moreover, one can check that using the 
convention above for the skein relations actually leads to an invariant which is independent of framing. 
This does not, however, give an invariant of tangled webs (webs with crossings) since we have the relation
\begin{equation}\label{knottedspider}
\left\langle
\xy
(-2,-5)*{};(-2,5)*{} **\crv{(-2,0)}?(1)*\dir{>} \POS?(.25)*{\hole}="h1" \POS?(.75)*{\hole}="h2";
(-5,-5)*{};"h1" **\dir{-}?(0)*\dir{<};
(-5,5)*{};"h2" **\dir{-}?(0)*\dir{<};
(1,0)*{};"h1" **\dir{-};
(1,0)*{};"h2" **\dir{-};
(1,0)*{}; (6,0)*{} **\dir{-}?(1)*\dir{>};
\endxy \right\rangle = q^8
\left\langle
\xy
(2,-5);(2,5) **\dir{-}?(1)*\dir{>} \POS?(.5)*{\hole}="h1";
(-1,0); "h1" **\dir{-};
"h1"; (5,0) **\dir{-}?(1)*\dir{>};
(-5,5);(-1,0) **\dir{-}?(0)*\dir{<};
(-5,-5);(-1,0) **\dir{-}?(0)*\dir{<};
(7,0)*{};
\endxy \right\rangle .
\end{equation}
There are similar corrections of $q^8$ for other orientations of this diagram with the factor of $q^8$ always 
appearing on the side of the equation with smaller writhe.

Using the $\mathfrak{sl}_3$ projectors we can extend this invariant to give a combinatorial description of the 
$\mathfrak{sl}_3$ Reshetikhin-Turaev invariant of framed tangles, also known as the colored $\mathfrak{sl}_3$ 
invariant. This is an invariant of framed tangles with each component labeled by a
finite dimensional irreducible representation of 
$\mathfrak{sl}_3$. To compute this invariant $\langle T \rangle_{(w_1,\ldots,w_r)}$ for an $r$-component tangle $T$, 
consider any word $w_{t}$ corresponding to the highest weight of the 
irreducible representation labeling the component $t$. Take the cable of the tangle corresponding to the tangle's framing 
with strands directed according to $w_{t}$, inserting the relevant projector somewhere 
along the component. Finally, use the skein relations to evaluate the (sum of) tangled webs.

\begin{ex}
Since we have
\[
\xy 
(-6,2); (-1,2) **\dir{-};
(-6,-2); (-1,-2) **\dir{-}**\dir{-}?(0)*\dir{<};
(-1,-4)*{}; (-1,4)*{} **\dir{-};
(-1,-4)*{}; (1,-4)*{} **\dir{-};
(-1,4)*{}; (1,4)*{} **\dir{-};
(1,-4)*{}; (1,4)*{} **\dir{-};
(1,2); (6,2) **\dir{-}?(1)*\dir{>};
(1,-2); (6,-2) **\dir{-};
\endxy =
\xy
(-3,3); (3,3) **\crv{(0,1)}?(1)*\dir{>};
(3,-3); (-3,-3) **\crv{(0,-1)}?(1)*\dir{>};
\endxy - \frac{1}{[3]} \
\xy
(3,-3); (3,3) **\crv{(1,0)}?(1)*\dir{>};
(-3,3); (-3,-3) **\crv{(-1,0)}?(1)*\dir{>};
\endxy
\]
we find that
\[
\left\langle
\xy
(4,5)*{_{0}};
(0,0)*\xycircle(5,5){-};
(5,0); (5,-0.1) **\dir{-}?(1)*\dir{>};
\endxy
\right\rangle_{(+-)}
= \ \
\xy
(-10,-1)*{}="a";
(-10,1)*{}="b";
(-2,-1)*{}="c";
(-2,1)*{}="d";
"a";"b" **\dir{-};
"b";"d" **\dir{-};
"d";"c" **\dir{-};
"c";"a" **\dir{-};
(-8,-1)*{}="e";
(-8,1)*{}="f";
(-4,-1)*{}="g";
(-4,1)*{}="h";
(8,0)*{}="p";
(4,0)*{}="q";
"e";"p" **\crv{(-8,-10)&(8,-10)};
"f";"p" **\crv{(-8,10)&(8,10)}?(1)*\dir{>};
"g";"q" **\crv{(-4,-6)&(4,-6)};
"h";"q" **\crv{(-4,6)&(4,6)}?(1)*\dir{<};
\endxy \ \ = [3]^2-1 .
\]
\end{ex}
It is possible to show that for each labeling this invariant depends only on the regular isotopy class of the tangle and 
in particular does not depend on where we place the projector on each component. Again, see \cite{ReshetikhinTuraev1} for 
the original construction of this invariant.

\subsection{$\mathfrak{sl}_3$ knot homology}
Using the cohomology rings of projective space and flag varieties and certain singular surfaces called foams, Khovanov 
constructed a categorification of the quantum $\mathfrak{sl}_3$ knot invariant in \cite{Khovanov3}. This construction 
gives a bigraded homology theory for links from which the quantum $\mathfrak{sl}_3$ link invariant can be obtained by 
taking the graded Euler characteristic. In \cite{MackaayVaz}, Mackaay and Vaz gave a geometric reformulation of this 
theory in the spirit of \cite{BarNatan2}, which was later refined by Morrison and Nieh in \cite{MorrisonNieh} to an 
invariant of tangles. This latter theory is the setting for our categorification.

We now briefly outline Morrison and Nieh's construction, referring the reader to their work for complete details. The 
invariant 
takes values in the homotopy category of bounded complexes over a graded, additive category 
$\mathcal{F}$ of webs and surfaces 
with singularities, denoted $\K^b(\mathcal{F})$ for the duration. In more detail, the objects of 
the category\footnote{This category is denoted $\mathbf{Mat}(\mathcal{C}ob(\mathfrak{su}_3))$ in Morrison and 
Nieh's work.} $\mathcal{F}$ are formal direct sums of $q$-graded webs and morphisms between webs are matrices 
of $\C$-linear 
combinations of isotopy classes of degree-zero foams - surfaces with singular arcs which locally look like the product of the 
letter $Y$ and an interval - having the appropriate webs as boundary. The degree of a foam $F:q^{k_1}W_1 \to q^{k_2}W_2$ is given 
by
\[
\deg(F) = 2\chi(F) - \abs{\partial} + \frac{\abs{V}}{2} + k_2 - k_1
\]
where $\chi$ is the Euler characteristic, $\partial$ is the boundary of $W_1$ (or $W_2$ - they agree), and $V$ is the 
set of trivalent vertices in $W_1 \coprod W_2$.
Certain (degree homogeneous) local relations are imposed on these foams, see \cite{MorrisonNieh}.

Morrison and Nieh emphasize the fact that $\mathcal{F}$ has the structure of a canopolis; 
informally, this is a planar algebra enriched over $\mathcal{C}at$, the category of categories. This setting is 
appropriate since they view $\mathcal{S}$ as a planar algebra, using the pivotal structure to ignore the distinction 
between domain and codomain. Since we view $\mathcal{S}$ 
as a pivotal category, we will consider $\mathcal{F}$ as a (weak) tensor $2$-category. Objects are words, $1$-morphisms 
are formal direct sums of $q$-graded webs, and $2$-morphisms are matrices of isotopy classes of foams. We will 
denote vertical 
composition, matrix multiplication via gluing of foams along webs, by $\circ$ and horizontal composition, gluing of 
webs along their boundaries, by $\bullet$. Tensor product is defined in a similar fashion as for $\mathcal{S}$ and will 
be denoted as before by $\otimes$. This tensor $2$-categorical structure naturally extends to $\K^b(\mathcal{F})$, 
which is constructed by taking the homotopy category of complexes in each $\Hom_{\bullet}$-category. Horizontal composition 
and tensor product are defined by taking the total complex of the corresponding double complex, similar to the 
construction of the tensor product of complexes of abelian groups.

The invariant of a tangle $T$, denoted $\llbracket T \rrbracket$, is given on crossings by
\begin{equation}\label{zip}
\left\llbracket \ \xy 
(-5,-5)*{};(5,5)*{} **\dir{-}?(1)*\dir{>}  \POS?(.5)*{\hole}="h1";
(-5,5)*{}; "h1" **\dir{-}?(0)*\dir{<};
(5,-5)*{}; "h1" **\dir{-};
\endxy \
\right\rrbracket
= \left(
\xymatrix{q^2 \underline{\xy
(-5,-5)*{};(-5,5)*{} **\crv{(0,0)}?(1)*\dir{>};
(5,-5)*{};(5,5)*{} **\crv{(0,0)}?(1)*\dir{>};
(-5,-7)*{};  
\endxy } \ar[rr]^{\xy
(-6,1)*{}="a"; 
(-4,5)*{}="b"; 
(-2,3)*{}="c"; 
(2,3)*{}="d"; 
(4,1)*{}="e"; 
(6,5)*{}="f"; 
(-6,-5)*{}="g"; 
(-4,-1)*{}="h"; 
(4,-5)*{}="i"; 
(6,-1)*{}="j"; 
"a";"c" **\dir{-} \POS?(.5)*{\hole}="h1";
"b";"c" **\dir{-};
"c";"d" **\dir{-};
"d";"e" **\dir{-};
"d";"f" **\dir{-};
"b";"h1" **\dir{-};
"h1";"h" **\dir{-};
"a";"g" **\dir{-};
"e";"i" **\dir{-} \POS?(.4)*{\hole}="h2";
"f";"j" **\dir{-};
"g";"i" **\crv{(1,-2)};
"h";"h2" **\crv{(-1,-2)};
"h2";"j" **\crv{(4,-2)};
"c";"d" **\crv{(0,-1)};
\endxy}
& &
q^3 \xy
(0,3)*{};(-5,5)*{} **\dir{-}?(.6)*\dir{>};
(0,3)*{};(5,5)*{} **\dir{-}?(.6)*\dir{>};
(0,-3)*{};(0,3)*{} **\dir{-}?(.4)*\dir{<};
(-5,-5)*{};(0,-3)*{} **\dir{-}?(.6)*\dir{>};
(5,-5)*{};(0,-3)*{} **\dir{-}?(.6)*\dir{>};
\endxy}
\right)
\end{equation}
\begin{equation}\label{unzip}
\left\llbracket \ \xy 
(-5,5)*{};(5,-5)*{} **\dir{-}?(0)*\dir{<}  \POS?(.5)*{\hole}="h1";
(-5,-5)*{}; "h1" **\dir{-};
(5,5)*{}; "h1" **\dir{-}?(0)*\dir{<};
\endxy \
\right\rrbracket
= \left(
\xymatrix{q^{-3} \xy
(0,3)*{};(-5,5)*{} **\dir{-}?(.6)*\dir{>};
(0,3)*{};(5,5)*{} **\dir{-}?(.6)*\dir{>};
(0,-3)*{};(0,3)*{} **\dir{-}?(.4)*\dir{<};
(-5,-5)*{};(0,-3)*{} **\dir{-}?(.6)*\dir{>};
(5,-5)*{};(0,-3)*{} **\dir{-}?(.6)*\dir{>};
\endxy \ar[rr]^{\xy
(-6,-5)*{}="a"; 
(-4,-1)*{}="b"; 
(-2,-3)*{}="c"; 
(2,-3)*{}="d"; 
(5,-5)*{}="e"; 
(8,-1)*{}="f"; 
(-6,1)*{}="g"; 
(-4,5)*{}="h"; 
(5,1)*{}="i"; 
(8,5)*{}="j"; 
"a";"c" **\dir{-};
"b";"c" **\dir{-};
"c";"d" **\dir{-};
"d";"e" **\dir{-};
"e";"i" **\dir{-} \POS?(.5)*{\hole}="h2";
"d";"h2" **\dir{-};
"h2";"f" **\dir{-};
"g";"i" **\crv{(1,2)} \POS?(.15)*{\hole}="h1";
"b";"h1" **\dir{-};
"h1";"h" **\dir{-};
"a";"g" **\dir{-};
"f";"j" **\dir{-};
"h";"j" **\crv{(-1,2)};
"c";"d" **\crv{(0,1)};
\endxy}
& &
q^{-2} \underline{ \xy
(-5,-5)*{};(-5,5)*{} **\crv{(0,0)}?(1)*\dir{>};
(5,-5)*{};(5,5)*{} **\crv{(0,0)}?(1)*\dir{>};
(0,-7)*{}; 
\endxy }  }
\right)
\end{equation}
and extended to all tangles using horizontal composition and tensor product. 
Note that these foams, and hence the differentials in all complexes, have degree zero.
We shall refer to the foam in equation \eqref{zip} as a zip and the foam in \eqref{unzip} as an unzip, 
often denoting these morphisms by $z$ and $u$ respectively.
Here and for the duration we will underline the term sitting in homological degree zero; if no underline is 
present then the leftmost term is assumed to be in homological degree zero.
Applying a Reidemeister move 
to a tangle changes the corresponding complex by a homotopy equivalence, so we obtain a 
$\K^b(\mathcal{F})$-valued invariant of tangles. Taking the graded Euler characteristic of the complex 
assigned to a tangle gives the quantum $\mathfrak{sl}_3$ invariant.

\subsection{Categorification}

$\mathcal{F}$ categorifies the $\mathfrak{sl}_3$ spider. To formulate this statement precisely, we must consider the 
subcategory $\mathcal{S}' \subset \mathcal{S}$ whose morphisms are $\Z[q^{-1},q]$-linear combinations of webs. 
The local relations \eqref{spidereqn1}, \eqref{spidereqn2}, and \eqref{spidereqn3} show that $\mathcal{S}'$ is indeed a 
subcategory. In \cite{MorrisonNieh}, Morrison 
and Nieh show that the graded split Grothendieck group of $\mathcal{F}$ corresponds with $\mathcal{S}'$.
In particular, categorified versions of 
equations \eqref{spidereqn1}, \eqref{spidereqn2}, and \eqref{spidereqn3} hold:
\begin{align}
\label{catspidereqn1}
\xy 0;/r.1pc/:
(-10,0)*{}; (10,0)*{} **\crv{(0,8)}?(.45)*\dir{<};
(-10,0)*{}; (10,0)*{} **\crv{(0,-8)}?(.45)*\dir{<};
(-20,0)*{}; (-10,0)*{}; **\dir{-}?(.3)*\dir{<};
(10,0)*{}; (20,0)*{}; **\dir{-}?(.3)*\dir{<};
\endxy
&\cong  q \hspace{1mm}
\xy
(-5,0)*{}; (5,0)*{} **\dir{-}?(.55)*\dir{>};
\endxy
\oplus
q^{-1} \hspace{1mm}
\xy
(-5,0)*{}; (5,0)*{} **\dir{-}?(.55)*\dir{>};
\endxy \\
\label{catspidereqn2}
\xy 0;/r.15pc/:
(-5,5)*{}="a";
(5,5)*{}="b";
(-5,-5)*{}="c";
(5,-5)*{}="d";
(-10,10)*{}="e";
(10,10)*{}="f";
(-10,-10)*{}="g";
(10,-10)*{}="h";
"a";"b" **\dir{-}?(.6)*\dir{>};
"d";"b" **\dir{-}?(.6)*\dir{>};
"a";"c" **\dir{-}?(.6)*\dir{>};
"d";"c" **\dir{-}?(.6)*\dir{>};
"a";"e" **\dir{-}?(.6)*\dir{>};
"f";"b" **\dir{-}?(.6)*\dir{>};
"g";"c" **\dir{-}?(.6)*\dir{>};
"d";"h" **\dir{-}?(.6)*\dir{>};
\endxy
&\cong
\xy 0;/r.13pc/:
(-10,10)*{}="e";
(10,10)*{}="f";
(-10,-10)*{}="g";
(10,-10)*{}="h";
"f";"e" **\crv{-}?(.55)*\dir{>};
"g";"h" **\crv{-}?(.55)*\dir{>};
\endxy \oplus
\xy 0;/r.13pc/:
(-10,10)*{}="e";
(10,10)*{}="f";
(-10,-10)*{}="g";
(10,-10)*{}="h";
"f";"h" **\crv{-}?(.55)*\dir{>};
"g";"e" **\crv{-}?(.55)*\dir{>};
\endxy\\
\label{catspidereqn3}
\xy 0;/r.15pc/:
(0,0)*\xycircle(5,5){-};
{\ar@{->}(0,5)*{};(1,5)*{}}; 
\endxy 
&\cong q^2 \emptyset \oplus q^0 \emptyset \oplus q^{-2} \emptyset
\end{align}
where $\cong$ denotes isomorphism in $\mathcal{F}$. 

The categorified version of equation $\eqref{knottedspider}$ also holds:
\begin{equation}\label{catknottedspider}
\left\llbracket
\xy
(-2,-5)*{};(-2,5)*{} **\dir{-}?(.6)*\dir{>} \POS?(.25)*{\hole}="h1" \POS?(.75)*{\hole}="h2";
(-5,-5)*{};"h1" **\dir{-}?(0)*\dir{<};
(-5,5)*{};"h2" **\dir{-}?(0)*\dir{<};
(1,0)*{};"h1" **\dir{-};
(1,0)*{};"h2" **\dir{-};
(1,0)*{}; (6,0)*{} **\dir{-}?(1)*\dir{>};
\endxy
\right\rrbracket
 \simeq \left\llbracket
\xy
(2,-5);(2,5) **\dir{-}?(1)*\dir{>} \POS?(.5)*{\hole}="h1";
(-1,0); "h1" **\dir{-};
"h1"; (5,0) **\dir{-}?(1)*\dir{>};
(-5,5);(-1,0) **\dir{-}?(0)*\dir{<};
(-5,-5);(-1,0) **\dir{-}?(0)*\dir{<};
\endxy
\right\rrbracket [2]\{8\}.
\end{equation}
Here and throughout $\simeq$ denotes homotopy equivalence, $[a]$ denotes a shift up in 
homological degree by $a$, and $\{b\}$ denotes a shift up in quantum degree by $b$. Similar relations hold 
for other orientations of \eqref{catknottedspider} and, as in the decategorified case, the shifts occur on the side of the equation with 
smaller writhe.

A priori, we have a commutative diagram 
\[
\xymatrix{\mathcal{T} \ar[drr]^-{\llbracket - \rrbracket} \ar[ddr]_-{\langle - \rangle} & & \\
& \mathcal{F} \ar@{^{(}->}[r] \ar[d] & \K^b(\mathcal{F}) \ar[d] \\
& \mathcal{S}' \ar[r] & \mathcal{K}_{\triangle}(\K^b(\mathcal{F}))}
\]
where $\mathcal{T}$ is the category of tangles and $\mathcal{K}_{\triangle}$ denotes the triangulated Grothendieck group 
of a triangulated category. The bottom map exists since it is easy to see that the split Grothendieck group of 
an additive category always maps (surjectively) to the triangulated Grothendieck group of the homotopy category 
of complexes over that category. In fact, we show in \cite{Rose1} that this map is an isomorphism, giving the 
desired diagram
\begin{equation}\label{eulereq}
\xymatrix{& & \K^b(\mathcal{F}) \ar[d]^{\chi} \\
& \mathcal{T} \ar[r]^{\langle - \rangle} \ar[ur]^{\llbracket - \rrbracket} & \mathcal{S}' \\}
\end{equation}
where the map $\chi$ is taking the `Euler characteristic', the alternating sum of terms 
in a complex viewed as an element of $\mathcal{S}'$. 
Another way of stating this result from \cite{Rose1} is 
that isomorphic complexes in $\K^b(\mathcal{F})$ 
(i.e. homotopy equivalent complexes) have the same Euler characteristic.

We will later see that a diagram similar to \eqref{eulereq} exists for an extension of the category 
$\K^b(\mathcal{F})$ containing 
certain semi-infinite complexes. This will be the natural setting for the decategorification of our categorified 
projectors.

\subsection{Summary of results}

The main result of this paper is the following:
\begin{thm}\label{1}
For each word $w$ there exists a complex $\tilde{P}_{w}$
consisting of webs in $\Hom_{\bullet}(w,w)$ supported in non-negative homological degree so that
\begin{enumerate}
\item $\Id_w$ appears only once in $\tilde{P}_w$ and does so in quantum and homological degree zero,
\item all other webs in the complex factor through words of lower weight,
 \item if $\wt(v) < \wt(w)$ then $\tilde{P}_w\bullet W_1 \simeq 0$ for any $W_1 \in \Hom_{\bullet}(w,v)$ and 
 $W_2\bullet \tilde{P}_w \simeq 0$ for any $W_2 \in \Hom_{\bullet}(v,w)$, and
 \item $\tilde{P}_w \bullet \tilde{P}_w \simeq \tilde{P}_w$.
\end{enumerate}
\end{thm}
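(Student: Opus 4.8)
The plan is to follow Rozansky's strategy for the $\mathfrak{sl}_2$ Jones--Wenzl projectors, adapted to the $\mathfrak{sl}_3$ setting. The first step is to define $\tilde{P}_w$ as the stable limit (in a suitable category of semi-infinite complexes) of the complexes $\llbracket T_k^w \rrbracket$ assigned to $k$-twist torus braids on the strands of $w$, appropriately shifted so that $id_w$ sits in bidegree $(0,0)$. By the categorified Reidemeister invariance of $\llbracket - \rrbracket$, adding one more full twist gives a chain map $\llbracket T_k^w \rrbracket \to \llbracket T_{k+1}^w \rrbracket$, and one must show that these maps become isomorphisms in high homological degree with a uniform bound, so that the direct limit (a semi-infinite complex bounded below) is well-defined up to homotopy. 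This requires the homological-algebra machinery of Subsections \ref{homoalg2} and \ref{homoalg3}: the key technical input is a ``gap'' or ``eventual stabilization'' estimate showing that after applying one twist, the complex simplifies (via Gaussian elimination / delooping using \eqref{catspidereqn1}, \eqref{catspidereqn2}, \eqref{catspidereqn3}) so that its truncation below any fixed degree stops changing. I expect this convergence argument — controlling the homological degrees in which the twist map fails to be an isomorphism — to be the main obstacle, since unlike the $\mathfrak{sl}_2$ case the $\mathfrak{sl}_3$ webs have a richer combinatorics (digons, squares, $Y$-webs, $H$-webs) and one must rule out unbounded contributions carefully.

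Granting the existence of $\tilde{P}_w$, properties (1) and (2) should come out of the construction almost by inspection: the torus-braid complex has $id_w$ as its top (or bottom) term exactly once, and every crossing resolution in \eqref{zip}--\eqref{unzip} that is not the identity resolution introduces a $Y$-web, hence factors through a word of strictly lower weight. One then checks that the delooping/Gaussian-elimination moves used to pass to the stable limit preserve this property: each move either deletes a term or replaces it by something still built from $Y$-webs and $U$-webs composed with arbitrary webs, which by the weight inequalities generated by $(w_+,w_-) > (w_+ + 1, w_- - 2)$ etc. lie below $\wt(w)$. For a segregated $w$ this is cleanest via Kuperberg's Proposition \ref{Kuprop}; the general case follows by inserting $H$-webs as in the discussion after Proposition \ref{charproj}.

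For property (3), the absorption/annihilation statement, I would argue as follows. It suffices by duality to treat $\tilde{P}_w \bullet W_1$ for $W_1 \in \Hom_\bullet(w,v)$ with $\wt(v) < \wt(w)$; and by linearity and the factorization in (2) it suffices to treat $W_1$ a single non-elliptic web, which by Proposition \ref{Kuprop} (after reducing to the segregated case with $H$-webs) factors through a $Y$-web or $U$-web attached to $w$. So one reduces to showing that $\tilde{P}_w$ composed with a single $Y$-web (or $U$-web) at the boundary is null-homotopic. Here I would use the defining property of the torus braid: composing the full-twist complex with a turnback/trivalent vertex yields, up to shift and homotopy, essentially the same twist complex with fewer strands plus a contractible piece, and since $\tilde{P}_w$ is a \emph{stable} limit under adding twists, this ``one extra twist absorbed into the cap'' argument forces the composite to be homotopic to a complex with no term in any bounded range of degrees, hence $\simeq 0$. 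Making precise that a bounded-below complex which is trivial in every sufficiently-low range is contractible is where the semi-infinite-complex formalism of Section \ref{homoalg} is used.

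Finally, property (4), idempotency, is a formal consequence of (1)--(3) together with the bounded-below structure, by the standard Rozansky argument: write $\tilde{P}_w = id_w \oplus (\text{lower-weight stuff})$ using (1) and (2), so $\tilde{P}_w \bullet \tilde{P}_w = \tilde{P}_w \oplus (\tilde{P}_w \bullet (\text{lower-weight stuff}))$; by (3) applied to each of the lower-weight terms — each factoring through a $v$ with $\wt(v) < \wt(w)$ — the second summand is null-homotopic, so $\tilde{P}_w \bullet \tilde{P}_w \simeq \tilde{P}_w$. The only subtlety is that ``$\oplus$'' here is really a (possibly infinite) filtered decomposition, so one must check the homotopies assemble; this is handled by the convergence estimates already established for the construction of $\tilde{P}_w$, i.e.\ the same degree bounds that made the stable limit well-defined guarantee that the null-homotopies on the pieces can be summed. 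I expect no essential difficulty here once the convergence framework is in place — the real work is all in the stabilization step.
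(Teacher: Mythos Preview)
Your outline is essentially the paper's approach, but two points need correcting. First, the system is an \emph{inverse} system with maps $\llbracket T_{k+1}^w\rrbracket_s \to \llbracket T_k^w\rrbracket_s$ (projecting off the extra twist, since $id_w$ sits in homological degree $0$ of the shifted twist complex), and stabilization means the maps become isomorphisms in \emph{low} homological degree, with the cone of each map pushed to higher and higher degree; your description of a direct limit with isomorphisms in high degree is backwards. Second, your argument for (4) via ``$\tilde P_w = id_w \oplus (\text{lower})$'' is not quite right as stated: $\tilde P_w$ is a cone, not a direct sum, so the paper instead uses the distinguished triangle $\tilde P_w \to id_w \to D^\cdot[-1]$ (with $D^\cdot$ the tail of non-identity webs), composes with $\tilde P_w$, and shows $D^\cdot \bullet \tilde P_w \simeq 0$ by successively truncating $D^\cdot$ from below and invoking (3) together with Lemma~\ref{contractiblelem}. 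Finally, your claim that ``the torus-braid complex has $id_w$ as its top term exactly once'' is only immediate when all crossings are positive (i.e.\ $w=(+\cdots+)$); for segregated words with both signs the degree-$0$ term of the unreduced complex is not $id_w$, and the paper needs a separate inductive argument (Proposition~\ref{mnstart}) to show that after simplification the complex begins with $id_w$.
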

It follows from this description that such a complex is unique up to homotopy equivalence.
We construct such complexes as the stable limit (up to homotopy) of the complexes 
\[
\left\llbracket \ \twistw{k} \right\rrbracket [k\cdot c_{-}] \{k \left(3c_{-} - 2c_{+}\right)\}
\]
as $k \to \infty$. The notation indicates that there are $k$ full twists on strands directed according to the 
word $w$ and
$c_{\pm}$ is the number of $\pm$ crossings in one twist. 
Although the complexes $\tilde{P}_w$ will be semi-infinite, we will show that it is possible to take 
their graded Euler characteristic and that the next result holds.
\begin{thm}\label{2}
$\chi(\tilde{P}_w) = P_w$.
\end{thm}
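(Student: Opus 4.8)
The plan is to apply the Euler characteristic map $\chi$ to the defining properties of $\tilde P_w$ from Theorem \ref{1} and match them against the characterization of $P_w$ in Proposition \ref{charproj}. First I would address the nontrivial point that $\chi$ is defined at all on the semi-infinite complex $\tilde P_w$: by property (1) of Theorem \ref{1}, $id_w$ occurs exactly once and in quantum degree zero, while all other webs factor through words of strictly lower weight (property (2)), hence appear in strictly positive homological degree with quantum-degree shifts that grow with homological degree (this is the content of the stability/convergence set-up, and should be quoted from the construction in Section \ref{sectioncatproj} together with the extended Euler-characteristic formalism promised after \eqref{eulereq}). Concretely, for each fixed web $W \ne id_w$ only finitely many homological degrees contribute a bounded power of $q$, so the alternating sum $\sum_i (-1)^i [C^i] \in \Hom_\bullet(w,w)$ converges coefficient-wise in the $q$-adic sense, landing in the completion $\C(q)$-span of webs; since $\Hom_\bullet(w,w)$ is finite-dimensional over $\C(q)$, the limit actually lies in $\Hom_\bullet(w,w)$. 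This justifies writing $\chi(\tilde P_w)$.

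Next I would verify that $\chi(\tilde P_w)$ satisfies the three conditions of Proposition \ref{charproj}. Condition (1): the coefficient of $id_w$ in $\chi(\tilde P_w)$ is exactly $1$ because $id_w$ sits in homological degree $0$ with a $+$ sign and appears nowhere else (Theorem \ref{1}(1)), and all remaining terms are $\C(q)$-combinations of non-identity webs (Theorem \ref{1}(2)). Condition (2): given $W_1 \in \Hom_\bullet(w,v)$ with $\wt(v) < \wt(w)$, Theorem \ref{1}(3) says $\tilde P_w \bullet W_1 \simeq 0$ as complexes; applying $\chi$ and using the compatibility of $\chi$ with horizontal composition (the extension of \eqref{eulereq} — homotopy-equivalent complexes have equal Euler characteristic, and $\chi$ is multiplicative for $\bullet$, which follows from the total-complex description of horizontal composition just as in the decategorified case) gives $\chi(\tilde P_w) \bullet W_1 = \chi(\tilde P_w \bullet W_1) = \chi(0) = 0$, and symmetrically $W_2 \bullet \chi(\tilde P_w) = 0$. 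Condition (3): idempotence $\tilde P_w \bullet \tilde P_w \simeq \tilde P_w$ (Theorem \ref{1}(4)) becomes $\chi(\tilde P_w) \bullet \chi(\tilde P_w) = \chi(\tilde P_w)$ after applying the multiplicative $\chi$. Hence $\chi(\tilde P_w)$ is a nonzero idempotent in $\Hom_\bullet(w,w)$ meeting the conditions of Proposition \ref{charproj}, and by the uniqueness asserted there we conclude $\chi(\tilde P_w) = P_w$.

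I expect the main obstacle to be not the formal matching — which is the short argument above — but the analytic bookkeeping that makes $\chi$ well-defined and multiplicative in this semi-infinite setting. One has to set up an appropriate category of complexes (bounded below, with a finiteness condition on how webs of bounded quantum degree can be distributed across homological degrees) on which $\chi$ lands in a suitable completion, check that the completion collapses back to $\Hom_\bullet(w,w)$ by finite-dimensionality, and verify that homotopy equivalence and horizontal composition behave well enough that the equalities $\chi(C \bullet D) = \chi(C)\bullet\chi(D)$ and $\chi$-invariance under $\simeq$ persist in the limit. A clean way to handle this is to observe that $\tilde P_w$ is a limit of the honestly bounded complexes $\llbracket \twistw{k} \rrbracket[k c_-]\{k(3c_- - 2c_+)\}$, for which $\chi$ is the already-established decategorified invariant, namely $\langle k\text{-twist torus braid}\rangle$ suitably shifted; then $\chi(\tilde P_w) = \lim_{k\to\infty}\chi(\llbracket\twistw{k}\rrbracket[\cdots]\{\cdots\})$ and one identifies this $q$-adic limit of quantum $\mathfrak{sl}_3$ invariants of twist torus braids with $P_w$ using the classical fact (the decategorified analog of stabilization, essentially the content of Proposition \ref{projrecur} or the representation-theoretic definition of the internal clasp as projection onto the highest-weight summand) that the renormalized twist converges to the projector in the $\mathfrak{sl}_3$ spider. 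Reconciling the homological-algebra definition of $\chi(\tilde P_w)$ with this limit-of-invariants description is where the real work lies; once that is in place, the identification with $P_w$ is immediate from either Proposition \ref{charproj} or the classical statement.
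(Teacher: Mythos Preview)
Your core strategy --- apply $\chi$ to the properties of Theorem~\ref{1} and invoke the characterization in Proposition~\ref{charproj} --- is exactly the paper's. The paper makes your first-paragraph sketch precise by introducing the subcategory $\K^{\angle}(\mathcal{F})$ of complexes supported in a region $\{(h,q): h\ge 0,\ q\ge th\}[a]\{b\}$ for some fixed $t>0$, with only finitely many non-elliptic webs appearing, and then proving $\tilde P_w\in\K^{\angle}(\mathcal{F})$ via explicit estimates on the complexes $C_k^{\cdot}$ (Lemmas~\ref{lem1}, \ref{lem2} and Propositions~\ref{pluscatprojgoodform}, \ref{hardproof}); on $\K^{\angle}(\mathcal{F})$ the Euler characteristic is well-defined, homotopy-invariant, and multiplicative under $\bullet$, which is what feeds your second paragraph.

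One point to correct: the Euler characteristic does not collapse back to the $\C(q)$-span by finite-dimensionality as you suggest; it lands in the $\Z[q^{-1},q]]$-span of non-elliptic webs (the paper's $\hat{\mathcal{S}'}$). The identification $\chi(\tilde P_w)=P_w$ then uses that the three characterizing conditions of Proposition~\ref{charproj} still determine $P_w$ uniquely after base-change along $\C(q)\hookrightarrow\C[q^{-1},q]]$. As for the alternative in your third paragraph --- computing $\chi(\tilde P_w)$ as $\lim_k \chi$ of the shifted $k$-twist complex --- the paper derives that limit formula as a \emph{corollary} of Theorem~\ref{2}, not as its proof; turning it into a direct proof would require showing that $\chi$ commutes with the $\K$-limit, which in the end comes down to the same support estimates on $\K^{\angle}(\mathcal{F})$.
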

Theorems \ref{1} and \ref{2} will be proved in Section \ref{sectioncatproj}.
As a consequence of the above we obtain an alternate characterization of the $\mathfrak{sl}_3$ projectors, 
known to experts in the field. The author is unaware of a proof appearing in the literature.

\begin{cor}
Let $w$ be a word, then
\[
\xy
(-8,0); (-1,0) **\dir{-};
(-1,-6)*{}; (-1,6)*{} **\dir{-};
(-1,-6)*{}; (1,-6)*{} **\dir{-};
(-1,6)*{}; (1,6)*{} **\dir{-};
(1,-6)*{}; (1,6)*{} **\dir{-};
(1,0); (8,0) **\dir{-};
(10,0)*{_{w}};
\endxy = 
\lim_{k\to\infty} q^{k\left(3c_{-}-2c_{+}\right)} \left\langle \ \twistw{k} \right\rangle \ .
\]
\end{cor}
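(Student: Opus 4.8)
The plan is to deduce the Corollary directly from Theorems \ref{1} and \ref{2} together with the diagram \eqref{eulereq} and its promised extension to semi-infinite complexes. First I would observe that the right-hand side of the claimed identity is, by construction, the graded Euler characteristic of the semi-infinite complex $\tilde{P}_w$: indeed, Theorem \ref{1} exhibits $\tilde{P}_w$ as the stable limit of the complexes $\left\llbracket \ \twistw{k} \right\rrbracket [k\cdot c_{-}]\{k(3c_{-}-2c_{+})\}$, and by the extension of \eqref{eulereq} (stated at the end of the Categorification subsection as forthcoming) the map $\chi$ is well-defined on such semi-infinite complexes and is invariant under the notion of convergence used to define the stable limit. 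Hence $\chi$ of the stable limit equals the limit of the $\chi$-values of the finite truncations, i.e.
\[
\chi(\tilde{P}_w) = \lim_{k\to\infty} \chi\!\left( \left\llbracket \ \twistw{k} \right\rrbracket [k\cdot c_{-}]\{k(3c_{-}-2c_{+})\} \right).
\]

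Next I would unwind the right-hand side of this last display using the commutativity of \eqref{eulereq}: since $\chi \circ \llbracket - \rrbracket = \langle - \rangle$, and since a homological shift $[a]$ contributes a sign $(-1)^a$ to the Euler characteristic while a quantum shift $\{b\}$ contributes $q^b$, we get
\[
\chi\!\left( \left\llbracket \ \twistw{k} \right\rrbracket [k\cdot c_{-}]\{k(3c_{-}-2c_{+})\} \right) = (-1)^{k\cdot c_{-}}\, q^{k(3c_{-}-2c_{+})}\, \left\langle \ \twistw{k} \right\rangle.
\]
Here one must be slightly careful about the sign: the shift conventions and the precise normalization in \eqref{zip}--\eqref{unzip} should make the factor $(-1)^{k\cdot c_{-}}$ either trivial or already absorbed into $\langle - \rangle$; I would check against the decategorified skein relations that the parity works out so that the surviving prefactor is exactly $q^{k(3c_{-}-2c_{+})}$ with no sign, matching the statement of the Corollary. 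Combining this with Theorem \ref{2}, which gives $\chi(\tilde{P}_w) = P_w$, yields
\[
P_w = \lim_{k\to\infty} q^{k(3c_{-}-2c_{+})} \left\langle \ \twistw{k} \right\rangle,
\]
which is precisely the asserted identity once we recall that $P_w$ is drawn as the boxed web $\xy (-8,0); (-1,0) **\dir{-}; (-1,-6)*{}; (-1,6)*{} **\dir{-}; (-1,-6)*{}; (1,-6)*{} **\dir{-}; (-1,6)*{}; (1,6)*{} **\dir{-}; (1,-6)*{}; (1,6)*{} **\dir{-}; (1,0); (8,0) **\dir{-}; (10,0)*{_{w}}; \endxy$.

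The main obstacle, and the only genuinely non-formal point, is justifying that $\chi$ commutes with the stable limit, i.e. that the Euler characteristic of the semi-infinite complex $\tilde{P}_w$ agrees with the limit of the Euler characteristics of the finite complexes $\left\llbracket \ \twistw{k} \right\rrbracket$ (suitably shifted). This requires the extension of \eqref{eulereq} to the category of semi-infinite complexes that is promised in the excerpt: one needs that $\chi$ is well-defined there (each fixed homological degree of $\tilde{P}_w$ stabilizes, so the alternating sum makes sense term-by-term and lives in a suitable completion of $\mathcal{S}'$ where infinite $q$-power sums converge), and that homotopy-equivalent semi-infinite complexes have equal Euler characteristic. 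Given that machinery — which Theorems \ref{1} and \ref{2} and the surrounding discussion entitle us to assume — the rest of the argument is a bookkeeping exercise with shifts and signs. I would therefore present the proof as: (1) identify the RHS as $\chi$ of the stable limit via the shift conventions, (2) invoke Theorem \ref{2} for $\chi(\tilde{P}_w)=P_w$, (3) conclude.
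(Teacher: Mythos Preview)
Your approach is correct and is precisely the deduction the paper intends: the Corollary is stated without proof as a consequence of Theorems \ref{1} and \ref{2}, and your three-step outline (identify the right-hand side as $\chi$ applied to the finite shifted complexes, pass to the stable limit, invoke $\chi(\tilde{P}_w)=P_w$) is the natural way to make that explicit. The commutation of $\chi$ with the $\K$-limit that you flag as the main obstacle is justified by the paper's machinery: the intermediate complexes $B_k$ from the proof of Theorem \ref{klimitiffcauchy} agree with $\tilde{P}_w$ through homological degree tending to infinity, all lie in $\K^{\angle}(\mathcal{F})$ with support in a common $R_{1/M}$, and satisfy $B_k\simeq A_k$; since $\chi$ is homotopy-invariant on $\K^{\angle}(\mathcal{F})$, this forces $\chi(A_k)\to\chi(\tilde{P}_w)$ in the $q$-adic topology.

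The one point you leave open, the sign $(-1)^{kc_-}$, is easily settled rather than merely checkable: in a full twist every pair of strands crosses exactly twice with the same sign, so $c_-$ is twice the number of oppositely-oriented pairs and hence even (cf.\ the explicit formula $c_-=2mn$ in the proof of Proposition \ref{hardproof}). Thus $(-1)^{kc_-}=1$ and the prefactor is exactly $q^{k(3c_--2c_+)}$ as stated.
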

This limit is a finite sum of webs with coefficients 
in $\Z[q^{-1},q]]$ and corresponds to the left hand side using the inclusion of coefficients 
$\C(q)\hookrightarrow \C[q^{-1},q]]$.

Finally, we shall use the categorified projectors $\tilde{P}_w$ to give a categorification of the $\mathfrak{sl}_3$ 
Reshetikhin-Turaev invariants of framed tangles. Let $\K^+(\mathcal{F})$ denote the homotopy category of bounded 
below complexes in $\mathcal{F}$.
\begin{thm}\label{3}
Let $T$ be an $r$-component framed tangle and let $w_1,\dots,w_r$ be words labeling the components of $T$. 
There exists a complex $\left\llbracket T \right\rrbracket_{(w_1,\dots,w_r)}$ in $\K^+(\mathcal{F})$, 
invariant up to homotopy under regular isotopy, which gives a categorification of the $\mathfrak{sl}_3$ 
Reshetikhin-Turaev invariant, i.e. the diagram
\[
\xymatrix{& & & \K^+(\mathcal{F}) \ar[d]^{\chi} \\
& \mathcal{T} \ar[rr]_{\langle - \rangle_{(w_1,\dots,w_r)}} \ar[urr]^{\llbracket - \rrbracket_{(w_1,\dots,w_r)}} 
& & \mathcal{S} \\ }
\]
commutes.
\end{thm}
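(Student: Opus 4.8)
The plan is to build $\llbracket T \rrbracket_{(w_1,\dots,w_r)}$ by cabling and inserting projectors, mimicking the decategorified construction, and then to verify invariance and the decategorification statement using the categorified tools already developed. Concretely, given the framed tangle $T$ with components labeled by words $w_1,\dots,w_r$, I would first replace each component by its $w_t$-cable (the number of parallel strands being $\abs{w_t}$, directed and oriented according to the signs of $w_t$) compatibly with the framing, obtaining a tangle diagram $T^{\mathrm{cab}}$ whose strands carry no extra decoration. Applying Morrison--Nieh's invariant $\llbracket - \rrbracket$ gives a complex in $\K^b(\mathcal{F})$; then I would insert the categorified projector $\tilde P_{w_t}$ (from Theorem~\ref{1}) somewhere along the $w_t$-cable of the $t$-th component, horizontally composing it into $\llbracket T^{\mathrm{cab}} \rrbracket$ via $\bullet$. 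Because each $\tilde P_{w_t}$ is a bounded-below (semi-infinite) complex supported in non-negative homological degree, the total complex of the resulting iterated horizontal composition lives in $\K^+(\mathcal{F})$; this defines $\llbracket T \rrbracket_{(w_1,\dots,w_r)}$, with an appropriate overall homological/quantum shift dictated by the framing (via the relation \eqref{catknottedspider} and its variants, so that the final object is framing-sensitive in the same way the decategorified invariant is).

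Next I would establish invariance under regular isotopy. Reidemeister II and III moves performed away from the inserted projectors are handled directly by Morrison--Nieh's homotopy equivalences, which propagate through $\bullet$ since horizontal composition of complexes is functorial up to homotopy. The substantive points are: (a) independence of \emph{where} each $\tilde P_{w_t}$ is inserted along its component, and (b) the behavior under a Reidemeister~I (framing) move or a strand of the cable passing near a projector. For (a) I would use the idempotence and the absorption property (3) of Theorem~\ref{1}: sliding a projector past a local piece of the cabled diagram and back is witnessed by a homotopy built from $\tilde P_w \bullet \tilde P_w \simeq \tilde P_w$ together with the fact that, since all non-identity webs in $\tilde P_w$ factor through lower-weight words (property (2)), the ``obstruction'' to sliding is a morphism that $\tilde P_w$ kills up to homotopy (property (3)). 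This is precisely the categorified analogue of the statement in the decategorified case that the projector annihilates everything of strictly lower weight, and it is the step I expect to be the main obstacle, since one must assemble an explicit contracting homotopy compatible with the total-complex structure of the iterated composition. For (b), the needed local homotopy equivalences are again consequences of \eqref{catknottedspider}, of $\tilde P_w$ absorbing cap/cup (Yang-Baxter-type) pieces, and of the standard "turnback" relations in $\mathcal{F}$; the framing shift is read off so that both sides of each local relation match.

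Finally I would verify that the diagram commutes, i.e. $\chi\left(\llbracket T \rrbracket_{(w_1,\dots,w_r)}\right) = \langle T \rangle_{(w_1,\dots,w_r)}$. Here I would first note that $\chi$ extends to the relevant subcategory of $\K^+(\mathcal{F})$ containing the semi-infinite complexes under consideration, with values in $\mathcal{S}$ (using coefficients in $\C[q^{-1},q]]$ as in the Corollary, then reidentifying the result inside $\mathcal{S}$), and that $\chi$ is multiplicative for $\bullet$ and $\otimes$ on this subcategory --- this is the extension of \eqref{eulereq} alluded to at the end of Subsection on categorification and can be proved by the same argument as in \cite{Rose1}, reduced to checking it termwise and using that the differentials have degree zero. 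Granting this, $\chi\left(\llbracket T^{\mathrm{cab}} \rrbracket\right) = \langle T^{\mathrm{cab}} \rangle$ by \eqref{eulereq}, $\chi(\tilde P_{w_t}) = P_{w_t}$ by Theorem~\ref{2}, and multiplicativity of $\chi$ together with the matching overall shifts gives exactly $\langle T \rangle_{(w_1,\dots,w_r)}$, since by construction the decategorified invariant is computed by the identical cable-and-insert-$P_{w_t}$ recipe. The well-definedness of the value of $\chi$ (independence of the homotopy representative) is handled by the extended version of the result from \cite{Rose1}, which I would cite.
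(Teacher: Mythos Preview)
Your proposal is correct and follows essentially the same route as the paper: define the invariant by cabling and inserting $\tilde P_{w_t}$, prove a sliding lemma so the projector can be moved freely along its component, deduce R2/R3 invariance by moving projectors away from the local move, and obtain the decategorification from Theorem~\ref{2} together with the extension of the Euler-characteristic argument to $\K^{\angle}(\mathcal{F})$.

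Two small points of comparison. First, what you flag as ``the main obstacle'' in (a) is handled in the paper by a clean trick you should adopt: rather than directly constructing a homotopy that slides $\tilde P_w$ past a transverse strand, one shows
\[
\bigl\llbracket\, \tilde P_w \text{ on the left of the strand} \,\bigr\rrbracket \;\simeq\; \bigl\llbracket\, \tilde P_w \text{ on both sides} \,\bigr\rrbracket \;\simeq\; \bigl\llbracket\, \tilde P_w \text{ on the right} \,\bigr\rrbracket,
\]
where the middle complex has a projector on each side. Each outer equivalence is obtained by expanding the extra projector as $id_w$ plus lower-weight webs, sliding those webs past the strand via \eqref{catknottedspider} (acquiring only a shift), and then annihilating them with the remaining projector; Propositions~\ref{replacement} and~\ref{contractiblelem} package the resulting homotopies. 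This avoids building any ad hoc contracting homotopy. Second, your point (b) about Reidemeister~I is unnecessary: the claim is invariance under \emph{regular} isotopy only, so R1 never enters, and the framing dependence is already encoded in the cabling.
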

This result is proved in Section \ref{sectioninvt}.

\section{Homological algebra}\label{homoalg}

In this section we present the requisite homological algebra for our results. The first subsection contains standard 
results from homological algebra. The following two subsections present the 
`calculus' of chain complexes. Almost all of
these results (or rather the dual statements) are taken from \cite{Rozansky}, but we repeat them in the interest of
giving a self contained treatment and in order to provide some proofs omitted there.

\subsection{Standard results}\label{homoalg1}
Let $\mathcal{A}$ be an
additive category. We will consider both the category $\Ch(\mathcal{A})$ of (cochain) complexes of objects in
$\mathcal{A}$ and the category $\K(\mathcal{A})$, the homotopy category of $\Ch(\mathcal{A})$. 
We will use the superscripts $b$ and $+$ to denote the full subcategories 
of these categories consisting of bounded and bounded below complexes. For instance, $\K^+(\mathcal{A})$ denotes the 
homotopy category of bounded below complexes in $\mathcal{A}$. We will use $\cong$ to indicate isomorphism in 
$\Ch(\mathcal{A})$ 
and $\simeq$ to denote isomorphism in $\K(\mathcal{A})$, that is, homotopy equivalence.

The first result, a technical tool from \cite{BarNatan3}, concerns Gaussian elimination homotopy equivalences. 
Such homotopy equivalences are ubiquitous in the study of Khovanov homology and its generalizations.
\begin{prop}[Gaussian elimination]\label{GE}
Let
\[
\xymatrix{ &\cdots \ar[r] &A \ar[r]^-{\left(\begin{smallmatrix} \ast \\ \alpha \end{smallmatrix}\right)} 
&B\oplus C \ar[rr]^-{\left(\begin{smallmatrix} \psi & \beta \\ \gamma & \delta \end{smallmatrix}\right)} 
&
&D\oplus E \ar[r]^-{\left(\begin{smallmatrix} \ast & \epsilon \end{smallmatrix}\right)} 
&F \ar[r] &\cdots
}
\]
be a complex in $\Ch(\mathcal{A})$ where $\psi:B \to D$ is an isomorphism, then this complex is 
homotopy equivalent to the following complex:
\[
\xymatrix{
&\cdots \ar[r] &A \ar[r]^{\alpha} &C \ar[rr]^{\delta - \gamma \circ \psi^{-1} \circ \beta} 
& &E \ar[r]^{\epsilon} &F \ar[r] &\cdots
} \ \ .
\]
Moreover, if $\mathcal{A}$ is graded and the differentials in the complex are degree $0$ then so is the 
homotopy equivalence.
\end{prop}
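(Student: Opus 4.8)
The plan is to recognize the displayed complex, after an invertible change of basis on the two terms $B\oplus C$ and $D\oplus E$, as the direct sum of the desired reduced complex with a contractible two-term complex, and then to discard the contractible summand.

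First I would change basis on $B\oplus C$ by the automorphism $g=\left(\begin{smallmatrix}1 & \psi^{-1}\circ\beta\\0 & 1\end{smallmatrix}\right)$ and on $D\oplus E$ by the automorphism $h^{-1}=\left(\begin{smallmatrix}1 & 0\\ \gamma\circ\psi^{-1} & 1\end{smallmatrix}\right)$, leaving every other object of the complex fixed; both are invertible in $\mathcal{A}$ since they are triangular with identity on the diagonal. Under this isomorphism of complexes the middle differential is replaced by $h\circ\left(\begin{smallmatrix}\psi & \beta\\\gamma & \delta\end{smallmatrix}\right)\circ g^{-1}$, which a direct $2\times 2$ computation shows equals the block-diagonal morphism $\left(\begin{smallmatrix}\psi & 0\\0 & \delta-\gamma\circ\psi^{-1}\circ\beta\end{smallmatrix}\right)$.

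Next I would check that the two flanking differentials become block-compatible for free. The incoming map $\left(\begin{smallmatrix}*\\\alpha\end{smallmatrix}\right)$ is replaced by $g\circ\left(\begin{smallmatrix}*\\\alpha\end{smallmatrix}\right)=\left(\begin{smallmatrix}*+\psi^{-1}\circ\beta\circ\alpha\\\alpha\end{smallmatrix}\right)$, and the original complex identity $\left(\begin{smallmatrix}\psi & \beta\\\gamma & \delta\end{smallmatrix}\right)\circ\left(\begin{smallmatrix}*\\\alpha\end{smallmatrix}\right)=0$ forces $\psi\circ *=-\beta\circ\alpha$; since $\psi$ is an isomorphism this says precisely that the $B$-component $*+\psi^{-1}\circ\beta\circ\alpha$ vanishes. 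Symmetrically, the outgoing map $\left(\begin{smallmatrix}* & \epsilon\end{smallmatrix}\right)$ is replaced by $\left(\begin{smallmatrix}* & \epsilon\end{smallmatrix}\right)\circ h^{-1}=\left(\begin{smallmatrix}*+\epsilon\circ\gamma\circ\psi^{-1} & \epsilon\end{smallmatrix}\right)$, whose $D$-component vanishes by the complex identity on the other side. Hence after the change of basis the complex is literally the direct sum of $\cdots\to A\to C\to E\to F\to\cdots$ with differentials $\alpha$, $\delta-\gamma\circ\psi^{-1}\circ\beta$, $\epsilon$ and the two-term complex $(\cdots\to 0\to B\xrightarrow{\ \psi\ }D\to 0\to\cdots)$ placed in the appropriate homological degrees; in particular the relations $(\delta-\gamma\circ\psi^{-1}\circ\beta)\circ\alpha=0$ and $\epsilon\circ(\delta-\gamma\circ\psi^{-1}\circ\beta)=0$ come out automatically.

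Finally, since $\psi$ is an isomorphism the two-term summand is contractible via the homotopy $\psi^{-1}$, hence homotopy equivalent to the zero complex, and cancelling it yields the asserted homotopy equivalence. For the graded statement, if all differentials are degree $0$ then $\psi$ and $\psi^{-1}$ are degree $0$, so every entry of $g$, of $h^{-1}$, and of the contracting homotopy $\psi^{-1}$ is degree $0$, and therefore so is the composite homotopy equivalence. I do not expect a genuine obstacle here; the only point requiring care is keeping the matrix conventions straight (left versus right composition, and which summand sits in which homological degree) so that the flanking terms glue up correctly. As an alternative one could instead follow Bar-Natan's original route, writing the inclusion, projection, and homotopy explicitly and verifying $dh+hd=\mathrm{id}-\iota\circ\pi$ by hand, but the splitting argument above makes the degree-$0$ refinement immediate.
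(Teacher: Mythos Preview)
Your argument is correct: the change-of-basis automorphisms $g$ and $h^{-1}$ block-diagonalize the middle differential, the complex identities force the flanking components into $B$ and out of $D$ to vanish, and the remaining two-term summand $B\xrightarrow{\psi}D$ is contractible. The degree-$0$ refinement follows exactly as you say.

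There is nothing to compare against, however: the paper does not prove this proposition. It is stated as a technical tool imported from Bar-Natan's work (the citation \cite{BarNatan3}) and no argument is given. Your proof is in fact the standard one and is essentially what one finds in Bar-Natan's original treatment.
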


Given a complex $(A^{\cdot},d_A)$ in $\Ch(\mathcal{A})$, we denote by $A[n]^{\cdot}$ 
the complex shifted up by the integer $n$, that is the complex with
\[
A[n]^i = A^{i-n}
\]
and with differential given by $(-1)^n d_A$. If $f:A^{\cdot}\to B^{\cdot}$ is a chain map, 
the complex $\cone(f)$ is given by
\[
\cone(f)^i = A^{i+1} \oplus B^i
\]
with differential $\left(\begin{smallmatrix} -d_A & 0 \\ -f & d_B \end{smallmatrix} \right)$. It is a standard fact, 
proved in \cite{Spanier} for the case of abelian groups,
that the cone of a chain map detects if the map is a homotopy equivalence. The proof described there carries over to 
arbitrary additive categories

\begin{prop}\label{conenull}
A chain map $\varphi$ is a homotopy equivalence iff $\cone(\varphi)\simeq 0$.
\end{prop}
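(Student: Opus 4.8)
The plan is to prove both directions directly at the level of chain maps and homotopies, working entirely within the additive category $\mathcal{A}$ so that no abelian structure is needed. Recall that for $\varphi: A^\cdot \to B^\cdot$ we have $\cone(\varphi)^i = A^{i+1}\oplus B^i$ with differential $\left(\begin{smallmatrix} -d_A & 0 \\ -\varphi & d_B \end{smallmatrix}\right)$, and that $\cone(\varphi)\simeq 0$ means the identity map of $\cone(\varphi)$ is chain-homotopic to the zero map, i.e.\ there is a degree $-1$ map $H$ on $\cone(\varphi)$ with $\mathrm{id} = d_{\cone}\circ H + H \circ d_{\cone}$.

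For the forward direction, suppose $\varphi$ is a homotopy equivalence with homotopy inverse $\psi: B^\cdot \to A^\cdot$, so $\psi\varphi - \mathrm{id}_A = d_A g + g d_A$ and $\varphi\psi - \mathrm{id}_B = d_B h + h d_B$ for degree $-1$ maps $g$ on $A^\cdot$ and $h$ on $B^\cdot$. I would write down an explicit contracting homotopy for $\cone(\varphi)$ as a $2\times 2$ matrix of the form $\left(\begin{smallmatrix} -g & \ast \\ \ast & -h \end{smallmatrix}\right)$ acting on $A^{i+1}\oplus B^i$, with the off-diagonal entries built from $\psi$ and correction terms involving $g, h$; the standard choice is $\left(\begin{smallmatrix} -g & 0 \\ \psi + h\varphi g \text{ (or similar)} & -h \end{smallmatrix}\right)$, and one checks the homotopy identity by a direct $2\times 2$ matrix computation using the three relations above together with $\varphi d_A = d_B \varphi$. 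I will record the precise entries and leave the (routine but slightly fiddly) matrix bookkeeping to the reader, following \cite{Spanier}.

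For the reverse direction, suppose $H = \left(\begin{smallmatrix} a & b \\ c & e \end{smallmatrix}\right)$ is a contracting homotopy for $\cone(\varphi)$, where $a: A^{i+1}\to A^{i+2}$, $b: B^i \to A^{i+2}$, $c: A^{i+1}\to B^{i-1}$, $e: B^i \to B^{i-1}$. Expanding the identity $\mathrm{id}_{\cone} = d_{\cone}H + Hd_{\cone}$ entrywise yields four equations. I expect the $(2,1)$-entry equation to read, after rearranging, $\mathrm{id}_B \text{-component} = \varphi(\text{something}) + (d_B e + e d_B)$, which exhibits $b$ (shifted appropriately) as a homotopy inverse to $\varphi$ up to the terms coming from $a$ and $e$; more precisely, setting $\psi := b[1]: B^\cdot \to A^\cdot$ (a degree $0$ map once the shift is accounted for), the diagonal equations force $\psi$ to be a chain map and the off-diagonal equations give homotopies $\psi\varphi \simeq \mathrm{id}_A$ and $\varphi\psi \simeq \mathrm{id}_B$ with $g$ and $h$ read off from $a$ and $e$.

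The main obstacle is purely notational: keeping the homological-degree shifts straight in the $2\times 2$ matrix identities (the $A$-component of $\cone(\varphi)$ in degree $i$ is $A^{i+1}$, which shifts every index and introduces the sign $-d_A$), and verifying that the candidate maps extracted in the reverse direction are genuine chain maps rather than merely satisfying the homotopy relations formally. Once the signs and shifts are fixed by the conventions in Subsection \ref{homoalg1}, everything reduces to matrix algebra over $\mathcal{A}$, valid verbatim in any additive category as claimed; the graded refinement (degree-$0$ homotopies) is automatic since all the maps produced are assembled from degree-$0$ pieces.
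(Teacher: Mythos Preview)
The paper does not actually prove this proposition; it states it as a standard fact and refers to \cite{Spanier}, remarking that the argument there carries over verbatim to arbitrary additive categories. So there is no proof in the paper to compare against, and your direct matrix approach is indeed the standard one.

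Your reverse direction is essentially correct, though the degree labels are off: a contracting homotopy has degree $-1$, so the entries should be $a\colon A^{i+1}\to A^i$, $b\colon B^i\to A^i$, $c\colon A^{i+1}\to B^{i-1}$, $e\colon B^i\to B^{i-1}$, and no shift ``$[1]$'' is needed since the $(1,2)$-entry $b$ already has the right degree to serve as $\psi=-b$. Expanding $dH+Hd=\mathrm{id}$ then gives: the $(1,2)$-equation says $b$ is a chain map, the $(1,1)$- and $(2,2)$-equations supply the two homotopies, and the $(2,1)$-equation is an extra compatibility you simply discard.

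There is, however, a genuine gap in your forward direction. With the natural candidate
\[
H_0 \;=\; \begin{pmatrix} g & -\psi \\ 0 & -h \end{pmatrix}
\]
(note that $\psi$ belongs in the $(1,2)$ slot; your proposed entry ``$\psi + h\varphi g$'' does not type-check in either off-diagonal position, since $\psi\colon B\to A$ and $h\varphi g\colon A\to B$ have different domains and codomains), one computes
\[
d_{\cone}H_0 + H_0\, d_{\cone} \;=\; \mathrm{id} - N, \qquad N \;=\; \begin{pmatrix} 0 & 0 \\ \varphi g - h\varphi & 0 \end{pmatrix},
\]
and $\varphi g - h\varphi$ does not vanish for generic homotopies $g,h$. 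This is the actual content of the forward direction, not routine bookkeeping: one must observe that $N$ is a nilpotent chain endomorphism of $\cone(\varphi)$ (indeed $N^2=0$, and $d_{\cone}N=Nd_{\cone}$ because $d_B(\varphi g - h\varphi)+(\varphi g - h\varphi)d_A=0$), so that $\mathrm{id}-N$ is a chain automorphism with inverse $\mathrm{id}+N$, and then $H:=(\mathrm{id}+N)H_0$ is an honest contracting homotopy. Without this step, or an equivalent modification of the homotopy $h$, the argument does not close.
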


Recall now that the category $\K(\mathcal{A})$ is triangulated, with 
distinguished triangles given by those isomorphic to triangles of the form
\[
\xymatrix{
A^{\cdot} \ar[r]^-{f} &B^{\cdot} \ar[r]^-{\iota} &\cone(f) \ar[r]^-{\delta} &A[-1]^{\cdot}  
}.
\]
Using the above we can deduce the next result.
\begin{prop}\label{conehtpy}
Let $\varphi$ and $\psi$ be homotopy equivalences and $\alpha$ be a chain map, then 
$\cone(\varphi \circ \alpha \circ \psi) \simeq \cone(\alpha)$.
\end{prop}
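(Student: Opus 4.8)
The plan is to reduce the statement $\cone(\varphi \circ \alpha \circ \psi) \simeq \cone(\alpha)$ to the special case where one composes with a homotopy equivalence on only one side at a time, and then to exploit the triangulated structure of $\K(\mathcal{A})$ together with Proposition \ref{conenull}. First I would observe that it suffices to prove two lemmas: (a) if $\psi \colon A'^{\cdot} \to A^{\cdot}$ is a homotopy equivalence and $\alpha \colon A^{\cdot} \to B^{\cdot}$ is any chain map, then $\cone(\alpha \circ \psi) \simeq \cone(\alpha)$; and (b) if $\varphi \colon B^{\cdot} \to B'^{\cdot}$ is a homotopy equivalence, then $\cone(\varphi \circ \alpha) \simeq \cone(\alpha)$. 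Granting these, the general statement follows by applying (a) with $\alpha$ replaced by $\varphi\circ\alpha$ and then (b).

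For each of (a) and (b), the key tool is the fact that a commuting square of chain maps in which the two horizontal (or vertical) maps are homotopy equivalences induces a homotopy equivalence on mapping cones. Concretely, consider the square with top row $\alpha\circ\psi \colon A'^{\cdot}\to B^{\cdot}$, bottom row $\alpha\colon A^{\cdot}\to B^{\cdot}$, left map $\psi\colon A'^{\cdot}\to A^{\cdot}$, and right map $\mathrm{id}_{B^{\cdot}}$; this square commutes on the nose. There is an induced chain map on cones $\cone(\alpha\circ\psi)\to\cone(\alpha)$, given in each degree by $\psi \oplus \mathrm{id}$ up to the usual sign conventions, and fitting into a morphism of distinguished triangles. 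The plan is then to show this induced map is itself a homotopy equivalence. One clean way: it sits in a commuting ladder of distinguished triangles whose other two vertical maps ($\psi$ and $\mathrm{id}_{B^{\cdot}}$) are homotopy equivalences, so its cone is homotopy equivalent to the iterated cone of the cones of $\psi$ and $\mathrm{id}$, both of which are contractible by Proposition \ref{conenull}; hence the induced map's cone is contractible and, again by Proposition \ref{conenull}, the map is a homotopy equivalence. Case (b) is handled symmetrically, using the square with vertical maps $\mathrm{id}_{A^{\cdot}}$ and $\varphi$.

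Alternatively — and perhaps more in the spirit of the elementary treatment in this section — I would give (a) and (b) directly by writing down explicit homotopies. Pick a homotopy inverse $\psi'$ for $\psi$ with chosen homotopies $\psi'\psi \simeq \mathrm{id}$, $\psi\psi' \simeq \mathrm{id}$; then build the candidate inverse $\cone(\alpha)\to\cone(\alpha\circ\psi)$ degreewise out of $\psi'$ and the homotopy terms, and verify it is a chain map and a two-sided homotopy inverse by a bookkeeping computation with the block-matrix differential $\left(\begin{smallmatrix} -d_A & 0 \\ -\alpha & d_B \end{smallmatrix}\right)$. This is routine but notation-heavy, so I would only sketch it.

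The main obstacle is bookkeeping: getting the sign conventions for the shift $[-1]$, the cone differential, and the induced maps on cones mutually consistent, so that the squares genuinely commute (or commute up to a specified homotopy) and the ladder-of-triangles argument applies verbatim. There is no conceptual difficulty — everything is formal once Proposition \ref{conenull} and the triangulated structure recalled just above are in hand — but the verification that the induced map on cones is a chain map, and the identification of its cone with the iterated cone, must be done carefully. I would therefore lean on the triangulated-category argument rather than explicit homotopies, since it isolates the one nontrivial input (Proposition \ref{conenull}) and makes the signs someone else's problem.
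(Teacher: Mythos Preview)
Your proposal is correct and takes essentially the same approach as the paper: reduce to one-sided composition with a homotopy equivalence, invoke the distinguished triangle relating $\cone(f)$, $\cone(g\circ f)$, and $\cone(g)$, and apply Proposition~\ref{conenull}. The paper's version is just slightly more streamlined in that it writes down this triangle directly (via the homotopy equivalence $\cone\big(\cone(f)\to\cone(g\circ f)\big)\simeq\cone(g)$) rather than going through the morphism-of-triangles language, but the content is the same.
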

\begin{proof}
Given chain maps $f$ and $g$, there is a homotopy equivalence
\[
\cone\left(\xymatrix{\cone(f) \ar[rr]^-{\left(\begin{smallmatrix} id & 0 \\ 0 & g\end{smallmatrix}\right)} 
& &\cone(g \circ f)}\right) \simeq \cone(g) 
\]
so there is a distinguished triangle
\[
\xymatrix{
\cone(f) \ar[r] &\cone(g\circ f) \ar[r] &\cone(g) \ar[r] &\cone(f)[-1]  
}.
\]
Considering the rotations of this triangle, the result follows from Proposition \ref{conenull} assuming in turn 
that $f$ or $g$ is a homotopy equivalence.
\end{proof}

Let $\{A^{i,j}\}$ be a double complex. By convention, the horizontal $d_h$ and vertical $d_v$ 
differentials anti-commute. Given a double complex we can obtain an element in $\Ch(\Ch(\mathcal{A}))$ by negating 
the differentials in every other row, and vice-versa. We will use this trick to show the following.

\begin{prop}[Replacement]\label{replacement}
Let $\{A^{i,j}\}$ be a double complex with $0 \leq i \leq \infty$ and $0 \leq j \leq m$ (a triply-bounded 
double complex). Suppose that for each $j$ there exist complexes $D^{\cdot,j}$ and
homotopy equivalences $\varphi_j: A^{\cdot,j} \simeq D^{\cdot,j}$, 
then $\Tot(\{A^{i,j}\})$ is homotopy equivalent to a complex $D_m$ which 
has $\oplus_{i+j=k} D^{i,j}$ in homological degree $k$.
\end{prop}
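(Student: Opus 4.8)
The plan is to induct on $m$, the number of rows. For the base case $m = 0$ the double complex is just the single complex $A^{\cdot, 0}$, and the homotopy equivalence $\varphi_0 \colon A^{\cdot,0} \simeq D^{\cdot,0}$ is precisely the desired statement with $D_0 = D^{\cdot,0}$. For the inductive step, suppose the result holds for double complexes with rows indexed $0 \leq j \leq m-1$; I want to prove it for rows $0 \leq j \leq m$.

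First I would apply the sign-change trick to view $\{A^{i,j}\}$ as an object of $\Ch(\Ch(\mathcal{A}))$: negate the differentials in alternating rows so that each row $A^{\cdot,j}$ becomes an honest complex and the vertical maps become chain maps between these complexes. Viewed this way, the whole double complex is an iterated mapping cone. Concretely, write $B^{\cdot} = \{A^{i,j}\}_{0 \le j \le m-1}$ for the sub-double-complex on the bottom $m$ rows and let $T = \Tot(B^{\cdot})$; then there is a chain map $f \colon A[-1]^{\cdot, m} \to T$ (the alternating-sign vertical differential out of the top row, suitably shifted) such that $\Tot(\{A^{i,j}\}_{0\le j\le m}) \cong \cone(f)$, with $\cone(f)^k = A^{k+1, m} \oplus T^k$, which unwinds to $\bigoplus_{i+j=k} A^{i,j}$ as required before we replace anything.

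Next I would replace the two sides of this cone by their homotopy-equivalent models. By the inductive hypothesis, $T = \Tot(B^{\cdot}) \simeq D_{m-1}$ via some homotopy equivalence $\psi$, where $D_{m-1}$ has $\bigoplus_{i+j=k,\ j\le m-1} D^{i,j}$ in degree $k$. And the top row satisfies $A^{\cdot, m} \simeq D^{\cdot, m}$ via $\varphi_m$. Now invoke Proposition \ref{conehtpy}: the composite $\psi \circ f \circ \varphi_m^{-1}$ (after the appropriate shift, noting $\varphi_m^{-1}$ is a homotopy equivalence since $\varphi_m$ is) is a chain map $D[-1]^{\cdot,m} \to D_{m-1}$, and
\[
\cone(f) \;\simeq\; \cone\!\left(\psi \circ f \circ \varphi_m^{-1}\right) \;=:\; D_m.
\]
This $D_m$ has $D^{k+1,m} \oplus (D_{m-1})^k = \bigoplus_{i+j=k} D^{i,j}$ in homological degree $k$, exactly as claimed. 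Composing with $\cong$ to the original $\Tot$ gives $\Tot(\{A^{i,j}\}) \simeq D_m$ and closes the induction.

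The one genuine subtlety — the main thing to be careful about rather than the main obstacle — is bookkeeping for the semi-infinite direction: $i$ ranges over $0 \le i \le \infty$, so the total complex is only bounded below, not bounded, and one must make sure the mapping-cone decomposition and the total-complex sums are well-defined degreewise (they are: fixing $k$, the condition $i + j = k$ with $0 \le j \le m$ leaves only finitely many $i$, so each $\Tot^k$ is a finite direct sum, and Propositions \ref{conehtpy} and \ref{replacement}'s ingredients apply without any completeness or convergence hypothesis). The other point to verify is that the sign-adjusted vertical map really does assemble the top row plus bottom $m$ rows into a mapping cone with the correct total differential; this is the standard "$\Tot$ of a double complex is the iterated cone" computation, and the anti-commutativity convention for $d_h, d_v$ stated just before the proposition is exactly what makes the signs work out.
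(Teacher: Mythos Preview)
Your overall strategy --- split off one row, express the total complex as a mapping cone, then invoke Proposition~\ref{conehtpy} to replace both source and target by their homotopy-equivalent models --- is exactly the paper's approach. However, you have the direction of the cone backwards, and this propagates into index errors throughout.

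In the paper's convention the vertical differential goes upward, $d_v : A^{i,j} \to A^{i,j+1}$; there is no natural map \emph{out of} the top row back down to the sub-total-complex $T$. The correct decomposition (which you can verify directly in the $m=1$ case the paper spells out) is
\[
\Tot(\{A^{i,j}\}_{0\le j\le m}) \;\cong\; \cone\bigl(T \xrightarrow{\,d_v\,} A^{\cdot,m}[m-1]\bigr)[1],
\]
with the map going \emph{from} $T=\Tot(\{A^{i,j}\}_{0\le j\le m-1})$ \emph{to} the shifted top row. Your own degree count is a symptom of the mix-up: you claim the row-$m$ contribution to $\Tot^k$ is $A^{k+1,m}$, but the condition $i+j=k$ with $j=m$ forces it to be $A^{k-m,m}$; likewise your asserted equality $D^{k+1,m}\oplus (D_{m-1})^k = \bigoplus_{i+j=k}D^{i,j}$ fails for the same reason. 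Once you reverse the direction of $f$ and correct the shifts, the rest of your argument --- including the remark about finiteness of the direct sums in each degree --- goes through and coincides with the paper's proof.
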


\begin{proof}
We proceed via induction on $m$. The case $m=0$ is obvious. We will show the $m=1$ case as this informs the 
proof of the general case. We consider a double complex $\{A^{i,j}\}$ of the form
\begin{equation}\label{m=1dc}
\xymatrix{A^{0,1} \ar[r]^{d_h} &A^{1,1} \ar[r]^{d_h} &A^{2,1} \ar[r]^{d_h} &\cdots \\
A^{0,0} \ar[u]^{d_v} \ar[r]^{d_h} &A^{1,0} \ar[u]^{d_v} \ar[r]^{d_h} &A^{2,0} \ar[u]^{d_v} 
\ar[r]^{d_h} &\cdots
}
\end{equation}
where each square anti-commutes. We find that
\begin{equation}\label{m=1tot}
\Tot(\{A^{i,j}\}) =
\xymatrix{ \underline{A^{0,0}} \ar[r]^-{\left(\begin{smallmatrix}d_h \\ d_v \end{smallmatrix}\right)}
&A^{1,0} \oplus A^{0,1} \ar[rr]^-{\left(\begin{smallmatrix} d_h & 0 \\ d_v & d_h \end{smallmatrix}\right)}
& & A^{2,0} \oplus A^{1,1} \ar[r]^-{\left(\begin{smallmatrix} d_h & 0 \\ d_v & d_h \end{smallmatrix}\right)}
&  \cdots
}.
\end{equation}
Negating the top row of equation \eqref{m=1dc} to view $d_v$ as a chain map between the complexes 
$(A^{\cdot,0},d_h)$ and $(A^{\cdot,1},-d_h)$ we find that $\Tot(\{A^{i,j}\}) = \cone(d_v)[1]$.
Consider the composition $\varphi_1 \circ d_v \circ \varphi_0^{-1}: D^{\cdot,0} \to D^{\cdot,1}$. 
Proposition \ref{conehtpy} shows that
\[
\cone(\varphi_1 \circ d_v \circ \varphi_0^{-1}) \simeq \cone(d_v)
\]
which gives the result since the degree $k$ term of $\cone(\varphi_1 \circ d_v \circ \varphi_0^{-1})[1]$ 
is $D^{k,0} \oplus D^{k-1,1}$.

We now prove the general case. Let $\{A^{i,j}\}_{j\leq m+1}$ be a double complex with $0
\leq i \leq \infty$ and $0 \leq j \leq m+1$ and let $\{A^{i,j}\}_{j\leq m}$ be the double complex obtained by 
truncating the last row of $\{A^{i,j}\}_{j\leq m+1}$. We find that
\[
\Tot(\{A^{i,j}\}_{j\leq m+1}) \cong \cone\left(
\xymatrix{\Tot(\{A^{i,j}\}_{j\leq m})\ar[r]^-{d_v 
} &A^{\cdot,m+1}}[m]\right)[1]
\]
where we have negated the differential on $A^{\cdot,m+1}[m]$ in the case that $m$ is even to view $d_v$ as 
a chain map (recall how $[-]$ acts on differentials). By induction, there exists a homotopy equivalence 
$\psi:D_m \to \Tot(\{A^{i,j}\}_{j\leq m})$ and the result follows from 
\[
\cone(\varphi_{m+1}[m] \circ d_v \circ \psi) \simeq \cone(d_v)
\]
as above.
\end{proof}

We will typically apply this result to double complexes of the form $\{A^i \bullet B^j\}$ where 
$A^{\cdot}$ and $B^{\cdot}$ are complexes in $\Ch(\mathcal{F})$ and  the complexes $A^i \bullet B^{\cdot}$ 
can be simplified using Gaussian elimination.

The final result we shall need describes how tensor products interact with cones and homotopy equivalence.

\begin{prop}\label{tensorcone}
Suppose $\mathcal{A}$ is a tensor category, then
\[
\cone\left(\xymatrix{A^{\cdot} \ar[r]^-{f} &B^{\cdot}}\right)\otimes C^{\cdot} =
\cone\left(\xymatrix{A^{\cdot}\otimes C^{\cdot} \ar[rr]^-{f\otimes id_C} & &B^{\cdot}\otimes C^{\cdot}}\right).
\]
and if $A^{\cdot} \simeq B^{\cdot}$ then $A^{\cdot} \otimes C^{\cdot} \simeq B^{\cdot} \otimes C^{\cdot}$.
\end{prop}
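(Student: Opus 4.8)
The plan is to prove the two assertions in turn: the equality of complexes by an explicit termwise comparison, and the homotopy statement by reducing to the contractibility of cones via Proposition \ref{conenull}.

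For the equality, I would write out both sides degree by degree. Since $\cone(f)^i = A^{i+1}\oplus B^i$, forming the total complex of the double complex $\{\cone(f)^i \otimes C^j\}$ gives $(\cone(f)\otimes C^{\cdot})^n = \bigoplus_{i+j=n}\bigl((A^{i+1}\otimes C^j)\oplus(B^i\otimes C^j)\bigr)$. On the other hand $\cone(f\otimes\mathrm{id}_C)^n = (A^{\cdot}\otimes C^{\cdot})^{n+1}\oplus(B^{\cdot}\otimes C^{\cdot})^n$, and reindexing the first summand ($i+j=n+1 \leftrightarrow (i-1)+j=n$) shows that the underlying objects agree. It then remains to check that the differentials coincide. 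Both are assembled from $-d_A$, $-f$, $d_B$, and the $C$-differential, with the Koszul sign governing how $d_C$ interacts with the first tensor factor; the shift $[-1]$ built into the cone and the shift implicit in forming $\cone(f\otimes\mathrm{id}_C)$ each contribute the same sign twist, so the two differentials agree on the nose. I would present this as a direct matrix computation with the signs exhibited.

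For the second assertion, suppose $g\colon A^{\cdot}\to B^{\cdot}$ is a homotopy equivalence. By Proposition \ref{conenull}, $\cone(g)\simeq 0$, so there is a contracting homotopy $h$ with $d h + h d = \mathrm{id}_{\cone(g)}$. Using the first part, $\cone(g\otimes\mathrm{id}_C) = \cone(g)\otimes C^{\cdot}$, and one checks that $h\otimes\mathrm{id}_C$ (acting on the first tensor factor, with the Koszul sign convention making the two $d_C$-terms cancel) is a contracting homotopy for $\cone(g)\otimes C^{\cdot}$; hence $\cone(g\otimes\mathrm{id}_C)\simeq 0$, and Proposition \ref{conenull} applied once more shows $g\otimes\mathrm{id}_C$ is a homotopy equivalence, i.e. $A^{\cdot}\otimes C^{\cdot}\simeq B^{\cdot}\otimes C^{\cdot}$. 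Equivalently, one can verify directly that $-\otimes C^{\cdot}$ carries a chain homotopy $s$ between chain maps $\varphi,\psi\colon A^{\cdot}\to B^{\cdot}$ to a chain homotopy $s\otimes\mathrm{id}_C$ between $\varphi\otimes\mathrm{id}_C$ and $\psi\otimes\mathrm{id}_C$, and so preserves homotopy equivalences.

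The only real obstacle is bookkeeping: pinning down conventions so that the first part is a literal equality rather than a mere isomorphism, since the cone, the shift $[-1]$, and the Koszul rule for $\otimes$ each introduce signs that must cancel consistently. Once the conventions are fixed this is mechanical, and the second assertion then follows formally.
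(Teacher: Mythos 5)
Your proof is correct. The paper states Proposition \ref{tensorcone} as one of the ``standard results'' of Subsection \ref{homoalg1} and gives no proof at all, so there is nothing to compare against; your argument --- termwise identification of $\cone(f)\otimes C^{\cdot}$ with $\cone(f\otimes id_C)$ via the reindexing $i+j=n+1 \leftrightarrow (i-1)+j=n$, and then either tensoring a contracting homotopy of $\cone(g)$ with $id_C$ or, more directly, observing that a chain homotopy $s$ between $\varphi$ and $\psi$ yields the chain homotopy $s\otimes id_C$ between $\varphi\otimes id_C$ and $\psi\otimes id_C$ --- is exactly the expected one, and you correctly flag the Koszul-sign bookkeeping as the only point requiring care.
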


Similar results holds for other operations that behave like the tensor product of complexes. In particular, we will 
use the analogous result for horizontal composition in $\Ch(\mathcal{F})$.

\subsection{Homological calculus in $\Ch(\mathcal{A})$}\label{homoalg2}

In this section, we study the calculus of chain complexes in $\Ch(\mathcal{A})$. 
Let $A^{\cdot}$ be such a complex.

\begin{defn} The \emph{$k^{th}$ truncation} of $A^{\cdot}$ is the complex $t_{\leq k}A^{\cdot}$ with
\[ t_{\leq k}A^i = \left\{
     \begin{array}{ll}
       A^i & \text{ if \ \ }  i\leq k\\
       0 & \text{ if \ \ }   i > k
     \end{array}
   \right.\]
and the obvious differentials.
\end{defn}

Extend $t_{\leq k}$ to a functor $\Ch(\mathcal{A}) \rightarrow \Ch(\mathcal{A})$ by defining
$t_{\leq k}f$ for a chain map $f:A^{\cdot} \to B^{\cdot}$ to be the obvious map $t_{\leq k}A^{\cdot} \to
t_{\leq k}B^{\cdot}$. We will say that a chain complex $A^{\cdot}$ is $\Oh^h(k)$ if $A^i = 0$ for all $i<k$.
Equivalently, $A^{\cdot}$ is $\Oh^h(k)$ if and only if $t_{\leq (k-1)}A^{\cdot} = 0$.

\begin{defn}
The \emph{isomorphism order} of a chain map $f:A^{\cdot} \to B^{\cdot}$ is given by
\[ \abs{f}_{\cong} = \sup_k \{k \ | \ t_{\leq k}f \text{ is an isomorphism}\}\]
\end{defn}
Clearly, a chain map $f$ is a chain isomorphism if and only if $\abs{f}_{\cong} = \infty$.

\begin{prop}\label{triangleprop}
Let $A^{\cdot}\overset{f}{\longrightarrow}B^{\cdot}\overset{\iota}{\longrightarrow}\cone(f)
\overset{\delta}{\longrightarrow}A[-1]^{\cdot}$ be a distinguished triangle and suppose 
$B^{\cdot}$ is $\Oh^h(k)$,
then $\abs{\delta}_{\cong} \geq k-1$.
\end{prop}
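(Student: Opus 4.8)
The plan is to unwind the definition of the cone and its connecting map $\delta$, and to directly exhibit the degreewise structure of $\cone(f)$ in the relevant range. Recall that $\cone(f)^i = A^{i+1} \oplus B^i$, and the connecting map $\delta : \cone(f)^{\cdot} \to A[-1]^{\cdot}$ is the projection onto the $A^{i+1}$ summand, which in the shifted complex $A[-1]$ sits in degree $i$ (since $A[-1]^i = A^{i+1}$). So $\delta$ is, degreewise, the canonical projection $A^{i+1} \oplus B^i \twoheadrightarrow A^{i+1}$.

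First I would observe that the hypothesis $B^{\cdot}$ is $\Oh^h(k)$ means $B^i = 0$ for all $i < k$. Hence for every $i \leq k-1$ we have $B^i = 0$ (using $i \leq k-1 < k$), so $\cone(f)^i = A^{i+1} \oplus 0 = A^{i+1}$, and the map $\delta^i : \cone(f)^i \to A[-1]^i$ is the identity map $A^{i+1} \to A^{i+1}$ — in particular an isomorphism. Next I would check that the truncation $t_{\leq (k-1)}\delta$ is an isomorphism of complexes: it is degreewise an isomorphism by the previous sentence, and being a chain map it automatically intertwines the differentials, so a degreewise isomorphism of complexes is an isomorphism of complexes. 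Therefore $\abs{\delta}_{\cong} \geq k-1$ by the definition of isomorphism order as the supremum of the $k'$ for which $t_{\leq k'}\delta$ is an isomorphism.

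The only subtlety — and the single place one must be careful — is bookkeeping the degree shift: one must confirm that in $\cone(f)^i$ the summand that $\delta$ retains is $A^{i+1}$, that this is precisely $A[-1]^i$, and that $\delta$ is genuinely a projection (not, say, twisted by a differential component), which is immediate from the explicit matrix $\left(\begin{smallmatrix} -d_A & 0 \\ -f & d_B \end{smallmatrix}\right)$ for the differential of $\cone(f)$ and the standard formula for $\delta$. Once the indexing is pinned down, the argument is a one-line consequence of $B^i = 0$ for $i < k$. I do not expect any real obstacle here; the statement is essentially a sanity check that vanishing of $B$ in low degrees forces $\cone(f)$ to agree with $A[-1]$ there, which is exactly what the isomorphism-order language is designed to record.
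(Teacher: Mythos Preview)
Your proposal is correct and follows essentially the same approach as the paper: the paper's proof simply displays the diagram of $\delta$ degreewise, noting that $\cone(f)^i = A^{i+1}\oplus B^i$ so that for $i\leq k-1$ the $B^i$ summand vanishes and $\delta^i$ is the identity, whence $t_{\leq k-1}\delta$ is an isomorphism. Your additional remarks on verifying the chain-map condition and the indexing conventions are accurate but not strictly needed beyond what the paper records.
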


\begin{proof}
The result follows since $\delta$ is given by the diagram
\[\xymatrix{\cdots \ar[r] & A^{i+1} \oplus B^i \ar[r] \ar[d]^{=} &\cdots \ar[r] &A^k \oplus
B^{k-1} \ar[r] \ar[d]^{=} &A^{k+1}\oplus B^k \ar[r] \ar[d]^{?} &\cdots\\ 
\cdots \ar[r] & A^{i+1} \ar[r] & \cdots \ar[r] & A^k \ar[r] & A^{k+1} \ar[r] & \cdots\\} 
\]
noticing that the degree $i$ term of $\cone(f)$ is $A^{i+1} \oplus B^i$.
\end{proof}

Define an inverse system in $\Ch(\mathcal{A})$ as a sequence of chain complexes linked by chain maps:
\[\mathbf{A} = \left(A_0^{\cdot} \overset{f_0}{\longleftarrow} A_1^{\cdot} \overset{f_1}{\longleftarrow}
\cdots\right).\]

\begin{defn}\label{defstabilizing}
An inverse system $\mathbf{A}$ is \emph{stabilizing} if $\lim_{l\to\infty}\abs{f_l}_{\cong} = \infty$.
\end{defn}

\begin{defn}\label{defchainlimit}
An inverse system has a \emph{$\Ch$-limit}, denoted $\lim_{\Ch}\mathbf{A}$, if there exist chain maps
$\lim_{\Ch}\mathbf{A} \overset{\tilde{f_l}}{\longrightarrow}A_l^{\cdot}$ so that
\begin{equation}\label{chlimitdiag}
\xymatrix{&\lim_{\Ch}\mathbf{A} \ar[ld]_{\tilde{f}_{l-1}} \ar[dr]^{\tilde{f}_l}& \\ A_{l-1}^{\cdot} 
&& A_l^{\cdot} \ar[ll]_{f_{l-1}} \\}
\end{equation}
commutes and $\lim_{l\to\infty}\abs{\tilde{f}_l}_{\cong} = \infty$.
\end{defn}

In fact, these preceding two notions coincide.

\begin{thm}\label{chlimiffstab}
An inverse system $\mathbf{A}$ has a $\Ch$-limit if and only if it is stabilizing. Such a limit is unique up 
to isomorphism.
\end{thm}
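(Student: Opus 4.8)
The plan is to prove both directions of the equivalence and then uniqueness. For the forward direction, suppose $\mathbf{A}$ has a $\Ch$-limit with structure maps $\tilde{f}_l : \lim_{\Ch}\mathbf{A} \to A_l^{\cdot}$. From the commutativity of diagram \eqref{chlimitdiag} we have $\tilde{f}_{l-1} = f_{l-1} \circ \tilde{f}_l$, so $t_{\leq k}\tilde{f}_{l-1} = (t_{\leq k}f_{l-1}) \circ (t_{\leq k}\tilde{f}_l)$ for every $k$, using that $t_{\leq k}$ is a functor. The key observation is that if a composite $g \circ h$ is an isomorphism and $h$ is an isomorphism, then $g$ is an isomorphism; applying this with $k = \min(\abs{\tilde{f}_{l-1}}_{\cong}, \abs{\tilde{f}_l}_{\cong})$ shows roughly that $\abs{f_{l-1}}_{\cong} \geq \min(\abs{\tilde{f}_{l-1}}_{\cong}, \abs{\tilde{f}_l}_{\cong})$, and since $\abs{\tilde{f}_l}_{\cong} \to \infty$ this forces $\abs{f_l}_{\cong} \to \infty$, i.e. $\mathbf{A}$ is stabilizing.

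For the converse, suppose $\mathbf{A}$ is stabilizing, so $\abs{f_l}_{\cong} \to \infty$. The idea is to build $\lim_{\Ch}\mathbf{A}$ degree by degree: in homological degree $i$, the maps $(f_l)^i : A_{l+1}^i \to A_l^i$ are isomorphisms for all $l$ sufficiently large (once $\abs{f_l}_{\cong} \geq i$), so the inverse system of objects $\{A_l^i\}$ stabilizes to a well-defined object which we declare to be $(\lim_{\Ch}\mathbf{A})^i$; concretely one can take $(\lim_{\Ch}\mathbf{A})^i = A_{N(i)}^i$ where $N(i)$ is minimal with $\abs{f_l}_{\cong} \geq i$ for all $l \geq N(i)$. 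The differential is transported from the differential of $A_{N(i)}^{\cdot}$ (compatibly, since the $f_l$ are chain maps), and the structure map $\tilde{f}_l$ is defined on degree $i$ as the composite of the (iso or not) transition maps from $A_{N(i)}^i$ down to $A_l^i$. One then checks $\tilde{f}_l$ is a chain map, that \eqref{chlimitdiag} commutes by construction, and that $t_{\leq k}\tilde{f}_l$ is an isomorphism once $l$ is large enough that no truncation below $k$ is affected, giving $\abs{\tilde{f}_l}_{\cong} \to \infty$.

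For uniqueness: given two $\Ch$-limits $L$ and $L'$ with structure maps $\tilde{f}_l$ and $\tilde{f}_l'$, for each degree $i$ and $l$ large the truncation $t_{\leq i}\tilde{f}_l$ and $t_{\leq i}\tilde{f}_l'$ are isomorphisms onto $t_{\leq i}A_l^{\cdot}$, so $(t_{\leq i}\tilde{f}_l')^{-1} \circ t_{\leq i}\tilde{f}_l$ gives an isomorphism $t_{\leq i}L \to t_{\leq i}L'$; one verifies these are compatible as $i$ varies (using commutativity of the limit diagrams) and assemble to a chain isomorphism $L \cong L'$.

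The main obstacle I expect is purely bookkeeping rather than conceptual: making the ``degree by degree'' construction of $\lim_{\Ch}\mathbf{A}$ genuinely functorial and well-defined — one must pin down a consistent choice of the stabilized object in each degree, check the transported differentials square to zero and are compatible across degrees, and verify the structure maps are honest chain maps with $\abs{\tilde{f}_l}_{\cong} \to \infty$. A clean way to sidestep some of this is to define $(\lim_{\Ch}\mathbf{A})^{\cdot}$ as an actual inverse limit $\varprojlim A_l^{\cdot}$ in $\Ch(\mathcal{A})$ when $\mathcal{A}$ has such limits, but since $\mathcal{A}$ is only assumed additive it is cleaner to use the stabilization argument above, where each degree's inverse system is eventually constant.
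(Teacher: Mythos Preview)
Your proposal is correct and follows essentially the same approach as the paper: the degree-by-degree construction of the limit (your $N(i)$ plays the role of the paper's $l(k)$), the inequality $\abs{f_l}_{\cong} \geq \min(\abs{\tilde{f}_l}_{\cong}, \abs{\tilde{f}_{l+1}}_{\cong})$ from the commuting triangle, and the uniqueness argument via $(\tilde{f}_l')^{-1}\circ \tilde{f}_l$ in sufficiently high index all match the paper's proof. The only cosmetic differences are the order in which you treat the two implications and your slightly stronger (but harmless) choice of stabilization index $N(i)$.
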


\begin{proof}
First, assume that $\mathbf{A} = \left(A_0^{\cdot} \overset{f_0}{\longleftarrow} A_1^{\cdot} 
\overset{f_1}{\longleftarrow} \cdots\right)$ is a stabilizing inverse system. We shall give a direct construction of 
$A^{\cdot} = \lim_{\Ch}\mathbf{A}$.

For each homological degree $k$, there exists a minimal $l(k)$ so that $f_l^k:A_{l+1}^k \to A_l^k$ is 
an isomorphism for all $l\geq l(k)$; let $A^k = A_{l(k)}^k$. Since all of the $f_l$ are chain maps, 
there is an obvious choice of boundary map $A^k \to A^{k+1}$. Indeed, the construction is demonstrated 
in the following diagram:
\begin{landscape}
\[\xymatrix{
 \vdots &&&\vdots & \vdots & \vdots & \vdots & \vdots & \\
A_{l} \ar[u]_{f_{l-1}} &=&\cdots \ar[r] & \boxed{A_{l}^{k}} \ar@{~>}[dr] \ar[r] \ar[u] 
& A_{l}^{k+1} \ar[r] \ar[u] & A_{l}^{k+2} \ar[r] \ar[u] & A_{l}^{k+3} \ar[r] \ar[u] 
& A_{l}^{k+4} \ar[r] \ar[u] & \cdots  \\
A_{l+1} \ar[u]_{f_{l}} &=&\cdots \ar[r] & A_{l+1}^{k} \ar[r] \ar[u]^{\cong} 
& \boxed{A_{l+1}^{k+1}} \ar@{~>}[ddr] \ar[r] \ar[u] & A_{l+1}^{k+2} \ar[r] \ar[u] 
& A_{l+1}^{k+3} \ar[r] \ar[u] & A_{l+1}^{k+4} \ar[r] \ar[u] & \cdots \\
A_{l+2} \ar[u]_{f_{l+1}} &=&\cdots \ar[r] & A_{l+2}^{k} \ar[r] \ar[u]^{\cong} 
& A_{l+2}^{k+1} \ar[r] \ar[u]^{\cong} & A_{l+2}^{k+2} \ar[r] \ar[u] 
& \boxed{A_{l+2}^{k+3}} \ar@{~>}[ddr] \ar[r] \ar[u] & A_{l+2}^{k+4} \ar[r] \ar[u] & \cdots  \\
A_{l+3} \ar[u]_{f_{l+2}} &=&\cdots \ar[r] & A_{l+3}^{k} \ar[r] \ar[u]^{\cong} 
& A_{l+3}^{k+1} \ar[r] \ar[u]^{\cong} & \boxed{A_{l+3}^{k+2}} \ar@{~>}[ur] \ar[r] \ar[u] 
& A_{l+3}^{k+3} \ar[r] \ar[u]^{\cong} & A_{l+3}^{k+4} \ar[r] \ar[u] & \cdots  \\
\vdots \ar[u]_{f_{l+3}}&&&\vdots \ar[u]^{\cong} & \vdots \ar[u]^{\cong} & \vdots \ar[u]^{\cong} 
& \vdots \ar[u]^{\cong} & \vdots \ar[u] & \\
}
\]
\end{landscape}
\noindent where the boundary maps are uniquely specified as any path between the selected 
`nodes'; commutativity of the above grid ensures that these maps are well-defined and 
square to zero. Define maps $\tilde{f}_l:A^{\cdot} \to A_l^{\cdot}$ using the obvious maps from $A^k = A_{l(k)}^k 
\longrightarrow A_l^k$, noting that these determine chain maps with respect to the differential on $A^{\cdot}$ 
described above, again by commutativity.

We now show that $A^{\cdot}$ is a $\Ch$-limit for $\mathbf{A}$. Since $\mathbf{A}$ is stabilizing, for every 
$N$ there exists $l_N$ so that $l>l_N$ implies $\abs{f_l}_{\cong} \geq N$. This in turn implies that if $l>l_N$ then 
$\abs{\tilde{f}_l}_{\cong} \geq N$, showing that $\lim_{l\to\infty}\abs{\tilde{f}_l}_{\cong} = \infty$. The result now 
follows since the diagram \eqref{chlimitdiag} commutes by construction.

Next, assume that $\mathbf{A}$ has a $\Ch$-limit. Commutativity of \eqref{chlimitdiag} shows that 
\[\abs{f_l}_{\cong} \geq \min\left(\abs{\tilde{f}_l}_{\cong} ,\abs{\tilde{f}_{l+1}}_{\cong}\right) \] so 
$\lim_{l\to\infty}\abs{f_l}_{\cong} = \infty$, i.e. $\mathbf{A}$ is stabilizing.

Finally, we show that $\Ch$-limits are unique up to isomorphism in $\Ch(\mathcal{A})$. Suppose that both 
$A^{\cdot}$ and $(A')^{\cdot}$ are $\Ch$-limits. For each homological degree $k_0$, there exists $m(k_0)$ so that 
for all $m\geq m(k_0)$ the maps $\tilde{f}_m^k$ and $\tilde{f'}_m^k$ are isomorphisms for all $k\leq k_0$. Define the 
isomorphism $(A')^{\cdot} \overset{\cong}{\longrightarrow} A^{\cdot}$ in the $k_0^{th}$ homological degree by
\[(\tilde{f}_{m(k_0)}^k)^{-1}\circ \tilde{f'}_{m(k_0)}^k : (A')^k \to A^k. \]
Commutativity of \eqref{chlimitdiag} implies that this gives a chain isomorphism, well defined independent of the choice 
of $m(k_0)$.
\end{proof}

\subsection{Homological calculus in $\K(\mathcal{A})$}\label{homoalg3}

We now describe the extension of the definitions and results from the previous section to the homotopy category 
$\K(\mathcal{A})$.

Let $A^{\cdot}$ be a complex in $\K(\mathcal{A})$. Define the homological order of $A^{\cdot}$ via
\[\abs{A^{\cdot}}_h = \sup \{k \ | \ A^{\cdot} \simeq B^{\cdot} \text{ where } 
B^{\cdot} \text{ is } \Oh^h(k) \}.\]
We think of a complex as homologically negligible if $\abs{A^{\cdot}}_h$ is large. In the same vein, we view 
complexes $A^{\cdot}$ and $B^{\cdot}$ as homologically close if there is a chain map $A^{\cdot} 
\overset{f}{\longrightarrow} B^{\cdot}$ so that $\cone(f)$ is homologically negligible. 

The next two definitions generalize the notions of stabilizing inverse system and $\Ch$-limit.

\begin{defn}\label{defcauchy}
An inverse system $\mathbf{A} = \left(A_0^{\cdot} \overset{f_0}{\longleftarrow} A_1^{\cdot} 
\overset{f_1}{\longleftarrow} \cdots\right)$ is \emph{Cauchy} if $\lim_{l\to\infty}\abs{\cone(f_l)}_h = \infty$.
\end{defn}

\begin{defn}\label{defKlim}
An inverse system $\mathbf{A} = \left(A_0^{\cdot} \overset{f_0}{\longleftarrow} A_1^{\cdot} 
\overset{f_1}{\longleftarrow} \cdots\right)$ has a \emph{$\K$-limit}, denoted $\lim_{\K}\mathbf{A}$, if 
there exist chain maps $\lim_{\K}\mathbf{A} \overset{\tilde{f_l}}{\longrightarrow}A_l^{\cdot}$ so that
\begin{equation}\label{hlimitdiag}
\xymatrix{&\lim_{\K}\mathbf{A} \ar[ld]_{\tilde{f}_{l-1}} \ar[dr]^{\tilde{f}_l}& \\ 
A_{l-1}^{\cdot} && A_l^{\cdot} \ar[ll]_{f_{l-1}} \\}
\end{equation}
commutes in $\K(\mathcal{A})$ and $\lim_{l\to\infty}\abs{\cone(\tilde{f}_l)}_h = \infty$.
\end{defn}

We now aim to state and prove the analog of Theorem \ref{chlimiffstab} in the homotopy category. Before doing so, we need 
a preparatory lemma.

\begin{lem}\label{trianglelem}
Suppose we have a commutative triangle
\[\xymatrix{A^{\cdot} \ar[d]^{f} \ar[dr]^{k} & \\
B^{\cdot} \ar[r]^{g} & C^{\cdot}} \]
in $\K(\mathcal{A})$ (i.e. $k \simeq g\circ f$), then
\[\abs{\cone(g)}_h \geq \min \{\abs{\cone(k)}_h,\abs{\cone(f)}_h-1\},\]
\[\abs{\cone(f)}_h \geq \min \{\abs{\cone(g)}_h+1,\abs{\cone(k)}_h\},\]
and
\[\abs{\cone(k)}_h \geq \min \{\abs{\cone(f)}_h,\abs{\cone(g)}_h\}.\]
\end{lem}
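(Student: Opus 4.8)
The plan is to deduce all three inequalities from a single source: the octahedral-type relationship among the cones of $f$, $g$, and $k \simeq g \circ f$. Concretely, I would invoke Proposition \ref{conehtpy} (and its proof) to produce a distinguished triangle
\[
\xymatrix{
\cone(f) \ar[r] & \cone(k) \ar[r] & \cone(g) \ar[r] & \cone(f)[-1]
}
\]
in $\K(\mathcal{A})$, exactly as in the proof of Proposition \ref{replacement} where the analogous triangle $\cone(f) \to \cone(g\circ f) \to \cone(g) \to \cone(f)[-1]$ was used. The three desired inequalities are then the three cyclic instances of a single lemma: \emph{if $X^{\cdot} \to Y^{\cdot} \to Z^{\cdot} \to X[-1]^{\cdot}$ is a distinguished triangle, then $\abs{Y^{\cdot}}_h \geq \min\{\abs{X^{\cdot}}_h, \abs{Z^{\cdot}}_h\}$}, applied to the above triangle and to its two rotations (remembering that rotating introduces a shift by $[-1]$ or $[1]$, which shifts homological order by $\pm 1$ and accounts for the asymmetric $-1$ and $+1$ terms in the statement).

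So the real content is that sublemma about homological order and distinguished triangles. First I would reduce to the case where the triangle is the mapping-cone triangle $X^{\cdot} \overset{h}{\to} Y^{\cdot} \to \cone(h) \to X[-1]^{\cdot}$ up to isomorphism in $\K(\mathcal{A})$, which is legitimate since $\abs{-}_h$ is a homotopy invariant by definition. Next, given that $\abs{X^{\cdot}}_h \geq k$ and $\abs{\cone(h)}_h \geq k$, I would replace $X^{\cdot}$ and $\cone(h)$ by homotopy-equivalent complexes that are genuinely $\Oh^h(k)$; the subtle point is that $Y^{\cdot}$ sits in a triangle with these replacements, and I want to conclude $Y^{\cdot}$ is homotopy equivalent to something $\Oh^h(k)$. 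For this I would use the long exact sequence in cohomology (or, more in the spirit of this paper, a direct Gaussian-elimination / truncation argument): in homological degrees $< k$ the complex $Y^{\cdot}$ receives maps only from a contractible-in-low-degrees situation, so $t_{\leq k-1}Y^{\cdot}$ is contractible and can be stripped off. Alternatively, and perhaps more cleanly, I would argue that $t_{\leq k-1}$ applied to the triangle gives $t_{\leq k-1}Y^{\cdot} \simeq t_{\leq k-1}\cone(t_{\leq k-1}h)$, and since both ends of the original triangle vanish through degree $k-1$ after homotopy, the truncated cone is contractible.

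The main obstacle I anticipate is the interaction between truncation and homotopy equivalence: $\abs{X^{\cdot}}_h \geq k$ only says $X^{\cdot}$ is \emph{homotopy equivalent} to an $\Oh^h(k)$ complex, not that $t_{\leq k-1}X^{\cdot} = 0$ on the nose, so one cannot naively truncate the triangle. The fix is to first choose the homotopy equivalences realizing the minimal models of $X^{\cdot}$ and $\cone(h)$, transport $h$ across them to get a chain map between honestly $\Oh^h(k)$ complexes, and only then form the cone and compare — Proposition \ref{conehtpy} guarantees the new cone is homotopy equivalent to $Y^{\cdot}$. Once everything in sight is genuinely supported in degrees $\geq k$, the conclusion $\abs{Y^{\cdot}}_h \geq k$ is immediate because the cone of a chain map between $\Oh^h(k)$ complexes is visibly $\Oh^h(k)$. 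Feeding $k = \min\{\abs{X^{\cdot}}_h, \abs{Z^{\cdot}}_h\}$ through this argument, and then running it through the three rotations of the Proposition \ref{conehtpy} triangle, yields the three stated inequalities.
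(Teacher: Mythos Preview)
Your approach is correct and is essentially the paper's own argument: both use the distinguished triangle $\cone(f) \to \cone(g\circ f) \to \cone(g) \to \cone(f)[-1]$ together with a basic inequality, and then read off the three statements from its rotations. The paper phrases the key fact slightly more directly as $\abs{\cone(\alpha)}_h \geq \min\{\abs{Y^{\cdot}}_h, \abs{X^{\cdot}}_h - 1\}$ for any chain map $\alpha: X^{\cdot} \to Y^{\cdot}$, which is immediately proved by replacing $X^{\cdot}$ and $Y^{\cdot}$ by their $\Oh^h$ models and invoking Proposition \ref{conehtpy}; this is exactly your sublemma after one rotation, and it sidesteps the slightly tangled passage in your write-up about transporting $h$ across a model of $\cone(h)$ (the map you actually want to transport is the connecting map $Z[1]^{\cdot} \to X^{\cdot}$, whose cone is $Y^{\cdot}$).
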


\begin{proof}
The result follows from the various rotations of the distinguished triangle
\[\cone(f) \to \cone(g\circ f) \to \cone(g) \to \cone(f)[-1],\]
noting that for any chain map $X^{\cdot} \overset{\alpha}{\longrightarrow} Y^{\cdot}$ we have the inequality 
$\abs{\cone(\alpha)}_h \geq \min \{\abs{Y^{\cdot}}_h,\abs{X^{\cdot}}_h-1\}$.
\end{proof}

\begin{thm}\label{klimitiffcauchy}
An inverse system $\mathbf{A}$ has a $\K$-limit if and only if it is Cauchy.
\end{thm}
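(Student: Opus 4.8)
The plan is to prove Theorem~\ref{klimitiffcauchy} by imitating the proof of Theorem~\ref{chlimiffstab}, replacing the notion of isomorphism order $\abs{f}_{\cong}$ with the notion of cone-triviality order $\abs{\cone(f)}_h$, and upgrading chain isomorphisms to homotopy equivalences throughout. One direction is easy: if $\mathbf{A}$ has a $\K$-limit $A^{\cdot}$, then commutativity of the triangle \eqref{hlimitdiag} in $\K(\mathcal{A})$ together with Lemma~\ref{trianglelem} (applied to the triangle with vertices $A^{\cdot}$, $A_l^{\cdot}$, $A_{l-1}^{\cdot}$ and maps $\tilde f_{l-1} \simeq f_{l-1}\circ \tilde f_l$) gives
\[
\abs{\cone(f_{l-1})}_h \geq \min\{\abs{\cone(\tilde f_{l-1})}_h,\ \abs{\cone(\tilde f_l)}_h - 1\},
\]
so $\lim_{l\to\infty}\abs{\cone(f_l)}_h = \infty$, i.e.\ $\mathbf{A}$ is Cauchy.

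For the converse, suppose $\mathbf{A}$ is Cauchy; I would construct a $\K$-limit. Choose a subsequence $l_0 < l_1 < l_2 < \cdots$ so that $\abs{\cone(f_l)}_h \geq N$ for all $l \geq l_{N-1}$; by Proposition~\ref{conenull} and the inequality recalled in the proof of Lemma~\ref{trianglelem}, the composite $g_N := f_{l_N - 1}\circ \cdots \circ f_{l_{N-1}} : A_{l_N}^{\cdot} \to A_{l_{N-1}}^{\cdot}$ satisfies $\abs{\cone(g_N)}_h \geq N$ (the cone order of a composite is at least the minimum of the cone orders of the factors, which follows from the third inequality in Lemma~\ref{trianglelem}). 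So after passing to this cofinal subsystem --- which does not change whether a $\K$-limit exists, by the commutativity requirement --- we may assume $\abs{\cone(f_l)}_h \geq l+1$ for all $l$. Now pick representatives: since $\abs{\cone(f_l)}_h \geq l+1$, there is a complex $C_l^{\cdot}$ that is $\Oh^h(l+1)$ and a homotopy equivalence $\cone(f_l) \simeq C_l^{\cdot}$. I would then mimic the telescope/inverse-limit construction: in homological degree $k$, the maps $f_l^k$ for $l \geq k$ fit into an exact triangle whose third term is concentrated in degrees $\geq l+1 > k$, so $f_l^k$ is "eventually as good as an isomorphism." The cleanest way to make this precise is to replace each $f_l$ up to homotopy by an honest chain map and apply Theorem~\ref{chlimiffstab} after truncating: for each $k_0$, truncate below degree $0$ and use that $t_{\leq k_0}$ of the system is eventually stabilizing in the $\Ch$ sense (because $\cone(f_l)$ being $\Oh^h(l+1)$ forces $t_{\leq l}f_l$ to have contractible cone, hence --- after a homotopy --- to be close to an isomorphism). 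Define $A^{\cdot}$ degreewise from these stabilized truncations, and define $\tilde f_l : A^{\cdot} \to A_l^{\cdot}$ as the resulting maps; the homological estimate $\abs{\cone(\tilde f_l)}_h \to \infty$ follows from Lemma~\ref{trianglelem} exactly as the $\abs{\tilde f_l}_{\cong}$ estimate followed from commutativity in Theorem~\ref{chlimiffstab}.

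The main obstacle, and the place where care is genuinely needed, is that $\Oh^h(k)$-ness of a cone is a statement about a \emph{representative}, not a homotopy invariant of the cone itself, and that compositions of the $f_l$ are only defined up to homotopy. So the passage from "$\cone(f_l)$ is homotopy equivalent to something $\Oh^h(l+1)$" to "the maps $f_l^k$ literally stabilize in high degrees" requires choosing, for each $l$, a good representative of $f_l$ within its homotopy class --- one whose cone is actually $\Oh^h(l+1)$ on the nose --- and checking these choices are compatible enough to run the degreewise construction. Here I would invoke that replacing a complex by a homotopy equivalent one allows one to assume the cone of $f_l$ has no terms below degree $l+1$, and then argue that $f_l$ restricted to degrees $< l$ is split injective with image a direct summand, so that $t_{<l} f_l$ is a split injection with cokernel contractible; composing telescopically then gives genuine isomorphisms in each fixed degree once $l$ is large. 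This bookkeeping, together with verifying that the constructed $A^{\cdot}$ is independent of all choices up to homotopy equivalence (using Proposition~\ref{conehtpy} and the uniqueness argument from Theorem~\ref{chlimiffstab}), is the technical heart of the proof; the rest is a direct transcription of the $\Ch$-level argument with $\abs{\cdot}_{\cong}$ replaced by $\abs{\cone(\cdot)}_h$ and Lemma~\ref{trianglelem} doing the work that commuting squares did before.
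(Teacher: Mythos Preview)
Your easy direction (a $\K$-limit implies Cauchy) is correct and matches the paper.

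For the hard direction you have correctly located the obstacle, but your proposed resolution contains a genuine gap. You write that after replacing complexes one may ``assume the cone of $f_l$ has no terms below degree $l+1$'' and then that ``$f_l$ restricted to degrees $<l$ is split injective with image a direct summand.'' Neither statement follows from $\abs{\cone(f_l)}_h \geq l+1$. Knowing that $\cone(f_l)$ is \emph{homotopy equivalent} to something $\Oh^h(l+1)$ tells you nothing about the literal vanishing of $A_{l+1}^{i+1}\oplus A_l^i$ in low degrees, and you cannot simply change $f_l$ within its homotopy class to make its cone literally vanish below degree $l+1$ while keeping the same source and target. So you never produce a system to which Theorem~\ref{chlimiffstab} actually applies.

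The paper's idea, which is what is missing from your sketch, is to change the \emph{objects} rather than the maps, and to do so \emph{iteratively}. Choose $C_l^{\cdot}$ literally $\Oh^h(m_l)$ with $C_l[-1]^{\cdot}\simeq\cone(f_l)$. Set $B_0^{\cdot}=A_0^{\cdot}$, and having built $B_{l-1}^{\cdot}\simeq A_{l-1}^{\cdot}$, rotate the triangle on $f_{l-1}$ and replace to obtain a map $j_{l-1}:B_{l-1}[1]^{\cdot}\to C_{l-1}^{\cdot}$; then set $B_l^{\cdot}=\cone(j_{l-1})$. The connecting map $\delta_{l-1}:B_l^{\cdot}\to B_{l-1}^{\cdot}$ is the cone projection, so by Proposition~\ref{triangleprop} one has $\abs{\delta_{l-1}}_{\cong}\geq m_{l-1}-1$ on the nose. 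Now $\mathbf{B}$ is a genuine stabilizing system and Theorem~\ref{chlimiffstab} yields its $\Ch$-limit $B^{\cdot}$; composing the maps $\tilde\delta_l$ with the homotopy equivalences $B_l^{\cdot}\simeq A_l^{\cdot}$ gives the required $\tilde f_l$ for $\mathbf{A}$, and Gaussian elimination gives $\abs{\cone(\tilde f_l)}_h\geq\abs{\tilde\delta_l}_{\cong}\to\infty$. The point is that taking a cone with something literally $\Oh^h(m)$ makes the projection map a literal isomorphism in low degrees --- this is what converts a homotopy-theoretic estimate into the $\Ch$-level control you need, and it is exactly the step your proposal lacks.
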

The following proof is essentially taken from \cite{Rozansky}, but we reproduce the argument in our context as a 
construction contained therein will be used later.
\begin{proof}
First, assume that $\mathbf{A} = \left(A_0^{\cdot} \overset{f_0}{\longleftarrow} A_1^{\cdot} 
\overset{f_1}{\longleftarrow} \cdots\right)$ is Cauchy. This implies that there exist complexes 
$C_l^{\cdot}$ so that
\begin{enumerate}
\item $C_l[-1]^{\cdot} \simeq \cone(f_l)$ 
\item $C_l^{\cdot}$ is $\Oh^h(m_l)$ 
\item $\lim_{l\to\infty}m_l = \infty. $
\end{enumerate}
We now construct a new inverse system $\mathbf{B} = \left(B_0^{\cdot} \overset{\delta_0}{\longleftarrow} 
B_1^{\cdot} 
\overset{\delta_1}{\longleftarrow} \cdots\right)$ with $B_l^{\cdot} \simeq A_l^{\cdot}$ 
via the following procedure. 
Let $B_0^{\cdot} = A_0^{\cdot}$; to define $B_l^{\cdot}$ for $l\geq1$, consider the distinguished triangle
\begin{equation}\label{triangle1}
A_l^{\cdot}\overset{f_{l-1}}{\longrightarrow}A_{l-1}^{\cdot}
\overset{\iota_{l-1}}{\longrightarrow}\cone(f_{l-1}).
\end{equation}
Using the diagram
\[\xymatrix{
 A_{l-1}[1]^{\cdot} \ar[r] \ar[d]^{=} & \cone(f_{l-1})[1] \ar[r] \ar[d]^{\simeq}  
& A_l^{\cdot} \ar[r]^{f_{l-1}} \ar[d]^{=}  & A_{l-1}^{\cdot} \ar[d]^{=} \\
A_{l-1}[1]^{\cdot} \ar[r] \ar[d]^{\simeq} & C_{l-1}^{\cdot} \ar[r] \ar[d]^{=}  & A_l^{\cdot} \ar[r] \ar[d]^{=}  
& A_{l-1}^{\cdot} \ar[d]^{\simeq} \\
B_{l-1}[1]^{\cdot} \ar[r]^{j_{l-1}} \ar[d]^{=} & C_{l-1}^{\cdot} \ar[r] \ar[d]^{=}  
& A_l^{\cdot} \ar[r] \ar[d]^{\simeq}  & B_{l-1}^{\cdot} \ar[d]^{=} \\
B_{l-1}[1]^{\cdot} \ar[r]^{j_{l-1}}  & C_{l-1}^{\cdot} \ar[r]   & \cone(j_{l-1}) \ar[r]^{\delta_{l-1}}   
& B_{l-1}^{\cdot} \\
}
\]
in which every row is a distinguished triangle and
whose first row is obtained from rotating \eqref{triangle1}, we define $B_l^{\cdot} = \cone(j_{l-1})$. Note that indeed 
$B_l^{\cdot} \simeq A_l^{\cdot}$ and under these homotopies we have the commutative diagram
\begin{equation}\label{square1}
\xymatrix{
A_l^{\cdot} \ar[r]^{f_{l-1}} \ar[d]^{\simeq} & A_{l-1}^{\cdot} \ar[d]^{\simeq}\\
B_l^{\cdot} \ar[r]^{\delta_{l-1}} & B_{l-1}^{\cdot} \ \ \ \ \  . \\
}
\end{equation}

Consider now the inverse system $\mathbf{B} = \left(B_0^{\cdot} \overset{\delta_0}{\longleftarrow} B_1^{\cdot} 
\overset{\delta_1}{\longleftarrow} \cdots\right)$ whose objects fit into distinguished triangles
\[B_l[1]^{\cdot} \overset{j_l}{\longrightarrow} C_l^{\cdot} \longrightarrow B_{l+1}^{\cdot} 
\overset{\delta_l}{\longrightarrow} B_l^{\cdot}.\]
Since $C_l^{\cdot}$ is $\Oh^h(m_l)$, Proposition \ref{triangleprop} gives that $\abs{\delta_l}_{\cong} \geq m_l-1$; this 
in turn implies that $\mathbf{B}$ is stabilizing so it has a $\Ch$-limit 
$B^{\cdot} = \lim_{\Ch}\mathbf{B}$.

We now see that $B^{\cdot}$ gives a $\K$-limit for $\mathbf{A}$. The commutative square \ref{square1} together 
with the fact that $B^{\cdot}$ is a $\Ch$-limit gives the diagram
\begin{equation}\label{speciallimitdiagram}
 \xymatrix{
&& B^{\cdot} \ar[dll]_{\tilde{\delta}_0} \ar[dl]^{\tilde{\delta}_1} \ar[d]^{\tilde{\delta}_2} \ar[dr] & \\
B_0^{\cdot} \ar[d]^{\alpha_0}_{\simeq} & B_1^{\cdot} \ar[d]^{\alpha_1}_{\simeq} \ar[l]^{\delta_0} 
& B_2^{\cdot} \ar[d]^{\alpha_2}_{\simeq} \ar[l]^{\delta_1} & \cdots \ar[l]\\
A_0^{\cdot} & A_1^{\cdot} \ar[l]^{f_0} & A_2^{\cdot} \ar[l]^{f_1} & \cdots \ar[l]\\  
}
\end{equation}
where the maps $\alpha_{l}$ are determined by equation \eqref{square1}, 
$\lim_{l\to\infty}\abs{\tilde{\delta}_l}_{\cong} = \infty$, and the squares commute up to homotopy. Since 
$\alpha_l$ is a homotopy equivalence,
$\cone(\alpha_l\circ \tilde{\delta}_l) \simeq \cone(\tilde{\delta}_l)$ which implies 
$\abs{\cone(\alpha_l\circ \tilde{\delta}_l)}_h = \abs{\cone(\tilde{\delta}_l)}_h.$ Define 
$\tilde{f}_l:B^{\cdot} \rightarrow A_l^{\cdot}$ via $\tilde{f}_l = \alpha_l\circ \tilde{\delta}_l$; Gaussian 
elimination of complexes implies that $\abs{\cone(\tilde{f}_l)}_h \geq \abs{\tilde{f}_l}_{\cong}$ so we have
\begin{align*}
\lim_{l\to\infty} \abs{\cone(\tilde{f}_l)}_h &\geq \lim_{l\to\infty}\abs{\tilde{\delta}_l}_{\cong}\\
&= \infty \ .
\end{align*}
Since $f_l\circ \tilde{f}_{l+1} \simeq \tilde{f}_{l}$ this shows that $B^{\cdot}$ is a $\K$-limit for 
$\mathbf{A}$.

Now, suppose that $\mathbf{A}$ has a $\K$-limit. Equation \eqref{hlimitdiag} and Lemma \ref{trianglelem} give 
that \[\abs{\cone(f_l)}_h \geq \min \{\abs{\cone(\tilde{f}_{l})}_h,\abs{\cone(\tilde{f}_{l+1})}_h-1\}\] so 
\[\lim_{l\to\infty} \abs{\cone(f_l)}_h = \infty\] showing that $\mathbf{A}$ is Cauchy.
\end{proof}

Similar to the case of limits in $\Ch(\mathcal{A})$, there is also a uniqueness statement; however, we need a few 
preparatory lemmata before giving its proof. 


\begin{lem}\label{technicallem}
Let $\mathbf{A}$ be a Cauchy inverse system and let $B^{\cdot}$ be the $\K$-limit constructed in the proof of 
Theorem \ref{klimitiffcauchy}. If $A^{\cdot}$ is a complex and there are maps 
$A^{\cdot} \overset{a_i}{\longrightarrow} A_i^{\cdot}$ so that the diagram
\[\xymatrix{&& A^{\cdot} \ar[dll]_{a_0} \ar[dl]^{a_1} \ar[d]^{a_2} \ar[dr] & \\
A_0^{\cdot} & A_1^{\cdot} \ar[l]^{f_0} & A_2^{\cdot} \ar[l]^{f_1} & \cdots \ar[l]\\
}
\]
commutes in $\K(\mathcal{A})$, then there exists a map 
$A^{\cdot} \overset{h}{\longrightarrow} B^{\cdot}$ so that the triangles
\[\xymatrix{& A^{\cdot} \ar[d]^{h} \ar[dl]_{a_i} \\
A_i^{\cdot}  & B^{\cdot} \ar[l]^{\alpha_i \circ \tilde{\delta}_i} }  \]
commute in $\K(\mathcal{A})$.
\end{lem}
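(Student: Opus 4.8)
The plan is to build the map $h : A^{\cdot} \to B^{\cdot}$ by exploiting the particular construction of $B^{\cdot}$ as a $\Ch$-limit of the auxiliary system $\mathbf{B}$, together with the inverse homotopy equivalences $\alpha_i^{-1} : A_i^{\cdot} \to B_i^{\cdot}$. First I would transport the given compatible maps $a_i : A^{\cdot} \to A_i^{\cdot}$ across the homotopy equivalences, setting $b_i = \alpha_i^{-1} \circ a_i : A^{\cdot} \to B_i^{\cdot}$. Using the commutative square \eqref{square1} (which says $\alpha_{l-1} \circ \delta_{l-1} \simeq f_{l-1} \circ \alpha_l$, equivalently $\delta_{l-1} \circ b_l \simeq b_{l-1}$), one checks that the $b_i$ form a compatible cone over the system $\mathbf{B}$ in $\K(\mathcal{A})$.

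The key point is then that $\mathbf{B}$ is \emph{stabilizing} in $\Ch(\mathcal{A})$ — this is precisely what was established in the proof of Theorem \ref{klimitiffcauchy}, namely $\abs{\delta_l}_{\cong} \geq m_l - 1 \to \infty$ — and $B^{\cdot} = \lim_{\Ch}\mathbf{B}$. So in each homological degree $k$ there is a minimal $l(k)$ with $\delta_l^k$ an isomorphism for $l \geq l(k)$, and $B^k = B_{l(k)}^k$. The natural candidate is to define $h^k : A^k \to B^k$ as a chain-level representative of $\tilde{\delta}_{l(k)}^{-1} \circ b_{l(k)}$ in degree $k$; more carefully, one should fix for each $k$ an $l$ large enough that $\tilde{\delta}_l^j$ is an isomorphism for all $j \leq k$ (possible since $\abs{\tilde{\delta}_l}_{\cong} \to \infty$) and set $h^j = (\tilde{\delta}_l^j)^{-1} \circ (b_l)^j$ there. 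Commutativity of \eqref{chlimitdiag} for $\mathbf{B}$ guarantees this is independent of the choice of $l$ in the overlap and assembles into a genuine chain map $h : A^{\cdot} \to B^{\cdot}$; by construction $\tilde{\delta}_i \circ h \simeq b_i$, hence $\alpha_i \circ \tilde{\delta}_i \circ h \simeq \alpha_i \circ b_i = a_i$, which is exactly the asserted commutativity.

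The main obstacle will be making the "transport across homotopy equivalences and then take a $\Ch$-limit" argument precise at the chain level: the relations $\delta_{l-1} \circ b_l \simeq b_{l-1}$ only hold up to homotopy, whereas the explicit $\Ch$-limit construction in Theorem \ref{chlimiffstab} works with honest chain maps and strict commutativity on the stabilized part. The way around this is to observe that on the stabilized range of degrees (where the relevant $\tilde{\delta}$'s are isomorphisms) one can first replace each $b_l$ by $\tilde{\delta}_l \circ \big(\text{its own definition}\big)$-compatible representatives and only the \emph{homotopy class} of $h$ is being pinned down, so it suffices to produce $h$ in $\K(\mathcal{A})$ — and there strict commutativity is not needed. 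One should be slightly careful that a single $h$ works simultaneously for all $i$; this follows because for $i \leq l$ the triangle $\tilde{\delta}_i = \delta_{i} \circ \cdots \circ \delta_{l-1} \circ \tilde{\delta}_l$ (on the stabilized range) lets the degree-$j$ formula computed with any sufficiently large $l$ push forward correctly to $B_i^j$. A final minor check is that $h$ is well defined as a chain map (not merely degreewise), which again is immediate from the commutativity of \eqref{chlimitdiag} for $\mathbf{B}$ since the boundary maps of $B^{\cdot}$ are inherited from those of the $B_l^{\cdot}$.
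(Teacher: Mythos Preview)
Your overall strategy --- transport the $a_i$ across the equivalences $\alpha_i$ to get maps into the $B_i^{\cdot}$, then use that $\mathbf{B}$ is a stabilizing $\Ch$-system to produce $h$ --- is exactly the paper's approach. But there is a genuine gap at the point you yourself flag as ``the main obstacle,'' and your proposed workaround does not close it.

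Concretely: fix a degree $j$ and two indices $l < l'$ both large enough that $\tilde\delta_l^j$ and $\tilde\delta_{l'}^j$ are isomorphisms. Your recipe gives $h^j = (\tilde\delta_l^j)^{-1}\circ b_l^j$ on the one hand and $h^j = (\tilde\delta_{l'}^j)^{-1}\circ b_{l'}^j$ on the other. For these to agree you would need $b_l^j = (\delta_l \circ \cdots \circ \delta_{l'-1})^j \circ b_{l'}^j$, but you only know $b_l \simeq \delta_l\circ\cdots\circ\delta_{l'-1}\circ b_{l'}$. So the degreewise formulas do \emph{not} stabilize as $l\to\infty$, and no single chain map $h$ is produced. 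Saying ``only the homotopy class of $h$ matters'' does not help: a homotopy class is still represented by an honest chain map, and you have not constructed one. Your suggestion to ``replace each $b_l$ by $\tilde\delta_l\circ(\text{its own definition})$-compatible representatives'' is exactly what is needed, but it is the entire content of the argument and you have not carried it out; it is not clear a priori that one can modify the $b_l$ within their homotopy classes to make all the relations strict simultaneously.

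The paper resolves this by an explicit degree-by-degree correction. It sets $g_i = (t_{\le i}b_i)^{-1}\circ t_{\le i}a_i$ (with $b_i := \alpha_i\circ\tilde\delta_i$, after reindexing so $|b_i|_{\cong}\ge i$), then uses the actual homotopies $H_i$ witnessing $a_i \simeq f_i\circ a_{i+1}$ to build a map $\hat g_{i+1}\simeq g_{i+1}$ with $t_{\le j}\hat g_{i+1} = t_{\le j}g_i$ for $j<i$. A second round of homotopy correction produces $h_i\simeq g_i$ with $t_{\le j}h_{i+1} = t_{\le j}h_i$ for $j<i$, and \emph{these} stabilize to a genuine chain map $h$. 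Finally one checks $b_i\circ h\simeq a_i$ by an explicit (and somewhat lengthy) computation tracking all the accumulated homotopies. This bookkeeping is the substance of the lemma; your outline stops just before it begins.
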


\begin{proof}
Consider the diagram

\[\xymatrix{& A^{\cdot} \ar[dl]_{a_0} \ar[d]_{a_1} \ar[dr]_{a_2} \ar[drr] & \\
A_0^{\cdot} & A_1^{\cdot} \ar[l]^{f_0} & A_2^{\cdot} \ar[l]^{f_1} & \cdots \ar[l]\\
& B^{\cdot}\ar[ul]^{b_0} \ar[u]^{b_1} \ar[ur]^{b_2} \ar[urr] 
}
\]
where $b_i = \alpha_i \circ \tilde{\delta}_i$ (in the notation from the proof of Theorem \ref{klimitiffcauchy}). Using 
\eqref{speciallimitdiagram} we can assume that $b_i = f_i \circ b_{i+1}$ and by throwing out terms and re-indexing we can 
suppose that $\abs{b_i}_{\cong} \geq i$. It suffices to show that such an $h: A^{\cdot} \to B^{\cdot}$ exists with the 
(remaining) triangles
\[\xymatrix{& A^{\cdot} \ar[d]^{h} \ar[dl]_{a_i} \\
A_i^{\cdot}  & B^{\cdot} \ar[l]^{b_i} } \]
commuting up to homotopy.

To this end, consider the sub-diagrams given by
\[\xymatrix{& A^{\cdot} \ar[dl]_{a_i} \ar[dr]^{a_{i+1}} \\
 A_i^{\cdot} && A_{i+1}^{\cdot} \ar[ll]^{f_i}\\
& B^{\cdot} \ar[ul]^{b_i} \ar[ur]_{b_{i+1}}.&}\\
\]
We begin by constructing maps $g_i: t_{\leq i}A^{\cdot} \to t_{\leq i}B^{\cdot}$ 
so that $g_{i+1}$ is homotopic to a 
map $\hat{g}_{i+1}$ with $t_{\leq j}\hat{g}_{i+1} = t_{\leq j}g_i$ for $j<i$ and so that the triangles
\[\xymatrix{& t_{\leq i}A^{\cdot} \ar[d]^{g_i} \ar[dl]_{t_{\leq i}a_i} \\
t_{\leq i} A_i^{\cdot}  & t_{\leq i}B^{\cdot} \ar[l]^{t_{\leq i}b_i} }  \]
commute. Define $g_i = (t_{\leq i}b_i)^{-1} \circ t_{\leq i}a_i$. Since $a_i \simeq f_i \circ a_{i+1}$ there exist maps 
$H_i^k$ so that
\begin{equation}\label{ahtpy}
a_i^k = f_i^k \circ a_{i+1}^k + d_{A_i} \circ H_i^{k-1} + H_i^k \circ d_{A}.
\end{equation}
Consider the map  $\hat{g}_{i+1}: t_{\leq i+1}A^{\cdot} \to t_{\leq i+1}B^{\cdot}$ 
defined in homological degree $k$ by
\[
\hat{g}_{i+1}^k = \left\{ \begin{array}{ll}
       g_{i+1}^{i+1} & \text{ if \ \ }  k = i+1\\
       g_{i+1}^i + d_B \circ (b_i^{i-1})^{-1} \circ H_i^{i-1}  & \text{ if \ \ }   k=i\\
       g_{i+1}^k + d_B \circ (b_i^{k-1})^{-1} \circ H_i^{k-1} + (b_i^{k})^{-1} \circ H_i^k \circ d_{A} 
        & \text{ if \ \ } k<i
       
     \end{array}
   \right.\ 
\]
which is a chain map homotopic to $g_{i+1}$. Note that for $k<i$
\begin{align*}
\hat{g}_{i+1}^k &= g_{i+1}^k + d_B \circ (b_i^{k-1})^{-1} \circ H_i^{k-1} 
+ (b_i^{k})^{-1} \circ H_i^k \circ d_{A} \\
&= (b_i^k)^{-1} \circ f_i^k \circ a_{i+1}^k + d_B \circ (b_i^{k-1})^{-1} \circ H_i^{k-1} 
+ (b_i^{k})^{-1} \circ H_i^k \circ d_{A} \\
&= (b_i^k)^{-1} \circ \left(f_i^k \circ a_{i+1}^k + d_{A_i} \circ H_i^{k-1} 
+ H_i^k \circ d_{A}\right) \\
&= (b_i^k)^{-1} \circ a_i^k \\
&= g_i^k
\end{align*}
so the $g_i$ have the desired properties.

We now define maps $h_i: t_{\leq i}A^{\cdot} \to t_{\leq i}B^{\cdot}$ with $h_i \simeq g_i$ and so that 
$t_{\leq j}h_{i+1}$ agrees with $t_{\leq j}h_i$ for all $j<i$ as follows. Let $h_0 = g_0$ and 
$h_1 = \hat{g}_1 \simeq g_1$; we will construct $h_{i+1}$ assuming that we have constructed $h_0,\dots,h_i$. Since 
$h_i \simeq g_i$ there exist maps $G_i^k$ so that 
\begin{align*}
 h_i^i &= g_i^i + d_{B}\circ G_i^{i-1}\\
h_i^k &= g_i^k + d_{B}\circ G_i^{k-1} + G_i^k\circ d_{A} \text{ \ \ if \ \ } k<i.
\end{align*}
Define $h_{i+1}$ via
\[
 h_{i+1}^k = \left\{ \begin{array}{ll}
                     g_{i+1}^{i+1} & \text {if \ \ } k=i+1\\
		     g_{i+1}^i + d_{B}\circ \left((b_i^{i-1})^{-1} \circ H_i^{i-1} + G_i^{i-1} \right) 
                      & \text {if \ \ } k=i\\
                     g_{i+1}^k + d_{B}\circ \left((b_i^{k-1})^{-1} \circ H_i^{k-1} + G_i^{k-1} \right) 
                       & \\
                     \ \ \ \ \ \  + \left((b_i^{k})^{-1} \circ H_i^k + G_i^k \right) \circ d_{A} & \text {if \ \ } k<i\\
                      
\end{array} \right.\ 
\]
and observe that $h_{i+1} \simeq g_{i+1}$ (and that in fact we take $G_i^{i-1}=0$). We also compute
\begin{align*}
 h_{i+1}^k &= g_{i+1}^k + d_{B}\circ \left((b_i^{k-1})^{-1} \circ H_i^{k-1} + G_i^{k-1} \right) 
                + \left((b_i^{k})^{-1} \circ H_i^k + G_i^k \right) \circ d_{A}\\
           &=\hat{g}_{i+1}^k + d_{B}\circ G_i^{k-1} + G_i^k \circ d_{A}\\
           &= g_i^k + d_{B}\circ G_i^{k-1} + G_i^k \circ d_{A}\\
           &= h_i^k
\end{align*}
for $k<i$, so the $h_i$ have the desired properties.

Finally, let $h:A^{\cdot} \to B^{\cdot}$ be defined as the stable limit of the maps $h_i$, i.e. $h^k = h_i^k$ for any 
$i>k$. It remains to check that $b_i\circ h \simeq a_i$ for all $i$. Observe first that we have the equalities
\begin{align*}
h^k &= \left\{ \begin{array}{ll}
       h_{k+1}^k & \text{ if \ \ }  k\geq i\\
       h_i^k  & \text{ if \ \ }   k<i \end{array} \right. \\
&= \left\{ \begin{array}{ll}
       g_{k+1}^k + d_{B}\circ (b_k^{k-1})^{-1} \circ H_k^{k-1} & \text{ if \ \ }  k\geq i\\
       g_i^k + d_B\circ G_i^{k-1} + G_i^k \circ d_A & \text{ if \ \ }   k<i \end{array} \right.
\end{align*}
and so
\begin{align*}
(b_i\circ h)^k &= \left\{ \begin{array}{ll}
       b_i^k\circ g_{k+1}^k + b_i^k\circ d_{B}\circ (b_k^{k-1})^{-1} \circ H_k^{k-1} 
          & \text{ if \ \ }  k\geq i\\
       b_i^k\circ g_i^k + b_i^k\circ d_B\circ G_i^{k-1} + b_i^k\circ G_i^k \circ d_A 
          & \text{ if \ \ }   k<i \end{array} \right.  \\
&= \left\{ \begin{array}{ll}
        b_i^k\circ (b_{k+1}^k)^{-1}\circ a_{k+1}^k & \\
        \ \ \ \ + d_{A_i}\circ b_i^{k-1}\circ (b_k^{k-1})^{-1} \circ H_k^{k-1}  & \text{ if \ \ }  k\geq i\\
        b_i^k\circ (b_i^k)^{-1}\circ a_i^k & \\
        \ \ \ \ +  d_{A_i}\circ b_i^{k-1}\circ G_i^{k-1} + b_i^k\circ G_i^k \circ d_A & \text{ if \ \ }   k<i
        \end{array} \right. \\
&= \left\{ \begin{array}{ll}
        f_i^k\circ \cdots \circ f_k^k\circ a_{k+1}^k & \\
        \ \ \ \ + d_{A_i}\circ f_i^{k-1}\circ \cdots \circ f_{k-1}^{k-1} \circ H_k^{k-1}  & \text{ if \ \ }  k\geq i\\
        a_i^k +  d_{A_i}\circ b_i^{k-1}\circ G_i^{k-1} + b_i^k\circ G_i^k \circ d_A & \text{ if \ \ }   k<i.
       \end{array} \right.
\end{align*}
Using \eqref{ahtpy} we compute
\begin{align*}
f_i^k\circ \cdots \circ f_k^k\circ a_{k+1}^k &= a_i^k 
 - d_{A_i}\circ \left(H_i^{k-1}+f_i^{k-1}\circ H_{i+1}^{k-1}
 +\cdots \right. \\
& \ \ \left. +f_i^{k-1}\circ \cdots \circ f_{k-1}^{k-1} \circ H_k^{k-1}\right) - \left(H_i^k+f_i^k\circ H_{i+1}^k+\cdots 
                                                                                                              \right. \\
& \ \  \left. +f_i^k\circ \cdots \circ f_{k-1}^k \circ H_k^k\right)\circ d_A
\end{align*}
and
\begin{equation*}
(b_i\circ h)^k = \left\{ \begin{array}{ll}
                    a_i^k - d_{A_i}\circ \left(H_i^{k-1}
                     +f_i^{k-1}\circ H_{i+1}^{k-1}+\cdots \right. & \\
                    \ \ \ \ \ \ \left.+f_i^{k-1}\circ \cdots \circ f_{k-2}^{k-1} \circ H_{k-1}^{k-1}\right) & \\
                    \ \ \ \ \ \ - \left(H_i^k+f_i^k\circ H_{i+1}^k+\cdots \right. & \\
                    \ \ \ \ \ \ \left.+f_i^k\circ \cdots \circ f_{k-1}^k \circ H_k^k\right)\circ d_A & 
                                                                                   \text{ if \ \ }  k\geq i+2 \\
                    a_i^{i+1} - d_{A_i}\circ H_i^i 
                     - \left(H_i^{i+1}+f_i^{i+1}\circ H_{i+1}^{i+1}\right)\circ d_A & \text{ if \ \ }  k= i+1\\ 
                    a_i^i - H_i^i\circ d_A & \text{ if \ \ }  k= i\\
                    a_i^{i-1} + d_{A_i}\circ b_i^{i-2}\circ G_i^{i-2} & \text{ if \ \ }  k= i-1\\
                    a_i^k + d_{A_i}\circ b_i^{k-1}\circ G_i^{k-1} 
                     + b_i^k\circ G_i^k \circ d_A & \text{ if \ \ }  k\leq i-2
                      \end{array}
                 \right.
 \end{equation*}
from which it is evident that $b_i\circ h\simeq a_i$.
\end{proof}

The next result shows that if a complex is `infinitely' homologically negligible, then it is contractible.

\begin{lem}\label{contractiblelem}
If $\abs{A^{\cdot}}_h = \infty$, then $A^{\cdot}$ is contractible.
\end{lem}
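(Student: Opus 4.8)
The plan is to produce an explicit chain contraction $s\colon A^\cdot\to A^\cdot$ of degree $-1$ with $ds+sd=\mathrm{id}_{A^\cdot}$, building it one homological degree at a time starting from the bottom. To have a bottom to start from, I would first reduce to the bounded below case: since $\abs{A^\cdot}_h=\infty\geq 0$ there is a complex $B^\cdot$ with $A^\cdot\simeq B^\cdot$ and $B^\cdot$ being $\Oh^h(0)$, and since $\abs{-}_h$ is evidently a homotopy invariant and contractibility passes across homotopy equivalences, it suffices to treat $B^\cdot$. So assume from now on that $A^i=0$ for $i<N$.

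Now I would define maps $s^i\colon A^i\to A^{i-1}$ by induction on $i$, maintaining the invariant that $(ds+sd)^j=\mathrm{id}_{A^j}$ for every $j<i$. Take $s^i=0$ for $i\leq N$, which handles the base of the induction for free since $A^j=0$ for $j<N$. For the inductive step, given $s^N,\dots,s^i$ satisfying the invariant, set $e^i:=\mathrm{id}_{A^i}-d^{i-1}s^i$; producing $s^{i+1}$ with $d^{i-1}s^i+s^{i+1}d^i=\mathrm{id}_{A^i}$ amounts to producing $s^{i+1}$ with $s^{i+1}d^i=e^i$. The invariant in degree $i-1$ gives $s^id^{i-1}=\mathrm{id}_{A^{i-1}}-d^{i-2}s^{i-1}$, hence $d^{i-1}s^id^{i-1}=d^{i-1}$ and therefore $e^id^{i-1}=0$. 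This is where the hypothesis $\abs{A^\cdot}_h=\infty$ enters: choose a homotopy equivalence $A^\cdot\simeq C^\cdot$ with $C^\cdot$ being $\Oh^h(i+1)$; composing with a homotopy inverse yields a chain map $g\colon A^\cdot\to A^\cdot$ that factors through $C^\cdot$, so $g^i=0$, together with a homotopy $H$ satisfying $\mathrm{id}_{A^\cdot}-g=dH+Hd$, whose degree $i$ component reads $\mathrm{id}_{A^i}=d^{i-1}H^i+H^{i+1}d^i$. Left-multiplying this identity by $e^i$ and using $e^id^{i-1}=0$ gives $e^i=(e^iH^{i+1})d^i$, so I may set $s^{i+1}:=e^iH^{i+1}$; a short check confirms $s^{i+1}d^i=e^i$ and hence the invariant now holds in degree $i$. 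Iterating over all $i\geq N$ builds the desired $s$.

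The graded version is routine, since all the homotopy equivalences and homotopies in sight can be taken degree $0$, as is automatic in $\K(\mathcal{A})$ for graded $\mathcal{A}$, so $s$ is degree $0$. The one conceptual point to get right is the choice of inductive invariant. The tempting approach of restricting a single homotopy $H$ to a truncation $t_{\leq m}A^\cdot$ fails, because $t_{\leq m}A^\cdot$ has zero differential in its top degree and the restricted homotopy leaves an uncancelled term $H^{m+1}d^m$. The device that repairs this is to follow the ``error operator'' $e^i$ of the partially built contraction: the previous stage of the induction forces $e^id^{i-1}=0$, and multiplying the chain homotopy identity on the left by $e^i$ then makes $e^i$ factor through $d^i$, which is precisely the data needed to extend the contraction by one degree. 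I expect that verifying $e^id^{i-1}=0$ and bookkeeping these relations into an honest chain contraction is the only place requiring care; the rest is formal.
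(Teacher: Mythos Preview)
Your proof is correct and takes a genuinely different route from the paper's. The paper (following Rozansky) builds the null-homotopy as a stable limit: it fixes a sequence of homotopy equivalences $A^\cdot \simeq A_i^\cdot$ with $A_i^\cdot$ being $\Oh^h(m_i)$ and $m_i\to\infty$, composes them to obtain $\tilde f_i,\tilde g_i$ and accumulated homotopies $\tilde H_i$ with $\mathrm{Id}_A-\tilde g_i\tilde f_i=d\tilde H_i+\tilde H_i d$, and then observes that the successive differences $\tilde H_{i+1}-\tilde H_i=\tilde g_i H_{i+1}\tilde f_i$ factor through $A_{i+1}^\cdot$ and hence vanish in degrees below $m_{i+1}$. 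Thus the $\tilde H_i$ stabilize degree-by-degree to a global null-homotopy $\tilde H$.

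Your argument instead reduces to the bounded-below case and then builds the contraction one degree at a time, choosing a fresh homotopy equivalence to an $\Oh^h(i+1)$ complex at each step and using the error operator $e^i$ to splice the resulting homotopy component into the partially constructed $s$. This avoids tracking an entire sequence of compatible homotopies and their stabilization, at the modest cost of the preliminary reduction to bounded below (which the paper's argument does not need). Both are clean; the paper's version is more uniform and applies directly to unbounded $A^\cdot$, while yours is more local, since each inductive step depends only on $s^i$ and not on the earlier choices of $C^\cdot$ or $H$.
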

The following proof is taken from \cite{Rozansky}.
\begin{proof}
Since $\abs{A^{\cdot}}_h = \infty$ we have that $A^{\cdot} \simeq A_i^{\cdot}$ for complexes $A_i^{\cdot}$ which are
$O^h(m_i)$ with $\lim_{i\to \infty} m_i = \infty$. We have the following diagram in which each map is a 
homotopy equivalence
\[\xymatrix{A^{\cdot}\ar@/^/[r]^{f_0} & A_1^{\cdot} \ar@/^/[r] \ar@/^/[l]^{g_0} 
& \cdots \ar@/^/[r] \ar@/^/[l] &A_i^{\cdot} \ar@/^/[r]^{f_i} \ar@/^/[l]
& A_{i+1}^{\cdot} \ar@/^/[l]^{g_i} \ar@/^/[r] &\cdots \ar@/^/[l]} \ \ ;\]
in particular $Id_{A_i} -g_i\circ f_i = d_{A_i}\circ H_i + H_i \circ d_{A_i}$ where the $H_i$ are 
chain homotopies (and $A_0^{\cdot} = A^{\cdot}$).

Now, define maps $\tilde{f}_i = f_i \circ \cdots \circ f_0$, $\tilde{g}_i = g_0 \circ \cdots \circ g_i$, and 
$\tilde{H}_i = H_0 + \tilde{g}_0\circ H_1\circ \tilde{f}_0 + \cdots + \tilde{g}_{i-1}\circ H_i\circ \tilde{f}_{i-1}$.
which are related by
\begin{equation}\label{nullhtpy}
Id_A - \tilde{g}_i\circ \tilde{f}_i = d_A\circ \tilde{H}_i + \tilde{H}_i\circ d_A.
\end{equation}
The equality $\lim_{i\to \infty} m_i = \infty$ implies that the maps $\tilde{H}_i$ stabilize in each homological degree
as $i\to \infty$, so we can define their stable limit $\tilde{H}$. Equation \eqref{nullhtpy} stabilizes as well (for the
same reason) to give
\[ Id_A = d_A\circ \tilde{H} + \tilde{H}\circ d_A\]
which shows that $A^{\cdot}$ is contractible.
\end{proof}

We can now prove a uniqueness result concerning limits in $\K(\mathcal{A})$.

\begin{prop}\label{Klimunique}
The limit of a Cauchy sequence $\mathbf{A}$ is unique up to homotopy equivalence. 
\end{prop}

\begin{proof}
Since $\mathbf{A}$ is Cauchy we can construct the limit $B^{\cdot}$ as in the proof of Theorem \ref{klimitiffcauchy}.
If $A^{\cdot}$ is another $\K$-limit, then Lemma \ref{technicallem} gives a map 
$A^{\cdot} \overset{h}{\longrightarrow} B^{\cdot}$ so that the triangles
\[\xymatrix{& A^{\cdot} \ar[d]^{h} \ar[dl]_{a_i} \\
A_i^{\cdot}  & B^{\cdot} \ar[l]^{b_i} }  \]
commute (up to homotopy) for all $i$.

Lemma \ref{trianglelem} gives that
\[\abs{\cone(h)}_h \geq \min \{\abs{\cone(b_i)}_h+1,\abs{\cone(a_i)}_h\}\]
for all $i$. Taking the limit of the right hand side as $i\to\infty$ and using the fact that both $A^{\cdot}$ and
$B^{\cdot}$ are $\K$-limits, we find $\abs{\cone(h)}_h = \infty$. Lemma \ref{contractiblelem} then shows that
$\cone(h)$ is contractible, which is equivalent to $h$ being a homotopy equivalence.
\end{proof}

We conclude this section with two easy results concerning $\K$-limits.

\begin{prop}\label{easy1}
If $\mathbf{A} = \left( \xymatrix{A_0^{\cdot} & A_1^{\cdot} \ar[l]_-{f_0} & \cdots \ar[l]_-{f_1}}\right)$ is a 
Cauchy system and $\lim_{l\to\infty}\abs{A_l^{\cdot}}_h = \infty$, then $\lim_{\K}\mathbf{A} \simeq 0$.
\end{prop}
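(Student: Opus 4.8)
The plan is to observe that the zero complex, together with the family of zero maps, is itself a $\K$-limit of $\mathbf{A}$, and then to invoke the uniqueness of $\K$-limits established just above.

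Concretely, I would first take $\tilde{f}_l \colon 0 \to A_l^{\cdot}$ to be the zero chain map for every $l$. The triangle \eqref{hlimitdiag} then commutes trivially, since $f_{l-1} \circ \tilde{f}_l$ and $\tilde{f}_{l-1}$ are both zero. Next, using the mapping-cone convention recalled in Subsection \ref{homoalg1}, one has $\cone(\tilde{f}_l)^i = 0^{i+1} \oplus A_l^i = A_l^i$ with differential $d_{A_l}$, so in fact $\cone(\tilde{f}_l) = A_l^{\cdot}$ on the nose, and hence $\abs{\cone(\tilde{f}_l)}_h = \abs{A_l^{\cdot}}_h$. The hypothesis $\lim_{l\to\infty}\abs{A_l^{\cdot}}_h = \infty$ then says exactly that $\lim_{l\to\infty}\abs{\cone(\tilde{f}_l)}_h = \infty$, which is the remaining condition in the definition of a $\K$-limit. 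Thus $0$ qualifies as a $\K$-limit of $\mathbf{A}$.

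Finally, since $\mathbf{A}$ is Cauchy it has a $\K$-limit by Theorem \ref{klimitiffcauchy}, and the preceding theorem on uniqueness of $\K$-limits shows that any two $\K$-limits of $\mathbf{A}$ are homotopy equivalent; comparing $\lim_{\K}\mathbf{A}$ with the $\K$-limit $0$ produced above gives $\lim_{\K}\mathbf{A} \simeq 0$. There is no genuine obstacle here; the only point requiring a moment's care is that the stated cone formula makes $\cone(0 \to A_l^{\cdot})$ literally equal to $A_l^{\cdot}$ rather than a shift of it, which is immediate. If one prefers to bypass the uniqueness theorem, the same conclusion follows directly: writing $B^{\cdot} = \lim_{\K}\mathbf{A}$ and rotating the distinguished triangle $B^{\cdot} \xrightarrow{\tilde{f}_l} A_l^{\cdot} \to \cone(\tilde{f}_l) \to B[-1]^{\cdot}$, one bounds $\abs{B^{\cdot}}_h$ below by a quantity built from $\abs{A_l^{\cdot}}_h$ and $\abs{\cone(\tilde{f}_l)}_h$ (as in Lemma \ref{trianglelem}), so $\abs{B^{\cdot}}_h = \infty$, and then Lemma \ref{contractiblelem} shows $B^{\cdot}$ is contractible, i.e. $\lim_{\K}\mathbf{A} \simeq 0$.
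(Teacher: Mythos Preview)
Your proof is correct and is exactly the paper's approach: the paper's entire proof is the single line ``Let $\tilde{f}_l = \left(\xymatrix{0 \ar[r] & A_l^{\cdot}}\right)$,'' and you have simply unpacked the implicit verification that $0$ with these maps satisfies the definition of a $\K$-limit. Your added remark about the alternative route via Lemma~\ref{contractiblelem} is a nice bonus but not needed.
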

\begin{proof}
The maps $\tilde{f}_l = \left(\xymatrix{0 \ar[r] & A_l^{\cdot}}\right)$ satisfy the condition required in 
Definition \ref{defKlim}. The result then follows from Proposition \ref{Klimunique}.
\end{proof}

\begin{prop}\label{easy2}
If $\mathbf{A} = \left( \xymatrix{A_0^{\cdot} & A_1^{\cdot} \ar[l]_-{f_0} & \cdots \ar[l]_-{f_1}}\right)$ 
is a Cauchy system in 
$\K(\mathcal{A})$ and $\mathcal{A}$ is a tensor category then 
\[
B^{\cdot} \otimes \mathbf{A} = \left(\xymatrix{B^{\cdot} \otimes A_0^{\cdot} 
& B^{\cdot} \otimes A_1^{\cdot} \ar[l]_-{id\otimes f_0} & \cdots \ar[l]_-{id \otimes f_1}}\right)
\]
is a Cauchy system and 
$\lim_{\K}\left( B^{\cdot} \otimes \mathbf{A}\right) \simeq B^{\cdot} \otimes \lim_{\K} \mathbf{A}$. 
\end{prop}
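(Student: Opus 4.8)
The plan is to show that $B^{\cdot} \otimes \lim_{\K}\mathbf{A}$, equipped with the obvious structure maps, satisfies the definition of a $\K$-limit for the system $B^{\cdot} \otimes \mathbf{A}$, and then to conclude by the uniqueness theorem proved just above. Here, as in all our applications of this statement, $B^{\cdot}$ is bounded below, say $B^i = 0$ for $i < b$, so that the tensor product of $B^{\cdot}$ with an arbitrary complex is well-defined degreewise. The one elementary fact needed is that tensoring with $B^{\cdot}$ cannot decrease homological order by more than $\abs{b}$: if $X^{\cdot} \simeq D^{\cdot}$ with $D^{\cdot}$ being $\Oh^h(k)$, then $B^{\cdot} \otimes X^{\cdot} \simeq B^{\cdot} \otimes D^{\cdot}$ by Proposition \ref{tensorcone}, and since $(B^{\cdot} \otimes D^{\cdot})^n = \bigoplus_{i+j=n} B^i \otimes D^j$ vanishes for $n < b+k$, we get $\abs{B^{\cdot} \otimes X^{\cdot}}_h \geq b + \abs{X^{\cdot}}_h$ (with the convention $b + \infty = \infty$).

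The first step is to verify that $B^{\cdot} \otimes \mathbf{A}$ is Cauchy. By the symmetric version of Proposition \ref{tensorcone} (for $C^{\cdot} \otimes (-)$) we have $\cone(id_B \otimes f_l) = B^{\cdot} \otimes \cone(f_l)$, so the estimate above gives $\abs{\cone(id_B \otimes f_l)}_h \geq b + \abs{\cone(f_l)}_h$. Since $\mathbf{A}$ is Cauchy, $\abs{\cone(f_l)}_h \to \infty$, hence $\abs{\cone(id_B \otimes f_l)}_h \to \infty$ and $B^{\cdot} \otimes \mathbf{A}$ is Cauchy; in particular it has a $\K$-limit by Theorem \ref{klimitiffcauchy}.

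The second step is to identify that limit. Write $A^{\cdot} = \lim_{\K}\mathbf{A}$ with structure maps $\tilde{f}_l : A^{\cdot} \to A_l^{\cdot}$ satisfying $f_l \circ \tilde{f}_{l+1} \simeq \tilde{f}_l$ and $\abs{\cone(\tilde{f}_l)}_h \to \infty$, and equip $B^{\cdot} \otimes A^{\cdot}$ with the maps $id_B \otimes \tilde{f}_l$. Since the additive bifunctor $\otimes$ sends chain homotopies to chain homotopies and hence descends to $\K(\mathcal{A})$, we have $(id_B \otimes f_l) \circ (id_B \otimes \tilde{f}_{l+1}) = id_B \otimes (f_l \circ \tilde{f}_{l+1}) \simeq id_B \otimes \tilde{f}_l$, so diagram \eqref{hlimitdiag} commutes in $\K(\mathcal{A})$; and the negligibility condition follows exactly as before, using $\cone(id_B \otimes \tilde{f}_l) = B^{\cdot} \otimes \cone(\tilde{f}_l)$ together with $\abs{B^{\cdot} \otimes \cone(\tilde{f}_l)}_h \geq b + \abs{\cone(\tilde{f}_l)}_h \to \infty$. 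Thus $B^{\cdot} \otimes A^{\cdot}$ is a $\K$-limit of $B^{\cdot} \otimes \mathbf{A}$, and by uniqueness of $\K$-limits it is homotopy equivalent to $\lim_{\K}(B^{\cdot} \otimes \mathbf{A})$, which is the assertion.

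The argument is almost entirely bookkeeping with the definitions of Cauchy system and $\K$-limit. The only part that is not purely formal is the compatibility of $B^{\cdot} \otimes (-)$ with the relevant constructions — that it commutes with cones (Proposition \ref{tensorcone}), preserves homotopy equivalences, and preserves chain homotopies — and the observation that it shifts homological order by the fixed constant $\abs{b}$; I expect this (together with pinning down the standing boundedness hypothesis on $B^{\cdot}$) to be the only place where a short verification is genuinely required.
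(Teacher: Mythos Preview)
Your proof is correct and follows essentially the same route as the paper: tensor the structure maps $\tilde{f}_l$ of $\lim_{\K}\mathbf{A}$ with $id_B$, use Proposition~\ref{tensorcone} to identify $\cone(id_B\otimes\tilde{f}_l)$ with $B^{\cdot}\otimes\cone(\tilde{f}_l)$, and invoke uniqueness of $\K$-limits. The paper's version is terser---it does not separately verify Cauchy (this follows a posteriori from exhibiting a $\K$-limit via Theorem~\ref{klimitiffcauchy}) and does not spell out the boundedness hypothesis on $B^{\cdot}$ or the estimate $\abs{B^{\cdot}\otimes X^{\cdot}}_h \geq b + \abs{X^{\cdot}}_h$---so your additional care on these points is welcome but not a different argument.
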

\begin{proof}
Consider the maps $\xymatrix{\lim_{\K} \mathbf{A} \ar[r]^-{\tilde{f}_l} & A_l^{\cdot}}$ which satisfy 
\[\lim_{l\to\infty} \abs{\cone(\tilde{f}_l)}_h = \infty.\] These give maps 
$\xymatrix{B^{\cdot} \otimes \lim_{\K} \mathbf{A} \ar[r]^-{id\otimes \tilde{f}_l} & B^{\cdot} \otimes A_l^{\cdot}}$
which also satisfy \[\lim_{l\to\infty} \abs{\cone(id\otimes\tilde{f}_l)}_h = \infty\] since 
$\cone(id\otimes\tilde{f}_l) = B^{\cdot} \otimes \cone(\tilde{f}_l)$. The result then follows from Proposition \ref{Klimunique}.
\end{proof}

\section{Categorified $\mathfrak{sl}_3$ projectors}\label{sectioncatproj}

In this section we construct the categorified projectors and prove Theorems \ref{1} and \ref{2}. 
Subsection \ref{scp1} contains the construction of $\tilde{P}_w$ for $w=(+ \cdots +)$; in Subsection \ref{scp2} we show that 
in this case $\chi(\tilde{P}_w) = P_w$. The case $w=(+\cdots+-\cdots-)$ is treated in Subsection \ref{scp3} and 
the results for general $w$ are given in Subsection \ref{scp4}.

\subsection{$\tilde{P}_w$ for $w=(+ \cdots +)$}\label{scp1}

We begin by constructing the categorified projectors $\tilde{P}_w$ and giving a proof of Theorem \ref{1} 
when $w=(+ \cdots +)$. The general case differs from this one only in the technical details.

We will refer to the process of applying equations \eqref{catspidereqn1}, \eqref{catspidereqn2}, and \eqref{catspidereqn3} 
to express a web in terms of the direct sum of webs with fewer digon, square, and circular faces as reduction. Recall 
that a web which has no digon, square, or circular faces is called non-elliptic and that any 
web can be reduced to a direct sum of non-elliptic webs. When we write the complex 
$\left \llbracket D \right \rrbracket$ for a tangle diagram $D$ we will assume that we have reduced all webs appearing to 
direct sums of non-elliptic webs. If we would like to consider the complex with terms unreduced 
we will denote it by 
$\left \llbracket D \right \rrbracket^{un}$.

A shifted version of $\mathfrak{sl}_3$ knot homology will be useful for our considerations. 
Given a tangle diagram $D$, define the shifted complex by 
\[
\left\llbracket D \right\rrbracket_{s} = \left\llbracket D \right\rrbracket [c_{-}] \{3c_{-} - 2c_{+} \}
\]
where $c_{\pm}$ is the number of $\pm$ crossings in $D$; the complex $\left\llbracket D \right\rrbracket_{s}^{un}$ 
is defined similarly. 
This complex is not an invariant of the tangle corresponding to $D$ as it acquires shifts 
in both homological and quantum degree under $R1$ and $R2$ Reidemeister moves (but is invariant up to homotopy 
under $R3$). 
Nevertheless, this shifting convention will prove 
useful. In particular, for any diagram $D$ the shifted complex $\left\llbracket D \right\rrbracket_{s}$ is supported in 
non-negative homological degree.

We begin with a basic result describing the complex assigned to a $Y$-web attached to a 
positive crossing.
\begin{lem}\label{untwist} 
There are homotopy equivalences
\[
\left\llbracket \xy
(-3,0)*{}="a";
(.25,0)*{}="b";
(3,2.5)*{}="c";
(3,-2.5)*{}="d";
{\ar "b"; "a" };
{\ar "b"; "c" };
{\ar "b"; "d" };
\endxy \bullet
\smalltwohalftwist \ \right\rrbracket_s \simeq 
\left\llbracket \xy
(-3,0)*{}="a";
(0.25,0)*{}="b";
(3,2.5)*{}="c";
(3,-2.5)*{}="d";
{\ar "b"; "a" };
{\ar "b"; "c" };
{\ar "b"; "d" };
\endxy \ \right\rrbracket_s [1] \{2\} 
\]
and
\[
\left\llbracket
\smalltwohalftwist \bullet
\smallYli
\right\rrbracket_s \simeq
\left\llbracket
\smallYli
\right\rrbracket_s [1]\{2\} .
\]
\end{lem}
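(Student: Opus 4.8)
The plan is to compute the complex $\left\llbracket Y \bullet C \right\rrbracket_s$, where $C$ denotes the positive crossing being composed onto two of the legs of the $Y$-web, directly from the definition of the invariant on a crossing, equation \eqref{zip}, and then simplify via Gaussian elimination (Proposition \ref{GE}). Composing the $Y$-web onto the crossing produces the complex
\[
\left\llbracket Y \bullet C \right\rrbracket = \left(\xymatrix{q^2\, (Y \bullet (\text{two parallel strands})) \ar[r]^-{z} & q^3\, (Y \bullet (\text{$H$-web}))}\right),
\]
with the left-hand term in homological degree zero (before the shift $\left\llbracket-\right\rrbracket_s = \left\llbracket-\right\rrbracket[c_-]\{3c_- - 2c_+\}$, which for a single positive crossing is $[0]\{-2\}$). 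The left term $Y \bullet (\text{two parallel strands})$ is just $Y$ itself (up to isotopy), so it contributes $q^2\{-2\}Y = Y$ with no shift. The right-hand term $Y \bullet H$ contains a digon: attaching the $Y$-web to the top of an $H$-web creates a bigon face, which reduces via \eqref{catspidereqn1} to $q Y \oplus q^{-1} Y$.

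The key step is then to identify the differential $z$ (the zip foam) after this reduction as a map $Y \to qY \oplus q^{-1}Y$, i.e. as a column vector of two foams, and to check that one of its two components is an isomorphism in $\mathcal{F}$ — this is exactly the setup for Gaussian elimination. Concretely, I would appeal to the foam relations of Morrison–Nieh (\cite{MorrisonNieh}) that govern how a zip interacts with the digon-removal isomorphism: the composite of the zip with one of the two inclusions of $Y$ into the reduced $Y \bullet H$ is (up to a unit) the identity foam on $Y$, while the composite with the other inclusion is a foam of the appropriate nonzero degree. This is precisely the neck-cutting / digon relation computation that appears throughout \cite{MorrisonNieh} and \cite{Khovanov3}. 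Once the isomorphism component is isolated, Proposition \ref{GE} cancels it against the copy of $Y$ in homological degree zero, leaving the single remaining summand of the reduced right-hand term, namely $q^{\pm 1}$ times $Y$, now sitting in homological degree one. Tracking the quantum-degree bookkeeping: the surviving summand is $q^3 \cdot q^{\pm 1} \cdot \{-2\} Y$ placed in degree $1$; one of the two choices of sign survives Gaussian elimination (the one not killed by the isomorphism), and I expect it to work out to $q^2 Y$ in homological degree $1$, i.e. $\left\llbracket Y \right\rrbracket_s[1]\{2\}$ as claimed. The second statement, for $\smalltwohalftwist \bullet \smallYli$, follows either by an identical argument with the crossing on the other side, or formally by applying the pivotal/duality symmetry of the canopolis $\mathcal{F}$ (rotating the whole picture $180^\circ$), which exchanges the two identities.

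The main obstacle I anticipate is not the homological algebra — Gaussian elimination is routine once the differential is in the right form — but rather the careful verification that the relevant component of the zip foam really is an isomorphism after digon reduction, together with getting every quantum-degree shift exactly right. The degree formula $\deg(F) = 2\chi(F) - \abs{\partial} + \tfrac{1}{2}\abs{V} + k_2 - k_1$ must be applied to the specific foam that becomes the identity, and the shift conventions ($q^2$ vs.\ $q^3$ on the two terms of \eqref{zip}, the $\{-2\}$ from $\left\llbracket-\right\rrbracket_s$, and the $q^{\pm1}$ from \eqref{catspidereqn1}) all have to be combined without sign or exponent errors. I would organize this as: (i) write the unreduced two-term complex with its shifts; (ii) reduce the degree-one term using \eqref{catspidereqn1}; (iii) cite the Morrison–Nieh foam relation identifying one component of the reduced zip with a unit times the identity; (iv) apply Proposition \ref{GE}; (v) read off the surviving term and confirm it equals $\left\llbracket Y \right\rrbracket_s[1]\{2\}$; (vi) deduce the second equivalence by the $180^\circ$ rotation symmetry.
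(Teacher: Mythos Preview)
Your proposal is correct and follows essentially the same route as the paper: write the two-term complex for the crossing, compose with the $Y$-web, reduce the resulting digon via \eqref{catspidereqn1} to obtain a map $Y \to Y \oplus q^2 Y$ whose first component is the identity, and apply Gaussian elimination (Proposition~\ref{GE}) to leave $q^2 Y$ in homological degree~$1$. The paper's proof is just a terse version of exactly this computation (and handles the second equivalence by the same direct calculation rather than by symmetry, though your symmetry argument is equally valid).
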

\begin{proof}
We have
\[
\left\llbracket \smallYro \bullet \smalltwohalftwist \ \right\rrbracket_s = 
\xymatrix{ \smallYro
\ar[r]^-{\left(\begin{smallmatrix} id \\ \ast \end{smallmatrix}\right)}
& \left( \smallYro \oplus q^2 \smallYro \right)
}
\]
and
\[
\left\llbracket
\smalltwohalftwist \bullet
\smallYli
\right\rrbracket_s = 
\xymatrix{ \smallYli
\ar[r]^-{\left(\begin{smallmatrix} id \\ \ast \end{smallmatrix}\right)}
& \left( \smallYli \oplus q^2 \smallYli \right)} .
\]
The result then follows from Proposition \ref{GE}.
\end{proof}

Now consider the complex $\left\llbracket \smalltwistm{m}{} \right\rrbracket_s^{un}$ assigned to the 
diagram of a full twist on $m$ strands and note that every web $W$ appearing in the 
complex except the lone identity web in homological degree zero takes the form
\begin{equation}\label{goodwebform}
W = 
\xy
(0,0)*{\smallYli};
{\ar (-3,4);(3,4)};
{\ar (-3,-4);(3,-4)};
\endxy \bullet
W' \bullet 
\xy
(0,0)*{\smallYro};
{\ar (-3,4);(3,4)};
{\ar (-3,-4);(3,-4)};
\endxy
\end{equation}
where we have omitted multiplicities from the identity strands (we will often do this to simplify notation). 
Since reduction cannot affect such a decomposition, we find that 
\begin{align*}
\xymatrix{\left\llbracket \smalltwistm{m}{} \right\rrbracket_s} &=
\xymatrix{\left(\id{m}\right) \ar[r]^-{z} &C^1 \ar[r] &C^2 \ar[r] &\cdots} \\
&= \cone\left(\xymatrix{\left\llbracket \id{m} \right\rrbracket_s \ar[r]^-{z} & C[-1]^{\cdot}}\right)[1]
\end{align*}
where every web appearing in each $C^i$ is of the form \eqref{goodwebform} and non-elliptic 
(by definition we take $C^h=0$ for $h\leq0$). We thus have 
the distinguished triangle 
\[
\xymatrix{\left\llbracket \id{m} \right\rrbracket_s \ar[r]^-{z} & C[-1]^{\cdot} \ar[r] 
&\left\llbracket \smalltwistm{m}{} \right\rrbracket_s[-1] \ar[r]^-{g[-1]} 
& \left\llbracket \id{m} \right\rrbracket_s[-1]}
\]
where the map $g$ is the identity in homological degree zero and zero in all other homological degrees.
This implies that there is a homotopy equivalence
\[
C[-1]^{\cdot} \simeq \cone\left(
\xymatrix{\left\llbracket \smalltwistm{m}{} \right\rrbracket_s \ar[r]^-{g} 
& \left\llbracket \id{m} \right\rrbracket_s} \right).
\]

We now consider the inverse system 
\begin{equation}
\mathbf{T}_w = \xymatrix{\left\llbracket \id{m} \right\rrbracket_s 
& \left\llbracket \smalltwistm{m}{} \right\rrbracket_s \ar[l]_-{g_0}
& \left\llbracket \smalltwistm{m}{2} \right\rrbracket_s \ar[l]_-{g_1}
& \cdots \ar[l]
}
\end{equation}
for $w=\underbrace{(+ \cdots +)}_{m}$ where $g_k$ is defined as
\[
\xymatrix{\left\llbracket \smalltwistm{m}{} \right\rrbracket_s 
\bullet
\left\llbracket 
\smalltwistm{m}{k} \right\rrbracket_s
\ar[rr]^-{g\bullet id} 
& & \left\llbracket \id{m} \right\rrbracket_s
\bullet
\left\llbracket 
\smalltwistm{m}{k} \right\rrbracket_s
}.
\]
Here $\smalltwistm{m}{k}$ denotes $k$ full twists on $m$ strands.

\begin{prop}\label{plusCauchy}
The inverse system $\mathbf{T}_w$ is Cauchy.
\end{prop}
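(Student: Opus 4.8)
The plan is to verify the Cauchy condition by hand, i.e.\ to show that $\lim_{k\to\infty}\abs{\cone(g_k)}_h=\infty$. First I would rewrite the relevant cones. Since $g_k = g\bullet id$, the horizontal–composition analogue of Proposition~\ref{tensorcone} gives
\[
\cone(g_k)\;=\;\cone(g)\bullet\left\llbracket\smalltwistm{m}{k}\right\rrbracket_s\;\simeq\;C[-1]^{\cdot}\bullet\left\llbracket\smalltwistm{m}{k}\right\rrbracket_s,
\]
using the homotopy equivalence $C[-1]^{\cdot}\simeq\cone(g)$ established above together with the fact that $\bullet$ preserves homotopy equivalence. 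Recall that $C[-1]^{\cdot}$ is a \emph{bounded} complex supported in non-negative homological degree, and that every web appearing in it is of the form \eqref{goodwebform}; in particular each such web has a $\smallYro$-web (tensored with identity strands) glued along its codomain boundary, on two of the $m$ strands.

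The key step is the following estimate: for every web $W$ of the form \eqref{goodwebform}, the complex $\left\llbracket W\bullet\smalltwistm{m}{k}\right\rrbracket_s$ is $\Oh^h(L_k)$ with $L_k\to\infty$ as $k\to\infty$. To prove this I would isotope the $\smallYro$-vertex of $W$ upward through the stack of $k$ full twists. Whenever the two strands emanating from that vertex cross one another, the local picture is a $\smallYro$-web followed by a single crossing on its two outgoing strands, so Lemma~\ref{untwist} applies and replaces that portion, inside $\left\llbracket-\right\rrbracket_s$, by the same web shifted by $[1]\{2\}$; whenever one of those two strands crosses one of the remaining $m-2$ strands, $R3$-invariance of $\left\llbracket-\right\rrbracket_s$ slides the vertex past it. Since in $k$ full twists on $m$ strands a fixed pair of strands crosses a number of times growing linearly in $k$ (in fact $2k$), carrying the vertex through the whole stack yields a homotopy equivalence
\[
\left\llbracket W\bullet\smalltwistm{m}{k}\right\rrbracket_s\;\simeq\;\left\llbracket W_k\right\rrbracket_s[2k]\{4k\}
\]
for a suitable tangle diagram $W_k$ (a web composed with a braid on $m-1$ strands). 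Because $\left\llbracket W_k\right\rrbracket_s$ is supported in non-negative homological degree, the right-hand side is $\Oh^h(2k)$, which gives the estimate with $L_k=2k$. (When $m\le 1$ the twist diagram has no crossings, so $\mathbf{T}_w$ is the constant system and is trivially Cauchy.)

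Finally I would assemble the pieces. Writing each homological degree of $C[-1]^{\cdot}$ as a direct sum of its constituent webs and applying the Replacement Proposition~\ref{replacement} to the double complex computing $C[-1]^{\cdot}\bullet\left\llbracket\smalltwistm{m}{k}\right\rrbracket_s$ — with each subcomplex $W\bullet\left\llbracket\smalltwistm{m}{k}\right\rrbracket_s$ replaced by the $\Oh^h(2k)$ complex produced in the previous step — one finds that $C[-1]^{\cdot}\bullet\left\llbracket\smalltwistm{m}{k}\right\rrbracket_s$ is homotopy equivalent to a complex supported in homological degrees $\ge 2k$ (the summands contributing in degrees $\ge 2k$ and $C[-1]^{\cdot}$ in degrees $\ge 0$). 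Hence $\abs{\cone(g_k)}_h\ge 2k$, so $\lim_{k\to\infty}\abs{\cone(g_k)}_h=\infty$, i.e.\ $\mathbf{T}_w$ is Cauchy.

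I expect the middle paragraph to be the main obstacle: making the ``slide the $Y$-vertex through the twist'' argument rigorous inside the canopolis $\mathcal{F}$ requires careful bookkeeping of the $R3$ slides, a check that pulling the vertex all the way through really absorbs $\Theta(k)$ crossings, and at least a bound on the homological support of the residual diagram $W_k$. The first and last paragraphs are then essentially formal, resting only on Proposition~\ref{tensorcone}, Proposition~\ref{replacement}, Lemma~\ref{untwist}, and the homological calculus developed in Section~\ref{homoalg}.
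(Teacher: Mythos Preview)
Your proof is correct and follows essentially the same approach as the paper: pull the $Y$-vertex through the twists via $R3$ moves and Lemma~\ref{untwist}, then invoke Replacement (Proposition~\ref{replacement}) and the horizontal-composition analogue of Proposition~\ref{tensorcone} to conclude. The only difference is that the paper absorbs one twist at a time inductively---building complexes $C_k^{\cdot}$ with $C_k[-1]^{\cdot}\simeq\cone(g_k)$ and $C_k^{\cdot}$ being $\Oh^h(2k+1)$---whereas you absorb all $k$ twists at once; the paper's inductive packaging has the side benefit of recording that every web in $C_k^{\cdot}$ retains the form~\eqref{goodwebform}, which is used in the proofs of Propositions~\ref{plus1} and~\ref{pluscatprojgoodform}.
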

\begin{proof}

We inductively construct complexes $C_k^{\cdot}$ satisfying the following conditions.
\begin{enumerate}
\item $C_k[-1]^{\cdot} \simeq \cone(g_k)$.
\item $C_k^{\cdot}$ is $\Oh^h(2k+1)$.
\item Every web appearing in $C_k^{\cdot}$ takes the form \eqref{goodwebform}.
\end{enumerate}
Let $C_0^{\cdot} = C^{\cdot}$ and suppose we have constructed $C_0^{\cdot},\ldots,C_{k-1}^{\cdot}$ as above. 
Proposition \ref{tensorcone} gives that
\begin{align*}
\cone(g_k) &= \cone(g_{k-1}) \bullet \xymatrix{\left\llbracket \smalltwistm{m}{} \right\rrbracket_s} \\
&\simeq C_{k-1}[-1]^{\cdot} \bullet \xymatrix{\left\llbracket \smalltwistm{m}{} \right\rrbracket_s}
\end{align*}
so we must show that $C_{k-1}^{\cdot} \bullet \xymatrix{\left\llbracket \smalltwistm{m}{} \right\rrbracket_s}$
is homotopy equivalent to a complex satisfying the second and third of the above conditions.

To this end, consider the tangled web
\[
\xy
{\ar (-5,5)*{}; (5,5)*{}};
{\ar (-5,-5)*{}; (5,-5)*{}};
(-5,0)*{}="a";
(0,0)*{}="b";
(5,2)*{}="c";
(5,-2)*{}="d";
{\ar "b"; "a" };
{\ar "b"; "c" };
{\ar "b"; "d" };
\endxy \bullet
\twistm{m}{} \ .
\]
Using Reidemeister $3$ moves, we can pull the crossings on the two strands aligning with the $Y$-web 
through so that they take place before any other crossings, giving the tangled web
\[
\xy
{\ar (-5,5)*{}; (5,5)*{}};
{\ar (-5,-5)*{}; (5,-5)*{}};
(-5,0)*{}="a";
(0,0)*{}="b";
(5,2)*{}="c";
(5,-2)*{}="d";
{\ar "b"; "a" };
{\ar "b"; "c" };
{\ar "b"; "d" };
\endxy \bullet
\twisttwo{} \bullet
\twistds{m}{} \ .
\]
The rightmost tangle above is the result after pulling the crossings on the two strands through and out of the twist; 
we shall denote such a tangle in this way for the duration. Lemma \ref{untwist} gives the homotopy 
equivalence
\[
\left\llbracket 
\xy
{\ar (-5,5)*{}; (5,5)*{}};
{\ar (-5,-5)*{}; (5,-5)*{}};
(-5,0)*{}="a";
(0,0)*{}="b";
(5,2)*{}="c";
(5,-2)*{}="d";
{\ar "b"; "a" };
{\ar "b"; "c" };
{\ar "b"; "d" };
\endxy \bullet
\twisttwo{} \bullet
\twistds{m}{} \right\rrbracket_s \simeq
\left\llbracket 
\xy
{\ar (-5,5)*{}; (5,5)*{}};
{\ar (-5,-5)*{}; (5,-5)*{}};
(-5,0)*{}="a";
(0,0)*{}="b";
(5,2)*{}="c";
(5,-2)*{}="d";
{\ar "b"; "a" };
{\ar "b"; "c" };
{\ar "b"; "d" };
\endxy \bullet
\twistds{m}{} \right\rrbracket_s [2]\{4\}
\]
so we have homotopy equivalences
\begin{equation}\label{pt}
\left\llbracket W \bullet \smalltwistm{m}{} \right\rrbracket_s \simeq
\left\llbracket W \bullet \twistds{m}{} \right\rrbracket_s [2]\{4\}
\end{equation}
for each web $W$ appearing in $C_{k-1}^{\cdot}$.
Proposition \ref{replacement} now gives that the complex
\[
C_{k-1}^{\cdot} \bullet \left\llbracket \smalltwistm{m}{} \right\rrbracket_s
\]
is homotopy equivalent to a complex which is $\Oh^h(2k+1)$ and 
whose terms come from complexes taking the same form as the right side of equation \eqref{pt};
define $C_k^{\cdot}$ to be this complex. 
Since all such webs take the form \eqref{goodwebform}, the result follows.
\end{proof}

Since $\mathbf{T}_w$ is Cauchy, Theorem \ref{klimitiffcauchy} implies that $\lim_{\K}\mathbf{T}_w$ exists.
Let $\tilde{P}_w$ denote the limiting complex for $\mathbf{T}_w$ explicitly constructed using the proof of Theorem 
\ref{klimitiffcauchy}. The next result follows from analysis of the details of that proof.

\begin{prop}\label{plus1}
Let $w=(+ \cdots +)$. The web $\Id_w = \left( \id{m} \right)$ appears only once in $\tilde{P}_w$ and does so in quantum and homological 
degree zero. All other webs appearing in $\tilde{P}_w$ take the form 
$
\xy
(0,0)*{\smallYli};
{\ar (-3,4);(3,4)};
{\ar (-3,-4);(3,-4)};
\endxy \bullet
W \bullet 
\xy
(0,0)*{\smallYro};
{\ar (-3,4);(3,4)};
{\ar (-3,-4);(3,-4)};
\endxy
$.
\end{prop}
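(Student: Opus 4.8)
The plan is to unwind the explicit construction of $\tilde{P}_w = \lim_{\K}\mathbf{T}_w$ carried out in the proof of Theorem \ref{klimitiffcauchy} and simply keep track of which webs occur in each homological degree. Recall that the $\K$-limit there is assembled from complexes $C_l^{\cdot}$ satisfying $C_l[-1]^{\cdot}\simeq\cone(g_l)$, with $C_l^{\cdot}$ supported in homological degrees $\geq m_l$ and $m_l\to\infty$; I would take these to be precisely the complexes produced in the proof that $\mathbf{T}_w$ is Cauchy, for which $m_l = 2l+1$ and, crucially, \emph{every web appearing in $C_l^{\cdot}$ has the form \eqref{goodwebform}}, so in particular none of these webs equals $id_w$ (each has trivalent vertices). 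One then sets $B_0^{\cdot} = \llbracket \id{m}\rrbracket_s$ and defines $B_l^{\cdot} = \cone(j_{l-1})$ inductively for a chain map $j_{l-1}\colon B_{l-1}[1]^{\cdot}\to C_{l-1}^{\cdot}$, and finally $\tilde{P}_w = \lim_{\Ch}\mathbf{B}$, which in each homological degree is the stable value of the $B_l^{\cdot}$.

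First I would note that $B_0^{\cdot} = \llbracket \id{m}\rrbracket_s$ is the one-term complex with $id_w$ sitting in bidegree $(0,0)$: the diagram $\id{m}$ has no crossings, so $c_{\pm}=0$ and the homological and quantum shifts defining $\llbracket-\rrbracket_s$ are trivial. Next, from the mapping-cone formula together with the shift convention $A[n]^i = A^{i-n}$, I would compute $B_l^i = \cone(j_{l-1})^i = B_{l-1}[1]^{i+1}\oplus C_{l-1}^i = B_{l-1}^i \oplus C_{l-1}^i$, so that $B_l^{\cdot} = B_{l-1}^{\cdot}\oplus C_{l-1}^{\cdot}$ as graded objects. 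Then I would prove by induction on $l$ that $B_l^{\cdot}$ contains exactly one copy of $id_w$, located in bidegree $(0,0)$, and that all of its remaining webs have the form \eqref{goodwebform}. The base case $l=0$ is the preceding remark; in the inductive step the $B_{l-1}^{\cdot}$ summand contributes (by hypothesis) exactly one $id_w$, in bidegree $(0,0)$, with all other webs of the form \eqref{goodwebform}, while the $C_{l-1}^{\cdot}$ summand is supported in homological degrees $\geq 2l-1\geq 1$ — hence contributes nothing in degree $0$ — and consists entirely of webs of the form \eqref{goodwebform}, none equal to $id_w$.

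Finally I would pass to $\tilde{P}_w = \lim_{\Ch}\mathbf{B}$: in each homological degree $k$ this is the stable value $B_l^{k}$ for $l$ large (any $l$ with $2l\geq k$ works, since then the transition maps $\delta_{l'}^{k}$ are isomorphisms for all $l'\geq l$), which by the induction is one copy of $id_w$ in quantum degree $0$ when $k=0$ and webs of the form \eqref{goodwebform} otherwise, with no further occurrence of $id_w$. This is exactly the assertion of the proposition. I expect the only genuine obstacle to be the careful bookkeeping of homological and quantum shifts through the nested cone constructions — the rotations of the distinguished triangle defining $j_{l-1}$ and the identification $\cone(j_{l-1})^i = B_{l-1}^i\oplus C_{l-1}^i$; once those indices are pinned down the statement follows directly from the properties of the $C_l^{\cdot}$ already established in proving $\mathbf{T}_w$ Cauchy.
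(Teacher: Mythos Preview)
Your proposal is correct and follows essentially the same approach as the paper's proof: the paper also identifies $B_0^{\cdot}=\llbracket\id{m}\rrbracket_s$ and $B_k^{\cdot}=\cone(B_{k-1}^{\cdot}[1]\to C_{k-1}^{\cdot})$, then appeals to the properties of the $C_k^{\cdot}$ established in the Cauchy proof. Your version simply makes explicit the bookkeeping $\cone(j_{l-1})^i = B_{l-1}^i\oplus C_{l-1}^i$ and the induction that the paper leaves to the reader.
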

\begin{proof}
The limit of the Cauchy sequence is given as the $\Ch$-limit of the stabilizing system $\mathbf{B}$ constructed
in the proof of Theorem \ref{klimitiffcauchy}. 
In the current case, we see that
\begin{align*}
B_0^{\cdot} &= \llbracket \id{m} \rrbracket_s    \\
B_1^{\cdot} &= \cone\left( \llbracket \id{m} \rrbracket_s [1] \to C_{0}^{\cdot} \right) \\
& \ \ \vdots \\
B_k^{\cdot} &= \cone\left( B_{k-1}^{\cdot} [1] \to C_{k-1}^{\cdot} \right).
\end{align*}
The result now follows from our description of the complexes $C_k^{\cdot}$ above.
\end{proof}

\begin{prop}\label{plus2}
Let $w=(+ \cdots +)$. If $\wt(v) < \wt(w)$ then $\tilde{P}_w\bullet W_1 \simeq 0$ for any 
$W_1 \in \Hom_{\bullet}(w,v)$ and $W_2\bullet \tilde{P}_w \simeq 0$ for any 
$W_2 \in \Hom_{\bullet}(v,w)$.
\end{prop}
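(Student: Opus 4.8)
The plan is to reduce to a single $Y$-web and then feed the twist into the $\K$-limit machinery of Subsection \ref{homoalg3}. Since horizontal composition distributes over direct sums and is $\Z[q^{-1},q]$-linear, and since every web reduces via \eqref{catspidereqn1}--\eqref{catspidereqn3} to a finite direct sum of non-elliptic webs, it suffices to prove $\tilde{P}_w\bullet W_1 \simeq 0$ for a single non-elliptic web $W_1 \in \Hom_{\bullet}(w,v)$. Because $w = (+\cdots+)$ is segregated and $\wt(v) < \wt(w)$, Proposition \ref{Kuprop} shows that $W_1$ factors through a $Y$-web or a $U$-web with two of its boundary points attached to $w$; but a $U$-web joins a $+$-strand to a $-$-strand, so when $w$ consists only of $+$'s the $U$-web case is impossible. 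Hence $W_1 = (\theta \otimes \mathrm{id})\bullet W_1'$, where $\theta$ is a $Y$-web merging two of the outgoing strands of $\tilde{P}_w$ and $W_1'$ is a web (below I suppress the tensor factor of identity webs). Since gluing any web onto a contractible complex yields a contractible complex (horizontal-composition analog of Proposition \ref{tensorcone}), it is enough to prove $\tilde{P}_w \bullet \theta \simeq 0$.

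Write $\tau_k$ for the $k$-fold full twist on the strands of $w$, so that $\mathbf{T}_w$ has $k$-th term $\left\llbracket \tau_k \right\rrbracket_s$ and $\tilde{P}_w = \lim_{\K}\mathbf{T}_w$. By the horizontal-composition analog of Proposition \ref{easy2}, the system with $k$-th term $\left\llbracket \tau_k \right\rrbracket_s \bullet \theta$ is Cauchy, with $\K$-limit $\tilde{P}_w\bullet\theta$; here Cauchyness uses that gluing the fixed web $\theta$ onto a complex preserves homotopy equivalence and the property of being $\Oh^h(k)$. By Proposition \ref{easy1} it therefore suffices to show that the homological order of $\left\llbracket \tau_k \right\rrbracket_s \bullet \theta$ tends to $\infty$. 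For this, pull the two strands attached to $\theta$ through $\tau_k$ by Reidemeister $3$ moves, exactly as in the proof that $\mathbf{T}_w$ is Cauchy: this exhibits $\tau_k \bullet \theta$ as isotopic to $D' \bullet \sigma_k \bullet \theta$, where $\sigma_k$ is a $k$-fold twist on the two $\theta$-strands (hence $2k$ crossings) and $D'$ is a web-tangle diagram. Since $\left\llbracket - \right\rrbracket_s$ sends isotopic web-tangle diagrams to homotopy equivalent complexes and its shift is additive in the number of crossings, $\left\llbracket \tau_k \bullet \theta \right\rrbracket_s \simeq \left\llbracket D' \right\rrbracket_s \bullet \left\llbracket \sigma_k \right\rrbracket_s \bullet \theta$; applying Lemma \ref{untwist} once for each of the $2k$ crossings of $\sigma_k$ absorbs that factor at the cost of a shift by $[2k]\{4k\}$, so $\left\llbracket \tau_k \right\rrbracket_s \bullet \theta \simeq \left(\left\llbracket D' \right\rrbracket_s \bullet \theta\right)[2k]\{4k\}$. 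As $\left\llbracket D' \right\rrbracket_s$ is supported in non-negative homological degree, so is $\left\llbracket D' \right\rrbracket_s \bullet \theta$; hence $\left\llbracket \tau_k \right\rrbracket_s \bullet \theta$ is $\Oh^h(2k)$, its homological order is at least $2k \to \infty$, and $\tilde{P}_w \bullet \theta \simeq 0$.

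The second statement, $W_2 \bullet \tilde{P}_w \simeq 0$ for $W_2 \in \Hom_{\bullet}(v,w)$, follows from the mirror-image argument: the dual form of Proposition \ref{Kuprop} factors $W_2$ through a $Y$-web attached to the incoming strands of $\tilde{P}_w$, and one pulls those two strands out through the other end of the twist, applying Lemma \ref{untwist} as before. I expect the main obstacle to be the bookkeeping in the homological-order estimate of the second paragraph: verifying that the Reidemeister $3$ manipulation isolates exactly a $k$-fold two-strand twist adjacent to $\theta$ with an honest web-tangle residue $D'$ (so that Lemma \ref{untwist} may legitimately be applied $2k$ times), and tracking the interaction of the homological and quantum shifts with the $s$-shift convention so that the residual complex $\left\llbracket D'\right\rrbracket_s\bullet\theta$ really sits in non-negative homological degree rather than being shifted downward. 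Everything else is a formal consequence of the $\K$-limit formalism.
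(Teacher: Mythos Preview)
Your argument is correct and follows essentially the same route as the paper's proof: reduce to a non-elliptic $W_1$, invoke Proposition \ref{Kuprop} to factor through a $Y$-web, pull the two relevant strands through the $k$-twist via $R3$ moves, and use Lemma \ref{untwist} repeatedly to produce the shift $[2k]\{4k\}$, then finish with Propositions \ref{easy1} and \ref{easy2}. The bookkeeping worries you flag are not obstacles: only $R3$ moves are used in the isotopy (so $\llbracket - \rrbracket_s$ is genuinely invariant up to homotopy there, with no shift), and the residual tangle $D'$ is an honest tangle diagram, whence $\llbracket D'\rrbracket_s$ is supported in non-negative homological degree by the general remark preceding Lemma \ref{untwist}.
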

\begin{proof}
Let $W_1 \in \Hom_{\bullet}(w,v)$ and note that it suffices to consider the case when $W_1$ is non-elliptic. Proposition 
\ref{Kuprop} then implies that
\[
W_1 = 
\xy
(0,0)*{\smallYli};
{\ar (-3,4);(3,4)};
{\ar (-3,-4);(3,-4)};
\endxy \bullet
W_1'
\]
for some $W_1'$.
Lemma \ref{untwist} gives the homotopy equivalence
\[
\left\llbracket \smalltwistm{m}{k}\bullet
\xy
(0,0)*{\smallYli};
{\ar (-3,4);(3,4)};
{\ar (-3,-4);(3,-4)};
\endxy \ \right\rrbracket_s \simeq
\left\llbracket \twistds{m}{k} \bullet
\xy
(0,0)*{\smallYli};
{\ar (-3,4);(3,4)};
{\ar (-3,-4);(3,-4)};
\endxy \
\right\rrbracket_s [2k]\{4k\}
\]
so we have
\[
\abs{\left\llbracket 
\smalltwistm{m}{k}\bullet
W_1
\right\rrbracket_s
}_h \geq 2k .
\]
Propositions \ref{easy1} and \ref{easy2} now give
\begin{align*}
\tilde{P}_w \bullet W_1 &=\left(\lim_{\K}\mathbf{T}_w\right)\bullet W_1 \\ 
&\simeq \lim _{\K}\left(\mathbf{T}_w\bullet W_1 \right) \\
&\simeq 0 .
\end{align*}
The proof concerning $W_2$ is completely analogous.
\end{proof}

\begin{prop}\label{plus3}
Let $w=(+ \cdots +)$, then 
\[
\tilde{P}_w \bullet \tilde{P}_w \simeq \tilde{P}_w .
\]
\end{prop}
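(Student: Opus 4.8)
The plan is to realize both factors as $\K$-limits and thereby reduce the claim to a statement about the bonding maps of the system $\mathbf{T}_w$. Since $\tilde{P}_w = \lim_{\K}\mathbf{T}_w$ and horizontal composition obeys the horizontal-composition analogues of Propositions \ref{tensorcone} and \ref{easy2}, I would first establish
\[
\tilde{P}_w \bullet \tilde{P}_w \;\simeq\; \tilde{P}_w \bullet \lim_{\K}\mathbf{T}_w \;\simeq\; \lim_{\K}\bigl(\tilde{P}_w \bullet \mathbf{T}_w\bigr),
\]
where $\tilde{P}_w \bullet \mathbf{T}_w$ is the (Cauchy) inverse system whose $l$-th term is $\tilde{P}_w \bullet \llbracket \smalltwistm{m}{l} \rrbracket_s$ and whose bonding maps are $id \bullet g_l$. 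Its zeroth term is $\tilde{P}_w \bullet \llbracket \id{m} \rrbracket_s = \tilde{P}_w$ (the diagram $\id{m}$ has no crossings, so $\llbracket\id{m}\rrbracket_s$ is just the identity web in homological degree zero). Hence it suffices to show that every bonding map $id\bullet g_l$ is a homotopy equivalence: an inverse system all of whose bonding maps are homotopy equivalences is automatically Cauchy, and it has its zeroth term as a $\K$-limit — take $\tilde{f}_l$ to be a homotopy inverse of the composite $g_0\circ\cdots\circ g_{l-1}$, so that each $\cone(\tilde{f}_l)\simeq 0$ — so by uniqueness of $\K$-limits one concludes $\lim_{\K}(\tilde{P}_w\bullet\mathbf{T}_w)\simeq\tilde{P}_w$.

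It remains to prove that $id\bullet g_l$ is a homotopy equivalence, equivalently (Proposition \ref{conenull}) that $\cone(id\bullet g_l)\simeq 0$. From the proof that $\mathbf{T}_w$ is Cauchy, $\cone(g_l)$ is homotopy equivalent to a bounded complex $C_l^{\cdot}$ every web of which has the form \eqref{goodwebform}; since horizontal composition commutes with cones, $\cone(id\bullet g_l) = \tilde{P}_w\bullet\cone(g_l)\simeq\tilde{P}_w\bullet C_l^{\cdot}$. A web $W$ as in \eqref{goodwebform}, viewed as an endomorphism of $w=(+\cdots+)$, factors as $W=A\bullet B$ where $A$ — the leading $Y$-web of \eqref{goodwebform} together with its identity strands — lies in $\Hom_{\bullet}(w,v)$ for a word $v$ with $\wt(v)<\wt(w)$; thus Proposition \ref{plus2} gives $\tilde{P}_w\bullet A\simeq 0$, hence $\tilde{P}_w\bullet W\simeq 0$, and the same holds for each term of $C_l^{\cdot}$. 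Finally, since $\tilde{P}_w$ is supported in non-negative homological degree and $C_l^{\cdot}$ is bounded, $\tilde{P}_w\bullet C_l^{\cdot}$ is the total complex of a triply-bounded double complex each of whose rows — obtained by fixing the $C_l^{\cdot}$-degree and varying the $\tilde{P}_w$-degree — is contractible, so Proposition \ref{replacement}, applied with every replacement complex equal to zero, gives $\tilde{P}_w\bullet C_l^{\cdot}\simeq 0$. Chaining the equivalences completes the proof.

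I expect the main obstacle to be precisely the vanishing $\tilde{P}_w\bullet C_l^{\cdot}\simeq 0$: one must upgrade the term-by-term vanishing supplied by Proposition \ref{plus2} to vanishing of the horizontal composite with the whole complex $C_l^{\cdot}$, which is where the boundedness bookkeeping and the appeal to Proposition \ref{replacement} (for a double complex of the triply-bounded shape) are genuinely needed. The remaining ingredients — exactness of $\tilde{P}_w\bullet(-)$ on distinguished triangles and compatibility of $\bullet$ with $\K$-limits — are the horizontal-composition versions of Propositions \ref{tensorcone} and \ref{easy2}, which the paper has already indicated it will use.
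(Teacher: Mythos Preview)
Your argument is correct, and it takes a genuinely different route from the paper's proof.

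The paper does not return to the inverse system at all. Instead, it uses Proposition~\ref{plus1} to write $\tilde{P}_w$ as a cone $\cone(\llbracket\id{m}\rrbracket_s\to D[-1]^{\cdot})[1]$, where $D^{\cdot}$ is the (infinite) tail of non-identity webs. Composing the resulting distinguished triangle on the right with $\tilde{P}_w$ reduces the statement to $D^{\cdot}\bullet\tilde{P}_w\simeq 0$. Since $D^{\cdot}$ is unbounded, Proposition~\ref{replacement} does not apply directly; instead the paper peels off one homological degree at a time, obtaining $t_{\geq k}D^{\cdot}\bullet\tilde{P}_w\simeq D^{\cdot}\bullet\tilde{P}_w$ for every $k$, and then invokes Lemma~\ref{contractiblelem} to conclude contractibility.

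Your approach sidesteps this infinite-truncation maneuver by pushing the limit inside via (the horizontal-composition analogue of) Proposition~\ref{easy2}, so that the vanishing you need is $\tilde{P}_w\bullet C_l^{\cdot}\simeq 0$ for the \emph{bounded} complexes $C_l^{\cdot}$. This lets Proposition~\ref{replacement} apply cleanly, with no appeal to Lemma~\ref{contractiblelem}. The trade-off is that you must revisit the inverse-system construction and use boundedness of the $C_l^{\cdot}$, whereas the paper's argument works entirely with the intrinsic structure of $\tilde{P}_w$ recorded in Proposition~\ref{plus1}. Both routes rest on Proposition~\ref{plus2}; they simply organize the passage from term-by-term vanishing to vanishing of the composite differently.

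One cosmetic point: $\cone(g_l)\simeq C_l^{\cdot}[-1]$ rather than $C_l^{\cdot}$, but the shift is irrelevant for contractibility.
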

\begin{proof}
Proposition \ref{plus1} gives that 
\[
\tilde{P}_w = \left( \xymatrix{\left(\id{m}\right) \ar[r]^-{z} &D^1 \ar[r] &D^2 \ar[r] &\cdots} \right)
\]
where each web appearing in $D^i$ takes the form \eqref{goodwebform}. Setting $D^i=0$ for $i\leq 0$ we have
\[
\tilde{P}_w = \cone \left( \llbracket \id{m} \rrbracket_s \to D[-1]^{\cdot} \right)[1]
\]
which gives the distinguished triangle
\[
\xymatrix{\tilde{P}_w \ar[r] & \llbracket \id{m} \rrbracket_s \ar[r] & D[-1]^{\cdot}}.
\]
This in turn gives the distinguished triangle
\[
\xymatrix{\tilde{P}_w \bullet \tilde{P}_w \ar[r] & \tilde{P}_w \ar[r] & D[-1]^{\cdot}\bullet \tilde{P}_w}
\]
so by Proposition \ref{conenull} it suffices to show that $D^{\cdot} \bullet \tilde{P}_w \simeq 0$.

We can write 
\[
D^{\cdot} = \cone \left( D^{1}[2] \to t_{\geq 2}D^{\cdot} \right)
\]
where $D^1$ stands for the complex with all terms zero except $D^1$ sitting in homological degree zero and 
$t_{\geq k}D^{\cdot}$ denotes the truncation of $D^{\cdot}$ from below. As above, this gives the distinguished 
triangle
\[
\xymatrix{t_{\geq 2}D^{\cdot} \bullet \tilde{P}_w \ar[r] & D^{\cdot} \bullet \tilde{P}_w \ar[r] 
& D^1[1] \bullet \tilde{P}_w}.
\]
Proposition \ref{plus2} implies that $D^1[1] \bullet \tilde{P}_w \simeq 0$ so 
$t_{\geq 2}D^{\cdot} \bullet \tilde{P}_w \simeq D^{\cdot} \bullet \tilde{P}_w$. Repeating this procedure gives
\[
t_{\geq k}D^{\cdot} \bullet \tilde{P}_w \simeq D^{\cdot} \bullet \tilde{P}_w
\]
for all $k>0$. Lemma \ref{contractiblelem} then implies that $D^{\cdot} \bullet \tilde{P}_w \simeq 0$.
\end{proof}

Propositions \ref{plus1}, \ref{plus2}, and \ref{plus3} give the proof of Theorem \ref{1} when $w=(+\cdots +)$. 
Moreover, uniqueness of $\tilde{P}_w$ follows from the argument used in the proof of Proposition \ref{plus3}. 
Indeed, if $\tilde{P}'_w$ is another complex supported in non-negative homological degree satisfying Propositions 
\ref{plus1} and \ref{plus2} then similar reasoning shows that the complex $\tilde{P}_w \bullet \tilde{P}'_w$ 
is homotopy equivalent to both $\tilde{P}_w$ and $\tilde{P}'_w$.

\subsection{Decategorification for $w=(+ \cdots +)$}\label{scp2}

We now aim to show that $\chi(\tilde{P}_w) = P_w$ when $w=(+\cdots +)$. Observe that doing so requires two 
steps. First, we must show that it is possible to define and compute $\chi(\tilde{P}_w)$ since 
$\chi$ is generally not well-defined for complexes in $\K^+(\mathcal{F})$. Second, we must show the desired equality.

To resolve the first issue, we consider a full subcategory of $\K^+(\mathcal{F})$ where we restrict the
support of complexes $A^{\cdot}$. By definition, $\supp(A^{\cdot})$ is the 
set of pairs $(h,l)\in \Z^2$ for which $A^h$ has a non-zero summand in quantum degree $q^l$. We shall identify 
$\supp(A^{\cdot})$ 
with the corresponding discrete subset in $\R^2$ and abuse notation slightly by calling this the $(h,q)$-plane. 

If we wish to translate a subset of $\R^2$ we will use the same notation which we use to shift complexes, 
viewing homological degree as the horizontal direction and quantum degree as the vertical direction. For 
instance,
\[
\{(h,q)| h\geq 2 \text{ and } q \geq 1\}[2]\{1\} = \{(h,q)| h\geq 4 \text{ and } q \geq 2\}.
\]
Consider now the subset of $\R^2$ given by
\[
R_t = \{(h,q)\in \R^2 | h\geq0 \text{ and } q\geq t\cdot h\}
\]
and let $\hat{\mathcal{S}'}$ denote the $\mathfrak{sl}_3$ spider considered over the ring 
$\Z[q^{-1},q]] := \Z[[q]][\frac1q]$.
The following conditions are sufficient to guarantee that the Euler characteristic $\chi(A^{\cdot})$ of a
complex $A^{\cdot}$ in $\K^+(\mathcal{F})$ is a well defined element in $\hat{\mathcal{S}'}$: 
\begin{enumerate}
\item $\supp(A^{\cdot})\subset R_t[a]\{b\}$ for some $t > 0$ and $a,b \in \Z$. 
\item All webs appearing in $A^{\cdot}$ are non-elliptic. 
\item Only finitely many distinct webs appear in $A^{\cdot}$.
\end{enumerate}
Denote by $\K^{\angle}(\mathcal{F})$ the full subcategory of $\K^+(\mathcal{F})$ whose objects satisfy the above 
conditions. Note that this subcategory is closed under taking direct sums, cones, and tensor product. The horizontal 
composition of two complexes in $\K^{\angle}(\mathcal{F})$ is isomorphic (in $\K^{+}(\mathcal{F})$) to a complex 
in $\K^{\angle}(\mathcal{F})$ via reduction. In this sense, we can view $\K^{\angle}(\mathcal{F})$ as closed under 
horizontal composition.

We now aim to show that $\tilde{P}_w$ is an object in $\K^{\angle}(\mathcal{F})$. 
Before doing so, we need some preparatory lemmata.
Our first result enables us to bound the quantum degree of the webs appearing when we express a web as a direct sum of 
non-elliptic webs.
\begin{lem}\label{lem1}
Let $W$ be a web with no closed components. When reducing $W$ to a direct sum of non-elliptic webs
we can assume that no closed component forms. 
If $W$ has $r$ faces, $W \cong \oplus_{i=1}^{s} q^{k_{i}} \cdot W_i$ 
is the direct sum decomposition into non-elliptic webs resulting 
from reduction, and $r_i$ is the number of faces in $W_i$, then $k_i \geq r_i-r$.
\end{lem}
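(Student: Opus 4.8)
The plan is to treat the two assertions separately; the inequality will come from an induction running along the reduction process that plays each quantum shift off against a drop in the number of faces. For the first assertion I would inspect the three reduction moves. The circle relation \eqref{catspidereqn3} only \emph{removes} circular components, and a short check of the local pictures in \eqref{catspidereqn1} and \eqref{catspidereqn2} shows that a digon or square reduction applied to a web with no closed components again produces webs with no closed components: a digon reduction glues the two non-digon legs into a single edge, whose two endpoints are vertices or boundary points of the ambient web, so it is never a vertexless loop unless the original digon sat inside a theta-graph, which would itself be a closed component; the square case is similar, with the potentially problematic configuration producing a digon (to be reduced next) rather than a circle. Hence the whole reduction of $W$ can be carried out without ever creating a closed component, so only the moves \eqref{catspidereqn1} and \eqref{catspidereqn2}, which carry the shifts $q^{\pm 1}$ and $q^{0}$ respectively, are ever used.

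For the inequality, write $f(V)$ for the number of faces of a web $V$, fixed by one convention throughout, and induct over the (finite) reduction tree of $W$. The statement to propagate down the tree is: at a node carrying the term $q^{m}V$, every non-elliptic leaf $q^{k_i}W_i$ below it satisfies $k_i - m \ge r_i - f(V)$; evaluated at the root, where $m = 0$, $V = W$, and $f(W) = r$, this is exactly the lemma. The base case is a non-elliptic $V$, where the leaf is $V$ itself and equality holds. For the inductive step one applies a move to an elliptic face of $V$. A digon move replaces $q^{m}V$ by $q^{m+1}V' \oplus q^{m-1}V'$, where deleting the digon destroys that face and merges the at most two faces flanking it, so $f(V') \in \{f(V)-1, f(V)-2\}$; a square move replaces $q^{m}V$ by $q^{m}V_1 \oplus q^{m}V_2$, where each resolution destroys the square face and merges some of its four neighbouring faces, so $f(V_i) \in \{f(V)-1, f(V)-2\}$. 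Feeding the inductive hypothesis through, the square branches and the $q^{+1}$ digon branch strictly improve the estimate, while the $q^{-1}$ digon branch with $f(V') = f(V)-1$ reproduces $k_i - m \ge r_i - f(V)$ exactly; this closes the induction.

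The one step that needs genuine care is the face-count bookkeeping for the digon and square moves — above all the claim that neither move can \emph{raise} $f$, together with the edge cases in which the two faces abutting a digon, or several of the faces around a square, coincide, since precisely those cases saturate $f(V') = f(V)-1$ and hence the inequality itself. I would verify $f(V') - f(V) \in \{-1,-2\}$ and its square analogue uniformly via an Euler-characteristic count for webs in a disk, rather than diagram by diagram. The same bookkeeping shows that the bound $k_i \ge r_i - r$ is sharp: it is attained along a branch consisting of iterated digon removals, each merging into a pre-existing face and each taking the $q^{-1}$ summand.
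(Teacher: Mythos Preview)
Your induction for the inequality $k_i \geq r_i - r$ is fine and is exactly what the paper does in one sentence: each digon or square reduction lowers the number of faces by at least one and shifts $q$ by at least $-1$, so after $N$ steps $k_i \geq -N \geq r_i - r$. (A small slip: a digon reduction removes only the digon face and does not merge the two flanking faces, so $f$ drops by exactly $1$; this does not affect your bound.)

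The first assertion is where the real content lies, and here there is a genuine gap. You assert that a square reduction applied to a web with no closed components again produces webs with no closed components, dismissing the square case with ``the potentially problematic configuration producing a digon \ldots rather than a circle.'' This is false. Take $V_0$ to be the $1$-skeleton of the cube (a valid closed $\mathfrak{sl}_3$ web) with one edge deleted; attach its two resulting degree-$2$ vertices to two adjacent legs of a fresh square $S$, and send the other two legs of $S$ to the disk boundary. The web $W$ so obtained has no closed components and no digons --- its internal faces are five squares and one hexagon. Applying \eqref{catspidereqn2} to $S$, one of the two summands reinserts precisely the edge you deleted, and the cube reappears as a closed component.

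So the lemma does \emph{not} say that every reduction avoids closed components, only that \emph{some} sequence of reductions does. The paper's proof confronts this head-on: assuming every available reduction splits off a closed component (necessarily via a square move), it repeatedly shifts to an adjacent square until the closed component $C$ that would form has no digons and at most one square face; an Euler-characteristic count on $C$, using that every face of a closed $\mathfrak{sl}_3$ web has an even number of edges, then yields $2 \leq 1$. In the counterexample above this plays out concretely: reducing any of the four squares inside $V_0$ rather than $S$ produces only digons, not closed components. Your argument needs a replacement for this step; once it is in place, the face-count bookkeeping for the inequality follows immediately.
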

\begin{proof}
Suppose that a reduction does produce a closed component. We can assume that no reductions are possible which do not 
split off a closed component; otherwise, perform these reductions. Since a closed component can only form upon application of 
equation \eqref{catspidereqn2}, we have that
\[
W = \left( U \bullet 
\xy
(-1,1)*{}="a";
(1,1)*{}="b";
(-1,-1)*{}="c";
(1,-1)*{}="d";
(-2.5,2.5)*{}="e";
(2.5,2.5)*{}="f";
(-2.5,-2.5)*{}="g";
(2.5,-2.5)*{}="h";
"a";"b" **\dir{-};
"d";"b" **\dir{-};
"a";"c" **\dir{-};
"d";"c" **\dir{-};
"a";"e" **\dir{-};
"f";"b" **\dir{-};
"g";"c" **\dir{-};
"d";"h" **\dir{-};
\endxy \bullet
V \right)
\]
and hence the isomorphism
\[
W \cong
\Big(U \bullet
\xy
(-2,2)*{}="a";
(2,2)*{}="b";
(-2,-2)*{}="c";
(2,-2)*{}="d";
"a";"b" **\crv{-};
"c";"d" **\crv{-};
\endxy \bullet
V \Big) \oplus 
\Big(U \bullet
\xy
(-2,2)*{}="a";
(2,2)*{}="b";
(-2,-2)*{}="c";
(2,-2)*{}="d";
"a";"c" **\crv{-};
"b";"d" **\crv{-};
\endxy \bullet
V \Big) .
\]
We can assume that
$
\xy
(2,2)*{}="a";
(2,-2)*{}="b";
"a";"b" **\crv{-};
\endxy \bullet V
$
has no internal digon faces. Indeed, if a digon face is formed then we must have
$
V = 
\xy
(-2,2)*{}="a";
(0,1)*{}="b";
(2,2)*{}="c";
(0,-1)*{}="d";
(-2,-2)*{}="e";
(2,-2)*{}="f";
"a";"b" **\dir{-};
"b";"c" **\dir{-};
"b";"d" **\dir{-};
"e";"d" **\dir{-};
"d";"f" **\dir{-};
\endxy \bullet
V'
$
so we can instead consider the reduction corresponding to 
$W =
U' \bullet 
\xy
(-1,1)*{}="a";
(1,1)*{}="b";
(-1,-1)*{}="c";
(1,-1)*{}="d";
(-2.5,2.5)*{}="e";
(2.5,2.5)*{}="f";
(-2.5,-2.5)*{}="g";
(2.5,-2.5)*{}="h";
"a";"b" **\dir{-};
"d";"b" **\dir{-};
"a";"c" **\dir{-};
"d";"c" **\dir{-};
"a";"e" **\dir{-};
"f";"b" **\dir{-};
"g";"c" **\dir{-};
"d";"h" **\dir{-};
\endxy \bullet
V'
$ where
$U' = U \bullet
\xy
(-2,2)*{}="a";
(0,1)*{}="b";
(2,2)*{}="c";
(0,-1)*{}="d";
(-2,-2)*{}="e";
(2,-2)*{}="f";
"a";"b" **\dir{-};
"b";"c" **\dir{-};
"b";"d" **\dir{-};
"e";"d" **\dir{-};
"d";"f" **\dir{-};
\endxy$. Similarly, we can assume that 
$
\xy
(2,2)*{}="a";
(2,-2)*{}="b";
"a";"b" **\crv{-};
\endxy \bullet V
$ has at most one internal square face since otherwise we have
$V = V'' \bullet 
\xy
(-1,1)*{}="a";
(1,1)*{}="b";
(-1,-1)*{}="c";
(1,-1)*{}="d";
(-2.5,2.5)*{}="e";
(2.5,2.5)*{}="f";
(-2.5,-2.5)*{}="g";
(2.5,-2.5)*{}="h";
"a";"b" **\dir{-};
"d";"b" **\dir{-};
"a";"c" **\dir{-};
"d";"c" **\dir{-};
"a";"e" **\dir{-};
"f";"b" **\dir{-};
"g";"c" **\dir{-};
"d";"h" **\dir{-};
\endxy \bullet
V'
$
and again we can consider the reduction corresponding to
$W =
U'' \bullet 
\xy
(-1,1)*{}="a";
(1,1)*{}="b";
(-1,-1)*{}="c";
(1,-1)*{}="d";
(-2.5,2.5)*{}="e";
(2.5,2.5)*{}="f";
(-2.5,-2.5)*{}="g";
(2.5,-2.5)*{}="h";
"a";"b" **\dir{-};
"d";"b" **\dir{-};
"a";"c" **\dir{-};
"d";"c" **\dir{-};
"a";"e" **\dir{-};
"f";"b" **\dir{-};
"g";"c" **\dir{-};
"d";"h" **\dir{-};
\endxy \bullet
V'
$ where
$
U'' = U \bullet 
\xy
(-1,1)*{}="a";
(1,1)*{}="b";
(-1,-1)*{}="c";
(1,-1)*{}="d";
(-2.5,2.5)*{}="e";
(2.5,2.5)*{}="f";
(-2.5,-2.5)*{}="g";
(2.5,-2.5)*{}="h";
"a";"b" **\dir{-};
"d";"b" **\dir{-};
"a";"c" **\dir{-};
"d";"c" **\dir{-};
"a";"e" **\dir{-};
"f";"b" **\dir{-};
"g";"c" **\dir{-};
"d";"h" **\dir{-};
\endxy \bullet
V''$.

We analyze the closed web
\[ C =
\xy
(2,2)*{}="a";
(2,-2)*{}="b";
"a";"b" **\crv{-};
\endxy \bullet V
\]
which has at most one internal square face and no internal digon faces. 
The orientations of edges around vertices shows that all faces must have an even 
number of edges.
Considering the web on the surface of the $2$-sphere creates an external face which may have any (even) number of edges. 
Since every edge borders two regions we compute
\[
e_C \geq \frac{1}{2}(6(f_C-2)+4+2) = 3(f_C-1)
\]
where $e_C$ is the number of edges in $C$ and $f_C$ is the number of faces bounded on the $2$-sphere by $C$.
Since $C$ is trivalent we have $v_C = \frac{2}{3} e_C$ where $v_C$ is the number of vertices in C. We thus find
\begin{align*}
2 &= f_C - e_C + v_C \\
&= f_C - \frac{1}{3} e_C \\
&\leq f_C + (1-f_C) = 1,
\end{align*}
a contradiction.

The second statement follows from the first by noticing that each reduction lowers the number of internal faces and that 
\eqref{catspidereqn3} need never be used.
\end{proof}

Given a diagram $D$, define the $0$-resolution as the unique web appearing in 
$\left\llbracket D \right\rrbracket_{s}^{un}$ 
in homological degree zero. Concretely, this is the web obtained by taking the smooth resolution
$\xy
(-2,2)*{}="a";
(2,2)*{}="b";
(-2,-2)*{}="c";
(2,-2)*{}="d";
"a";"c" **\crv{-};
"b";"d" **\crv{-};
\endxy$
of each positive crossing and the singular resolution
$\xy
(-2,2)*{}="a";
(0,1)*{}="b";
(2,2)*{}="c";
(0,-1)*{}="d";
(-2,-2)*{}="e";
(2,-2)*{}="f";
"a";"b" **\dir{-};
"b";"c" **\dir{-};
"b";"d" **\dir{-};
"e";"d" **\dir{-};
"d";"f" **\dir{-};
\endxy$
of each negative crossing. Define the smooth resolution of $D$ to be the web obtained by taking the smooth resolution 
of both positive and negative crossings.

\begin{lem}\label{lem2}
Let $D$ be a tangle diagram and let the smooth resolution of $D$ have no closed components. Let $r$ be the number of 
internal faces in the $0$-resolution of $D$ and $c_{+}$ be the number of positive crossings in $D$, then the complex 
$\llbracket D \rrbracket_{s}$ 
satisfies
\[
\supp\left(\llbracket D \rrbracket_{s}\right) \subset R_{1/c_{+}}\{-r-1\}.
\]
If a web $W$ appearing in 
$\llbracket D \rrbracket_{s}$ has $r_W$ internal faces then that term is supported in $R_{1/c_{+}}\{r_W-r-1\}$. 
Moreover, if $D$ has no negative crossings then the $-1$ shifts can be omitted from both statements.
\end{lem}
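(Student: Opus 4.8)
The plan is to induct on the number of crossings of $D$, pushing shifts through the two skein resolutions \eqref{zip} and \eqref{unzip} and using Lemma \ref{lem1} to control the quantum degrees that appear after reduction. First I would set up the base case: if $D$ has no crossings then $\llbracket D \rrbracket_s$ is concentrated in homological degree zero and equals the $0$-resolution web $W_0$ (which by hypothesis has no closed components, so no \eqref{catspidereqn3} is used), supported at the single quantum degree $q^0$; since $W_0$ has $r=r_W$ internal faces the claimed containment $\supp \subset R_{1/c_+}\{r_W - r - 1\}$ is vacuous/immediate once one checks the indexing conventions, and when there are no negative crossings the $-1$ can indeed be dropped. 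I would phrase the inductive statement so that it simultaneously tracks the $R_t[a]\{b\}$-region for the whole complex and the finer ``$r_W - r - 1$'' bound for each individual web.

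The inductive step resolves one crossing of $D$, say at a point giving diagrams $D_0$ and $D_1$ with fewer crossings, so that $\llbracket D \rrbracket_s$ is (up to the shifts dictated by \eqref{zip} or \eqref{unzip} together with the $[c_-]\{3c_- - 2c_+\}$ normalization) a cone on a chain map between $\llbracket D_0 \rrbracket_s$ and $\llbracket D_1 \rrbracket_s$. For a positive crossing the two summands sit in homological degrees $0$ and $1$ with quantum shifts $q^2$ and $q^3$; the extra $\{-2\}$ from the normalization $3c_- - 2c_+$ exactly compensates, and one checks that both the $R_{1/c_+}$ slope condition and the $\{-r-1\}$ vertical offset are preserved — the key numerical point being that decreasing $c_+$ by one steepens the allowed region, while the one new crossing contributes at most the ``width one, height-over-width $1$'' displacement that $R_{1/c_+}$ is designed to absorb. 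For a negative crossing one argues similarly using \eqref{unzip} and the $[1]\{3\}$ normalization contribution, and here the absence of negative crossings in the final clause is what removes the $-1$. Throughout, each web $W'$ appearing in $\llbracket D_i \rrbracket_s^{un}$ must be reduced to non-elliptic webs, and this is where Lemma \ref{lem1} enters: if $W'$ has $r'$ faces and reduces as $\bigoplus q^{k_j} W_j'$ with $W_j'$ having $r_j'$ faces, then $k_j \geq r_j' - r'$, so a web with $r_{W}$ internal faces produced deep in the complex cannot appear in too negative a quantum degree relative to its face count — this is precisely the ``$r_W - r - 1$'' bound, and summing/maximizing over the cone gives the region statement.

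The main obstacle I anticipate is bookkeeping the interaction between (i) the homological and quantum shifts coming from \eqref{zip}/\eqref{unzip}, (ii) the global $[c_-]\{3c_- - 2c_+\}$ renormalization (which changes when a crossing is removed), and (iii) the face-count change between the $0$-resolution of $D$ and those of $D_0, D_1$ — resolving a crossing can merge or split faces, so $r$ is not simply inherited, and one must verify that the inequality from Lemma \ref{lem1} still closes the induction after this change. A secondary subtlety is that the $0$-resolution uses the \emph{singular} resolution at negative crossings while the smooth resolution (whose no-closed-component hypothesis we are given) uses the smooth one at all crossings; I would need to note that no closed component forms during any reduction, which follows from Lemma \ref{lem1}'s first assertion applied to each resolution, so that \eqref{catspidereqn3} is genuinely never invoked and the quantum degrees are governed entirely by \eqref{catspidereqn1} and \eqref{catspidereqn2}. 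Once these three moving parts are pinned down, the slope $1/c_+$ emerges because the worst case is a tower of $c_+$ successive positive-crossing resolutions each costing one homological step and (after reduction, via Lemma \ref{lem1}) at least one net quantum step.
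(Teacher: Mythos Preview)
Your inductive approach is genuinely different from the paper's proof, which is direct and short. The paper observes that the unreduced complex $\llbracket D \rrbracket_s^{un}$ is supported exactly on the diagonal $q = h$ in the $(h,q)$-plane, bounds the number of internal faces of any web in homological degree $h$ by $r + \min(h, c_+)$ (since moving from the $0$-resolution to homological degree $h$ involves at most $\min(h,c_+)$ smooth-to-singular switches at positive crossings, each adding at most one face, while singular-to-smooth switches at negative crossings add none), and then applies Lemma~\ref{lem1} once to conclude that after reduction any non-elliptic web $W$ at degree $h$ sits in quantum degree
\[
l \;\geq\; h + r_W - r - \min(h, c_+) \;\geq\; r_W - r - 1 + h/c_+.
\]

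Your induction, as written, does not close. You correctly flag the bookkeeping as the main obstacle, but the specific numerical check you describe fails. For the singular-resolution piece $D_1$ of a positive crossing one has $c_+(D_1) = c_+ - 1$, and the face count $r_1$ of its $0$-resolution can be $r+1$. In $\llbracket D \rrbracket_s$ this piece appears shifted by $[1]\{1\}$. For the induction to close you would need
\[
R_{1/(c_+-1)}\{-r_1-1\}[1]\{1\} \;\subset\; R_{1/c_+}\{-r-1\},
\]
which, in the worst case $r_1 = r+1$, unpacks to $(h-1)/(c_+-1) \geq h/c_+$, i.e.\ $h \geq c_+$. This is false for $1 \leq h < c_+$, so the linear region $R_{1/c_+}\{-r-1\}$ is not a self-improving inductive invariant. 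The repair is to carry the sharper piecewise-linear bound $l \geq h + r_W - r - \min(h, c_+)$ through the induction instead of its weaker linear consequence; but once you write that bound down, the paper's direct argument (count faces in $\llbracket D\rrbracket_s^{un}$, then reduce once) is shorter and avoids the induction entirely. There is also a secondary issue you did not address: after a singular resolution the diagram $D_1$ is a tangled web, not a tangle diagram, so the lemma as stated does not apply to it and your inductive hypothesis would have to be formulated for a larger class of diagrams.
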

\begin{proof}
Since any web in $\llbracket D \rrbracket_s^{un}$ is obtained from the smooth resolution by switching smoothings of 
crossings to singular resolutions, no web appearing in this complex has closed components. Also, note that changing 
the resolution of a crossing from the smooth resolution to the singular resolution produces at most one new internal face 
while changing from the singular resolution to the smooth resolution cannot produce new internal faces.

It follows that a web $V$ appearing in $\llbracket D \rrbracket_s^{un}$ in homological degree $h$ has at most 
$r+\min(h,c_{+})$ internal faces. Next, note that 
the complex $\llbracket D \rrbracket_s^{un}$ is supported along the line $h=q$ in the $(h,q)$-plane. 
Hence by Lemma 
\ref{lem1}, if $q^l W$ appears in $\llbracket D \rrbracket_s$ in homological degree $h$ then 
\begin{align*}
l &\geq h+r_W-r-\min(h,c_{+}) \\
&\geq r_W-r-1+\frac{h}{c_{+}} 
\end{align*}
which gives the result.

For the final statement, note that if $D$ has no negative crossing then the $0$-resolution and the smooth resolution 
agree. All webs appearing in $\llbracket D \rrbracket_s^{un}$ are thus obtained from the smooth resolution by 
changing a crossing to the 
singular resolution. Since no internal faces are formed when the first resolution is changed, we find that a web $V$ 
appearing in $\llbracket D \rrbracket_s^{un}$ in homological degree $h>0$ has at most 
$r+\min(h,c_{+})-1$ internal faces. The result then follows as above.
\end{proof}

\begin{prop}\label{pluscatprojgoodform}
Let $w=(+\cdots +)$. The categorified projector $\tilde{P}_w$ lies in $\K^{\angle}(\mathcal{F})$.
\end{prop}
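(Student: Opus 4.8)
The plan is to check, one at a time, the three conditions defining $\K^{\angle}(\mathcal{F})$. Conditions (2) and (3) come essentially for free. Every complex $\llbracket - \rrbracket_s$ is written with all webs already reduced to non-elliptic webs, and none of the operations used to assemble $\tilde{P}_w$ --- reduction, the Gaussian eliminations of Proposition \ref{GE}, cones, and the replacements of Proposition \ref{replacement} --- reintroduce elliptic faces, so every web appearing in $\tilde{P}_w$ is non-elliptic; this is (2). Since all these webs lie in $\Hom_{\bullet}(w,w)$ and there are only finitely many non-elliptic webs with a fixed boundary \cite{Kuperberg}, only finitely many distinct webs occur; combined with the observation that each homological degree of $\tilde{P}_w$ is visibly a finite direct sum (the twist complexes, and every intermediate complex produced in the construction, have finite terms in each degree) this gives (3) and simultaneously places $\tilde{P}_w$ in $\K^+(\mathcal{F})$.

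The real work is condition (1): producing $t>0$ and $a,b\in\Z$ with $\supp(\tilde{P}_w)\subset R_t[a]\{b\}$. Since $\tilde{P}_w$ is the $\Ch$-limit of the system $\mathbf{B}$, each of whose terms is $\Oh^h(0)$, it is supported in non-negative homological degrees and we take $a=0$. To control the quantum direction I would track, for a complex $A$ over $\mathcal{F}$, the quantity
\[
\nu(A) = \inf\bigl\{\, l - r_V \ \big| \ q^l V \text{ appears in } A, \ r_V = \text{number of internal faces of } V \,\bigr\}.
\]
The two estimates to establish are: (i) $\nu(C_k)\ge 4k$ for every $k$, where $C_k$ is the simplified model of $\cone(g_k)[1]$ from the construction of $\tilde{P}_w$; and (ii) the homological support of $C_k$ is contained in $[\,2k+1,\ \alpha k+\beta\,]$ for fixed constants $\alpha>0$ and $\beta$ depending only on $m$ (one may take $\alpha$ to be two plus the crossing number of $\twistds{m}{}$ and $\beta=c_+$, the number of crossings in one full twist on the $m$ strands). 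Since $\tilde{P}_w^h=\llbracket\id{m}\rrbracket_s^h\oplus\bigoplus_{j\ge 0}C_j^h$, the two estimates combine to finish: a non-identity web $q^l V$ of $\tilde{P}_w$ at homological degree $h$ appears in some $C_k$, and $\alpha k+\beta\ge h$ forces $k\ge (h-\beta)/\alpha$, whence $l\ge r_V+\nu(C_k)\ge 4k\ge 4(h-\beta)/\alpha$; as the identity web sits at $(0,0)$, this gives $\supp(\tilde{P}_w)\subset R_{4/\alpha}\{b\}$ for a suitable integer $b$.

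For (i), Lemma \ref{lem2} applied to a single full twist --- whose smooth resolution is the identity web $\id{m}$, with no closed components, no internal faces, and no negative crossings --- gives $l\ge h/c_+ + r_V$ for any $q^l V$ in $\llbracket\smalltwistm{m}{}\rrbracket_s$, so $\nu(C_0)\ge 0$. For the inductive step I would use that, by \eqref{pt} together with Propositions \ref{tensorcone} and \ref{replacement}, $C_k$ is obtained from $C_{k-1}$ by replacing each web $W$ appearing at bidegree $(i,l')$ with $q^{l'}\llbracket W\bullet\twistds{m}{}\rrbracket_s[2]\{4\}$, with homological degrees translated by $i$. The diagram $W\bullet\twistds{m}{}$ has no negative crossings and its smooth resolution is just $W$ (the smooth resolution of $\twistds{m}{}$ being the trivial braid), which has $r_W$ internal faces and no closed components; so Lemma \ref{lem2} bounds the quantum degree $\lambda$ of any web $V$ with $r_V$ internal faces appearing in $\llbracket W\bullet\twistds{m}{}\rrbracket_s$ by $\lambda\ge r_V-r_W$. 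The corresponding web of $C_k$ then has quantum degree $l=l'+\lambda+4$, so $l-r_V\ge (l'-r_W)+4\ge \nu(C_{k-1})+4$, giving $\nu(C_k)\ge\nu(C_{k-1})+4$ and hence (i). Estimate (ii) is a crude width count: $\llbracket W\bullet\twistds{m}{}\rrbracket_s$ has homological width at most the crossing number of $\twistds{m}{}$, so the $[2]$ shift makes $C_k$ extend at most $\alpha$ homological degrees past $C_{k-1}$, and $C_0\subset\llbracket\smalltwistm{m}{}\rrbracket_s$ already has width at most $c_+$.

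The hard part --- and the reason the categorified projector lands in $\K^{\angle}(\mathcal{F})$ at all --- is the interplay of (i) and (ii). Lemma \ref{lem2} on its own only confines support to lines of slope $1/c_+$, and the per-twist gain of $\{4\}$ in $\nu$ would be useless for producing a positive \emph{overall} slope were it not forced, via the width bound (ii), to occur of order $h$ times by the time one reaches homological degree $h$. The two technical inputs that make (i) work and must be verified with care are that $W\bullet\twistds{m}{}$ has no negative crossings and that its zeroth resolution is exactly $W$, so that absorbing one more twist creates no new internal faces and therefore pays the full $\{4\}$ of quantum degree into the invariant $l-r_V$.
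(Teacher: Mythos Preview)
Your argument is correct and uses the same essential ingredients as the paper --- Lemma \ref{lem2} applied to the diagram $W\bullet\twistds{m}{}$, the $[2]\{4\}$ gain from \eqref{pt}, and an induction on $k$ --- but you organize the bookkeeping differently. The paper proves a single inductive statement: every web $V$ occurring in $C_k^{\cdot}$ at bidegree $(h,l)$ satisfies $l \ge h/M + r_V$ for $M = m(m-1)$, so each $C_k^{\cdot}$ already lies in the fixed wedge $R_{1/M}$ and no separate width estimate is needed. You instead throw away the $h'/c_+$ term from Lemma \ref{lem2}, keeping only $\lambda \ge r_V - r_W$, and recover a positive slope at the end by pairing your quantum bound $\nu(C_k)\ge 4k$ with the homological width bound (ii) and the decomposition $\tilde{P}_w^h = \llbracket\id{m}\rrbracket_s^h \oplus \bigoplus_j C_j^h$. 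Both routes work; the paper's is more direct (one induction, no width count, no need to decompose $\tilde{P}_w^h$), while yours separates the two mechanisms at play and, amusingly, ends up with the sharper slope $4/c_+$ instead of $1/c_+$.
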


\begin{proof}
Recall that there are only finitely many non-elliptic basis webs in \[\Hom_{\bullet}(w,w).\]
Since the terms of $\tilde{P}_w$ are the stable limit of the terms from the complexes $B_k$ given in the proof of 
Proposition \ref{plus1} and all webs appearing there are non-elliptic, 
it suffices to show that the complexes $C_k^{\cdot}$ from the proof of Proposition \ref{plusCauchy} can (additionally) 
be chosen to lie in $A_{1/t}$ for some fixed $t>0$. 

Lemma \ref{lem2} gives that each web $W$ appearing in
$
\llbracket \smalltwistm{m}{} \rrbracket_s
$
is supported in $A_{1/M}\{r_W\}$ where $M = m(m-1)$ and $r_W$ is the number of internal faces in $W$.
It follows that the same is true for webs appearing in $C_0^{\cdot}$. We now show, via induction, that the same result 
holds for the complexes $C_k^{\cdot}$.

Recalling how $C_k^{\cdot}$ is constructed from $C_{k-1}^{\cdot} \bullet \llbracket \smalltwistm{m}{} \rrbracket_s$, 
it suffices to show that if $W$ is a web 
appearing in $C_{k-1}^{\cdot}$ then any web $V$ appearing in 
\[
\left\llbracket W \bullet \twistds{m}{} \right\rrbracket_s [2]\{4\}
\]
is supported in $A_{1/M}\{r_V - r_W\}$.
Since the zero ($=$ smooth) resolution of 
\[
W \bullet \twistds{m}{}
\]
is simply $W$ there are 
$r_W$ internal faces and no closed components. The result then follows from Lemma \ref{lem2}.
\end{proof}

We hence can consider the $\Z[q^{-1},q]]$-linear combination of webs $\chi(\tilde{P}_w)$. 
A slight extension of the results from \cite{Rose1} shows that Euler characteristic 
is invariant under homotopy in $\K^{\angle}(\mathcal{F})$; Proposition \ref{plus3} then gives that
\[
\chi(\tilde{P}_w) \bullet \chi(\tilde{P}_w) = \chi(\tilde{P}_w)
\]
and Proposition \ref{plus2} gives that if $\wt(v) < \wt(w)$ then 
\[
\chi(\tilde{P}_w)\bullet W_1 = 0
\]
for any 
$W_1 \in \Hom_{\bullet}(w,v)$ and 
\[
W_2\bullet \chi(\tilde{P}_w) = 0
\]
for any 
$W_2 \in \Hom_{\bullet}(v,w)$. Propositions \ref{plus1} and \ref{pluscatprojgoodform} show that
\[
\chi(\tilde{P}_w) = \Id_w + \sum_{i=1}^{r} f_i(q) \cdot W_i
\]
with $f_i \in \Z[q^{-1},q]]$ and where $W_i \in \Hom_\bullet(w,w) \smallsetminus \Id_w$ are non-elliptic webs. These facts, together with Proposition 
\ref{charproj}, give a proof of Theorem \ref{2} in the case that $w=(+\cdots +)$.

\begin{ex}\label{catex1}
The computations given in \cite[Section 6.1]{MorrisonNieh} show that 
\[
\tilde{P}_{(++)} = \ \ \ \ \
\xymatrix{
\xy 
(0,0);
(0,1)*{\xy
(-3,3); (3,3) **\crv{(0,1)};
(-3,-3); (3,-3) **\crv{(0,-1)};
(.5,2.1); (.6,2.1) **\dir{-}?(1)*\dir{>};
(.5,-2); (.6,-2) **\dir{-}?(1)*\dir{>};
\endxy} ;
\endxy
\ \ \ar[r]^-{z}
& q \xy
(-4,4); (-2.5,0) **\dir{-}?(.7)*\dir{>};
(-4,-4); (-2.5,0) **\dir{-}?(.7)*\dir{>};
(-2.5,0); (2.5,0) **\dir{-}?(.4)*\dir{<};
(2.5,0); (4,4) **\dir{-}?(.7)*\dir{>};
(2.5,0); (4,-4) **\dir{-}?(.7)*\dir{>};
\endxy \ar[r]^-{\psi_{-}}
& q^3 \xy
(-4,4); (-2.5,0) **\dir{-}?(.7)*\dir{>};
(-4,-4); (-2.5,0) **\dir{-}?(.7)*\dir{>};
(-2.5,0); (2.5,0) **\dir{-}?(.4)*\dir{<};
(2.5,0); (4,4) **\dir{-}?(.7)*\dir{>};
(2.5,0); (4,-4) **\dir{-}?(.7)*\dir{>};
\endxy \ar[r]^-{\psi_{+}}
& q^5 \xy
(-4,4); (-2.5,0) **\dir{-}?(.7)*\dir{>};
(-4,-4); (-2.5,0) **\dir{-}?(.7)*\dir{>};
(-2.5,0); (2.5,0) **\dir{-}?(.4)*\dir{<};
(2.5,0); (4,4) **\dir{-}?(.7)*\dir{>};
(2.5,0); (4,-4) **\dir{-}?(.7)*\dir{>};
\endxy \ar[r]^-{\psi{-}}
& q^7 \xy
(-4,4); (-2.5,0) **\dir{-}?(.7)*\dir{>};
(-4,-4); (-2.5,0) **\dir{-}?(.7)*\dir{>};
(-2.5,0); (2.5,0) **\dir{-}?(.4)*\dir{<};
(2.5,0); (4,4) **\dir{-}?(.7)*\dir{>};
(2.5,0); (4,-4) **\dir{-}?(.7)*\dir{>};
\endxy \ar[r]^-{\psi_{+}}
&\cdots
}
\]
where $\psi_{\pm}$ are the morphisms defined in that section. Note that
\begin{align*}
\xymatrix{\chi(\tilde{P}_{(++)})} &= \ \ \ \  \xymatrix{\xy
(0,0);
(0,1)*{\xy
(-3,3); (3,3) **\crv{(0,1)};
(-3,-3); (3,-3) **\crv{(0,-1)};
(.5,2.1); (.6,2.1) **\dir{-}?(1)*\dir{>};
(.5,-2); (.6,-2) **\dir{-}?(1)*\dir{>};
\endxy} ;
\endxy \ \  - (q-q^3+q^5-q^7+\cdots) \ \xy
(-4,4); (-2.5,0) **\dir{-}?(.7)*\dir{>};
(-4,-4); (-2.5,0) **\dir{-}?(.7)*\dir{>};
(-2.5,0); (2.5,0) **\dir{-}?(.4)*\dir{<};
(2.5,0); (4,4) **\dir{-}?(.7)*\dir{>};
(2.5,0); (4,-4) **\dir{-}?(.7)*\dir{>};
\endxy} \\
&= \ \ \ \ \xymatrix{\xy
(0,0);
(0,1)*{\xy
(-3,3); (3,3) **\crv{(0,1)};
(-3,-3); (3,-3) **\crv{(0,-1)};
(.5,2.1); (.6,2.1) **\dir{-}?(1)*\dir{>};
(.5,-2); (.6,-2) **\dir{-}?(1)*\dir{>};
\endxy} ;
\endxy \ \ - \frac{1}{[2]} \ \xy
(-4,4); (-2.5,0) **\dir{-}?(.7)*\dir{>};
(-4,-4); (-2.5,0) **\dir{-}?(.7)*\dir{>};
(-2.5,0); (2.5,0) **\dir{-}?(.4)*\dir{<};
(2.5,0); (4,4) **\dir{-}?(.7)*\dir{>};
(2.5,0); (4,-4) **\dir{-}?(.7)*\dir{>};
\endxy} \\
&= P_{(++)}
\end{align*}
by Proposition \ref{projrecur}.
\end{ex}

\subsection{$\tilde{P}_w$ and decategorification for $w=(+ \cdots + - \cdots -)$}\label{scp3}

We now construct $\tilde{P}_w$ for $w=(\underbrace{+ \cdots +}_{m} \underbrace{- \cdots -}_{n})$. 
Since we have already considered the case $n=0$ (and $m=0$ by taking duals) we assume $m,n>0$.
We proceed by mimicking the proof for $w=(+\cdots +)$. Let $\smalltwistmn{m}{n}{}$ denote a full twist on 
$m+n$ strands directed as indicated. The first step is to construct a map
\[
\xymatrix{\left \llbracket \smalltwistmn{m}{n}{} \right \rrbracket_s \ar[r]^-{g} 
& \left \llbracket \idmn{m}{n} \right \rrbracket_s}
\]
and use it to build the inverse system
\[
\mathbf{T}_w = \xymatrix{\left\llbracket \idmn{m}{n} \right\rrbracket_s 
& \left\llbracket \smalltwistmn{m}{n}{} \right\rrbracket_s \ar[l]_-{g_0}
& \left\llbracket \smalltwistmn{m}{n}{2} \right\rrbracket_s \ar[l]_-{g_1}
& \cdots \ar[l]
}.
\]
This is trivial when $w=(+ \cdots +)$ since the degree zero term of the complex assigned to a single twist is the 
identity tangle. This fails for $w=(+ \cdots + - \cdots -)$, but we shall see that it holds up to homotopy, which is 
sufficient to define the map $g$.

We begin with a lemma showing this for the case $m=1=n$, which will also be of use later.

\begin{lem}\label{m=1=n}
\[
\left \llbracket \ \smalltwotwistoneone \ \right\rrbracket_s \simeq 
\left(
\xymatrix{
\xy
(0,1)*{
\xy
(-2,2)*{}="a";
(2,2)*{}="b";
(-2,-2)*{}="c";
(2,-2)*{}="d";
"a";"b" **\crv{(0,1)};
"d";"c" **\crv{(0,-1)};
(1.9,1.97);(2,2)**\dir{-}?(1)*\dir{>};
(-1.9,-1.97);(-2,-2)**\dir{-}?(1)*\dir{>};
\endxy};
\endxy \ar[r]^-{s}
& q^2 \xy
(-2,2)*{}="a";
(2,2)*{}="b";
(-2,-2)*{}="c";
(2,-2)*{}="d";
"a";"c" **\crv{-};
"d";"b" **\crv{-};
(1.9,1.85);(2,2)**\dir{-}?(1)*\dir{>};
(-1.9,-1.87);(-2,-2)**\dir{-}?(1)*\dir{>};
\endxy \ar[r]^-{ct_{-}}
& q^4 \xy
(-2,2)*{}="a";
(2,2)*{}="b";
(-2,-2)*{}="c";
(2,-2)*{}="d";
"a";"c" **\crv{-};
"d";"b" **\crv{-};
(1.9,1.85);(2,2)**\dir{-}?(1)*\dir{>};
(-1.9,-1.87);(-2,-2)**\dir{-}?(1)*\dir{>};
\endxy
}
\right)
\]
\end{lem}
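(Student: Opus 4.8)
The plan is to compute $\left\llbracket \smalltwotwistoneone \right\rrbracket_s$ directly from the definition of Morrison and Nieh's invariant on crossings and then simplify the resulting complex by Gaussian elimination. First I would resolve each of the two crossings in the double twist using equation \eqref{zip} (after, if necessary, bending a strand via the pivotal structure so that the basic crossing \eqref{zip} applies to the anti-parallel pair $(+-)$; the case in which the crossings are negative is entirely analogous using \eqref{unzip}). Horizontal composition of the two length-two complexes exhibits $\left\llbracket \smalltwotwistoneone \right\rrbracket_s^{un}$ as the total complex of a $2\times 2$ double complex of webs with zip and unzip foams as differentials, and applying the shift $[c_-]\{3c_- - 2c_+\}$ defining $\left\llbracket - \right\rrbracket_s$ (here $c_+ = 2$, $c_- = 0$, so the shift is $\{-4\}$) places this complex in homological degrees $0,1,2$, with the all-oriented resolution in bidegree $(0,0)$ and in quantum degree $q^0$.

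Next I would reduce the nontrivial corner webs to non-elliptic form using \eqref{catspidereqn1}, \eqref{catspidereqn2}, and \eqref{catspidereqn3}. The all-oriented resolution is already the identity web $id_{(+-)}$; each of the two mixed resolutions is an $H$-web with two adjacent legs capped off, hence contains a digon and reduces by \eqref{catspidereqn1} to a direct sum of $q$-shifted copies of the cup-cap web; and the all-singular resolution is a cup-cap web composed with a cup-cap web, which contains a closed circle and reduces by \eqref{catspidereqn3} (together with \eqref{catspidereqn1}) to $q$-shifted copies of the cup-cap web. After this reduction the complex has homological length $3$, with a single copy of $id_{(+-)}$ in bidegree $(0,0)$ and several $q$-shifted copies of the cup-cap web in homological degrees $1$ and $2$.

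Then I would apply Proposition \ref{GE} repeatedly to cancel every summand except one copy of $id_{(+-)}$, one copy of the cup-cap web in homological degree $1$, and one copy in homological degree $2$. Since every zip and unzip foam, and hence every induced differential, has degree zero, the surviving copies of the cup-cap web necessarily lie in quantum degrees $q^2$ and $q^4$ respectively, which is precisely the complex in the statement; tracking which foams survive the eliminations then identifies the two remaining differentials with the morphisms $s$ and $ct_-$. This computation is the $(1,1)$-strand analogue of the computation of $\tilde{P}_{(++)}$ in \cite[Section 6.1]{MorrisonNieh} recalled in Example \ref{catex1}, and one may either invoke that reference or carry out the bookkeeping by hand.

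The main obstacle is exactly this bookkeeping: fixing the orientation conventions for the anti-parallel crossing, propagating the quantum-degree shifts correctly through the reductions (the closed circle produced by the all-singular resolution is the delicate point, cf. the proof of Lemma \ref{lem1}), and, most importantly, verifying that the two surviving differentials are genuinely $s$ and $ct_-$ and that no further Gaussian elimination is possible, i.e.\ that neither surviving foam becomes an isomorphism after the reductions. One should also check that the reductions can be carried out with no closed component left behind, so that the final object is a genuine finite complex of non-elliptic webs.
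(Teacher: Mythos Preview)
Your overall strategy---expand the two crossings, reduce the resulting webs to non-elliptic form, and cancel via Proposition~\ref{GE}---is exactly the paper's approach. However, your bookkeeping is off in a way that would derail the computation.

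The two crossings in $\smalltwotwistoneone$ are \emph{negative}, not positive: reversing the orientation of one strand in the $(++)$ full twist flips the sign of both crossings, so $c_- = 2$, $c_+ = 0$ and the shift defining $\llbracket - \rrbracket_s$ is $[2]\{6\}$, not $\{-4\}$. Consequently the term in homological degree $0$ is not the all-smooth resolution but the all-\emph{singular} one, namely the composite of two $H$-webs; this contains a square and reduces via \eqref{catspidereqn2} to $id_{(+-)} \oplus \text{(cup-cap)}$, not to $id_{(+-)}$ alone. Likewise the smooth resolution of an anti-parallel crossing is the cup-cap web, not the identity, so the all-smooth resolution (which sits in homological degree~$2$ after the shift) is a cup-cap composed with a cup-cap and carries the closed circle you describe. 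Your descriptions of the four resolutions are therefore permuted: the paper's reduced complex has $2$, $4$, and $3$ summands in degrees $0$, $1$, $2$ respectively, and three Gaussian eliminations (one coming from the identity entry in the degree-$0$ map, two from the degree-$1$ map) leave the three-term complex in the statement.

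One further remark: rather than tracking the surviving foams through the eliminations to identify the second differential, the paper simply observes that up to scalar there is a unique degree-zero map $q^2(\text{cup-cap}) \to q^4(\text{cup-cap})$ whose composite with $s$ vanishes, and then checks by a single computation that the scalar is nonzero. This shortcut spares you the most delicate part of the bookkeeping.
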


Here $s$ is a saddle cobordism and $ct_{-} = ct_{L} - ct_{R}$ where $ct_{L}$ denotes the foam which is the identity foam 
on the right arc and has a `choking torus:' 
\[
\xy 0;/r.2pc/:
(-20,18); (20,20) **\crv{(0,15)&(20,15)};
(-20,-22); (-20,18) **\dir{-};
(-20,-22); (20,-20) **\crv{(0,-25)&(20,-25)};
(10,7); (10,-13) **\crv{(20,0)&(40,15)&(70,0)&(40,-20)&(20,-5)};
(20,4.2)*{\hole}="h1";
(20,-9.3)*{\hole}="h2";
(20,20); "h1" **\dir{-};
"h1"; "h2" **\dir{.};
"h2"; (20,-20) **\dir{-};
(38,-1); (50,-2) **\crv{(43,-4)}\POS?(.2)="a" \POS?(.85)="b";
"a"; "b" **\crv{(45,1)};
(26.5,-2.5)*\xycircle(3,7.6){-};
(23.7,0); (28,4) **\dir{-};
(23.6,-4.2); (29.1,.85) **\dir{-};
(24.4,-8); (29.5, -3.15) **\dir{-};
\endxy
\]
on the left arc. The foam
$ct_R$ is defined similarly.

\begin{proof}
We have
\[
\left \llbracket \ \smalltwotwistoneone \ \right\rrbracket_s =
\xymatrix{ \xy
(0,1)*{\xy
(-2,2)*{}="a";
(2,2)*{}="b";
(-2,-2)*{}="c";
(2,-2)*{}="d";
"a";"b" **\crv{(0,1)}?(1);
"d";"c" **\crv{(0,-1)}?(1);
(1.9,1.97);(2,2)**\dir{-}?(1)*\dir{>};
(-1.9,-1.97);(-2,-2)**\dir{-}?(1)*\dir{>};
\endxy};
\endxy \oplus
 \xy
(-2,2)*{}="a";
(2,2)*{}="b";
(-2,-2)*{}="c";
(2,-2)*{}="d";
"a";"c" **\crv{-};
"d";"b" **\crv{-};
(1.9,1.85);(2,2)**\dir{-}?(1)*\dir{>};
(-1.9,-1.87);(-2,-2)**\dir{-}?(1)*\dir{>};
\endxy \ar[r]^-{A}
& q^{2} \xy
(-2,2)*{}="a";
(2,2)*{}="b";
(-2,-2)*{}="c";
(2,-2)*{}="d";
"a";"c" **\crv{-};
"d";"b" **\crv{-};
(1.9,1.85);(2,2)**\dir{-}?(1)*\dir{>};
(-1.9,-1.87);(-2,-2)**\dir{-}?(1)*\dir{>};
\endxy \oplus 
 \xy
(-2,2)*{}="a";
(2,2)*{}="b";
(-2,-2)*{}="c";
(2,-2)*{}="d";
"a";"c" **\crv{-};
"d";"b" **\crv{-};
(1.9,1.85);(2,2)**\dir{-}?(1)*\dir{>};
(-1.9,-1.87);(-2,-2)**\dir{-}?(1)*\dir{>};
\endxy \oplus 
q^{2} \xy
(-2,2)*{}="a";
(2,2)*{}="b";
(-2,-2)*{}="c";
(2,-2)*{}="d";
"a";"c" **\crv{-};
"d";"b" **\crv{-};
(1.9,1.85);(2,2)**\dir{-}?(1)*\dir{>};
(-1.9,-1.87);(-2,-2)**\dir{-}?(1)*\dir{>};
\endxy \oplus 
 \xy
(-2,2)*{}="a";
(2,2)*{}="b";
(-2,-2)*{}="c";
(2,-2)*{}="d";
"a";"c" **\crv{-};
"d";"b" **\crv{-};
(1.9,1.85);(2,2)**\dir{-}?(1)*\dir{>};
(-1.9,-1.87);(-2,-2)**\dir{-}?(1)*\dir{>};
\endxy \ar[r]^-{B}
& q^{4} \xy
(-2,2)*{}="a";
(2,2)*{}="b";
(-2,-2)*{}="c";
(2,-2)*{}="d";
"a";"c" **\crv{-};
"d";"b" **\crv{-};
(1.9,1.85);(2,2)**\dir{-}?(1)*\dir{>};
(-1.9,-1.87);(-2,-2)**\dir{-}?(1)*\dir{>};
\endxy \oplus 
q^{2} \xy
(-2,2)*{}="a";
(2,2)*{}="b";
(-2,-2)*{}="c";
(2,-2)*{}="d";
"a";"c" **\crv{-};
"d";"b" **\crv{-};
(1.9,1.85);(2,2)**\dir{-}?(1)*\dir{>};
(-1.9,-1.87);(-2,-2)**\dir{-}?(1)*\dir{>};;
\endxy \oplus
 \xy
(-2,2)*{}="a";
(2,2)*{}="b";
(-2,-2)*{}="c";
(2,-2)*{}="d";
"a";"c" **\crv{-};
"d";"b" **\crv{-};
(1.9,1.85);(2,2)**\dir{-}?(1)*\dir{>};
(-1.9,-1.87);(-2,-2)**\dir{-}?(1)*\dir{>};
\endxy}
\]
where
\[
A = \begin{pmatrix}s & \ast \\ 0 & \Id \\ s & \ast \\ 0 & \Id\end{pmatrix}
\]
and
\[
B = \begin{pmatrix}\ast & \ast & \ast & \ast \\ \ast & \ast & \Id & \ast \\ 0 & \ast & 0 & -\Id\end{pmatrix}.
\]
Using Gaussian elimination, we find that
\[
\left \llbracket \ \smalltwotwistoneone \ \right\rrbracket_s \simeq 
\xymatrix{
\xy
(0,1)*{
\xy
(-2,2)*{}="a";
(2,2)*{}="b";
(-2,-2)*{}="c";
(2,-2)*{}="d";
"a";"b" **\crv{(0,1)};
"d";"c" **\crv{(0,-1)};
(1.9,1.97);(2,2)**\dir{-}?(1)*\dir{>};
(-1.9,-1.97);(-2,-2)**\dir{-}?(1)*\dir{>};
\endxy};
\endxy \ar[r]^-{s}
& q^2 \xy
(-2,2)*{}="a";
(2,2)*{}="b";
(-2,-2)*{}="c";
(2,-2)*{}="d";
"a";"c" **\crv{-};
"d";"b" **\crv{-};
(1.9,1.85);(2,2)**\dir{-}?(1)*\dir{>};
(-1.9,-1.87);(-2,-2)**\dir{-}?(1)*\dir{>};
\endxy \ar[r]
& q^4 \xy
(-2,2)*{}="a";
(2,2)*{}="b";
(-2,-2)*{}="c";
(2,-2)*{}="d";
"a";"c" **\crv{-};
"d";"b" **\crv{-};
(1.9,1.85);(2,2)**\dir{-}?(1)*\dir{>};
(-1.9,-1.87);(-2,-2)**\dir{-}?(1)*\dir{>};
\endxy
}.
\]
We deduce the second map since, up to scalar multiple, it is the only degree zero map 
which makes the diagram a complex. A computation shows that it is indeed non-zero.
\end{proof}

We now prove the general case.

\begin{prop}\label{mnstart}
\[
\left \llbracket \smalltwistmn{m}{n}{} \right \rrbracket_s \simeq 
\xymatrix{ \left(\idmn{m}{n}\right) \ar[r] & C^1 \ar[r] & C^2 \ar[r] &\cdots} 
\]
where every web appearing in $C^{\cdot}$ is non-elliptic and takes the form 
\begin{equation}\label{goodwebform2}
W_L \bullet W \bullet W_R
\end{equation}
for 
\[
W_L = 
\xy  
(-3,5); (3,5) **\dir{-}?(1)*\dir{>};
(-3,-3); (3,-3) **\dir{-}?(1)*\dir{>};
(-3,-5); (3,-5) **\dir{-}?(0)*\dir{<};
(0,1)*{\smallYli};
\endxy \text{ \ \ or \ \ }
\xy  
(-3,5); (3,5) **\dir{-}?(1)*\dir{>};
(-3,3); (3,3) **\dir{-}?(0)*\dir{<};
(-3,-5); (3,-5) **\dir{-}?(0)*\dir{<};
(0,-1)*{\smallYlo};
\endxy \text{ \ \ or \ \ }
\xy
(-3,5); (3,5) **\dir{-}?(1)*\dir{>};
(-3,3); (-3,-3) **\crv{(3,0)}?(1)*\dir{>};
(-3,-5); (3,-5) **\dir{-}?(0)*\dir{<};
\endxy
\]
and
\[
W_R = 
\xy  
(-3,5); (3,5) **\dir{-}?(1)*\dir{>};
(-3,-3); (3,-3) **\dir{-}?(1)*\dir{>};
(-3,-5); (3,-5) **\dir{-}?(0)*\dir{<};
(0,1)*{\smallYro};
\endxy \text{ \ \ or \ \ }
\xy  
(-3,5); (3,5) **\dir{-}?(1)*\dir{>};
(-3,3); (3,3) **\dir{-}?(0)*\dir{<};
(-3,-5); (3,-5) **\dir{-}?(0)*\dir{<};
(0,-1)*{\smallYri};
\endxy \text{ \ \ or \ \ }
\xy
(-3,5); (3,5) **\dir{-}?(1)*\dir{>};
(3,3); (3,-3) **\crv{(-3,0)}?(1)*\dir{>};
(-3,-5); (3,-5) **\dir{-}?(0)*\dir{<};
\endxy \ \
\]
where the strands involved in the $Y$-webs and $U$-webs have multiplicity one (and the other strands can have 
higher multiplicities).
\end{prop}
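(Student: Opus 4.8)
The plan is to mirror the proof of the $w=(+\cdots+)$ case: work first with the unreduced complex $\left\llbracket \smalltwistmn{m}{n}{} \right\rrbracket_s^{un}$, show that its degree-zero term is $\idmn{m}{n}$ and that every other web in it already has the form \eqref{goodwebform2}, and then observe that reduction via \eqref{catspidereqn1}, \eqref{catspidereqn2}, \eqref{catspidereqn3} only rewrites the interior factor $W$ of a product $W_L\bullet W\bullet W_R$ as a direct sum of non-elliptic webs, leaving the boundary factors $W_L,W_R$ untouched; the reduced complex $C^{\cdot}$ then has precisely the asserted shape. This is the same reasoning used after \eqref{goodwebform} in Subsection \ref{scp1}.

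For the degree-zero term: the all-smooth resolution of the full-twist diagram is the identity braid on $m+n$ strands, so it equals $\idmn{m}{n}$, and as in \eqref{zip}--\eqref{unzip} the smooth resolution of each crossing lives in homological degree zero while every other resolution raises homological degree (the positive full twist has no negative crossings, so the shift defining $\left\llbracket - \right\rrbracket_s$ is purely in quantum degree here). Hence the complex is supported in non-negative homological degree with the identity web alone in degree zero, and $\left\llbracket \smalltwistmn{m}{n}{} \right\rrbracket_s$ is realized as a cone over $\left\llbracket \idmn{m}{n} \right\rrbracket_s$, exactly as in the $(+\cdots+)$ case.

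The crux is the claim that every non-smooth resolution has the form \eqref{goodwebform2}. Present the full twist as the braid word $(\sigma_1\sigma_2\cdots\sigma_{m+n-1})^{m+n}$ and fix a resolution other than the all-smooth one. Let $\sigma_j$ be the first crossing (in the order of composition) resolved the non-smooth way; all crossings preceding it are smooth, so their composite is the identity, and the two strand-segments meeting at $\sigma_j$ therefore run straight back to two adjacent boundary points of the domain word $w$. If those two segments are like-oriented, the non-smooth resolution of $\sigma_j$ is the square web, which attaches a $Y$-web to those two boundary points; if they are oppositely oriented, it is the turnback $U$-web of Lemma \ref{m=1=n}. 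Since $w=(\underbrace{+\cdots+}_{m}\underbrace{-\cdots-}_{n})$, two adjacent points of $w$ read $++$, $--$, or $+-$, giving exactly the three listed shapes of $W_L$. Reading the braid word in the opposite order and repeating the argument with the \emph{last} non-smooth crossing produces $W_R$. Because these two factors hug opposite ends of $W$ and $m,n>0$ (so there are at least two strands), the two factorizations combine to write $W=W_L\bullet W'\bullet W_R$; the degenerate case in which the leftmost and rightmost non-smooth crossings coincide is also fine, since a lone square web splits as (merge $Y$-web)$\bullet$(identity)$\bullet$(split $Y$-web) and a lone turnback splits as (cup)$\bullet$(identity)$\bullet$(cap).

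The main obstacle — really the only ingredient beyond the formal machinery already deployed for $w=(+\cdots+)$ — is the identification of the non-smooth resolution of a $+/-$ crossing with the turnback $U$-web (and of a $+/+$ or $-/-$ crossing with the square web, which carries $Y$-webs on all four sides), together with the verification that these webs are glued to the boundary of $w$ rather than appearing internally. The first point is read off directly from the Morrison--Nieh resolution conventions and is the content of the explicit computation in Lemma \ref{m=1=n}; the second is precisely the ``straight strands back to $w$'' observation above. Once this is in place the rest of the argument, and the setup of the inverse system $\mathbf{T}_w$, is bookkeeping identical to the $(+\cdots+)$ case.
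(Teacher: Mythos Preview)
Your argument has a genuine gap at the very first step. The full twist on the segregated word $w=(\underbrace{+\cdots+}_m\underbrace{-\cdots-}_n)$ is \emph{not} a positive braid when $m,n>0$: every crossing between a $+$-strand and a $-$-strand is negative, and there are $c_-=2mn$ of them (the paper records this in the proof of Proposition~\ref{hardproof}). So your parenthetical ``the positive full twist has no negative crossings, so the shift defining $\llbracket-\rrbracket_s$ is purely in quantum degree here'' is simply false, and with it the claim that the identity web sits alone in homological degree zero of the unreduced complex. In $\llbracket-\rrbracket_s$ the degree-zero piece of a negative crossing is the \emph{non-smooth} resolution; the all-smooth resolution (the identity web) therefore lands in homological degree $c_-=2mn$, mixed with many other summands, and reduction preserves homological degree so it cannot be extracted. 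Your ``first non-smooth crossing'' analysis then has no traction, because in the shifted complex every term contributing to degree zero already has all $2mn$ mixed-orientation crossings resolved non-smoothly.

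This is precisely the obstacle the paper flags in the paragraph preceding Lemma~\ref{m=1=n}: ``This is trivial when $w=(+\cdots+)$ since the degree zero term of the complex assigned to a single twist is the identity tangle. This fails for $w=(+\cdots+-\cdots-)$, but we shall see that it holds up to homotopy.'' The paper's proof of Proposition~\ref{mnstart} accordingly does substantive work that your proposal bypasses: it factors the full twist as an iterated single-strand wrap, invokes Lemma~\ref{m=1=n} (itself a Gaussian-elimination computation) together with Proposition~\ref{tensorcone} to simplify the $+/-$ interactions one layer at a time, and runs a double induction (on $m$ for the outer structure, on $n$ for the innermost wrap) to produce a complex \emph{homotopy equivalent} to one of the claimed shape. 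None of this can be shortcut by inspecting resolutions of the unreduced complex.
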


\begin{proof}
We proceed via induction on $m$,
noting that the result holds 
trivially in the cases $m=0$ or $n=0$. 
We have
\[
\left \llbracket \twistmn{m}{n}{} \right \rrbracket_s \simeq 
\left \llbracket \onewrapmn{m-1}{n}{} \right \rrbracket_s \bullet 
\left \llbracket 
\xy
(-14,6)*{}; (8,6)*{} **\dir{-}?(1)*\dir{>};
(0,-1)*{\twistmn{m-1}{n}{}};
\endxy
\right \rrbracket_s
\]
where the first tangle diagram on the right hand side indicates one strand wrapping around the others and the 
second denotes the tensor product of a strand with a full twist on $m+n-1$ strands, directed as indicated. By induction, 
the second complex has the desired form. 
Since 
the composition of two webs of the form \eqref{goodwebform2} is isomorphic to a $q$-linear direct sum of non-elliptic 
webs of this form, it suffices to show that the first complex has the desired form. We have
\[
\left \llbracket \onewrapmn{m}{n}{} \right \rrbracket_s = 
\left \llbracket
\xy
(-4,-3)*{};(2,-3)*{} **\dir{-}?(0)*\dir{<};
(2,2)*{\smallhalftwistmn{m-1}{}};
(4,-3.5)*{^{_{n}}};
\endxy
\right \rrbracket_s \bullet
\left \llbracket
\xy
(-5,4)*{};(5,4)*{} **\dir{-}?(1)*\dir{>};
(0,-1)*{\smalltwotwistoneone};
(8,4)*{^{_{m-1}}};
(6,-4)*{^{_{n}}};
\endxy
\right \rrbracket_s \bullet
\left \llbracket
\xy
(-4,-3)*{};(2,-3)*{} **\dir{-}?(0)*\dir{<};
(2,2)*{\smallhalftwistmn{}{m-1}};
(4,-3.5)*{^{_{n}}};
\endxy
\right \rrbracket_s
\]
where the middle term on the right side is the tensor product of $m-1$ strands with a single strand wrapping 
around $n$ strands (which do not twist themselves - note the subtle difference in notation!). Since the two outside 
terms on the right side have the desired form, it now suffices to show that 
\[
\left \llbracket
\xy
(0,0)*{\smalltwotwistoneone};
(6,-3)*{^{_{n}}};
\endxy
\right \rrbracket_s
\]
has this form. We claim that this complex
is homotopy equivalent to a complex
\begin{equation}\label{claimeq}
\xymatrix{\left(\idmn{}{n}\right) \ar[r] & D^{1} \ar[r] &D^{2} \ar[r] & \cdots}
\end{equation}
where each web in $D^{i}$ takes the form
\begin{equation}\label{Dweb}
\xy
(0,13)*{\vdots};
(-30,20)*{}; (30,20)*{} **\dir{-}?(.5)*\dir{<};
(-40,17)*{\Big(\smooth \text{ \ or \ } \singular \Big)};
(40,17)*{\Big(\smooth \text{ \ or \ } \singular \Big)};
(-15,0)*{\Big(\smooth \text{ \ or \ } \singular \Big)};
(15,0)*{\Big(\smooth \text{ \ or \ } \singular \Big)};
(-30,14)*{}; (-29,11.75)*{} **\dir{-};
(-26,5.25)*{}; (-25,3)*{} **\dir{-};
(-28,9.7)*{\cdot};
(-27.5,8.6)*{\cdot};
(-27,7.5)*{\cdot};
(30,14)*{}; (29,11.75)*{} **\dir{-};
(26,5.25)*{}; (25,3)*{} **\dir{-};
(28,9.7)*{\cdot};
(27.5,8.6)*{\cdot};
(27,7.5)*{\cdot};
(-5,3)*{}; (5,3)*{} **\dir{-}?(.5)*\dir{<};
(-5,-3)*{}; (-50,-10) **\crv{(5,-10)}?(1)*\dir{>};
(5,-3)*{}; (50,-10) **\crv{(-5,-10)}?(.7)*\dir{<};
(-50,-15)*{}; (50,-15)*{} **\dir{-}?(0)*\dir{<};
(-25,-2)*{}; (-50,-2)*{} **\dir{-}?(1)*\dir{>};
(25,-2)*{}; (50,-2)*{} **\dir{-}?(.5)*\dir{<};
(52,-15)*{_{l}};
\endxy
\end{equation}
for $l\geq0$.

We proceed via induction on $n$. The $n=1$ case follows from Lemma \ref{m=1=n}. We now compute
\[
\left \llbracket
\xy
(0,0)*{\smalltwotwistoneone};
(6,-3)*{^{_{n}}};
\endxy
\right \rrbracket_s =
\left \llbracket \
\xy 
(-2,2)*{\crossdown};
(-5,-3)*{}; (1,-3)*{} **\dir{-}?(0)*\dir{<};
(5,-3.5)*{^{_{n-1}}};
\endxy
\right \rrbracket_s \bullet
\left \llbracket \
\xy
(-5,4)*{}; (5,4)*{} **\dir{-}?(0)*\dir{<};
(0,-1)*{\smalltwotwistoneone};
(8,-3.5)*{^{_{n-1}}};
\endxy
\right \rrbracket_s \bullet
\left \llbracket \
\xy
(-2,2)*{\crossup};
(-5,-3)*{}; (1,-3)*{} **\dir{-}?(0)*\dir{<};
(5,-3.5)*{^{_{n-1}}};
\endxy
\right \rrbracket_s
\]
\[\simeq \left \llbracket
\xy
(-2,2)*{\crossdown};
(-5,-3)*{}; (1,-3)*{} **\dir{-}?(0)*\dir{<};
(5,-3.5)*{^{_{n-1}}};
\endxy
\right \rrbracket_s \bullet
\cone\left( \xymatrix{\left(\ \xy
(-3,2)*{}; (3,2)*{} **\dir{-}?(0)*\dir{<};
(-3,0)*{}; (3,0)*{} **\dir{-}?(1)*\dir{>};
(-3,-2)*{}; (3,-2)*{} **\dir{-}?(0)*\dir{<};
(6,-2.5)*{^{_{n-1}}};
\endxy \ \right) \ar[r] & 
\xy
(-5,2)*{}; (-1,2)*{} **\dir{-}?(0)*\dir{<};
(0,-1)*{D^{\cdot}[-1]};
\endxy} \right) [1] \bullet
\left \llbracket \
\xy
(-2,2)*{\crossup};
(-5,-3)*{}; (1,-3)*{} **\dir{-}?(0)*\dir{<};
(5,-3.5)*{^{_{n-1}}};
\endxy
\right \rrbracket_s .
\]
Here $\xy
(-2,2)*{}; (2,2)*{} **\dir{-}?(0)*\dir{<};
(0,0)*{D^{\cdot}};
\endxy$ denotes the tensor product of the complex $D^{\cdot}$ with a single strand.
By Proposition \ref{tensorcone}, the above complex is homotopy equivalent to
\[
\cone \left( \xymatrix{
\left \llbracket
\xy
(0,2)*{\smalltwotwistoneone};
(-5,-2)*{}; (5,-2)*{} **\dir{-}?(0)*\dir{<};
(8,-2.5)*{^{_{n-1}}};
\endxy
\right \rrbracket_s \ar[r]
&
\left \llbracket
\xy
(-2,2)*{\crossdown};
(-5,-3)*{}; (1,-3)*{} **\dir{-}?(0)*\dir{<};
(5,-3.5)*{^{_{n-1}}};
\endxy
\right \rrbracket_s \bullet
\xy
(-5,2)*{}; (-1,2)*{} **\dir{-}?(0)*\dir{<};
(0,-1)*{D^{\cdot}[-1]};
\endxy \bullet
\left \llbracket \
\xy
(-2,2)*{\crossup};
(-5,-3)*{}; (1,-3)*{} **\dir{-}?(0)*\dir{<};
(5,-3.5)*{^{_{n-1}}};
\endxy
\right \rrbracket_s}
\right)
\]
which has the form \eqref{claimeq}. 
Since reducing a web of the form \eqref{Dweb} gives webs of the form 
\eqref{goodwebform2}, the result follows.
\end{proof}

There thus exists a map 
$\left \llbracket \smalltwistmn{m}{n}{} \right \rrbracket_s \stackrel{g}{\longrightarrow}
 \left \llbracket \idmn{m}{n} \right \rrbracket_s$
which we use to construct the inverse system 
\begin{equation}
\mathbf{T}_w = \xymatrix{\left\llbracket \idmn{m}{n} \right\rrbracket_s 
& \left\llbracket \smalltwistmn{m}{n}{} \right\rrbracket_s \ar[l]_-{g_0}
& \left\llbracket \smalltwistmn{m}{n}{2} \right\rrbracket_s \ar[l]_-{g_1}
& \cdots \ar[l]
}
\end{equation}
for $w=(+\cdots+-\cdots-)$; $g_k$ is defined as
\[
\xymatrix{\left\llbracket \smalltwistmn{m}{n}{} \right\rrbracket_s 
\bullet
\left\llbracket 
\smalltwistmn{m}{n}{k} \right\rrbracket_s
\ar[rr]^-{g\bullet id} 
& & \left\llbracket \idmn{m}{n} \right\rrbracket_s
\bullet
\left\llbracket 
\smalltwistmn{m}{n}{k} \right\rrbracket_s
}.
\]
In the case $w=(+\cdots+)$ both the proof that the limit of the system exists 
(i.e. that the system is Cauchy) and that the limit lies in $\K^{\angle}(\mathcal{F})$ involve analysis of the 
complexes $\cone(g_k)$. We thus combine them into one result.

\begin{prop}\label{hardproof}
Let $w=(+\cdots+-\cdots-)$. The inverse system $\mathbf{T}_w$ is Cauchy and its limit lies in 
$\K^{\angle}(\mathcal{F})$.
\end{prop}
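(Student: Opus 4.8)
The plan is to mirror the treatment of $w=(+\cdots+)$ in Subsections \ref{scp1}--\ref{scp2}, with Proposition \ref{mnstart} playing the role that the structural form \eqref{goodwebform} played there. As in that case I would prove the two assertions simultaneously by constructing, inductively in $k$, complexes $C_k^{\cdot}$ such that: (i) $C_k[-1]^{\cdot}\simeq\cone(g_k)$; (ii) $C_k^{\cdot}$ is $\Oh^h(m_k)$ for integers $m_k$ with $\lim_{k\to\infty}m_k=\infty$; (iii) every web appearing in $C_k^{\cdot}$ is non-elliptic and of the form \eqref{goodwebform2}; and (iv) a web in $C_k^{\cdot}$ with $r_W$ internal faces is supported in $R_{1/M}\{r_W-r\}$ for fixed constants $M>0$ and $r$ depending only on $m$ and $n$. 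The base case is $C_0^{\cdot}=C^{\cdot}$, the complex furnished by Proposition \ref{mnstart}; it is $\Oh^h(1)$ since the homological-degree-zero identity term is absorbed in passing to $\cone(g_0)$, and property (iv) for it is exactly the analogue of Lemma \ref{lem2} applied to the diagram of a single full twist, whose smooth resolution is the identity braid (so no closed components arise under reduction).

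For the inductive step, the horizontal-composition form of Proposition \ref{tensorcone} gives $\cone(g_k)=\cone(g_{k-1})\bullet\left\llbracket\smalltwistmn{m}{n}{}\right\rrbracket_s\simeq C_{k-1}[-1]^{\cdot}\bullet\left\llbracket\smalltwistmn{m}{n}{}\right\rrbracket_s$, so it suffices to simplify $C_{k-1}^{\cdot}\bullet\left\llbracket\smalltwistmn{m}{n}{}\right\rrbracket_s$. Fix a web $W=W_L\bullet W'\bullet W_R$ appearing in $C_{k-1}^{\cdot}$; the decoration $W_R$ lies against the twist. Using Reidemeister $3$ moves I would pull the crossings of the twist involving the two distinguished multiplicity-one strands of $W_R$ out of the twist, so that they occur immediately after $W_R$, exactly as in the manipulation preceding \eqref{pt}. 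What is then required, for each of the three shapes of $W_R$ permitted by Proposition \ref{mnstart}, is an ``untwisting'' statement: the composite of $W_R$ with a half-twist (resp. a single crossing) of its two distinguished strands is homotopy equivalent to $W_R$, or to the elementary modification of it obtained by sliding the crossing through the trivalent vertex, shifted upward by one in homological degree. For the $Y$-web shapes this is Lemma \ref{untwist} together with its mirror-image version; for the $U$-web turnback it follows from Gaussian elimination applied to the two-term crossing-resolution complex after simplifying the singular resolution by the digon relation \eqref{catspidereqn2}, in the spirit of Lemma \ref{m=1=n}. Iterating over the (two) crossings of each distinguished pair produces, for every such $W$, a homotopy equivalence $\left\llbracket W\bullet\smalltwistmn{m}{n}{}\right\rrbracket_s\simeq\left\llbracket W\bullet(\text{the twist with the two strands of }W_R\text{ extracted})\right\rrbracket_s[d]\{e\}$ with $d\geq2$ and $e>0$. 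The right-hand side again consists of non-elliptic webs of the form \eqref{goodwebform2} (a composite of two such webs reduces to such webs, as noted in the proof of Proposition \ref{mnstart}), and property (iv) survives because the zero-resolution of $W\bullet(\text{the extracted twist})$ is just $W$. Proposition \ref{replacement} then assembles these equivalences into a complex that one takes to be $C_k^{\cdot}$; it satisfies (ii)--(iv) with $m_k\geq m_{k-1}+2$, completing the induction.

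With the $C_k^{\cdot}$ in hand both conclusions follow as before. Property (ii) is precisely the statement $\lim_{k\to\infty}\abs{\cone(g_k)}_h=\infty$, so $\mathbf{T}_w$ is Cauchy and $\tilde{P}_w=\lim_{\K}\mathbf{T}_w$ exists by Theorem \ref{klimitiffcauchy}. The terms of $\tilde{P}_w$ are the stable limits of the terms of the complexes $B_k$ built from the $C_k^{\cdot}$ as in the proof of Proposition \ref{plus1}; hence, using properties (iii) and (iv) and the fact that $\Hom_{\bullet}(w,w)$ contains only finitely many non-elliptic webs, $\tilde{P}_w$ consists of finitely many distinct non-elliptic webs and is supported in $R_{1/M}\{b\}$ for a suitable $b\in\Z$, i.e. $\tilde{P}_w\in\K^{\angle}(\mathcal{F})$, exactly as in Proposition \ref{pluscatprojgoodform}.

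The main obstacle is the local ``untwisting'' input: proving the required homotopy equivalence for each mixed-orientation $Y$-web and, most of all, for the $U$-web turnback, and confirming in each case that the homological shift is strictly positive so that $m_k$ does not stall. A secondary point of care is the Reidemeister-$3$ bookkeeping needed to extract the distinguished pair of strands from a full twist on $m+n$ mixed strands and to recognize the result as again a twist diagram whose webs have the form \eqref{goodwebform2}; this is more intricate than in the all-$+$ case because the distinguished strands may carry either sign and may be the innermost strands joined by a cap, but it is the same type of argument already used in the proof of Proposition \ref{mnstart}.
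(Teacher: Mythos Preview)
Your overall architecture is right and matches the paper's: set up $C_k^{\cdot}$ with the four properties, take $C_0^{\cdot}$ from Proposition \ref{mnstart}, and for the inductive step analyse $W\bullet\left\llbracket\smalltwistmn{m}{n}{}\right\rrbracket_s$ for each shape of $W_R$. Your treatment of the two $Y$-web shapes (pull the relevant two-strand twist out by $R3$, then apply Lemma \ref{untwist}) is essentially the paper's Cases 2 and 3.

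There is, however, a genuine gap in your argument for property (iv). You assert that ``the zero-resolution of $W\bullet(\text{the extracted twist})$ is just $W$''. This is the all-$+$ reasoning and it fails here: the extracted twist still contains negative crossings (every crossing between a $+$ strand and a $-$ strand other than the pair you removed), and the $0$-resolution of a negative crossing is the singular $H$-web resolution, not the smooth one. Hence the $0$-resolution of $W\bullet(\text{extracted twist})$ has strictly more internal faces than $W$, and Lemma \ref{lem2} gives a weaker bound than you need. The bulk of the paper's proof is precisely the bookkeeping required to close this gap: one rewrites the twist as an iterated wrap \eqref{tangle1}, uses Lemma \ref{m=1=n} repeatedly to express everything in terms of tangles $\rho$ of the explicit form \eqref{tangleform}, bounds the number $r$ of internal faces in the $0$-resolution by the negative-crossing count $l$, and then checks that the accumulated shifts $[(m')^2+a+b+l]\{(m')^2+m'+a+2b\}$ from good $R2$ moves and strand turnbacks exceed the deficit, all inside $R_{1/M}$ for the fixed value $M=2(m+n)^2$. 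Without this, you have no uniform $M$ that works for all $k$, and the $\K^{\angle}(\mathcal{F})$ conclusion does not follow.

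A second point: for the $U$-web turnback the paper does \emph{not} extract a two-strand $(+-)$ twist as you propose. Instead it passes to the finer decomposition \eqref{goodwebform3}, writes the turnback piece as $R$ with $u$ cap pairs (with $u$ chosen maximal), and slides the whole web $R$ through the full twist by isotopy, using \eqref{catknottedspider} to track the resulting shift $[2u(m+n-u)]\{2u(m+n-u+2)\}$; this reduces the problem to a full twist on $m-u$ and $n-u$ strands. The maximality of $u$ is what guarantees the smooth resolution of the resulting $W'\bullet\hat\rho$ has no closed components, which you need to invoke Lemma \ref{lem2} at all. Your single-cap extraction would also require this kind of check, and you have not addressed it.

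In short: the ``main obstacle'' is not the local untwisting (that is equation \eqref{capandtwist}, which is easy), but the quantitative control of support in the $(h,q)$-plane in the presence of negative crossings. Your sketch treats this as a one-line consequence of Lemma \ref{lem2}; it is not.
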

Of course, since limits are unique only up to homotopy, we mean that there is a representative of the homotopy class 
of the limit which lies in $\K^{\angle}(\mathcal{F})$.

\begin{proof}
As before, it will suffice to construct complexes $C_k^{\cdot}$ satisfying the following conditions.
\begin{enumerate}
 \item $C_k^{\cdot}[-1] \simeq \cone(g_k)$.
\item $C_k^{\cdot}$ is $\Oh^h(2k+1)$.
\item Every web appearing in $C_k^{\cdot}$ is non-elliptic and takes the form \eqref{goodwebform2}.
\item $C_k^{\cdot}$ is supported in $A_{1/M}$ for some fixed $M$. 
Moreover, if a web $W$ in $C_k^{\cdot}$ has $r_W$ internal faces then that web is supported in $A_{1/M}\{r_W\}$.
\end{enumerate}
We shall see that it suffices to take $M=2(m+n)^2$, so fix this value.

We begin by reconsidering the complex $\left \llbracket \smalltwistmn{m}{n}{} \right \rrbracket_s$ and showing that
it is homotopic to a complex of the form
\[
\xymatrix{ \left(\idmn{m}{n}\right) \ar[r] & C^1 \ar[r] & C^2 \ar[r] &\cdots}
\]
which is supported in $A_{1/M}$ and where each web in $C^i$ takes the form \eqref{goodwebform2}. 
Using only $R3$ moves we find that the tangle $\smalltwistmn{m}{n}{}$ is isotopic to the tangle
\begin{equation}\label{tangle1}
\onewrapmn{m-1}{n} \bullet
\xy
(-10,5)*{}; (3,5)*{} **\dir{-}?(1)*\dir{>};
(0,-2)*{\medonewrapmn{m-2}{n}};
\endxy \bullet \cdots \bullet
\xy
(-7,5)*{};(8,5)*{} **\dir{-}?(1)*\dir{>};
(0,-1)*{\medtwotwistoneone};
(12,5)*{^{_{m-1}}};
(8,-5)*{^{_{n}}};
\endxy \bullet
\xy
(-9,5)*{};(6,5)*{} **\dir{-}?(1)*\dir{>};
(0,-1)*{\medtwistn{n}{}};
(9,5)*{_{m}};
\endxy
\end{equation}
where all the strands directed to the right wrap (one by one, starting with the top strand) 
around the strands directed to the left at the beginning. We consider the complex assigned to 
this tangle. We next use Lemma \ref{m=1=n} and Proposition \ref{tensorcone} to express this complex in terms of complexes 
assigned to tangles obtained from \eqref{tangle1} by replacing the tangle
\[
\xy
(-7,5)*{};(8,5)*{} **\dir{-}?(1)*\dir{>};
(0,-1)*{\medtwotwistoneone};
(12,5)*{^{_{m-1}}};
(8,-5)*{^{_{n}}};
\endxy
\]
with tangles of the form 
\[
\xy 
(-10,7.5)*{}; (10,7.5)*{} **\dir{-}?(1)*\dir{>};
(-10,5)*{}; (-10,-5)*{} **\crv{(5,0)}?(1)*\dir{>} \POS?(.5)*{\hole}="h1";
(-10,0)*{}; "h1" **\dir{-}?(0)*\dir{<};
(-10,-7.5)*{}; (10,-7.5)*{} **\dir{-}?(0)*\dir{<};
"h1"; (10,0)*{} **\dir{-};
(3,0)*{\hole}="h2";
(10,-5)*{}; "h2" **\crv{(5,-5)};
"h2"; (10,5)*{} **\crv{(5,5)}?(1)*\dir{>};
(14,7.5)*{^{_{m-1}}};
(15,-8)*{^{_{n-p-1}}};
(12,0)*{^{_{p}}};
\endxy.
\]
We can then use $R2$ moves to slide the left strand which `turns back' through the entire tangle. We now repeat the procedure
for all terms in \eqref{tangle1}, moving leftward from
\[
\xy
(-7,5)*{};(8,5)*{} **\dir{-}?(1)*\dir{>};
(0,-1)*{\medtwotwistoneone};
(12,5)*{^{_{m-1}}};
(8,-5)*{^{_{n}}};
\endxy
\]
and one by one expressing complexes assigned to tangles of the form 
\[
\onewrapmn{x}{y}
\]
in terms of complexes assigned to
\[
\xy
(-20,-15)*{}; (20,-15)*{} **\dir{-}?(0)*\dir{<};
(-20,20)*{}; (-20,-10)*{} **\crv{(10,0)} \POS?(.2)*{\hole}="h1"
\POS?(.4)*{\hole}="h2" ?(1)*\dir{>};
(-20,10)*{}; "h1" **\crv{(-15,10)};
(-20,0)*{}; "h2" **\crv{(-15,0)}?(0)*\dir{<};
"h1"; (20,10)*{} **\crv{(0,20)&(15,10)}?(1)*\dir{>};
"h2"; (20,0)*{} **\crv{(0,15)&(15,0)};
(5,7)*{\hole}="h4";
(8,13)*{\hole}="h3";
(20,-10)*{}; "h4" **\crv{(0,0)};
"h4"; "h3" **\crv{(6,10)};
"h3"; (20,20)*{} **\crv{(9,16)}?(1)*\dir{>};
(22,10)*{^{_{x}}};
(25,0)*{^{_{y-z-1}}};
(22,-15)*{^{_{z}}};
\endxy  
\]
then sliding the left strand which turns around through the tangle. 
In the end we find that the terms in the complex assigned to \eqref{tangle1} come from the 
complexes assigned to tangles $\tau$ which, for example, take the form
\begin{equation}\label{tangle2}
 \xy
(-20,.85)*{\turnback};
(10,0)*{\threetwist};
\endxy \ .
\end{equation}
It hence suffices to show that $\left \llbracket \tau \right \rrbracket_s$, suitably shifted to
take into account the shifts in quantum and homological degree that arise from the Gaussian elimination 
homotopies and $R2$ moves, is supported in $A_{1/M}$. 

We first establish some notation. Noting that
\[
\left \llbracket \
\xy 0;/r.1pc/:
(-5,5)*{}="a";
(5,5)*{}="b";
(-5,-5)*{}="c";
(5,-5)*{}="d";
"a"; "c" **\crv{(15,0)} \POS?(.15)*{\hole}="h1" \POS?(.85)*{\hole}="h2";
"b"; "h1" **\crv{(5,5)};
"d"; "h2" **\crv{(5,-5)};
"h1"; "h2" **\crv{(-10,0)};
\endxy \
\right \rrbracket_s
\simeq \left \llbracket \ \smooth \ \right \rrbracket_s[1]\{1\}
\]
for all possible orientations of the strands, we call an $R2$ move which reduces the number of crossings 
in a tangle a \emph{good} $R2$ move. These moves are `good' in the sense that the corresponding Gaussian 
elimination homotopy equivalence yields a complex whose support is (properly) contained in that of the original 
complex.

Now, if $m'$ is the number of strands which turn back on the left side of $\tau$ then, assuming $m'>1$, 
we make at least 
\[
2\sum_{i=1}^{m'-1}i = (m')^2 - m'
\]
good $R2$ moves to arrive at such a presentation. The right hand side of this formula also works in the cases 
when $m'=0,1$.

Next, let $l$ denote half the total number of negative crossings in $\tau$. We can apply $l$ good $R2$ moves to 
eliminate all negative crossings involving the strands which turn back on the right to produce a tangle of the form 
\begin{equation}\label{tangleform}
\rho =
\xy
(10,-5)*{}; (10,5)*{} **\crv{(0,-5)&(0,5)}?(.45)*\dir{>};
(-10,5)*{}; (-10,-5)*{} **\crv{(0,5)&(0,-5)}?(.45)*\dir{>};
(-12.5,5)*{_{\sigma_1}};
(-10,6.5)*{}; (-10,3.5)*{} **\dir{-};
(-10,6.5)*{}; (-15,6.5)*{} **\dir{-};
(-15,6.5)*{}; (-15,3.5)*{} **\dir{-};
(-15,3.5)*{}; (-10,3.5)*{} **\dir{-};
(-12.5,-6)*{\sigma_2};
(-10,-8.5)*{}; (-10,-3.5)*{} **\dir{-};
(-10,-8.5)*{}; (-15,-8.5)*{} **\dir{-};
(-15,-8.5)*{}; (-15,-3.5)*{} **\dir{-};
(-15,-3.5)*{}; (-10,-3.5)*{} **\dir{-};
(-10,-7)*{}; (10,-7)*{} **\dir{-}?(.5)*\dir{<};
(12.5,-6)*{P_3};
(10,-8.5)*{}; (10,-3.5)*{} **\dir{-};
(10,-8.5)*{}; (15,-8.5)*{} **\dir{-};
(15,-8.5)*{}; (15,-3.5)*{} **\dir{-};
(15,-3.5)*{}; (10,-3.5)*{} **\dir{-};
(12.5,6)*{P_2};
(10,8.5)*{}; (10,3.5)*{} **\dir{-};
(10,8.5)*{}; (15,8.5)*{} **\dir{-};
(15,8.5)*{}; (15,3.5)*{} **\dir{-};
(15,3.5)*{}; (10,3.5)*{} **\dir{-};
(-22.5,6)*{P_1};
(-20,8.5)*{}; (-20,3.5)*{} **\dir{-};
(-20,8.5)*{}; (-25,8.5)*{} **\dir{-};
(-25,8.5)*{}; (-25,3.5)*{} **\dir{-};
(-25,3.5)*{}; (-20,3.5)*{} **\dir{-};
(-20,5)*{}; (-15,5)*{} **\dir{-}?(.6)*\dir{>};
(-20,7.5)*{}; (10,7.5)*{} **\dir{-}?(.5)*\dir{>};
(-15,-6)*{}; (-30,-6)*{} **\dir{-}?(1)*\dir{>};
(-30,6)*{}; (-25,6)*{} **\dir{-}?(.6)*\dir{>};
(15,6)*{}; (23,6)*{} **\dir{-}?(1)*\dir{>};
(15,-6)*{}; (23,-6)*{} **\dir{-}?(.4)*\dir{<};
(25,6)*{m};
(25,-6)*{n};
(0,0)*{_{m'}};
\endxy
\end{equation}
where the $P_i$ are positive braids. The $\sigma_i$ are negative braids, but of a particular sort - 
every braid determines an element of the symmetric group and these braids are the simplest ones
corresponding to their particular element (i.e. there is no twisting). The braid $\sigma_2$ has the further 
property that all crossings consist of a strand leaving $P_3$ crossing under a strand leaving $\sigma_1$.

All of the internal faces in the $0$-resolution of \eqref{tangleform} come from the tangle
\begin{equation}
N=
\xy
(-10,5)*{}; (-10,-5)*{} **\crv{(0,5)&(0,-5)}?(.45)*\dir{>};
(-20,5)*{}; (-15,5)*{} **\dir{-}?(.6)*\dir{>};
(-12.5,5)*{_{\sigma_1}};
(-10,6.5)*{}; (-10,3.5)*{} **\dir{-};
(-10,6.5)*{}; (-15,6.5)*{} **\dir{-};
(-15,6.5)*{}; (-15,3.5)*{} **\dir{-};
(-15,3.5)*{}; (-10,3.5)*{} **\dir{-};
(-12.5,-6)*{\sigma_2};
(-10,-8.5)*{}; (-10,-3.5)*{} **\dir{-};
(-10,-8.5)*{}; (-15,-8.5)*{} **\dir{-};
(-15,-8.5)*{}; (-15,-3.5)*{} **\dir{-};
(-15,-3.5)*{}; (-10,-3.5)*{} **\dir{-};
(-15,-6)*{}; (-20,-6)*{} **\dir{-}?(1)*\dir{>};
(-10,-7)*{}; (5,-7)*{} **\dir{-}?(.5)*\dir{<};
(-20,7.5)*{}; (5,7.5)*{} **\dir{-}?(1)*\dir{>};
(10,7.5)*{_{m-m'}};
(10,-7)*{_{n-m'}};
(0,0)*{_{m'}};
\endxy
\end{equation}
and it follows from inspection that if $r$ is the number of such faces then
\[
r \leq l-1
\]
unless $l=0$ in which case there are no negative crossings in \eqref{tangleform} and no internal faces 
in the zero resolution. We assume for now that we are not in the exception case $l=0$.

For later use, we'll bound the values for $l$. The number of crossings in $\sigma_1$ is bounded by
\[
\binom{m'}{2} = \frac{1}{2} m'(m'-1),
\]
the length of the longest element in the symmetric group. The remaining crossings come from $\sigma_2$ 
where some of the $n-m'$ strands pass underneath some of the $m'$ strands leaving $\sigma_1$, 
producing at most $m'(n-m')$ crossings. We thus have
\[
l \leq \frac{1}{2} m'(m'-1) + m'(n-m') = \frac{1}{2}m'(2n-m'-1) < mn.
\] 

Lemma \ref{lem2} gives that any web $W$ appearing in $\left \llbracket \rho \right \rrbracket_s$ is supported in
\[
A_{1/c_+}\{-r-1+r_W\} \subseteq A_{1/c_+}\{-l+1-1+r_W\}
\]
where $r_W$ is the number of internal faces in $W$ and $c_+$ is the number of positive crossings in $\rho$. Since 
\[c_+ \leq (m+n)(m+n-1) < M
\]
we see such a web is supported in 
\[
A_{1/M}\{-l+r_W\}.
\]
Considering all the shifts due to Gaussian elimination homotopies and good $R2$ moves, we see that the contribution to 
$\left \llbracket \smalltwistmn{m}{n}{} \right \rrbracket_s$ is given by
\[
\left \llbracket \rho \right \rrbracket_s [(m')^2-m'+a]\{(m')^2-m'+a\}[m'+b]\{2m'+2b\}[l]\{l\}
\]
where $a \leq m^2 - (m')^2 - m + m' \leq m^2$ and $b \leq m' \leq m$.
Indeed, the shifts of $[1]\{1\}$ come from the good $R2$ moves and the shifts of $[1]\{2\}$ come 
via Lemma \ref{m=1=n} from the strands which turn back. The web $W$ is hence supported in
\[
A_{1/M}[(m')^2+a+b+l]\{(m')^2+m'+a+2b+r_W\}.
\]
We have
\[
(m')^2+a+b+l \leq 2m^2 + m + mn \leq M
\]
so we see that each web $W$ in the contribution to 
$\left \llbracket \smalltwistmn{m}{n}{} \right \rrbracket_s$ 
coming from $\left \llbracket \rho \right \rrbracket_s$ with $l \neq 0$ is supported in $A_{1/M}\{r_W\}$ 
(we have used the fact that $l \neq 0$ implies $m' \geq 1$ so $(m')^2+m'+a+2b \geq 1$).

In the case that $l=0$, i.e. there are no negative crossings in $\tau$, 
it follows from Lemma \ref{lem2} that any web $W$ contributing to 
$\left \llbracket \smalltwistmn{m}{n}{} \right \rrbracket_s$ is supported in $A_{1/M}\{r_W\}$.

We have thus shown that 
\[
\left \llbracket \smalltwistmn{m}{n}{} \right \rrbracket_s 
\simeq 
\xymatrix{ \left(\idmn{m}{n}\right) \ar[r] & C^1 \ar[r] & C^2 \ar[r] &\cdots}
\]
where each web $W$ in $C^i$ is supported in $A_{1/M}\{r_W\}$ and comes from the tangles 
\eqref{tangleform}. It is easy to see that all such tangles take the form \eqref{goodwebform2}. 
It will be useful for our further considerations to note that every web of the form \eqref{goodwebform2} 
is of the form 
\begin{equation}\label{goodwebform3}
W' \bullet R  \text{ \ \ , \ \ }
W' \bullet
\xy  
(-3,5); (3,5) **\dir{-}?(1)*\dir{>};
(-3,3); (3,3) **\dir{-}?(0)*\dir{<};
(-3,-5); (3,-5) **\dir{-}?(0)*\dir{<};
(0,-1)*{\smallYri};
\endxy \text{ \ , \ or \ \ }
W' \bullet 
\xy  
(-3,5); (3,5) **\dir{-}?(1)*\dir{>};
(-3,-3); (3,-3) **\dir{-}?(1)*\dir{>};
(-3,-5); (3,-5) **\dir{-}?(0)*\dir{<};
(0,1)*{\smallYro};
\endxy 
\end{equation}
where $R$ is a non-elliptic web of the form
\[
\xy
(-5,5); (5,5) **\dir{-}?(1)*\dir{>};
(0,0)*{S};
(1,1.5); (5,1.5) **\dir{-}?(1)*\dir{>};
(1,-1.5); (5,-1.5) **\dir{-}?(.5)*\dir{<};
(-5,-5); (5,-5) **\dir{-}?(0)*\dir{<};
(9,5)*{_{m-u}};
(9,-5)*{_{n-u}};
(7,1.5)*{_{u}};
(7,-1.5)*{_{u}};
\endxy
\]
and $S$ is a non-elliptic web with no left boundary. 
While the latter two webs in \eqref{goodwebform3} do not preclude the first, 
we employ the convention that
if we claim a web has either of these two forms it is implicit that it does not have the first. 
We will also assume that $u$ is chosen maximal for the first type of web.

We now proceed with our construction of the complexes $C_k^{\cdot}$. Let $C_0^{\cdot}$ be the complex constructed above 
with the degree zero term truncated off. We now construct $C_k^{\cdot}$ assuming we have constructed $C_{k-1}^{\cdot}$;
we have
\begin{align*}
\cone(g_k) [1] &= \xymatrix{ \cone(g_{k-1})[1] \bullet \left \llbracket \smalltwistmn{m}{n}{} \right \rrbracket_s} \\
&\simeq \xymatrix{C_{k-1}^{\cdot} \bullet \left \llbracket \smalltwistmn{m}{n}{} \right \rrbracket_s} .
\end{align*}
Since each web $W$ in $C_{k-1}^{\cdot}$ has the form \eqref{goodwebform3} (and also \eqref{goodwebform2}), is supported 
in $A_{1/M}\{r_W\}$, and has homological degree at least $1+2(k-1)$, it suffices to show that each of the complexes
\[
\left \llbracket
W' \bullet R
\bullet \smalltwistmn{m}{n}{}
\right \rrbracket_s \text{ \ \ , \ \ }
\left \llbracket
W' \bullet
\xy  
(-3,5); (3,5) **\dir{-}?(1)*\dir{>};
(-3,3); (3,3) **\dir{-}?(0)*\dir{<};
(-3,-5); (3,-5) **\dir{-}?(0)*\dir{<};
(0,-1)*{\smallYri};
\endxy
\bullet \smalltwistmn{m}{n}{}
\right \rrbracket_s \text{ \ \ , \ \ } 
\left \llbracket
W' \bullet 
\xy  
(-3,5); (3,5) **\dir{-}?(1)*\dir{>};
(-3,-3); (3,-3) **\dir{-}?(1)*\dir{>};
(-3,-5); (3,-5) **\dir{-}?(0)*\dir{<};
(0,1)*{\smallYro};
\endxy
\bullet \smalltwistmn{m}{n}{}
\right \rrbracket_s 
\]
is homotopic to a complex with 
terms of the form \eqref{goodwebform2}, minimal homological degree $2$, and with webs $V$ supported in
$A_{1/M}\{r_V-r_W\}$ with $W = W'\bullet R$ or $W'$.
We shall analyze each case separately.\newline 

\noindent
\underline{\textbf{Case 1:}}
$\left \llbracket
W' \bullet R
\bullet \smalltwistmn{m}{n}{}
\right \rrbracket_s$ \newline

We compute
\begin{equation}\label{delta1}
\left \llbracket
W' \bullet R
\bullet \smalltwistmn{m}{n}{}
\right \rrbracket_s =
\left \llbracket
W' \bullet 
\xy
(-5,5); (5,5) **\dir{-}?(1)*\dir{>};
(0,0)*{S};
(1,1.5); (5,1.5) **\dir{-}?(1)*\dir{>};
(1,-1.5); (5,-1.5) **\dir{-}?(.5)*\dir{<};
(-5,-5); (5,-5) **\dir{-}?(0)*\dir{<};
(9,5)*{_{m-u}};
(9,-5)*{_{n-u}};
(7,1.5)*{_{u}};
(7,-1.5)*{_{u}};
\endxy \bullet \smalltwistmn{m}{n}{}
\right \rrbracket [c_{-}] \{3c_{-}-2c_{+}\}
\end{equation}
which is homotopy equivalent to the complex
\begin{equation}\label{delta2}
\left \llbracket 
W' \bullet \smalltwistmn{ \ \ \ \ m-u}{ \ \ \ \ n-u}{} \bullet
\xy
(-5,5); (5,5) **\dir{-}?(1)*\dir{>};
(0,0)*{S};
(1,1.5); (5,1.5) **\dir{-}?(1)*\dir{>};
(1,-1.5); (5,-1.5) **\dir{-}?(.5)*\dir{<};
(-5,-5); (5,-5) **\dir{-}?(0)*\dir{<};
(9,5)*{_{m-u}};
(9,-5)*{_{n-u}};
(7,1.5)*{_{u}};
(7,-1.5)*{_{u}};
\endxy \bullet
\xy
(-7,5); (3,5) **\dir{-}?(1)*\dir{>};
(-7,-5); (3,-5) **\dir{-}?(0)*\dir{<};
(0,0)*{\smalltwistmn{u}{u}{}}
\endxy
\right \rrbracket  [c_{-}-\frac23 \Delta] \{3c_{-}-2c_{+}-\frac83 \Delta\}
\end{equation}
where $c_{\pm}$ is the number of $\pm$-crossings in $\smalltwistmn{m}{n}{}$ and $\Delta$ is the change in writhe 
between the right side of \eqref{delta1} and \eqref{delta2}. The shifts involving $\Delta$ can be deduced from 
\eqref{catknottedspider}.

We have the formulae
\begin{align*}
c_{+} &= m^2-m+n^2-n \\
c_{-} &= 2mn
\end{align*}
and $\Delta = 0$. 
Taking into account the shifts due to the changes in the number of crossings, this gives that
$
\left \llbracket
W' \bullet R
\bullet \smalltwistmn{m}{n}{}
\right \rrbracket_s
$
is homotopy equivalent to
\[ 
\left \llbracket
W' \bullet \smalltwistmn{ \ \ \ \ m-u}{ \ \ \ \ n-u}{} \bullet
\xy
(-5,5); (5,5) **\dir{-}?(1)*\dir{>};
(0,0)*{S};
(1,1.5); (5,1.5) **\dir{-}?(1)*\dir{>};
(1,-1.5); (5,-1.5) **\dir{-}?(.5)*\dir{<};
(-5,-5); (5,-5) **\dir{-}?(0)*\dir{<};
(9,5)*{_{m-u}};
(9,-5)*{_{n-u}};
(7,1.5)*{_{u}};
(7,-1.5)*{_{u}};
\endxy \bullet
\xy
(-7,5); (3,5) **\dir{-}?(1)*\dir{>};
(-7,-5); (3,-5) **\dir{-}?(0)*\dir{<};
(0,0)*{\smalltwistmn{u}{u}{}}
\endxy
\right \rrbracket_s [2u(m+n-2u)]\{2u(m+n-2u)\}.
\]
Similarly, taking into account the change in writhe, we compute
\[
\left \llbracket
\xy
(0,0)*{S};
(1,1.5); (5,1.5) **\dir{-}?(1)*\dir{>};
(1,-1.5); (5,-1.5) **\dir{-}?(.5)*\dir{<};
(7,1.5)*{_{u}};
(7,-1.5)*{_{u}};
\endxy \bullet
\smalltwistmn{u}{u}{}
\right \rrbracket_s \simeq
\left \llbracket
\xy
(0,0)*{S};
(1,1.5); (5,1.5) **\dir{-}?(1)*\dir{>};
(1,-1.5); (5,-1.5) **\dir{-}?(.5)*\dir{<};
(7,1.5)*{_{u}};
(7,-1.5)*{_{u}};
\endxy
\right \rrbracket_s [2u^2]\{2u(u+2)\}
\]
and so 
$
\left \llbracket
W' \bullet R
\bullet \smalltwistmn{m}{n}{}
\right \rrbracket_s
$
is homotopy equivalent to
\begin{equation}\label{WR} 
\left \llbracket
W' \bullet \smalltwistmn{ \ \ \ \ m-u}{ \ \ \ \ n-u}{} \bullet
\xy
(-5,5); (5,5) **\dir{-}?(1)*\dir{>};
(0,0)*{S};
(1,1.5); (5,1.5) **\dir{-}?(1)*\dir{>};
(1,-1.5); (5,-1.5) **\dir{-}?(.5)*\dir{<};
(-5,-5); (5,-5) **\dir{-}?(0)*\dir{<};
(9,5)*{_{m-u}};
(9,-5)*{_{n-u}};
(7,1.5)*{_{u}};
(7,-1.5)*{_{u}};
\endxy
\right \rrbracket_s [2u(m+n-u)]\{2u(m+n-u+2)\}.
\end{equation}
Since $u\geq1$ and $m+n-u \geq 1$ and each web appearing in \eqref{WR} takes the 
form $W' \bullet R$, it suffices to show that every web $V$ appearing in this 
complex is supported in $A_{1/M}\{r_V-r_{(W'\bullet R)}\}$.

We have
\[
\supp \left(
\left \llbracket
W' \bullet \smalltwistmn{ \ \ \ \ m-u}{ \ \ \ \ n-u}{} \bullet
\xy
(-5,5); (5,5) **\dir{-}?(1)*\dir{>};
(0,0)*{S};
(1,1.5); (5,1.5) **\dir{-}?(1)*\dir{>};
(1,-1.5); (5,-1.5) **\dir{-}?(.5)*\dir{<};
(-5,-5); (5,-5) **\dir{-}?(0)*\dir{<};
(9,5)*{_{m-u}};
(9,-5)*{_{n-u}};
(7,1.5)*{_{u}};
(7,-1.5)*{_{u}};
\endxy
\right \rrbracket_s 
\right) \subseteq
\supp \left(
\left \llbracket
W' \bullet \smalltwistmn{ \ \ \ \ m-u}{ \ \ \ \ n-u}{}
\right \rrbracket_s 
\right)
\]
so we will consider complexes
\[
\left \llbracket
W' \bullet \smalltwistmn{\hat{m}}{\hat{n}}{}
\right \rrbracket_s 
\]
for $\hat{m}<m$ and $\hat{n}<n$. As before, we can express the complex assigned 
to the twist in terms of the complexes assigned to tangles 
\begin{equation}\label{tangleformhat}
\hat{\rho} =
\xy
(10,-5)*{}; (10,5)*{} **\crv{(0,-5)&(0,5)}?(.45)*\dir{>};
(-10,5)*{}; (-10,-5)*{} **\crv{(0,5)&(0,-5)}?(.45)*\dir{>};
(-12.5,5)*{_{\hat{\sigma_1}}};
(-10,6.5)*{}; (-10,3.5)*{} **\dir{-};
(-10,6.5)*{}; (-15,6.5)*{} **\dir{-};
(-15,6.5)*{}; (-15,3.5)*{} **\dir{-};
(-15,3.5)*{}; (-10,3.5)*{} **\dir{-};
(-12.5,-6)*{\hat{\sigma_2}};
(-10,-8.5)*{}; (-10,-3.5)*{} **\dir{-};
(-10,-8.5)*{}; (-15,-8.5)*{} **\dir{-};
(-15,-8.5)*{}; (-15,-3.5)*{} **\dir{-};
(-15,-3.5)*{}; (-10,-3.5)*{} **\dir{-};
(-10,-7)*{}; (10,-7)*{} **\dir{-}?(.5)*\dir{<};
(12.5,-6)*{\hat{P_3}};
(10,-8.5)*{}; (10,-3.5)*{} **\dir{-};
(10,-8.5)*{}; (15,-8.5)*{} **\dir{-};
(15,-8.5)*{}; (15,-3.5)*{} **\dir{-};
(15,-3.5)*{}; (10,-3.5)*{} **\dir{-};
(12.5,6)*{\hat{P_2}};
(10,8.5)*{}; (10,3.5)*{} **\dir{-};
(10,8.5)*{}; (15,8.5)*{} **\dir{-};
(15,8.5)*{}; (15,3.5)*{} **\dir{-};
(15,3.5)*{}; (10,3.5)*{} **\dir{-};
(-22.5,6)*{\hat{P_1}};
(-20,8.5)*{}; (-20,3.5)*{} **\dir{-};
(-20,8.5)*{}; (-25,8.5)*{} **\dir{-};
(-25,8.5)*{}; (-25,3.5)*{} **\dir{-};
(-25,3.5)*{}; (-20,3.5)*{} **\dir{-};
(-20,5)*{}; (-15,5)*{} **\dir{-}?(.6)*\dir{>};
(-20,7.5)*{}; (10,7.5)*{} **\dir{-}?(.5)*\dir{>};
(-15,-6)*{}; (-30,-6)*{} **\dir{-}?(1)*\dir{>};
(-30,6)*{}; (-25,6)*{} **\dir{-}?(.6)*\dir{>};
(15,6)*{}; (23,6)*{} **\dir{-}?(1)*\dir{>};
(15,-6)*{}; (23,-6)*{} **\dir{-}?(.4)*\dir{<};
(25,6)*{\hat{m}};
(25,-6)*{\hat{n}};
(0,0)*{_{\hat{m}'}};
\endxy \ .
\end{equation}
We must show that the complexes 
$\left \llbracket W' \bullet \hat{\rho} \right \rrbracket_s$, with appropriate shifts, 
are supported in $A_{1/M}$. Let $\hat{n}'$ be the number of strands which leave $\hat{P_3}$ 
and actually cross strands in $\hat{\sigma_2}$. Refining our earlier estimate, if $r$ is the number 
of internal faces in the zero resolution of $\rho$ then
\[
r \leq \max(0,\hat{l}-\hat{n})
\]
where $\hat{l}$ is the number of negative crossings in \eqref{tangleformhat}. The 
$0$-resolution of $W'\bullet \hat{\rho}$ hence has at most 
\[
r_{W'}+\hat{l}-\hat{n}'+2\hat{m}'+\hat{n}'-1 = r_{W'}+\hat{l}+2\hat{m}'-1
\]
internal faces where $r_{W'}$ is the number of internal faces in $W'$ (the additional 
$2\hat{m}'+\hat{n}'-1$ possible faces come from gluing $W'$ to $\hat{\rho}$).

Since we chose $u$ maximal, the smooth resolution of $W' \bullet \hat{\rho}$ has no closed 
components; Lemma \ref{lem2} gives that 
$\left \llbracket W' \bullet \hat{\rho} \right \rrbracket_s$ is 
supported in
\[
A_{1/M}\{-r_{W'}-\hat{l}-2\hat{m}'\}.
\]
The contribution to $\left \llbracket W' \bullet \smalltwistmn{m'}{n'}{} \right \rrbracket_s$ 
is obtained considering the shifts. As before we find it is given by
\[
\left \llbracket W' \bullet \hat{\rho} \right \rrbracket_s
[(\hat{m}')^2 - \hat{m}'+a]\{(\hat{m}')^2 - \hat{m}'+a\}
[\hat{m}'+b]\{2(\hat{m}'+b)\}[\hat{l}]\{\hat{l}\}
\]
with $a\leq \hat{m}^2$ and $b\leq \hat{m}$ so this complex is supported in
\[
A_{1/M}[(\hat{m}')^2+a+b+\hat{l}]\{(\hat{m}')^2-\hat{m}'+a+2b-r_{W'}\}.
\]
Since $(\hat{m}')^2+a+b+\hat{l} \leq M$ we see that this support is contained in 
\[
A_{1/M}\{-r_{W'}-1\}. 
\]

The contribution to $\left \llbracket W' \bullet R \bullet 
\smalltwistmn{m}{n}{} \right \rrbracket_s$ is supported in
\[
A_{1/M}[2u(m+n-u)]\{2u(m+n-u+2)-r_{W'}-1\}
\]
which is contained in $A_{1/M}\{-r_{W'}\}$ since $1 \leq u \leq \min(m,n)$ 
and 
\[
2u(m+n-u) \leq M.
\]
Moreover, applying the second statement from 
Lemma \ref{lem2} throughout the preceding argument, we see that any 
web $V$ in the contribution to 
\[ 
\left \llbracket W' \bullet R \bullet 
\smalltwistmn{m}{n}{} \right \rrbracket_s
\]
is supported in $A_{1/M}\{r_V-r_{(W'\bullet R)}\}$. \newline

\noindent
\underline{\textbf{Case 2:}}
$\left \llbracket
W' \bullet
\xy  
(-3,5); (3,5) **\dir{-}?(1)*\dir{>};
(-3,3); (3,3) **\dir{-}?(0)*\dir{<};
(-3,-5); (3,-5) **\dir{-}?(0)*\dir{<};
(0,-1)*{\smallYri};
\endxy
\bullet \smalltwistmn{m}{n}{}
\right \rrbracket_s$ \newline

We assume that $W' \bullet
\xy  
(-3,5); (3,5) **\dir{-}?(1)*\dir{>};
(-3,3); (3,3) **\dir{-}?(0)*\dir{<};
(-3,-5); (3,-5) **\dir{-}?(0)*\dir{<};
(0,-1)*{\smallYri};
\endxy$ does not take the form $W'\bullet R$ above. We begin by using $R3$ moves to 
express the tangle $\smalltwistmn{m}{n}{}$ as in \eqref{tangle1}. We then use $R3$ moves 
to pull the two strand twist which `lines up' with $\smallYri$ to the left through the tangle. 
Lemma \ref{untwist} gives
\[
\left \llbracket
W' \bullet
\xy  
(-3,5); (3,5) **\dir{-}?(1)*\dir{>};
(-3,3); (3,3) **\dir{-}?(0)*\dir{<};
(-3,-5); (3,-5) **\dir{-}?(0)*\dir{<};
(0,-1)*{\smallYri};
\endxy
\bullet \smalltwistmn{m}{n}{}
\right \rrbracket_s \simeq
\left \llbracket
W' \bullet
\xy  
(-3,5); (3,5) **\dir{-}?(1)*\dir{>};
(-3,3); (3,3) **\dir{-}?(0)*\dir{<};
(-3,-5); (3,-5) **\dir{-}?(0)*\dir{<};
(0,-1)*{\smallYri};
\endxy
\bullet T
\right \rrbracket_s [2]\{4\}
\]
where
\[
T = 
\onewrapmn{m-1}{n} \bullet
\xy
(-10,5)*{}; (3,5)*{} **\dir{-}?(1)*\dir{>};
(0,-2)*{\medonewrapmn{m-2}{n}};
\endxy \bullet \cdots \bullet
\xy
(-7,5)*{};(8,5)*{} **\dir{-}?(1)*\dir{>};
(0,-1)*{\medtwotwistoneone};
(12,5)*{^{_{m-1}}};
(8,-5)*{^{_{n}}};
\endxy \bullet
\xy
(-7,5)*{};(8,5)*{} **\dir{-}?(1)*\dir{>};
(2,0)*{\leftmedtwistds{n}{}};
(11,5)*{^{_{m}}};
\endxy \ .
\]
As in our analysis of the complex assigned to a single twist, we use 
Lemma \ref{m=1=n} starting with 
\[
\xy
(-7,5)*{};(8,5)*{} **\dir{-}?(1)*\dir{>};
(0,-1)*{\medtwotwistoneone};
(12,5)*{^{_{m-1}}};
(8,-5)*{^{_{n}}};
\endxy
\]
and moving left to express 
$\left \llbracket T \right \rrbracket_s$ in terms of complexes
$\left \llbracket \rho \right \rrbracket_s$ or 
$\left \llbracket \rho \bullet
\xy
(-3,5); (3,5) **\dir{-}?(1)*\dir{>};
(3,3); (-3,3) **\dir{-}?(1)*\dir{>};
(3,-5); (-3,-5) **\dir{-}?(1)*\dir{>};
(0,-1)*{\crossleft};
\endxy \
\right \rrbracket_s$ with $\rho$ as in \eqref{tangleform}.
In the latter case the top strand of the crossing in 
\[\xy
(-3,5); (3,5) **\dir{-}?(1)*\dir{>};
(3,3); (-3,3) **\dir{-}?(1)*\dir{>};
(3,-5); (-3,-5) **\dir{-}?(1)*\dir{>};
(0,-1)*{\crossleft};
\endxy\] 
turns back to the right in $\rho$;
this case arises due to the pair of non-twisting strands in $T$ which 
prevent the use of a good $R2$ move which in our previous analysis removed 
the crossing.

We hence analyze the complexes 
$\left \llbracket W' \bullet
\xy  
(-3,5); (3,5) **\dir{-}?(1)*\dir{>};
(-3,3); (3,3) **\dir{-}?(0)*\dir{<};
(-3,-5); (3,-5) **\dir{-}?(0)*\dir{<};
(0,-1)*{\smallYri};
\endxy \bullet \rho \right \rrbracket_s$
and 
$\left \llbracket W' \bullet
\xy  
(-3,5); (3,5) **\dir{-}?(1)*\dir{>};
(-3,3); (3,3) **\dir{-}?(0)*\dir{<};
(-3,-5); (3,-5) **\dir{-}?(0)*\dir{<};
(0,-1)*{\smallYri};
\endxy \bullet \rho \bullet
\xy
(-3,5); (3,5) **\dir{-}?(1)*\dir{>};
(3,3); (-3,3) **\dir{-}?(1)*\dir{>};
(3,-5); (-3,-5) **\dir{-}?(1)*\dir{>};
(0,-1)*{\crossleft};
\endxy \
\right \rrbracket_s$ noting that every web appearing takes the form 
\eqref{goodwebform2}. It suffices to consider the support of such 
complexes and we begin with the former (and slightly easier) case. 
The smooth resolution of 
$W' \bullet
\xy  
(-3,5); (3,5) **\dir{-}?(1)*\dir{>};
(-3,3); (3,3) **\dir{-}?(0)*\dir{<};
(-3,-5); (3,-5) **\dir{-}?(0)*\dir{<};
(0,-1)*{\smallYri};
\endxy \bullet \rho$ takes the form 
\[
W' \bullet
\xy  
(-3,5); (3,5) **\dir{-}?(1)*\dir{>};
(-3,3); (3,3) **\dir{-}?(0)*\dir{<};
(-3,-5); (3,-5) **\dir{-}?(0)*\dir{<};
(0,-1)*{\smallYri};
\endxy \bullet 
\xy
(-7,5); (7,5) **\dir{-}?(1)*\dir{>};
(12,5)*{_{m-m'}};
(-7,3); (-7,-3) **\crv{(0,0)}?(1)*\dir{>};
(0,0)*{_{m'}};
(7,-3); (7,3) **\crv{(0,0)}?(1)*\dir{>};
(7,-5); (-7,-5) **\dir{-}?(1)*\dir{>};
(12,-5)*{_{n-m'}};
\endxy
\]
which has no closed components (since $W' \bullet
\xy  
(-3,5); (3,5) **\dir{-}?(1)*\dir{>};
(-3,3); (3,3) **\dir{-}?(0)*\dir{<};
(-3,-5); (3,-5) **\dir{-}?(0)*\dir{<};
(0,-1)*{\smallYri};
\endxy \neq W' \bullet R$). The $0$-resolution of
$W' \bullet
\xy  
(-3,5); (3,5) **\dir{-}?(1)*\dir{>};
(-3,3); (3,3) **\dir{-}?(0)*\dir{<};
(-3,-5); (3,-5) **\dir{-}?(0)*\dir{<};
(0,-1)*{\smallYri};
\endxy \bullet \rho$
has at most
\[
r_{W'}+l-n'+2m'+n'-1 = r_{W'}+l+2m'-1
\]
internal faces, where $n'$ is the number of strands in $\rho$ 
leaving $P_3$ and crossing under strands in $\sigma_2$.

Lemma \ref{lem2} now gives that any web $V$ appearing in 
$\left \llbracket W' \bullet
\xy  
(-3,5); (3,5) **\dir{-}?(1)*\dir{>};
(-3,3); (3,3) **\dir{-}?(0)*\dir{<};
(-3,-5); (3,-5) **\dir{-}?(0)*\dir{<};
(0,-1)*{\smallYri};
\endxy \bullet \rho \right \rrbracket_s$ is supported in 
$A_{1/M}\{r_V-r_{W'}-l-2m'\}$. The contribution to 
$\left \llbracket W' \bullet
\xy  
(-3,5); (3,5) **\dir{-}?(1)*\dir{>};
(-3,3); (3,3) **\dir{-}?(0)*\dir{<};
(-3,-5); (3,-5) **\dir{-}?(0)*\dir{<};
(0,-1)*{\smallYri};
\endxy \bullet \smalltwistmn{m}{n}{} \right \rrbracket_s$ is computed considering 
the shifts and as before we see it is given by
\[
\left \llbracket W' \bullet
\xy  
(-3,5); (3,5) **\dir{-}?(1)*\dir{>};
(-3,3); (3,3) **\dir{-}?(0)*\dir{<};
(-3,-5); (3,-5) **\dir{-}?(0)*\dir{<};
(0,-1)*{\smallYri};
\endxy \bullet \rho \right \rrbracket_s 
[(m')^2-m'+a]\{(m')^2-m'+a\}[m'+b]\{2(m'+b)\}[l]\{l\}[2]\{4\}
\]
with $a\leq m^2$ and $b \leq m$. Any web $V$ in this complex is supported in 
\[
A_{1/M}[(m')^2+a+b+l+2]\{(m')^2-m'+a+2b+r_V-r_{W'}+4\}
\]
and since $(m')^2+a+b+l+2\leq M$ this is contained in $A_{1/M}\{r_V-r_{W'}\}$.

We now consider 
$\left \llbracket W' \bullet
\xy  
(-3,5); (3,5) **\dir{-}?(1)*\dir{>};
(-3,3); (3,3) **\dir{-}?(0)*\dir{<};
(-3,-5); (3,-5) **\dir{-}?(0)*\dir{<};
(0,-1)*{\smallYri};
\endxy \bullet \rho \bullet
\xy
(-3,5); (3,5) **\dir{-}?(1)*\dir{>};
(3,3); (-3,3) **\dir{-}?(1)*\dir{>};
(3,-5); (-3,-5) **\dir{-}?(1)*\dir{>};
(0,-1)*{\crossleft};
\endxy \
\right \rrbracket_s$. The smooth resolution of 
\[W' \bullet
\xy  
(-3,5); (3,5) **\dir{-}?(1)*\dir{>};
(-3,3); (3,3) **\dir{-}?(0)*\dir{<};
(-3,-5); (3,-5) **\dir{-}?(0)*\dir{<};
(0,-1)*{\smallYri};
\endxy \bullet \rho \bullet
\xy
(-3,5); (3,5) **\dir{-}?(1)*\dir{>};
(3,3); (-3,3) **\dir{-}?(1)*\dir{>};
(3,-5); (-3,-5) **\dir{-}?(1)*\dir{>};
(0,-1)*{\crossleft};
\endxy
\] 
takes the form 
$
W' \bullet
\xy  
(-3,5); (3,5) **\dir{-}?(1)*\dir{>};
(-3,3); (3,3) **\dir{-}?(0)*\dir{<};
(-3,-5); (3,-5) **\dir{-}?(0)*\dir{<};
(0,-1)*{\smallYri};
\endxy \bullet 
\xy
(-7,5); (7,5) **\dir{-}?(1)*\dir{>};
(12,5)*{_{m-m'}};
(-7,3); (-7,-3) **\crv{(0,0)}?(1)*\dir{>};
(0,0)*{_{m'}};
(7,-3); (7,3) **\crv{(0,0)}?(1)*\dir{>};
(7,-5); (-7,-5) **\dir{-}?(1)*\dir{>};
(12,-5)*{_{n-m'}};
\endxy
$
which again has no closed components. Due to the alignment of the negative 
crossing, the $0$-resolution has the same number of internal faces as the case 
where it is not present. It follows that the only difference from the previous 
case is that we make one less good $R2$ move. It follows that any web $V$ in the 
contribution to 
$\left \llbracket W' \bullet
\xy  
(-3,5); (3,5) **\dir{-}?(1)*\dir{>};
(-3,3); (3,3) **\dir{-}?(0)*\dir{<};
(-3,-5); (3,-5) **\dir{-}?(0)*\dir{<};
(0,-1)*{\smallYri};
\endxy \bullet \smalltwistmn{m}{n}{} \right \rrbracket_s$ 
is supported in 
\[
A_{1/M}[(m')^2+a+b+l+1]\{(m')^2-m'+a+2b+r_V-r_{W'}+3\}
\]
which is contained in $A_{1/M}\{r_V-r_{W'}\}$ and has minimal homological 
degree $2$ (since we necessarily have $l\geq 1$).

\noindent
\underline{\textbf{Case 3:}}
$\left \llbracket
W' \bullet
\xy  
(-3,5); (3,5) **\dir{-}?(1)*\dir{>};
(-3,-3); (3,-3) **\dir{-}?(1)*\dir{>};
(-3,-5); (3,-5) **\dir{-}?(0)*\dir{<};
(0,1)*{\smallYro};
\endxy\bullet \smalltwistmn{m}{n}{}
\right \rrbracket_s$ \newline

This case can be handled completely analogously to case $2$ above. 
In some detail, begin by rotating the diagram
$W' \bullet
\xy  
(-3,5); (3,5) **\dir{-}?(1)*\dir{>};
(-3,-3); (3,-3) **\dir{-}?(1)*\dir{>};
(-3,-5); (3,-5) **\dir{-}?(0)*\dir{<};
(0,1)*{\smallYro};
\endxy\bullet \smalltwistmn{m}{n}{}$
$180^{\circ}$ about a horizontal axis and reversing the direction of all strands. 
We are then in case $2$ (with $m$ and $n$ switched). Apply the above analysis 
(noting that $M$ is symmetric in $m$ and $n$) then rotate every web appearing in the complexes 
$180^{\circ}$ about a horizontal axis and reverse the direction of all strands.
\end{proof}

Now, let $\tilde{P}_w$ be the limit of $\mathbf{T}_w$ lying in $\K^{\angle}(\mathcal{F})$ constructed 
using the complexes $C_k^{\cdot}$ from the above proof.

\begin{prop}
Let $w=(+\cdots+-\cdots-)$. The web $\Id_w$ appears only once in $\tilde{P}_w$ and does so in 
quantum and homological degree zero; all other webs in the complex take the form \eqref{goodwebform2}. 
If $\wt(v) < \wt(w)$ then $\tilde{P}_w\bullet W_1 \simeq 0$ for any 
$W_1 \in \Hom_{\bullet}(w,v)$ and $W_2\bullet \tilde{P}_w \simeq 0$ for any 
$W_2 \in \Hom_{\bullet}(v,w)$. Finally, $\tilde{P}_w \bullet \tilde{P}_w \simeq \tilde{P}_w$.
\end{prop}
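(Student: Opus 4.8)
The plan is to run the argument of Propositions \ref{plus1}, \ref{plus2}, and \ref{plus3} in the segregated setting, using the explicit model of $\tilde{P}_w$ as the $\Ch$-limit of the stabilizing system $\mathbf{B}$ assembled from the complexes $C_k^{\cdot}$ produced in the proof of Proposition \ref{hardproof}. Recall from that proof that each $C_k^{\cdot}$ is $\Oh^h(2k+1)$, that every web occurring in it is non-elliptic of the form \eqref{goodwebform2}, and that $B_k^{\cdot} = \cone\!\left(B_{k-1}^{\cdot}[1] \to C_{k-1}^{\cdot}\right)$ with $B_0^{\cdot} = \left\llbracket \idmn{m}{n} \right\rrbracket_s$. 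Passing to the stable limit, the only contribution in homological degree $0$ is the degree-zero term $\left\llbracket \idmn{m}{n} \right\rrbracket_s$, so $id_w$ appears exactly once in $\tilde{P}_w$, in homological and quantum degree $0$, while every other web of $\tilde{P}_w$ has the form \eqref{goodwebform2}. This establishes the first assertion, just as Proposition \ref{plus1} followed from the corresponding description of the $C_k^{\cdot}$ in the all-$+$ case.

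For the annihilation property, fix $W_1 \in \Hom_\bullet(w,v)$ with $\wt(v) < \wt(w)$; as in Proposition \ref{plus2} it suffices to treat non-elliptic $W_1$, and since $w$ is segregated Proposition \ref{Kuprop} shows that $W_1$ factors as $W_1 = W_L' \bullet W_1'$ where $W_L'$ is (a tensor product of identity webs with) one of the $Y$- or $U$-web configurations appearing among the $W_L$ of Proposition \ref{mnstart}. I would then show $\abs{\left\llbracket \smalltwistmn{m}{n}{k} \bullet W_1 \right\rrbracket_s}_h \geq 2k$: using $R3$ moves to present $\smalltwistmn{m}{n}{k}$ as a stack of one-strand wraps as in \eqref{tangle1}, the two twist strands lining up with $W_L'$ can be pulled leftward out of the twist, and Lemma \ref{untwist} (in the $Y$-web case) or a good $R2$/turnback move as in the proof of Proposition \ref{hardproof} (in the $U$-web case) yields a homotopy equivalence of $\left\llbracket \smalltwistmn{m}{n}{k} \bullet W_1 \right\rrbracket_s$ with a complex supported in homological degrees $\geq 2k$. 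Propositions \ref{easy1} and \ref{easy2} then give $\tilde{P}_w \bullet W_1 = \bigl(\lim_{\K}\mathbf{T}_w\bigr) \bullet W_1 \simeq \lim_{\K}\bigl(\mathbf{T}_w \bullet W_1\bigr) \simeq 0$. The claim for $W_2 \in \Hom_\bullet(v,w)$ follows by the mirror-image argument (equivalently, by duality, using that dualizing the twist on $w$ gives the twist on $w^*$, so $(\tilde{P}_w)^* \simeq \tilde{P}_{w^*}$).

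Finally, idempotency is formal, mimicking Proposition \ref{plus3}. Writing $D^i = 0$ for $i \leq 0$, the first part gives $\tilde{P}_w = \cone\bigl(\left\llbracket \idmn{m}{n} \right\rrbracket_s \to D[-1]^{\cdot}\bigr)[1]$ with every web of $D^{\cdot}$ of the form \eqref{goodwebform2}, hence a distinguished triangle $\tilde{P}_w \to \left\llbracket \idmn{m}{n} \right\rrbracket_s \to D[-1]^{\cdot}$. Horizontally composing with $\tilde{P}_w$ and invoking Proposition \ref{conenull}, it suffices to show $D^{\cdot} \bullet \tilde{P}_w \simeq 0$. Peeling off truncations from below one homological degree at a time, as in Proposition \ref{plus3}, each step introduces a shift of a single web of the form \eqref{goodwebform2}; such a web factors through a word of strictly lower weight, so by the second part it is annihilated after composing with $\tilde{P}_w$. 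Thus $t_{\geq k}D^{\cdot} \bullet \tilde{P}_w \simeq D^{\cdot} \bullet \tilde{P}_w$ for all $k > 0$, and Lemma \ref{contractiblelem} forces $D^{\cdot} \bullet \tilde{P}_w \simeq 0$.

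I expect the main obstacle to be the annihilation step. One must check that the factorization from Proposition \ref{Kuprop} genuinely lands $W_1$ on one of the finitely many $Y$/$U$ configurations along $w$ handled by Lemma \ref{untwist} or a turnback move, and --- more delicately --- that after pulling the relevant strands through the $k$-fold twist the complex gains an amount growing linearly in $k$, not merely a bounded amount, in homological order. This requires re-running, uniformly in $k$, the bookkeeping from the proof of Proposition \ref{hardproof}: counting the good $R2$ moves and tracking the $[1]\{2\}$ homological shifts contributed by each strand that turns back, so that the net effect is of order $2k$.
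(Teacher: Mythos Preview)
Your proposal is correct and follows essentially the same route as the paper: the first assertion comes from the explicit description of the $C_k^\cdot$ in Proposition \ref{hardproof}, idempotency is deduced formally from the first two assertions exactly as in Proposition \ref{plus3}, and annihilation is proved by invoking Proposition \ref{Kuprop}, pulling the two strands adjacent to the $Y$- or $U$-web through the $k$-fold twist, and applying a local untwisting identity $k$ times before concluding via Propositions \ref{easy1} and \ref{easy2}.

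Your closing worry is overblown, though, and the paper's actual argument is cleaner than the one you anticipate. You do \emph{not} need to re-run the $\rho$-tangle bookkeeping of Proposition \ref{hardproof} uniformly in $k$. For the $U$-web case the paper simply records the single local identity
\[
\left \llbracket \
\xy 0;/r.1pc/:
(-10,5)*{}; (0,-5)*{} **\crv{(-6.5,5)&(-3.5,-5)} \POS?(.5)*{\hole}="h1";
(-10,-5)*{}; "h1" **\crv{(-7.5,-5)};
"h1"; (0,5) **\crv{(-2.5,5)};
(0,5)*{}; (10,-5)*{} **\crv{(3.5,5)&(6.5,-5)} \POS?(.5)*{\hole}="h2";
(0,-5)*{}; "h2" **\crv{(2.5,-5)};
"h2"; (10,5) **\crv{(7.5,5)};
(10,5); (10,-5) **\crv{(17,5)&(17,-5)};
{\ar (-10,-5)*{};(-12,-5)*{}};
\endxy \
\right \rrbracket_s \simeq
\left \llbracket \
\xy 0;/r.1pc/:
(0,5); (0,-5) **\crv{(10,0)};
{\ar (0,5)*{};(-2,5)*{}};
\endxy \
\right \rrbracket_s [2]\{6\}
\]
(equation \eqref{capandtwist}), the direct analog of Lemma \ref{untwist} for a cap rather than a $Y$-web. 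After pulling the two relevant strands through the $k$-fold twist by $R3$ moves, one is left with $k$ copies of this local picture stacked end to end, so applying \eqref{capandtwist} $k$ times yields a $[2k]$ shift immediately. The $Y$-web cases are handled the same way with Lemma \ref{untwist}. No careful counting of good $R2$ moves or turnback shifts is required; the linear-in-$k$ growth is manifest.
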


\begin{proof}
The first statement follows as in the proof of Proposition \ref{plus1} from the description of the 
complexes $C_k^{\cdot}$ in the proof of Proposition \ref{hardproof}. The third statement follows from 
the first and second as in the proof of Proposition \ref{plus3}.

It hence suffices to prove the second statement. Pulling the strands lining up with a $U$-web through 
the twist and using the homotopy equivalence
\begin{equation}\label{capandtwist}
\left \llbracket \
\xy 0;/r.1pc/:
\htwist~{(-10,5)}{(0,5)}{(-10,-5)}{(0,-5)};
\htwist~{(0,5)}{(10,5)}{(0,-5)}{(10,-5)};
(10,5); (10,-5) **\crv{(17,5)&(17,-5)};
{\ar (-10,-5)*{};(-12,-5)*{}};
\endxy \
\right \rrbracket_s \simeq
\left \llbracket \
\xy 0;/r.1pc/:
(0,5); (0,-5) **\crv{(10,5)&(10,-5)};
(.1,-4.985); (0,-5) **\dir{}?(1)*\dir{>};
\endxy \
\right \rrbracket_s [2]\{6\}
\end{equation}
we see that 
$
\abs{
\left \llbracket
\smalltwistmn{m}{n}{k}
\bullet 
\xy 0;/r.15pc/: 
(-3,5); (3,5) **\dir{-}?(1)*\dir{>};
(-3,3); (-3,-3) **\crv{(3,3)&(3,-3)};
(-2.9,-2.975); (-3,-3) **\dir{}?(1)*\dir{>};
(-3,-5); (3,-5) **\dir{-}?(0)*\dir{<};
\endxy
\right \rrbracket_s}_h \geq 2k
$. A similar analysis using 
Lemma \ref{untwist} shows the same for the complexes 
$
\left \llbracket
\smalltwistmn{m}{n}{k}
\bullet 
\xy  
(-3,5); (3,5) **\dir{-}?(1)*\dir{>};
(-3,-3); (3,-3) **\dir{-}?(1)*\dir{>};
(-3,-5); (3,-5) **\dir{-}?(0)*\dir{<};
(0,1)*{\smallYli};
\endxy \ \right \rrbracket_s
$
and 
$
\left \llbracket
\smalltwistmn{m}{n}{k}
\bullet 
\xy  
(-3,5); (3,5) **\dir{-}?(1)*\dir{>};
(-3,3); (3,3) **\dir{-}?(0)*\dir{<};
(-3,-5); (3,-5) **\dir{-}?(0)*\dir{<};
(0,-1)*{\smallYlo};
\endxy \ \right \rrbracket_s
$. Using Proposition \ref{Kuprop} and following the proof of Proposition \ref{plus2}, 
we have that $\tilde{P}_w \bullet W_1 \simeq 0$. The result for $W_2$ follows 
similarly.
\end{proof}
This gives Theorem \ref{1} for the case $w=(+\cdots+-\cdots-)$. 
Since we have already seen that $\tilde{P}_w$ lies in $\K^{\angle}(\mathcal{F})$, Theorem 
\ref{2} follows as in the case $w=(+\cdots+)$.

The methods used to show that $\tilde{P}_w$ is supported in $\K^{\angle}(\mathcal{F})$ 
can be employed to simplify their computation. 
We exhibit this in the following computation of $\tilde{P}_{(+-)}$.

\begin{prop}\label{catex2}
\begin{equation}\label{exeq}
\tilde{P}_{(+-)} =
\xymatrix{
\xy
(0,1)*{\xy
(-2,2)*{}="a";
(2,2)*{}="b";
(-2,-2)*{}="c";
(2,-2)*{}="d";
"a";"b" **\crv{(0,1)};
"d";"c" **\crv{(0,-1)};
(1.9,1.97);(2,2)**\dir{-}?(1)*\dir{>};
(-1.9,-1.97);(-2,-2)**\dir{-}?(1)*\dir{>};
\endxy};
\endxy \ar[r]^-{s}
& q^2 \ \xy
(-2,2)*{}="a";
(2,2)*{}="b";
(-2,-2)*{}="c";
(2,-2)*{}="d";
"a";"c" **\crv{-};
"d";"b" **\crv{-};
(1.9,1.85);(2,2)**\dir{-}?(1)*\dir{>};
(-1.9,-1.87);(-2,-2)**\dir{-}?(1)*\dir{>};
\endxy \ar[r]^-{ct_{-}}
& q^4 \ \xy
(-2,2)*{}="a";
(2,2)*{}="b";
(-2,-2)*{}="c";
(2,-2)*{}="d";
"a";"c" **\crv{-};
"d";"b" **\crv{-};
(1.9,1.85);(2,2)**\dir{-}?(1)*\dir{>};
(-1.9,-1.87);(-2,-2)**\dir{-}?(1)*\dir{>};
\endxy \ar[rr]^-{\frac{1}{3}ct_B - T_{+}}
& & q^8 \ \xy
(-2,2)*{}="a";
(2,2)*{}="b";
(-2,-2)*{}="c";
(2,-2)*{}="d";
"a";"c" **\crv{-};
"d";"b" **\crv{-};
(1.9,1.85);(2,2)**\dir{-}?(1)*\dir{>};
(-1.9,-1.87);(-2,-2)**\dir{-}?(1)*\dir{>};
\endxy \ar[r]^-{ct_{-}}
& \cdots
}
\end{equation}
where $ct_B = ct_L \circ ct_R$ with $ct_{-}$, $ct_L$ and $ct_R$ as in Lemma \ref{m=1=n}. $T_{+} = T_L + T_R$ where 
$T_L$ is the identity foam on the right arc and is the 
foam:
\[
\xy 0;/r.2pc/:
(-20,18); (20,20) **\crv{(0,15)&(20,15)};
(-20,-22); (-20,18) **\dir{-};
(-20,-22); (20,-20) **\crv{(0,-25)&(20,-25)};
(10,7); (10,-13) **\crv{(20,0)&(40,15)&(70,0)&(40,-20)&(20,-5)};
(20,4.2)*{\hole}="h1";
(20,-9.3)*{\hole}="h2";
(20,20); "h1" **\dir{-};
"h1"; "h2" **\dir{.};
"h2"; (20,-20) **\dir{-};
(38,-1); (50,-2) **\crv{(43,-4)}\POS?(.2)="a" \POS?(.85)="b";
"a"; "b" **\crv{(45,1)};
\endxy
\]
on the left arc; $T_R$ is defined similarly.
\end{prop}
\begin{proof}
Lemma \ref{m=1=n} gives that 
\[
\left \llbracket \ \smalltwotwistoneone \ \right\rrbracket_s \simeq 
\cone\left(
\left \llbracket \
\xy
(-2,2)*{}="a";
(2,2)*{}="b";
(-2,-2)*{}="c";
(2,-2)*{}="d";
"a";"b" **\crv{(0,1)};
"d";"c" **\crv{(0,-1)};
(1.9,1.97);(2,2)**\dir{-}?(1)*\dir{>};
(-1.9,-1.97);(-2,-2)**\dir{-}?(1)*\dir{>};
\endxy \ \right \rrbracket \overset{s}{\longrightarrow} 
\left(
\xymatrix{q^2 \xy
(-2,2)*{}="a";
(2,2)*{}="b";
(-2,-2)*{}="c";
(2,-2)*{}="d";
"a";"c" **\crv{-};
"d";"b" **\crv{-};
(1.9,1.85);(2,2)**\dir{-}?(1)*\dir{>};
(-1.9,-1.87);(-2,-2)**\dir{-}?(1)*\dir{>};
\endxy \ar[r]^-{-ct_{-}}
& q^4 \xy
(-2,2)*{}="a";
(2,2)*{}="b";
(-2,-2)*{}="c";
(2,-2)*{}="d";
"a";"c" **\crv{-};
"d";"b" **\crv{-};
(1.9,1.85);(2,2)**\dir{-}?(1)*\dir{>};
(-1.9,-1.87);(-2,-2)**\dir{-}?(1)*\dir{>};
\endxy}
\right)
\right)[1]
\]
from which we compute that
$
\left \llbracket \ \smalltwotwistoneone \ \right\rrbracket_s \bullet 
\left \llbracket \ \smalltwotwistoneone \ \right\rrbracket_s 
$
is homotopy equivalent to
\[
\cone\left(
\left \llbracket \
\xy
(-2,2)*{}="a";
(2,2)*{}="b";
(-2,-2)*{}="c";
(2,-2)*{}="d";
"a";"b" **\crv{(0,1)};
"d";"c" **\crv{(0,-1)};
(1.9,1.97);(2,2)**\dir{-}?(1)*\dir{>};
(-1.9,-1.97);(-2,-2)**\dir{-}?(1)*\dir{>};
\endxy \ \right \rrbracket \overset{s}{\longrightarrow} 
\left(
\xymatrix{q^2 \xy
(-2,2)*{}="a";
(2,2)*{}="b";
(-2,-2)*{}="c";
(2,-2)*{}="d";
"a";"c" **\crv{-};
"d";"b" **\crv{-};
(1.9,1.85);(2,2)**\dir{-}?(1)*\dir{>};
(-1.9,-1.87);(-2,-2)**\dir{-}?(1)*\dir{>};
\endxy \ar[r]^-{-ct_{-}}
& q^4 \xy
(-2,2)*{}="a";
(2,2)*{}="b";
(-2,-2)*{}="c";
(2,-2)*{}="d";
"a";"c" **\crv{-};
"d";"b" **\crv{-};
(1.9,1.85);(2,2)**\dir{-}?(1)*\dir{>};
(-1.9,-1.87);(-2,-2)**\dir{-}?(1)*\dir{>};
\endxy}
\right)
\right)[1] \bullet \left \llbracket \ \smalltwotwistoneone \ \right\rrbracket_s
\]
\[
\simeq 
\cone\left(
\left \llbracket \ \smalltwotwistoneone \ \right\rrbracket_s \overset{f'}{\longrightarrow} 
\left(
\xymatrix{q^2 \xy
(-2,2)*{}="a";
(2,2)*{}="b";
(-2,-2)*{}="c";
(2,-2)*{}="d";
"a";"c" **\crv{-};
"d";"b" **\crv{-};
(1.9,1.85);(2,2)**\dir{-}?(1)*\dir{>};
(-1.9,-1.87);(-2,-2)**\dir{-}?(1)*\dir{>};
\endxy \ar[r]^-{-ct_{-}}
& q^4 \xy
(-2,2)*{}="a";
(2,2)*{}="b";
(-2,-2)*{}="c";
(2,-2)*{}="d";
"a";"c" **\crv{-};
"d";"b" **\crv{-};
(1.9,1.85);(2,2)**\dir{-}?(1)*\dir{>};
(-1.9,-1.87);(-2,-2)**\dir{-}?(1)*\dir{>};
\endxy}
\right)[2]\{6\}
\right)[1]
\]
\[= \xymatrix{
\xy
(0,1)*{\xy
(-2,2)*{}="a";
(2,2)*{}="b";
(-2,-2)*{}="c";
(2,-2)*{}="d";
"a";"b" **\crv{(0,1)};
"d";"c" **\crv{(0,-1)};
(1.9,1.97);(2,2)**\dir{-}?(1)*\dir{>};
(-1.9,-1.97);(-2,-2)**\dir{-}?(1)*\dir{>};
\endxy};
\endxy \ar[r]^-{s}
& q^2 \xy
(-2,2)*{}="a";
(2,2)*{}="b";
(-2,-2)*{}="c";
(2,-2)*{}="d";
"a";"c" **\crv{-};
"d";"b" **\crv{-};
(1.9,1.85);(2,2)**\dir{-}?(1)*\dir{>};
(-1.9,-1.87);(-2,-2)**\dir{-}?(1)*\dir{>};
\endxy \ar[r]^-{ct_{-}}
& q^4 \xy
(-2,2)*{}="a";
(2,2)*{}="b";
(-2,-2)*{}="c";
(2,-2)*{}="d";
"a";"c" **\crv{-};
"d";"b" **\crv{-};
(1.9,1.85);(2,2)**\dir{-}?(1)*\dir{>};
(-1.9,-1.87);(-2,-2)**\dir{-}?(1)*\dir{>};
\endxy \ar[r]^-{f}
& q^8 \xy
(-2,2)*{}="a";
(2,2)*{}="b";
(-2,-2)*{}="c";
(2,-2)*{}="d";
"a";"c" **\crv{-};
"d";"b" **\crv{-};
(1.9,1.85);(2,2)**\dir{-}?(1)*\dir{>};
(-1.9,-1.87);(-2,-2)**\dir{-}?(1)*\dir{>};
\endxy \ar[r]^-{ct_{-}}
& q^{10} \xy
(-2,2)*{}="a";
(2,2)*{}="b";
(-2,-2)*{}="c";
(2,-2)*{}="d";
"a";"c" **\crv{-};
"d";"b" **\crv{-};
(1.9,1.85);(2,2)**\dir{-}?(1)*\dir{>};
(-1.9,-1.87);(-2,-2)**\dir{-}?(1)*\dir{>};
\endxy
}.
\]
A direct (and tedious!) computation shows that $f=\frac{1}{3}ct_B - T_{+}$, although it can be argued based on 
degree that $f$ must be a multiple of this map. Repeating this procedure inductively to compute 
$
\left \llbracket \ 
\xy
(0,0)*{\smalltwotwistoneone};
(0,4)*_{_{k-1}};
\endxy 
\ \right\rrbracket_s \bullet 
\left \llbracket \ \smalltwotwistoneone \ \right\rrbracket_s
$
shows that 
$
\left \llbracket \ 
\xy
(0,0)*{\smalltwotwistoneone};
(0,4)*_{_{k}};
\endxy 
\ \right\rrbracket_s
$
is given by the complex in equation \eqref{exeq} truncated at homological degree $2k$.
\end{proof}

\subsection{$\tilde{P}_w$ and decategorification for non-segregated words}\label{scp4}

In the decategorified case, we construct the projector $P_w$ for a non-segregated word $w$ by considering 
the segregated projector of the same weight and (horizontally) composing with (a composition of) 
$H$-webs on both sides. This procedure works in the categorified setting as well. 

Indeed, suppose that $w'$ is a word for which we have constructed $\tilde{P}_{w'}$ 
satisfying the conditions of Theorem \ref{1} and 
$w$ is a word of the same weight obtained by transposing one pair of adjacent $+$ and $-$ 
signs in $w'$. Let $h$ be the web in $\Hom_{\bullet}(w',w)$ given by the tensor product of 
$ \ \xy
(-2,2)*{}="a";
(0,1)*{}="b";
(2,2)*{}="c";
(0,-1)*{}="d";
(-2,-2)*{}="e";
(2,-2)*{}="f";
"a";"b" **\dir{-};
"b";"c" **\dir{-};
"b";"d" **\dir{-};
"e";"d" **\dir{-};
"d";"f" **\dir{-};
\endxy$ (oriented appropriately) and identity webs. Consider 
$\bar{h} \bullet \tilde{P}_{w'}\bullet h$ where $\bar{h}$ is the web in $\Hom_{\bullet}(w,w')$
obtained from $h$ by reversing the orientation of the strands in $\xy
(-2,2)*{}="a";
(0,1)*{}="b";
(2,2)*{}="c";
(0,-1)*{}="d";
(-2,-2)*{}="e";
(2,-2)*{}="f";
"a";"b" **\dir{-};
"b";"c" **\dir{-};
"b";"d" **\dir{-};
"e";"d" **\dir{-};
"d";"f" **\dir{-};
\endxy$. 
If $V$ is a web in $\Hom_{\bullet}(v,w)$ with $\wt(v) < \wt(w)$ then 
\[
V\bullet \bar{h} \cong \bigoplus_{\alpha}q^{l_{\alpha}}W_{\alpha}
\]
for $W_{\alpha}$ in $\Hom_{\bullet}(v,w')$. Since $\wt(v) < \wt(w')$ we have 
that 
\begin{align*}
V \bullet \bar{h} \bullet \tilde{P}_{w'} \bullet h &\cong 
\left(\bigoplus_{\alpha} q^{l_{\alpha}}W_{\alpha}\right)\bullet \tilde{P}_{w'} \bullet h \\
&\cong \bigoplus_{\alpha} \left(q^{l_{\alpha}}W_{\alpha}\bullet \tilde{P}_{w'}\right) \bullet h \\
& \simeq 0 .
\end{align*}
A similar computation shows that $\bar{h} \bullet \tilde{P}_{w'} \bullet h \bullet V \simeq 0$ for 
$V$ in $\Hom_{\bullet}(w,v)$ with $\wt(v) < \wt(w)$.

If $h$ is as above, $h \bullet \bar{h} \cong \Id_w \oplus W$ where 
$W \bullet \tilde{P}_{w'} \simeq 0$ so we have
\begin{align*}
\left(\bar{h}\bullet \tilde{P}_{w'} \bullet h\right)\bullet \left(\bar{h}\bullet \tilde{P}_{w'} \bullet h\right) 
&\cong \bar{h}\bullet \tilde{P}_{w'} \bullet\left(\Id_w \oplus W\right) 
\bullet \tilde{P}_{w'} \bullet h \\
&\simeq \bar{h}\bullet \tilde{P}_{w'} \bullet \tilde{P}_{w'} \bullet h \\
&\simeq \bar{h}\bullet \tilde{P}_{w'} \bullet h .
\end{align*}
Since all non-identity webs appearing in $\tilde{P}_{w'}$ factor through a word of lower weight, these webs will not 
contribute an identity web to $\bar{h} \bullet \tilde{P}_{w'}\bullet h$. Noting that 
$\bar{h}\bullet h$ is the direct sum of an identity web and a web factoring through a word of lower weight, 
this shows that $\bar{h} \bullet \tilde{P}_{w'}\bullet h$ gives 
the categorified projector $\tilde{P}_w$. 

Since any word can be obtained from the segregated word of the same weight via a sequence of permutations of the symbols $+$ and $-$, 
this proves Theorem \ref{1} for arbitrary $w$. Theorem \ref{2} also follows 
since $\K^{\angle}(\mathcal{F})$ is (essentially) closed under horizontal composition and
\begin{align*}
\chi(\tilde{P}_w) &= \chi(\bar{h} \bullet \tilde{P}_{w'}\bullet h) \\
&= \bar{h} \bullet \chi(\tilde{P}_{w'})\bullet h \\
&= \bar{h} \bullet P_{w'}\bullet h \\
&= P_w.
\end{align*}

Since the above construction of $\tilde{P}_w$ for non-segregated $w$ is somewhat indirect, 
a natural question to ask is whether this projector can also be realized as the stable limit of torus braids. In fact, 
the answer is yes. To illustrate this, let $w'$ be a segregated word and suppose that $w$ is the word 
that results from switching the last $+$ in $w'$ with the first $-$. We then have 
\begin{align*}
\tilde{P}_w 
&= \xy
(-3,5); (3,5) **\dir{-}?(1)*\dir{>};
(3,-5); (-3,-5) **\dir{-}?(1)*\dir{>};
(0,0)*{\xy 0;/r.15pc/:
(0,3)*{};(-5,5)*{} **\dir{-}?(.8)*\dir{>};
(0,3)*{};(5,5)*{} **\dir{-}?(.8)*\dir{>};
(0,-3)*{};(0,3)*{} **\dir{-}?(.3)*\dir{<};
(-5,-5)*{};(0,-3)*{} **\dir{-}?(.7)*\dir{>};
(5,-5)*{};(0,-3)*{} **\dir{-}?(.7)*\dir{>};
\endxy}
\endxy \bullet \left(
\lim_{k\to\infty} \left \llbracket \ \xy (0,1)*{\smalltwistmn{}{}{k}} \endxy \right \rrbracket_s \right) \bullet
\xy
(-3,5); (3,5) **\dir{-}?(1)*\dir{>};
(3,-5); (-3,-5) **\dir{-}?(1)*\dir{>};
(0,0)*{\xy 0;/r.15pc/:
(0,-3)*{};(-5,-5)*{} **\dir{-}?(.8)*\dir{>};
(0,-3)*{};(5,-5)*{} **\dir{-}?(.8)*\dir{>};
(0,3)*{};(0,-3)*{} **\dir{-}?(.3)*\dir{<};
(-5,5)*{};(0,3)*{} **\dir{-}?(.7)*\dir{>};
(5,5)*{};(0,3)*{} **\dir{-}?(.7)*\dir{>};
\endxy}
\endxy \\
& \simeq \lim_{k\to\infty} \left \llbracket \ \xy
(-3,5); (3,5) **\dir{-}?(1)*\dir{>};
(3,-5); (-3,-5) **\dir{-}?(1)*\dir{>};
(0,0)*{\xy 0;/r.15pc/:
(0,3)*{};(-5,5)*{} **\dir{-}?(.8)*\dir{>};
(0,3)*{};(5,5)*{} **\dir{-}?(.8)*\dir{>};
(0,-3)*{};(0,3)*{} **\dir{-}?(.3)*\dir{<};
(-5,-5)*{};(0,-3)*{} **\dir{-}?(.7)*\dir{>};
(5,-5)*{};(0,-3)*{} **\dir{-}?(.7)*\dir{>};
\endxy}
\endxy \bullet
\xy (0,1)*{\smalltwistmn{}{}{k}} \endxy \bullet
\xy
(-3,5); (3,5) **\dir{-}?(1)*\dir{>};
(3,-5); (-3,-5) **\dir{-}?(1)*\dir{>};
(0,0)*{\xy 0;/r.15pc/:
(0,-3)*{};(-5,-5)*{} **\dir{-}?(.8)*\dir{>};
(0,-3)*{};(5,-5)*{} **\dir{-}?(.8)*\dir{>};
(0,3)*{};(0,-3)*{} **\dir{-}?(.3)*\dir{<};
(-5,5)*{};(0,3)*{} **\dir{-}?(.7)*\dir{>};
(5,5)*{};(0,3)*{} **\dir{-}?(.7)*\dir{>};
\endxy}
\endxy \ \right \rrbracket_s \\
& \simeq \lim_{k \to \infty} \left \llbracket \
\xy 
(-4,5); (4,5) **\dir{-}?(1)*\dir{>};
(4,-5); (-4,-5) **\dir{-}?(1)*\dir{>};
(0,0)*{\xy 0;/r.1pc/:
(-5,5)*{}="a";
(5,5)*{}="b";
(-5,-5)*{}="c";
(5,-5)*{}="d";
(-10,10)*{}="e";
(10,10)*{}="f";
(-10,-10)*{}="g";
(10,-10)*{}="h";
"a";"b" **\dir{-}?(.7)*\dir{>};
"d";"b" **\dir{-}?(.7)*\dir{>};
"a";"c" **\dir{-}?(.7)*\dir{>};
"d";"c" **\dir{-}?(.7)*\dir{>};
"a";"e" **\dir{-}?(.7)*\dir{>};
"f";"b" **\dir{-}?(.7)*\dir{>};
"g";"c" **\dir{-}?(.7)*\dir{>};
"d";"h" **\dir{-}?(.7)*\dir{>};
\endxy}
\endxy \bullet 
\xy (0,1)*{\smalltwist{k}}; (6,0)*{_{w}} \endxy \right \rrbracket_s \\
& \simeq \lim_{k \to \infty} \left \llbracket 
\xy (0,1)*{\smalltwist{k}}; (6,0)*{_{w}} \endxy \right \rrbracket_s \oplus
\lim_{k \to \infty} \left \llbracket \
\xy 
(0,0)*{
\xy 0;/r.15pc/:
(-5,5); (5,5) **\dir{-}?(1)*\dir{>};
(-5,3); (-5,-3) **\crv{(0,3)&(0,-3)}?(0)*\dir{<};
(5,-3); (5,3) **\crv{(0,-3)&(0,3)}?(0)*\dir{<};
(5,-5); (-5,-5) **\dir{-}?(0)*\dir{<};
\endxy}
\endxy \bullet 
\xy (0,1)*{\smalltwist{k}}; (6,0)*{_{w}} \endxy \right \rrbracket_s \\
& \simeq \lim_{k \to \infty} \left \llbracket 
\xy (0,1)*{\smalltwist{k}}; (6,0)*{_{w}} \endxy \right \rrbracket_s
\end{align*}
where $\xy (0,1)*{\smalltwist{k}}; (6,0)*{_{w}} \endxy$ denotes $k$ full twists on strands oriented according 
to $w$. The fact that $\displaystyle \lim_{k \to \infty} \left \llbracket \
\xy 0;/r.15pc/:
(-5,5); (5,5) **\dir{-}?(1)*\dir{>};
(-5,3); (-5,-3) **\crv{(0,3)&(0,-3)}?(0)*\dir{<};
(5,-3); (5,3) **\crv{(0,-3)&(0,3)}?(0)*\dir{<};
(5,-5); (-5,-5) **\dir{-}?(0)*\dir{<};
\endxy \bullet 
\xy (0,1)*{\smalltwist{k}}; (6,0)*{_{w}} \endxy \right \rrbracket_s$
is null-homotopic follows from equation \eqref{capandtwist} and Lemma \ref{contractiblelem}.
Repeating this argument to switch all desired $+$'s and $-$'s gives the general result.

\section{The categorified $\mathfrak{sl}_3$ Reshetikhin-Turaev invariant of tangles}\label{sectioninvt}

We now use the categorified projectors $\tilde{P}_w$ to give a categorification of the $\mathfrak{sl}_3$ 
Reshetikhin-Turaev invariant of framed tangles. Recall that this invariant assigns an element of $\C(q)$ for each 
labeling of the components of the tangle by irreducible representations. 
We now describe a combinatorial method, given in \cite{Kuperberg}, for computing this 
invariant. For each component of the tangle, consider any\footnote{The apparent dependence on the choice of word is immaterial; 
the invariant is independent of the choice of words labeling closed components and the invariants obtained by labeling components 
with boundary by different words of the same weight are isomorphic.}
word which specifies the highest weight of the 
representation. Cable the component according to the framing with the number and direction of strands given by the word. 
Insert the corresponding projector anywhere along the 
cabling and then evaluate the resulting webs in the $\mathfrak{sl}_3$ spider. We define the categorified invariant 
analogously.

\begin{defn}\label{invt}
The categorified $\mathfrak{sl}_3$ Reshetikhin-Turaev invariant of a framed tangle $T$ with 
$i^{th}$ component labeled by the irreducible representation corresponding to the word $w_i$, denoted 
$\left \llbracket T \right \rrbracket_{(w_1,\ldots,w_r)}$, is computed by cabling each component according 
to the framing with strands directed according to the corresponding word, 
inserting the categorified projector, and evaluating to obtain a complex in $\K^{\angle}(\mathcal{F})$.
\end{defn}

To prove that this defines an invariant of framed tangles 
it suffices to show that the resulting complex is invariant up to homotopy under $R2$ and $R3$ Reidemeister moves and 
under choice of where the projector is inserted. We first establish some diagrammatic notation. Let 
\[
\left \llbracket \xy (0,0)*{\smallproj}; (6,0)*{_{w}}; \endxy  \right \rrbracket 
:= \tilde{P}_w;
\]
this notation will prove useful when considering the horizontal composition of categorified projectors with complexes 
assigned to tangles. 

The following result is the analog in our setting of Lemma $5.2$ from \cite{CooperKrushkal}.

\begin{lem}
Let $w$ be a word. We have 
\[
\left \llbracket
\xy
(0,5); (0,-5) **\dir{-};
(-5,0)*{\smallproj};
(1,0); (6,0) **\dir{-};
(8,0)*{_{w}};
\endxy \right \rrbracket \simeq
\left \llbracket
\xy
(0,5); (0,-5) **\dir{-};
(5,0)*{\smallproj};
(-5,0)*{\smallproj};
(11,0)*{_{w}};
\endxy \right \rrbracket \simeq 
\left \llbracket
\xy
(0,5); (0,-5) **\dir{-};
(5,0)*{\smallproj};
(-1,0); (-6,0) **\dir{-};
(11,0)*{_{w}};
\endxy 
\right \rrbracket
\]
where the vertical strand can be oriented in either direction. A similar result holds for sliding 
a categorified projector over a strand.
\end{lem}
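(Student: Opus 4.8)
The plan is to deduce everything from the idempotency of $\tilde{P}_w$ (Theorem~\ref{1}(4)) together with the fact that horizontal composition descends to the homotopy category and respects homotopy equivalence (the analogue of Proposition~\ref{tensorcone} for $\bullet$, noted immediately after that proposition). Write $A$, $B$, $C$ for the three displayed diagrams: projector to the left of the vertical strand, projectors on both sides, and projector to the right. Since in $A$ the portion of the $w$-cable lying to the right of the vertical strand is the identity web $id_w$, horizontally composing a copy of $\tilde{P}_w$ onto that end produces exactly $B$; so it suffices to prove $B\simeq A$ and, symmetrically, $B\simeq C$. For $B\simeq A$ I would first slide the right-hand copy of $\tilde{P}_w$ leftward along the cable until it is adjacent to the left-hand copy — for a strand disjoint from the cable this is just an isotopy of the underlying (reduced) web diagrams, hence induces a homotopy equivalence of the associated complexes, which stay in $\K^{\angle}(\mathcal{F})$ since that subcategory is closed under horizontal composition via reduction — and then apply $\tilde{P}_w\bullet\tilde{P}_w\simeq\tilde{P}_w$ to collapse the two adjacent copies into one, recovering $A$. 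The argument for $B\simeq C$ is identical with left and right interchanged, and composing the two gives $A\simeq C$: the categorified projector may be slid freely past the transverse strand.

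For the ``similar result'' — sliding $\tilde{P}_w$ over a strand which genuinely crosses the $w$-cable — the same reduction applies once we know a copy of $\tilde{P}_w$ can be pushed through the $m+n$ crossings between the cable and that strand. I would argue locally: use $R3$ moves to bring those crossings up against $\tilde{P}_w$, expand each crossing by the skein formulae \eqref{zip}/\eqref{unzip}, and apply Gaussian elimination (Proposition~\ref{GE}) together with Proposition~\ref{replacement} to the resulting (triply-bounded) double complex. The key point is that every resolution other than the one in which the transverse strand passes straight through consists, after reduction, of non-elliptic webs which by Kuperberg's Proposition~\ref{Kuprop} factor through a $Y$-web or $U$-web attached to the $w$-boundary carrying $\tilde{P}_w$; composing with $\tilde{P}_w$ these become null-homotopic by Theorem~\ref{1}(3) (for the turnback resolutions of oppositely oriented strands one uses equation \eqref{capandtwist} in the same spirit). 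Gaussian elimination then leaves precisely the complex in which the crossing has been ``absorbed'' and the projector sits on the far side; iterating over the remaining crossings completes the slide.

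The main obstacle — minor in the disjoint case, the real work in the crossing case — is the bookkeeping that keeps the whole argument inside $\K^{\angle}(\mathcal{F})$: after composing with $\tilde{P}_w$ the complexes are semi-infinite, so I must check, using Propositions~\ref{pluscatprojgoodform} and~\ref{hardproof} and the support/finiteness analysis of Section~\ref{scp2}, that the Gaussian-elimination homotopies and $R3$ moves do not spoil the angular support condition, and that the termwise homotopy equivalences assemble into genuine homotopy equivalences of the infinite complexes — here one invokes the $\K$-limit machinery of Section~\ref{homoalg3} (Theorem~\ref{klimitiffcauchy} and Lemma~\ref{contractiblelem}) exactly as in the proof of Proposition~\ref{plus3}. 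Finally, one verifies that after the slide the underlying web diagram is, up to isotopy, the target diagram $C$, so that the composite equivalence is the one asserted in the statement.
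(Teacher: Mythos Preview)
Your reading of the picture is off: in all three displayed diagrams the vertical strand \emph{crosses} the $w$-cable --- the lemma is precisely about sliding $\tilde{P}_w$ through those crossings, and the ``similar result'' refers to the opposite over/under choice, not to a genuinely new configuration. So your first paragraph, which treats the strand as a disjoint tensor factor and appeals only to isotopy plus idempotency, does not establish $B\simeq A$.

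Your second paragraph is aimed at the right target but the key step does not go through. When you resolve a crossing between one cable strand and the transverse strand, the singular resolution is an $H$-web whose trivalent vertex on the cable side has only \emph{one} leg on the $w$-boundary; the other two legs go to the internal edge and to the transverse strand. Proposition~\ref{Kuprop} and Theorem~\ref{1}(3) annihilate webs that factor through a $Y$- or $U$-web with \emph{two} legs attached to $w$, so these resolutions are not killed by $\tilde{P}_w$ for the reason you give. (There is also a degree issue: the all-smooth resolution does not sit in bidegree zero, so even if the other terms vanished you would recover $\tilde{P}_w$ only up to an unwanted shift.)

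The paper's argument avoids both problems by pushing the \emph{webs} of the right-hand projector through the crossing rather than the projector itself. Write $B$ as $A$ composed on the right with $\tilde{P}_w = (id_w \to C^1 \to C^2 \to \cdots)$; by Theorem~\ref{1} every web $V\neq id_w$ appearing here already factors through a $Y$- or $U$-web with two legs on $w$. Equation~\eqref{catknottedspider} (and its variants) then slides each such $V$ past the vertical strand, up to a shift, where it meets the left-hand $\tilde{P}_w$ and becomes null-homotopic. Propositions~\ref{replacement} and Lemma~\ref{contractiblelem} collapse everything except the $id_w$ term, yielding $B \simeq A\bullet id_w = A$.
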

\begin{proof}
It suffices to show the first homotopy equivalence.
We have (dropping the word specifying the projector)
\[
\left \llbracket
\xy
(0,5); (0,-5) **\dir{-};
(5,0)*{\smallproj};
(-5,0)*{\smallproj};
\endxy \right \rrbracket = 
\left \llbracket
\xy
(0,5); (0,-5) **\dir{-};
(-5,0)*{\smallproj};
(1,0); (6,0) **\dir{-};
\endxy \right \rrbracket \bullet 
\left(
\xymatrix{C^0 \ar[r] & C^1 \ar[r] & \cdots}
\right)
\]
where $C^i$ is a ($q$-linear) direct sum of webs annihilated by $\tilde{P}_w$ for $i>0$ and $C^0$ is 
the direct sum of an identity web with webs annihilated by $\tilde{P}_w$. If $V$ is one such (non-identity) web, equation 
\eqref{catknottedspider} gives
\begin{align*}
\left \llbracket
\xy
(0,5); (0,-5) **\dir{-};
(-5,0)*{\smallproj};
(1,0); (4,0) **\dir{-};
(7.5,0)*{\bullet \ V};
\endxy \right \rrbracket &\simeq 
\left \llbracket
\xy
(0,5); (0,-5) **\dir{-};
(-18,0)*{\smallproj};
(1,0); (4,0) **\dir{-};
(-9,0)*{\bullet \ V \ \bullet};
(-3,0); (-1,0) **\dir{-};
\endxy \right \rrbracket\\
&\simeq 0.
\end{align*}
It then follows, using Propositions \ref{replacement} and \ref{contractiblelem}, that
\begin{align*}
\left \llbracket
\xy
(0,5); (0,-5) **\dir{-};
(5,0)*{\smallproj};
(-5,0)*{\smallproj};
\endxy \right \rrbracket &\simeq 
\left \llbracket
\xy
(0,5); (0,-5) **\dir{-};
(-5,0)*{\smallproj};
(1,0); (6,0) **\dir{-};
\endxy  \bullet C^0 \right \rrbracket \\
&\simeq \left \llbracket
\xy
(0,5); (0,-5) **\dir{-};
(-5,0)*{\smallproj};
(1,0); (6,0) **\dir{-};
\endxy \right \rrbracket.
\end{align*}
\end{proof}

\begin{prop}\label{proplast}
The complex assigned to a labeled, framed tangle according to Definition \ref{invt} is invariant up to homotopy under 
$R2$ and $R3$ Reidemeister moves and under choice of where along a component the categorified projector is inserted.
\end{prop}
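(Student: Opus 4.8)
The plan is to reduce the statement to three things already established: the Morrison--Nieh invariance of the unshifted complex $\llbracket - \rrbracket$ under $R2$ and $R3$ Reidemeister moves, the fact that horizontal composition (like tensor product) with a fixed complex preserves homotopy equivalence (the $\bullet$-analog of Proposition \ref{tensorcone}), and the preceding sliding Lemma together with its variant for sliding a categorified projector across a crossing, proved by the same argument using \eqref{catknottedspider}. Since we only claim invariance under $R2$ and $R3$, no homological or quantum shift will ever intervene: cabling an $R2$ or $R3$ move produces only $R2$ and $R3$ moves on the cabled diagram — never an $R1$ — and every sliding move used below is shift-free because the webs it produces are either identity webs or webs absorbed by $\tilde{P}_w$, exactly as in the proof of the sliding Lemma.

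First I would prove independence of the insertion point, since it is needed for the Reidemeister statement. Fix a cabled, labelled diagram with the box $\tilde{P}_{w_i}$ inserted at a point $A$ of the $i^{th}$ component, and let $B$ be another point of that same component. Travelling along the component from $A$ to $B$ in one direction, the box meets only finitely many crossings (with other components, with itself, and with framing curls — all of which are features of the fixed diagram and are neither created nor destroyed). Each time the box passes such a crossing it is replaced, up to homotopy, by the same box on the other side: this is the content of the sliding Lemma and its across-a-crossing variant, with a crossing against a whole cabled bundle handled by peeling off one strand at a time. Horizontally composing each such homotopy equivalence with the remainder of the diagram, a fixed complex in $\K^{\angle}(\mathcal{F})$, preserves homotopy equivalence by the $\bullet$-analog of Proposition \ref{tensorcone}. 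Composing the finitely many elementary slides moves the box from $A$ to $B$ and shows $\llbracket T \rrbracket_{(w_1,\dots,w_r)}$ is independent, up to homotopy, of where the projectors are inserted.

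Next I would treat the $R2$ and $R3$ moves. Suppose $T$ and $T'$ differ by a single $R2$ or $R3$ move performed inside a disk $D$. Using the previous paragraph, first slide every projector box out of $D$, so that the insertion data for $T$ and $T'$ coincide and lie entirely outside $D$. Cabling the move inside $D$ turns it into a finite sequence of $R2$ and $R3$ moves on the cabled diagram, still supported in $D$ and hence disjoint from all projector boxes; by the Morrison--Nieh invariance of $\llbracket - \rrbracket$ under $R2$ and $R3$ (applied locally, using the canopolis structure), the complexes of the portion of the diagram inside $D$ are homotopy equivalent. Since everything outside $D$ — in particular the $\tilde{P}_{w_i}$ — is unchanged, one last application of the $\bullet$-analog of Proposition \ref{tensorcone} yields $\llbracket T \rrbracket_{(w_1,\dots,w_r)} \simeq \llbracket T' \rrbracket_{(w_1,\dots,w_r)}$.

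The main obstacle — the only point requiring genuine care — is the bookkeeping behind the two phrases italicized above: that cabling an $R2$ or $R3$ move on $T$ forces no $R1$ move on the cable (an $R1$ would destroy the argument by contributing a shift), and that passing a projector box across a crossing with an entire bundle, rather than a single strand, is still shift-free. Both are routine once set up. A cabled $R3$ move is literally a product of $R3$ moves (slide each strand of the fingered bundle past each individual crossing of the other two bundles), and a cabled $R2$ move is a product of $R2$ and $R3$ moves (cancel the innermost bigon, use $R3$ to expose the next, iterate), so no $R1$ appears. The bundle version of the sliding Lemma follows by removing one strand of the bundle at a time and invoking the single-strand Lemma, using at each stage that the intermediate web is either an identity web or is killed by $\tilde{P}_{w_i}$ via \eqref{catknottedspider}, so that the shifts $[2]\{8\}$ occurring there are irrelevant.
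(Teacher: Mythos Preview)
Your proof is correct and follows the same approach as the paper: slide projectors away using the preceding Lemma, then invoke Morrison--Nieh $R2$/$R3$ invariance on the projector-free region. The paper's proof is two sentences long and leaves implicit exactly the bookkeeping you spell out --- that cabled $R2$/$R3$ moves decompose into ordinary $R2$/$R3$ moves (so no shift-inducing $R1$ appears) and that sliding past a multi-strand bundle is handled one strand at a time --- so your version is a faithful expansion rather than a different argument.
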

\begin{proof}
The preceding lemma shows that the categorified projector can be slid along a component to any desired location without 
changing the complex up to homotopy. Invariance under $R2$ and $R3$ Reidemeister moves follows since we can assume 
that the projector is not located in the region of the knot diagram where the moves take place.
\end{proof}
Theorem \ref{3} follows from Proposition \ref{proplast}, 
Theorem \ref{2}, and the similarities in the definitions of the categorified and decategorified invariants.

We conclude with some explicit computations of this invariant.

\begin{ex}\label{ex++}
Using Example \ref{catex1} we find that
\[
\left\llbracket \ \
\xy
(8,8)*{_{0}};
(-8,0)*{}="a";
(8,0)*{}="b";
"a";"b" **\crv{(-8,-10)&(8,-10)};
"a";"b" **\crv{(-8,10)&(8,10)}?(1)*\dir{>};
\endxy \ \
\right\rrbracket_{(++)} = 
\left \llbracket \ \
\xy
(-10,-1)*{}="a";
(-10,1)*{}="b";
(-2,-1)*{}="c";
(-2,1)*{}="d";
"a";"b" **\dir{-};
"b";"d" **\dir{-};
"d";"c" **\dir{-};
"c";"a" **\dir{-};
(-8,-1)*{}="e";
(-8,1)*{}="f";
(-4,-1)*{}="g";
(-4,1)*{}="h";
(8,0)*{}="p";
(4,0)*{}="q";
"e";"p" **\crv{(-8,-10)&(8,-10)};
"f";"p" **\crv{(-8,10)&(8,10)}?(1)*\dir{>};
"g";"q" **\crv{(-4,-6)&(4,-6)};
"h";"q" **\crv{(-4,6)&(4,6)}?(1)*\dir{>};
\endxy \ \
\right \rrbracket \\
\]
\[
\ \ \ = \xymatrix{
\xy
(0,0)*\xycircle(2,2){-};
(0,0)*\xycircle(4,4){-};
(-2,.5); (-2,0.51) **\dir{-}?(1)*\dir{>};
(-4,.5); (-4,0.51) **\dir{-}?(1)*\dir{>};
\endxy \ar[r]^-{z}
&
q \ \xy 0;/r.2pc/:
(-3,2); (-3,-2) **\dir{-}?(.7)*\dir{>};
(-3,2); (-3,-2) **\crv{(0,5)&(7,0)&(0,-5)};
(-3,2); (-3,-2) **\crv{(-7,5)&(3,8)&(9,0)&(3,-8)&(-7,-5)};
(3.4,-1); (3.4,-1.1) **\dir{-}?(1)*\dir{>};
(7,-1); (7,-1.1) **\dir{-}?(1)*\dir{>};
\endxy \ar[r]^-{0}
&
q^3 \ \xy 0;/r.2pc/:
(-3,2); (-3,-2) **\dir{-}?(.7)*\dir{>};
(-3,2); (-3,-2) **\crv{(0,5)&(7,0)&(0,-5)};
(-3,2); (-3,-2) **\crv{(-7,5)&(3,8)&(9,0)&(3,-8)&(-7,-5)};
(3.4,-1); (3.4,-1.1) **\dir{-}?(1)*\dir{>};
(7,-1); (7,-1.1) **\dir{-}?(1)*\dir{>};
\endxy \ar[r]^-{p}
&
q^5 \ \xy 0;/r.2pc/:
(-3,2); (-3,-2) **\dir{-}?(.7)*\dir{>};
(-3,2); (-3,-2) **\crv{(0,5)&(7,0)&(0,-5)};
(-3,2); (-3,-2) **\crv{(-7,5)&(3,8)&(9,0)&(3,-8)&(-7,-5)};
(3.4,-1); (3.4,-1.1) **\dir{-}?(1)*\dir{>};
(7,-1); (7,-1.1) **\dir{-}?(1)*\dir{>};
\endxy \ar[r]^-{0}
& \cdots
}
\]
where $p$ is the foam which zips and then unzips along the two downward arcs. Using Gaussian elimination
(and a somewhat involved foam calculation), we find that 
this complex is homotopy equivalent to the complex 
\[
\xymatrix{
q^{-2} \xy
(0,0)*\xycircle(2,2){-};
\endxy \ar[r]
& 0 \ar[r]
& q^2
\xy
(0,0)*\xycircle(2,2){-};
\endxy \ar[r]^-{T}
& q^6
\xy
(0,0)*\xycircle(2,2){-};
\endxy \ar[r]^-{0}
& q^6
\xy
(0,0)*\xycircle(2,2){-};
\endxy \ar[r]^-{T}
& q^{10}
\xy
(0,0)*\xycircle(2,2){-};
\endxy \ar[r]^-{0}
&\cdots}
\]
where 
\[
T \ = \ 
\xy 0;/r.15pc/:
(0,20)*\xycircle(20,5){-};
(-20,-20); (-20,20) **\dir{-};
(0,-10)*\ellipse(20,5){.}; 
(0,-10)*\ellipse(20,5)__,=:a(-180){-};
(10,7); (10,-13) **\crv{(20,0)&(40,15)&(70,0)&(40,-20)&(20,-5)};
(20,4.2)*{\hole}="h1";
(20,-9.3)*{\hole}="h2";
(20,20); "h1" **\dir{-};
"h1"; "h2" **\dir{.};
"h2"; (20,-20) **\dir{-};
(38,-1); (50,-2) **\crv{(43,-4)}\POS?(.2)="a" \POS?(.85)="b";
"a"; "b" **\crv{(45,1)};
\endxy
\]
and we have omitted the orientation of the circles.
Applying the functor $\widehat{\Hom}(\emptyset,-)$ which assigns the graded vector space of (not necessarily degree-zero) 
foams from $\emptyset$ to closed webs, we obtain a complex of graded vector spaces. We have that 
\[
\widehat{\Hom}(\emptyset, \xy (0,0)*\xycircle(2,2){-} \endxy \ ) = 
\C \ \diskv \oplus \ \C \ \ctv \oplus \ \C \ \tv 
\]
in gradings $2$, $0$, and $-2$ respectively (see \cite{MorrisonNieh}) and the map 
$\widehat{\Hom}(\emptyset,T)$ has rank $1$, giving the cohomology as 
\[
\mathcal{H}^{i,j}\left(
\left\llbracket \ \
\xy 0;/r.15pc/:
(8,8)*{_{0}};
(-8,0)*{}="a";
(8,0)*{}="b";
"a";"b" **\crv{(-8,-10)&(8,-10)};
"a";"b" **\crv{(-8,10)&(8,10)}?(1)*\dir{>};
\endxy \ \
\right\rrbracket_{(++)}
\right) = 
\left\{
\begin{array}{ll}
       \C & i=0 \text{ and } j= -4,-2,0 \\
       \C & i=2k  \text{ and } j= 4k-4, 4k-2 \text{ for } k>0  \\
       \C & i=2k+1  \text{ and } j= 4k+2, 4k+4 \text{ for } k>0  \\
        0 & \text{else }.
     \end{array} 
\right.
\]
In the above formula, $i$ denotes homological degree while $j$ denotes the
vector space grading.
\end{ex}

\begin{ex} Using Proposition \ref{catex2} we compute
\[
\left\llbracket \ \
\xy
(8,8)*{_{0}};
(-8,0)*{}="a";
(8,0)*{}="b";
"a";"b" **\crv{(-8,-10)&(8,-10)};
"a";"b" **\crv{(-8,10)&(8,10)}?(1)*\dir{>};
\endxy \ \
\right\rrbracket_{(+-)} = 
\left \llbracket \ \
\xy
(-10,-1)*{}="a";
(-10,1)*{}="b";
(-2,-1)*{}="c";
(-2,1)*{}="d";
"a";"b" **\dir{-};
"b";"d" **\dir{-};
"d";"c" **\dir{-};
"c";"a" **\dir{-};
(-8,-1)*{}="e";
(-8,1)*{}="f";
(-4,-1)*{}="g";
(-4,1)*{}="h";
(8,0)*{}="p";
(4,0)*{}="q";
"e";"p" **\crv{(-8,-10)&(8,-10)};
"f";"p" **\crv{(-8,10)&(8,10)}?(1)*\dir{>};
"g";"q" **\crv{(-4,-6)&(4,-6)};
"h";"q" **\crv{(-4,6)&(4,6)}?(1)*\dir{<};
\endxy \ \
\right \rrbracket \\
\]
\[
\ \ \ = \xymatrix{
\xy
(0,0)*\xycircle(2,2){-};
(0,0)*\xycircle(4,4){-};
\endxy \ar[r]^-{s}
& q^2 \ \xy
(0,0)*\xycircle(3,3){-};
\endxy \ar[r]^-{0}
& q^4 \ \xy
(0,0)*\xycircle(3,3){-};
\endxy \ar[r]^-{-3T}
& q^8 \ \xy
(0,0)*\xycircle(3,3){-};
\endxy \ar[r]^-{0}
& q^{10} \ \xy
(0,0)*\xycircle(3,3){-};
\endxy \ar[r]^-{-3T}
& \cdots
}
\]
\[
\simeq \xymatrix{
q^{-2} \xy
(0,0)*\xycircle(2,2){-};
\endxy \oplus 
\xy
(0,0)*\xycircle(2,2){-};
\endxy \ar[r]
& 0 \ar[r]
& q^4
\xy
(0,0)*\xycircle(2,2){-};
\endxy \ar[r]^-{T}
& q^8
\xy
(0,0)*\xycircle(2,2){-};
\endxy \ar[r]^-{0}
& q^{10}
\xy
(0,0)*\xycircle(2,2){-};
\endxy \ar[r]^-{T}
& q^{14}
\xy
(0,0)*\xycircle(2,2){-};
\endxy \ar[r]^-{0}
&\cdots}
\]
where again we have omitted the orientation of the circles. Applying 
$\widehat{\Hom}(\emptyset,-)$ and taking cohomology gives
\[
\mathcal{H}^{i,j}\left(
\left\llbracket \ \
\xy 0;/r.15pc/:
(8,8)*{_{0}};
(-8,0)*{}="a";
(8,0)*{}="b";
"a";"b" **\crv{(-8,-10)&(8,-10)};
"a";"b" **\crv{(-8,10)&(8,10)}?(1)*\dir{>};
\endxy \ \
\right\rrbracket_{(+-)}
\right) = 
\left\{
\begin{array}{ll}
       \C^2 & i=0 \text{ and } j=-2,0\\
       \C & i=0 \text{ and } j= -4,2 \\
       \C & i=2k  \text{ and } j= 6k-4, 6k-2 \text{ for } k>0  \\
       \C & i=2k+1  \text{ and } j= 6k+2, 6k+4 \text{ for } k>0  \\
        0 & \text{else }.
     \end{array} 
\right.
\]

\end{ex}

\bibliographystyle{QT}

\end{document}